\title {The Ramsey numbers for trees of order $n$ with maximum degree at least $n-5$ versus the wheel graph of order nine}
\author{Zhi Yee Chng\thanks{School of Mathematics and Statistics, UNSW Sydney, NSW 2052, Sydney, Australia.\newline Email: zhi\_yee.chng@unsw.edu.au.} \and Thomas Britz \thanks{School of Mathematics and Statistics, UNSW Sydney, NSW 2052, Sydney, Australia. Email: britz@unsw.edu.au.} \and Ta Sheng Tan\thanks{Institute of Mathematical Sciences, Faculty of Science, Universiti Malaya, 50603 Kuala Lumpur, Malaysia. Email: tstan@um.edu.my.} \and Kok Bin Wong\thanks{Institute of Mathematical Sciences, Faculty of Science, Universiti Malaya, 50603 Kuala Lumpur, Malaysia.\newline Email: kbwong@um.edu.my.}}
\date{}
\theoremstyle{plain}
\newtheorem{theorem}{Theorem}[section]
\newtheorem{lemma}[theorem]{Lemma}
\newtheorem{corollary}[theorem]{Corollary}
\newtheorem{observation}[theorem]{Observation}
\theoremstyle{definition}
\theoremstyle{remark}
\tikzstyle{vertex} = [draw, circle, scale=0.5]
\tikzstyle{nullvertex} = [draw=white, circle, scale=0.05]
\tikzstyle{cir} = [draw, circle, dotted, scale=5.0]
\tikzstyle{bigcir} = [draw, circle, dotted, scale=10.0]
\begin{document}

\maketitle

\begin{abstract}
The Ramsey numbers $R(T_n,W_8)$ are determined for each tree graph $T_n$ 
of order $n\geq 7$ and maximum degree $\Delta(T_n)$ equal to either $n-4$ or $n-5$.
These numbers indicate strong support for the conjecture, 
due to Chen, Zhang and Zhang and to Hafidh and Baskoro, 
that $R(T_n,W_m) = 2n-1$ for each tree graph $T_n$ 
of order $n\geq m-1$ with $\Delta(T_n)\leq n-m+2$ when $m\geq 4$ is even. 
\end{abstract}

\noindent
\footnotesize 2010 \textit {Mathematics Subject Classification.} 05C55, 05D10.\\
\noindent
\footnotesize \textit {Key words and phrases.} Ramsey number, tree, wheel graph
\normalsize

\section{Introduction}

Let $G$ and $H$ be two simple graphs.
The Ramsey number $R(G,H)$ is the smallest integer $n$ such that, 
for any graph of order $n$, 
either it contains $G$ or its complement contains $H$ as a subgraph. 
Chv\'atal and Harary~\cite{ChHa72} proved that $R(G,H)\geq (c(G)-1)(\chi(H)-1)+1$
where $c(G)$ is the largest order of any connected component of $G$ 
and where $\chi(H)$ is the chromatic number of $H$.
For any tree graph $G=T_n$ of order $n$ and the wheel graph $H = W_m$ of order $m+1$
obtained by connecting a vertex to each vertex of the cycle graph $C_m$, 
the Chv\'atal-Harary bound implies that $R(T_n,W_m)\geq 2n-1$ when $m$ is even
and $R(T_n,W_m)\geq 3n-2$ when $m$ is odd.

Chen et al.~{\cite{ChZhZh05c}} and Zhang~\cite{Zh08a} 
showed that $R(P_n,W_m)$ achieves these Chv\'atal-Harary bounds 
for the path graph $T_n=P_n$ of order $n$ when $m$ is odd and $3\leq m\leq n+1$ 
and when $m$ is even and $4\leq m\leq n+1$; see also~\cite{baskoro02,SaBr07}.
Baskoro et al.~\cite{BaSuNaMi02} and Surahmat and Baskoro~\cite{SuBa01}
further proved that $R(T_n,W_m)$ achieves the Chv\'atal-Harary bounds
for $m = 4,5$ and all tree graphs $T_n$ of order $n\geq 3$,
except when $m=4$ and $T_n$ is the star graph $S_n$, 
in which case $R(S_n,W_4) = 2n+1$.
This led Baskoro et al.~\cite{BaSuNaMi02} to conjecture that 
$R(T_n,W_m) = 3m-2$ for all tree graphs $T_n$ of order $n$ when $m\geq 5$ is odd.
The conjecture is true for all sufficiently large $n$, 
according to a result of Burr et al.~\cite{BuErFaRoScGoJa87}.
In contrast, the analogous equality $R(T_n,W_m) = 2n-1$ for even $m\geq 4$ is false
since the star graph $T_n = S_n$ does not achieve this bound,
as the following combined result of Zhang~\cite{Zh08b} and Zhang et al.~\cite{ZhChZh08,ZhChCh09} shows; 
see also~\cite{ChZhZh04a, HaMa18, HaBaAs05, korolova05, LiSc16}.

\begin{theorem}{\rm\cite{Zh08b,ZhChZh08,ZhChCh09}}
\label{thm:R(Sn,W8)}
For $n\geq 5$, 
\[
  R(S_n,W_8)=\begin{cases}
    2n+1 & \text{if $n$ is odd}\,;\\
    2n+2 & \text{if $n$ is even}\,.
  \end{cases}
\]
\end{theorem}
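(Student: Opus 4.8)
The plan is to prove, separately for $n$ odd and $n$ even, the lower bound $R(S_n,W_8)\ge 2n+1$ (resp.\ $2n+2$) and the matching upper bound. Throughout I use that a graph contains $S_n$ exactly when it has a vertex of degree at least $n-1$, so a graph $G$ on $m$ vertices avoids $S_n$ iff $\Delta(G)\le n-2$, iff $\delta(\overline G)\ge m-n+1$; and that $W_8=K_1+C_8$, so $\overline G$ contains $W_8$ iff some vertex $v$ satisfies $C_8\subseteq\overline G[N_{\overline G}(v)]$.

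For the lower bound I would exhibit, on $m-1$ vertices, a graph $G$ with $\Delta(G)\le n-2$ whose complement $H:=\overline G$ has no $W_8$. When $n$ is odd I take $H$ on $2n$ vertices to be $K_{n,n}$ with, inside each side, one path $P_3$ plus a perfect matching on the remaining $n-3$ vertices; then $\delta(H)=n+1$, so $\Delta(G)=n-2$, and one checks directly that each $H[N_H(v)]$ is obtained by joining one or two non-adjacent vertices to a forest that is a single $P_3$ together with a matching, in which every cycle alternates between the (at most two) apices and forest-paths on at most three vertices, hence has length at most $7$. When $n$ is even I take $H$ on $2n+1$ vertices to be $K_{n,n+1}$ with a perfect matching inside the side of order $n$ and a $2$-factor of cycles of length at most $6$ inside the side of order $n+1$; then $\delta(H)=n+2$, so $\Delta(G)=n-2$, and the analogous check (one vertex joined to short cycles, or two vertices joined to a matching) again bounds the length of every cycle in a neighbourhood below $8$. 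This gives the two lower bounds. The parity split is real: the extra unit of minimum degree (equivalently the extra vertex) needed to go from $2n$ to $2n+1$ for odd $n$, or from $2n+1$ to $2n+2$ for even $n$, cannot be supplied without forcing some neighbourhood to contain a $C_8$, which is exactly what the upper bound must confirm.

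For the upper bound, suppose $G$ has $m$ vertices with $m=2n+1$ ($n$ odd) or $m=2n+2$ ($n$ even), contains no $S_n$, and has $\overline G=H$ with no $W_8$; then $\delta(H)\ge m-n+1>m/2$, so $H$ is pancyclic and in particular contains $C_8$, though this by itself does not produce $W_8$. The real content is that for \emph{every} vertex $v$ the graph $F_v:=H[N_H(v)]$ is $C_8$-free while being forced to be dense: it has $d_H(v)\ge m-n+1$ vertices, $\delta(F_v)\ge\delta(H)+d_H(v)-m$, and $\Delta(\overline{F_v})\le n-2$. I would prove, and then exploit, a structural dichotomy: a $C_8$-free graph satisfying these density bounds is either small or is essentially ``dominated by a core of at most three vertices'' — the governing extremal shapes being $K_{3,t}$, a triangle joined to an independent set, and disjoint unions of cliques of order at most $7$, together with bounded perturbations of these — so whenever some $F_v$ avoids $C_8$ its complement $\overline{F_v}=G[N_H(v)]$ contains a clique covering almost all of $N_H(v)$. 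Taking $v$ of maximum $H$-degree and applying the dichotomy, such a clique inside $G$ forces some vertices of $G$ to have degree essentially equal to the maximum $n-2$, hence to have $H$-degree close to $m-n+1$ spread over the whole graph; feeding such a vertex back in as the next hub yields a neighbourhood whose complement is too sparse to contain the required clique, so that neighbourhood does contain $C_8$, a contradiction. The value $m=2n+2$ for even $n$ is exactly what eliminates the borderline configuration realised by the $K_{n,n+1}$ construction.

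The step I expect to be the main obstacle is this structural classification together with the casework needed to rule out every surviving possibility: one must pin down the $K_{3,t}$-type families, the disjoint-small-clique families, and their perturbations, and then track precisely how the clique they force in $G$ interacts with the global constraint $\Delta(G)\le n-2$; the near-extremal configurations (those close to the lower-bound constructions) are where the argument is most delicate and where the parity of $m$ becomes decisive. I would treat the smallest orders, in particular $n=5,6$, by hand — there the relevant neighbourhoods in the tight case have fewer than $8$ vertices, so $C_8$ is excluded trivially — and use them to anchor the general argument.
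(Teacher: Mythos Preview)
This theorem is not proved in the present paper at all: it is quoted from \cite{Zh08b,ZhChZh08,ZhChCh09} and used as a black box. So there is no ``paper's own proof'' to compare your proposal against; the relevant question is simply whether your argument stands on its own.

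Your lower-bound constructions are fine. For odd $n$, taking $\overline G=K_{n,n}$ with a $P_3$ plus a perfect matching inside each part does give $\delta(\overline G)=n+1$, hence $\Delta(G)=n-2$, and the neighbourhood analysis you sketch is correct: with at most two (non-adjacent) apices over a $P_3\cup\text{matching}$, any cycle has at most $2+3+2=7$ vertices. For even $n$, $K_{n,n+1}$ with a perfect matching on the $n$-side and a $2$-factor of cycles of length at most $6$ on the $(n+1)$-side likewise has $\delta(\overline G)=n+2$ and $C_8$-free neighbourhoods (one apex over short cycles, or two apices over a matching). These witness $R(S_n,W_8)\ge 2n+1$ and $2n+2$ respectively.

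The upper bound, however, is not a proof but a programme. You write that you ``would prove'' a structural dichotomy for dense $C_8$-free graphs (essentially: either small, or dominated by at most three vertices, up to bounded perturbation), and then ``feed back'' a carefully chosen hub to reach a contradiction. Neither the dichotomy nor the feedback step is carried out, and you yourself flag the classification and its interaction with $\Delta(G)\le n-2$ as ``the main obstacle''. That is exactly where the entire difficulty lies: the cited result required three separate papers, and the near-extremal configurations you allude to (close to the $K_{n,n}$- and $K_{n,n+1}$-type lower-bound graphs) do not fall to a soft density argument. In particular, pancyclicity of $\overline G$ gives you nothing, as you note; and the assertion that a large clique in $G[N_{\overline G}(v)]$ forces some vertex whose $\overline G$-neighbourhood must contain $C_8$ is not substantiated --- one has to rule out that \emph{every} neighbourhood simultaneously sits in one of the exceptional $C_8$-free families, and your sketch gives no mechanism for this. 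As written, the upper-bound half is a plausible outline with the substantive work missing.
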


Baskoro et al.~\cite{BaSuNaMi02} therefore conjectured that 
$R(T_n,W_m) = 2n-1$ for all non-star tree graphs $T_n$ of order $n$ when $n\geq 4$ is even.
This conjecture was disproved by Chen, Zhang and Zhang~\cite{ChZhZh04b}
who showed that $R(T_n,W_6)=2n$ for certain non-star tree graphs $T_n$.
Zhang~\cite{Zh08a} further proved the following theorem which shows that 
the conjecture is false when $n$ is small, even for the path graph $P_n$; 
see also~\cite{BaSu05, ChZhZh05c, LiNi14, SaBr07}.

\begin{theorem}{\rm\cite{Zh08a}}
\label{thm:R(Pn,Wm)}
If $m$ is even and $n+2\leq m\leq 2n$, 
then $R(P_n,W_m) = m+n-2$.
\end{theorem}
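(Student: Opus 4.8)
The plan is to prove Theorem~\ref{thm:R(Pn,Wm)} by establishing the lower bound $R(P_n,W_m)\geq m+n-2$ via an explicit colouring and then the matching upper bound $R(P_n,W_m)\leq m+n-2$ by a direct argument on any red/blue colouring of $K_{m+n-2}$. For the lower bound, I would exhibit a graph $G$ on $m+n-3$ vertices with no red $P_n$ and no blue $W_m$; the natural candidate (generalising the Chv\'atal--Harary extremal construction) is to take the red graph to be a disjoint union of cliques each of order at most $n-1$, so that the longest red path has at most $n-1$ vertices, while arranging the number and sizes of these cliques so that the complement (the blue graph, a complete multipartite graph) has independence number at most $2$ or otherwise fails to contain $W_m$. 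Since $W_m=K_1+C_m$ and $C_m$ with $m$ even is bipartite, the blue graph contains $W_m$ only if it contains a vertex joined to a blue $C_m$; with red cliques of size $n-1$ one controls exactly how large an even blue cycle can be forced, and the count $m+n-3$ should be precisely the threshold. I would verify that with two red cliques of sizes $n-1$ and $m-2$ (note $m-2\geq n$ in this range, so care is needed — one likely needs roughly $\lceil (m-2)/(n-1)\rceil+1$ cliques), the blue graph has no $W_m$ because every blue cycle is confined in length by the part sizes.

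For the upper bound, I would take an arbitrary $2$-colouring of $E(K_N)$ with $N=m+n-2$ and assume the red graph contains no $P_n$; the goal is then to find a blue $W_m$. The first step is to invoke the Erd\H{o}s--Gallai theorem: a graph on $N$ vertices with no path on $n$ vertices has at most $\frac{(n-2)N}{2}$ edges, so the red graph is sparse and the blue graph is correspondingly dense, with minimum-degree-type consequences after discarding a few low-blue-degree vertices. Concretely, I would identify a large subset $U$ of vertices each having high blue degree (at least $m$, say) within $U$, using the density bound to guarantee $|U|$ is large enough; then pick any $v\in U$ and try to build a blue $C_m$ inside the blue neighbourhood $B(v)\cap U$, which together with $v$ gives the blue $W_m=K_1+C_m$. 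Finding the blue $C_m$ reduces to a statement about the red graph restricted to $B(v)\cap U$: since that red graph still has no $P_n$, its structure is limited, and I would show its complement is pancyclic enough to contain an $m$-cycle, perhaps again through Erd\H{o}s--Gallai or a Bondy-type Hamiltonicity/pancyclicity argument on the dense blue graph.

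The main obstacle I anticipate is the regime $n+2\leq m\leq 2n$ itself: here $m$ is comparable to or larger than $n$, so the blue graph, while dense, is not dense enough for off-the-shelf pancyclicity results to immediately yield a cycle as long as $m$, and the red no-$P_n$ condition must be exploited more cleverly than a mere edge count. In particular, when $m$ is close to $2n$ the blue neighbourhood of a single vertex may be too small to host a $C_m$, so one likely has to build the cycle using almost all $m+n-2$ vertices and interleave red-path-avoidance with careful cycle construction, tracking how the $n-1$ vertices ``outside'' a near-spanning blue cycle can be absorbed. I would handle this by a careful case analysis on the component structure of the red graph (each red component has fewer than $n$ vertices, in fact a red component with a long path is impossible), showing that the red graph is essentially a union of small cliques plus sparse remainder, and then directly construct the blue $C_m$ hub-and-rim by zig-zagging between red components. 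Matching the exact constant $m+n-2$ in both bounds — rather than an off-by-one — will require the extremal construction and the upper-bound argument to be tuned to precisely the same configuration, which is the delicate point.
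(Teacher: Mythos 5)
This statement is quoted by the paper from Zhang~\cite{Zh08a}; the paper contains no proof of it, so there is nothing internal to compare against. Judged on its own terms, your proposal is a plan rather than a proof, and both halves have genuine gaps.

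For the lower bound, you never actually produce the extremal colouring: you note that two red cliques of sizes $n-1$ and $m-2$ fail (correctly, since $m-2\geq n$ forces a red $P_n$) and guess that ``roughly $\lceil (m-2)/(n-1)\rceil+1$ cliques'' are needed, but the whole content of the lower bound is identifying the exact partition and verifying it. The construction that works is the red graph $K_{n-1}\cup 2K_{(m-2)/2}$ on $m+n-3$ vertices: each clique has at most $n-1$ vertices (here the hypothesis $m\leq 2n$ is used, since $(m-2)/2\leq n-1$), so there is no red $P_n$; the blue graph is the complete tripartite graph $K_{n-1,(m-2)/2,(m-2)/2}$, and any blue $W_m$ would need its rim $C_m$ to lie entirely in the two parts not containing the hub, where the longest even cycle has length $2\min$ of those two part sizes, which is at most $m-2<m$ in every case. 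Your sketch gestures at ``controlling cycle lengths by part sizes'' but does not isolate the decisive condition (the two smallest parts must each have at most $(m-2)/2$ vertices) nor check that the total $n-1+2\cdot\frac{m-2}{2}=m+n-3$ is attainable, so the bound is not established.

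For the upper bound, the argument is not carried out at all, and the strategy you propose runs into the obstacle you yourself flag without resolving it. On $N=m+n-2$ vertices, a hub for a blue $W_m$ must have blue degree at least $m$, i.e.\ red degree at most $n-4$, and its blue neighbourhood excludes only $n-3$ other vertices; an Erd\H{o}s--Gallai edge count on the red graph gives average red degree at most $n-2$, which does not by itself yield even one vertex of red degree at most $n-4$, let alone a vertex whose blue neighbourhood is dense enough for a pancyclicity theorem (Lemma~\ref{lem:pancyclic} needs minimum degree half the order, which the blue neighbourhood will not satisfy in general when $m$ is close to $2n$). Saying that one would then do ``a careful case analysis on the component structure of the red graph'' and ``zig-zag between red components'' names the difficulty rather than overcoming it; this is precisely where the real work of Zhang's proof lies, and as written your proposal does not contain it.
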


However, Chen, Zhang and Zhang~\cite{ChZhZh04b} conjectured that $R(T_n,W_m) = 2n-1$ for all tree graphs $T_n$ of order $n\geq m-1$ 
when $m$ is even and the maximum degree $\Delta(T_n)$ ``is not too large";
see also~\cite{ChZhZh05a,ChZhZh05b,ChZhZh06}.
Hafidh and Baskoro~\cite{HaBa21} refined this conjecture by specifying the bound $\Delta(T_n)\leq n-m+2$.
When $n$ is large compared to $m$, $\Delta(T_n)$ is not required to be small; 
indeed, the refined conjecture implies that, for each fixed even integer $m$, 
all but a vanishing proportion of the tree graphs $\{T_n \::\: n\geq m-1\}$ satisfy $R(T_n,W_m)=2n-1$.

For $m=8$, the bound is $\Delta(T_n)\leq n-6$.
There is exactly one tree graph $T_n$ of order $n$ with maximum degree $\Delta(T_n) = n-1$, 
namely the star graph $S_n$; see Theorem~\ref{thm:R(Sn,W8)}.
There is exactly one tree graph $T_n$ of order $n\geq m-1$ with maximum degree $\Delta(T_n) = n-2$:
the graph $S_n(1,1)$ obtained by subdividing an edge of $S_{n-1}$.
More generally, 
let $S_n(\ell,m)$ be the tree graph of order $n$ obtained by 
subdividing $m$ times each of $\ell$ chosen edges of $S_{n-\ell m}$;
see Figure~\ref{fig:Tree example}.

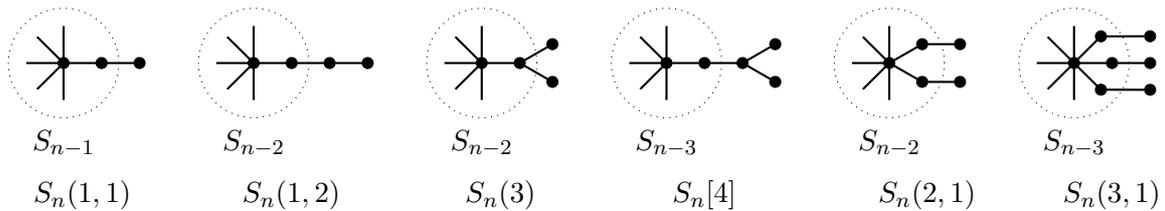
\begin{figure}[ht!]
  \centering
  \begin{tikzpicture}[scale=.5]
    \begin{scope}[shift={(0,0)}]
      \foreach \nn in {1,...,5}{\node[nullvertex] (\nn) at (45+\nn*45:1) {}; \draw[thick] (0,0) -- (\nn);}
      \node[cir] at (0,0)[label=below:$S_{n-1}$,scale=0.8] {};
      \node[vertex,fill=black,scale=.8] (6) at (0,0) {};
      \node[vertex,fill=black,scale=.8] (7) at (1,0) {};
      \node[vertex,fill=black,scale=.8] (8) at (2,0) {};
      \draw[thick] (6)--(7)--(8);
      \node () at (.5,-3.5) {$S_n(1,1)$};
    \end{scope}
    \begin{scope}[shift={(5,0)}]
      \foreach \nn in {1,...,5}{\node[nullvertex] (\nn) at (45+\nn*45:1) {}; \draw[thick] (0,0) -- (\nn);}
      \node[cir] at (0,0)[label=below:$S_{n-2}$,scale=0.8] {};
      \node[vertex,fill=black,scale=.8] (6) at (0,0) {};
      \node[vertex,fill=black,scale=.8] (7) at (1,0) {};
      \node[vertex,fill=black,scale=.8] (8) at (2,0) {};
      \node[vertex,fill=black,scale=.8] (9) at (3,0) {};
      \draw[thick] (6)--(7)--(8)--(9);
      \node () at (1,-3.5) {$S_n(1,2)$};
    \end{scope}
    \begin{scope}[shift={(11,0)}]
      \foreach \nn in {1,...,5}{\node[nullvertex] (\nn) at (45+\nn*45:1) {}; \draw[thick] (0,0) -- (\nn);}
      \node[cir] at (0,0)[label=below:$S_{n-2}$,scale=0.8] {};
      \node[vertex,fill=black,scale=.8] (6) at (0,0) {};
      \node[vertex,fill=black,scale=.8] (7) at (1,0) {};
      \node[vertex,fill=black,scale=.8] (8) at (1.86, .5) {};
      \node[vertex,fill=black,scale=.8] (9) at (1.86,-.5) {};
      \draw[thick] (7)--(9) (6)--(7)--(8);
      \node () at (.5,-3.5) {$S_n(3)$};
    \end{scope}
    \begin{scope}[shift={(15.86,0)}]
      \foreach \nn in {1,...,5}{\node[nullvertex] (\nn) at (45+\nn*45:1) {}; \draw[thick] (0,0) -- (\nn);}
      \node[cir] at (0,0)[label=below:$S_{n-3}$,scale=0.8] {};
      \node[vertex,fill=black,scale=.8]  (6) at (0   ,0  ) {};
      \node[vertex,fill=black,scale=.8]  (7) at (1   ,0  ) {};
      \node[vertex,fill=black,scale=.8]  (8) at (2   ,0  ) {};
      \node[vertex,fill=black,scale=.8]  (9) at (2.86, .5) {};
      \node[vertex,fill=black,scale=.8] (10) at (2.86,-.5) {};
      \draw[thick](8)--(9) (6)--(7)--(8)--(10);
      \node () at (1,-3.5) {$S_n[4]$};
    \end{scope}
    \begin{scope}[shift={(21.72,0)}]
      \foreach \nn in {1,...,5}{\node[nullvertex] (\nn) at (45+\nn*45:1) {}; \draw[thick] (0,0) -- (\nn);}
      \node[cir] at (0,0)[label=below:$S_{n-2}$,scale=0.8] {};
      \node[vertex,fill=black,scale=.8]  (6) at ( 0 ,0) {};
      \node[vertex,fill=black,scale=.8]  (7) at ( 30:1) {};
      \node[vertex,fill=black,scale=.8]  (8) at (-30:1) {};
      \node[vertex,fill=black,scale=.8]  (9) at (1.86, .5) {};
      \node[vertex,fill=black,scale=.8] (10) at (1.86,-.5) {};
      \draw[thick] (9)--(7)--(6)--(8)--(10);
      \node () at (1,-3.5) {$S_n(2,1)$};
    \end{scope}
    \begin{scope}[shift={(26.58,0)}]
      \foreach \nn in {1,...,5}{\node[nullvertex] (\nn) at (45+\nn*45:1) {}; \draw[thick] (0,0) -- (\nn);}
      \node[cir] at (0,0)[label=below:$S_{n-3}$,scale=0.8] {};
      \node[vertex,fill=black,scale=.8] (6) at (0,0) {};
      \foreach \nn in {7,8,9}{\node[vertex,fill=black,scale=.8] (\nn) at (\nn*45:1) {}; \draw[thick] (0,0) -- (\nn);}
      \node[vertex,fill=black,scale=.8] (10) at (2,-0.71) {};
      \node[vertex,fill=black,scale=.8] (11) at (2, 0   ) {};
      \node[vertex,fill=black,scale=.8] (12) at (2, 0.71) {};
      \draw[thick](7)--(10) (8)--(11) (9)--(12);
      \node () at (1,-3.5) {$S_n(3,1)$};
    \end{scope}
    \end{tikzpicture}
  \caption{Examples of $S_n(\ell,m)$, $S_n(\ell)$ and $S_n[\ell]$}\label{fig:Tree example}
\end{figure}

By Theorem~\ref{thm:R(Pn,Wm)}, $R(P_4,W_8) = 10$.
Hafidh and Baskoro~\cite{HaBa21} determined the Ramsey number $R\big(S_n(1,1),W_8\big)$ as follows.

\begin{theorem}{\rm\cite{HaBa21}}
\label{thm:R(Sn(1,1),W8)}
For $n\geq 5$, 
\[
  R\big(S_n(1,1),W_8\big)
  =
  \begin{cases}
    2n+1 & \text{if $n$ is odd}\;,\\
    2n   & \text{if $n$ is even}\,.
  \end{cases}
\]
\end{theorem}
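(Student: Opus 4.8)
The plan is to translate the containment ``$G \supseteq S_n(1,1)$'' into a purely structural statement, and then prove the lower bound by explicit colourings and the upper bound by a case analysis on the component structure of a hypothetical $S_n(1,1)$-free graph. I would begin with a containment criterion. Writing $S_n(1,1)$ as a centre $c$ of degree $n-2$ carrying $n-3$ pendant leaves and one path $c$--$u$--$v$, one checks that $G \supseteq S_n(1,1)$ if and only if there is an edge $uv$ together with a vertex $c \in N_G(u) \setminus \{v\}$ with $|N_G(c) \setminus \{u,v\}| \geq n-3$. Splitting into the cases $v \in N_G(c)$ and $v \notin N_G(c)$ and looking at $\deg_G(c)$, this unwinds to the lemma: $G$ is $S_n(1,1)$-free if and only if every connected component of $G$ is a star, or has at most $n-1$ vertices, or has maximum degree at most $n-3$. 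This is what makes both directions tractable --- it pins down the admissible graphs on the tree side, and it dictates the global shape of any $S_n(1,1)$-free graph.

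For the lower bounds I want colourings with no red $S_n(1,1)$ and no blue $W_8$ on $2n$ vertices when $n$ is odd and on $2n-1$ vertices when $n$ is even. In both cases I would take the red graph $G = K_{n-1} \cup H$, where $H$ is a $(n-3)$-regular graph on the remaining $n+1$ (respectively $n$) vertices, so that $\overline H$ is $3$-regular (respectively $2$-regular). By the criterion, $G$ is $S_n(1,1)$-free, as $K_{n-1}$ has $n-1$ vertices and $\Delta(H)=n-3$. For the blue side: a $W_8$-hub lying inside $H$ is impossible because its blue neighbourhood is the independent (in $\overline G$) set $V(K_{n-1})$ together with only two vertices of $H$, so every $8$-subset of it contains an independent set of size at least $6$ and hence spans no $C_8$; thus the only danger is a hub inside $K_{n-1}$, whose blue neighbourhood is exactly $V(H)$, inducing $\overline H$, so it suffices to choose $H$ with $\overline H$ containing no $C_8$. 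For odd $n$ I would take $\overline H$ to be a disjoint union of copies of $K_4$ and $K_{3,3}$ (possible since every even integer at least $4$ equals $4a+6b$), so that no component has order $\geq 8$; for even $n$ I would take $\overline H = C_n$ when $n \neq 8$ and $\overline H = C_4 \cup C_4$ when $n=8$. The parity of $n$ is precisely what governs whether the one extra vertex can be absorbed without creating a $C_8$, which is why the two cases give different answers.

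For the upper bounds I assume $G$ has $2n+1$ vertices ($n$ odd) or $2n$ vertices ($n$ even) and is $S_n(1,1)$-free, and I look for a $W_8$ in $\overline G$. If $G$ is connected, then by the criterion it is a star, so $\overline G$ contains a clique on all but one vertex, hence $K_9 \supseteq W_8$; or it has maximum degree at most $n-3$, so $\overline G$ has minimum degree at least $|V(G)|-1-(n-3) \geq n+2$, and choosing a vertex of maximum degree in $\overline G$, restricting to its blue neighbourhood, and applying a Dirac/Bondy-type pancyclicity argument (with the near-regular borderline cases checked by hand) yields a $C_8$. If $G$ is disconnected, let $C$ be a smallest component and $h \in C$; then $V(G) \setminus C$ has at least $\lceil |V(G)|/2 \rceil \geq n$ vertices, all blue-adjacent to $h$, and $\overline G$ restricted to $V(G) \setminus C$ contains the complete multipartite graph on the other components. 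If two of those components have at least $4$ vertices each, then $K_{4,4} \supseteq C_8$ and we are done; otherwise all but possibly one of the other components have at most $3$ vertices, and the vertex count forces either many small components (so $\overline G[V(G) \setminus C]$ is a dense complete multipartite graph on at least $8$ vertices, which contains $C_8$) or a single large component, which is again a star or of maximum degree at most $n-3$ and is handled as in the connected case.

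The bulk of the work, and the step I expect to be the main obstacle, is this upper bound case analysis. There are many borderline configurations --- roughly, one or two red copies of $K_{n-1}$ together with a little slack and a low-degree remainder --- in which $\overline G$ comes very close to being $W_8$-free, and one must show that on $2n+1$ vertices ($n$ odd) or $2n$ vertices ($n$ even) there is always just enough room left for a hub plus a blue $C_8$, whereas on one fewer vertex the colourings of the lower bound show there is not. Making this quantitative --- in particular carrying the exact cycle length $8$ through the argument, where the distinction between $C_8$ and $C_4 \cup C_4$ and the bound of $4$ on the number of pairwise non-adjacent vertices in an $8$-cycle interact with the parity of $n$ --- is the crux.
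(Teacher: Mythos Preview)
The paper does not prove this statement: Theorem~\ref{thm:R(Sn(1,1),W8)} is quoted from Hafidh and Baskoro~\cite{HaBa21} as background and no proof is given here, so there is nothing in the paper to compare your attempt against.

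On its own merits, your plan is sound in its broad outline. The containment criterion (a graph is $S_n(1,1)$-free if and only if each component is a star, has at most $n-1$ vertices, or has maximum degree at most $n-3$) is correct and is the right organising principle. Your lower-bound constructions are correct: the choice of $\overline H$ as a disjoint union of copies of $K_4$ and $K_{3,3}$ for odd $n$, and as $C_n$ or $C_4\cup C_4$ for even $n$, neatly avoids $C_8$, and the independence-number argument ruling out a hub inside $H$ is clean (for odd $n$ the bound is actually $\alpha\ge 5$ rather than $6$, but $\alpha(C_8)=4$ still suffices).

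The upper bound, as you yourself flag, is where the real work lies, and your sketch leaves genuine gaps. In the connected case with $\Delta(G)\le n-3$, restricting to the $\overline G$-neighbourhood of a maximum-degree vertex and invoking pancyclicity does not immediately yield a $C_8$: the minimum degree inside that neighbourhood can fall well below half its size, so Bondy's theorem does not apply directly, and the ``borderline cases checked by hand'' conceal a substantial argument. In the disconnected case with exactly two components $C$ and $D$, if $D$ is a star on about $n$ vertices then $\overline G[D]$ contains only $K_{|D|-1}$, which has no $C_8$ when $n\le 8$; one must then assemble the $C_8$ from both components, and this interacts delicately with the structure of $C$. These are exactly the configurations in which the parity of $n$ decides the answer, so they cannot be waved through. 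Your scaffold is reasonable, but turning it into a proof would require a considerably more detailed component-by-component accounting than you have sketched.
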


There are exactly $3$ tree graphs $T_n$ of order $n$ with maximum degree $n-3$,
namely $S_n(1,2)$, $S_n(3)$ and $S_n(2,1)$, 
where $S_n(\ell)$ is the tree graph of order $n$ obtained by adding an edge joining 
the centers of two star graphs $S_\ell$ and $S_{n-\ell}$;
see Figure~\ref{fig:Tree example}. 
By Theorem~\ref{thm:R(Pn,Wm)}, $R(P_5,W_8)=11$. 
Hafidh and Baskoro~\cite{HaBa21} determined the Ramsey numbers for the three other graphs as follows.

\begin{theorem}{\rm\cite{HaBa21}}\label{thm:R(Pn-Sn(1,2)-Sn(3)-Sn(2,1),W8)}
For $n\geq 6$, 
\begin{align*}
  R(S_n(1,2),W_8)
 &= \begin{cases}
      2n+1 & \mbox{if $n\equiv 3 \pmod{4}$}\,;\\
      2n   & \mbox{otherwise}
    \end{cases}\\
  R(S_n(3),W_8)
 &=\begin{cases}
    2n-1   & \text{if $n$ is odd and $n\geq 9$}\,;\\
    2n     & \text{otherwise}
  \end{cases}\\
 R(S_n(2,1),W_8)
&=\begin{cases}
    2n-1   & \text{if $n$ is odd}\,;\\
    2n     & \text{otherwise}\,.
  \end{cases}
\end{align*}
\end{theorem}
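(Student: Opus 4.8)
The plan is to prove the three identities in Theorem~\ref{thm:R(Pn-Sn(1,2)-Sn(3)-Sn(2,1),W8)} by establishing a lower bound and a matching upper bound in each case. For the lower bounds, I would exhibit explicit two-colourings (equivalently, graphs $G$ on $N-1$ vertices with neither $G\supseteq T_n$ nor $\overline{G}\supseteq W_8$). The Chv\'atal--Harary bound already gives $R(T_n,W_8)\geq 2n-1$; to push past it when $n$ is even (or, for $S_n(3)$, when $n$ is even or $n=7$), the standard trick is to take a disjoint union of two cliques of order $n-1$, or a near-regular graph whose complement is triangle-free or has no vertex of high enough degree to anchor the hub of $W_8$. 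Since $W_8$ has chromatic number $3$ and contains $K_3$, a graph $G$ on $2n-1$ vertices that is the union of two $K_{n-1}$'s misses $T_n$ (each component is too small) while $\overline{G}=K_{n-1,n}$ is bipartite, hence $W_8$-free; this yields $R\geq 2n$ whenever $T_n$ is not a subgraph of $K_{n-1}\cup K_{n-1}$... which is automatic. The parity refinements to $2n+1$ for $S_n(1,2)$ with $n\equiv 3\pmod 4$, and the ``$n$ odd'' improvements down to $2n-1$, come from a more delicate construction tuned to the divisibility obstructions that also appear in Theorems~\ref{thm:R(Sn,W8)} and~\ref{thm:R(Sn(1,1),W8)}; I would mirror those constructions (a disjoint union of $K_{\Delta}$'s or a blow-up of a small graph) and check by hand that $T_n$ does not embed.

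For the upper bounds, the core of the argument is this: let $G$ be a graph on $N$ vertices (with $N$ the claimed Ramsey value) and suppose $\overline{G}$ contains no $W_8$; I must find a copy of $T_n$ in $G$. Equivalently, if $G$ has no $T_n$, I show $\overline{G}\supseteq W_8$, i.e.\ $\overline{G}$ has a vertex $v$ of degree at least $8$ in $\overline{G}$ whose neighbourhood (in $\overline{G}$) contains a cycle $C_8$. The key structural input is a classical embedding lemma for trees: a graph with minimum degree at least $n-1$ contains every tree of order $n$; more usefully here, since our trees have very large maximum degree, a tree $S_n(\ell,m)$ (or $S_n(\ell)$) embeds in $G$ as soon as $G$ has a vertex of degree close to $n$ together with a little extra room. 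So the first move is: if $G$ has no $T_n$, then every vertex of $G$ has degree at most roughly $n+c$ for a small constant $c$ depending on the tree, hence every vertex of $\overline{G}$ has degree at least $N-1-(n+c)$, which for $N=2n$ or $2n+1$ is about $n$. Then $\overline{G}$ is dense, and I invoke the fact that a graph on $\geq 9$ vertices with minimum degree $\geq$ (half the order)$+$small contains $W_8$ — essentially a Bondy-type pancyclicity/Hamiltonicity statement applied to the neighbourhood of a chosen hub.

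More concretely, I would split into cases according to $\Delta(G)$. Case~1: $G$ has a vertex $v$ with $\deg_G(v)\geq n+c_{T}$ for the relevant threshold $c_T$ (for $P_5$, $S_n(1,2)$, $S_n(3)$, $S_n(2,1)$ these thresholds differ slightly, reflecting how much ``padding'' each tree needs beyond its star core). Then I show directly that $G\supseteq T_n$: take $v$ as the high-degree centre, attach the required subdivided paths or the second star using the surplus neighbours and second-neighbourhood of $v$, using a greedy/alternating-path argument and the fact that $G$ has no small separator of the wrong size (or else do a short ad hoc analysis). Case~2: $\Delta(G)\leq n+c_T-1$. Then $\delta(\overline{G})\geq N-1-(n+c_T-1) = N-n-c_T$, which with $N\in\{2n-1,2n,2n+1\}$ is $\geq n-c_T'$, and since $n$ is large this is more than half of $N$. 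Pick any vertex $u$; its non-neighbourhood in $G$, i.e.\ its $\overline{G}$-neighbourhood $A$, has $|A|\geq N-n-c_T$, and the induced graph $\overline{G}[A]$ still has large minimum degree (each vertex of $A$ has at most $\Delta(G)\le n+c_T-1$ non-edges of $\overline G$, hence at least $|A|-1-(\text{deficiency})$ $\overline G$-neighbours inside $A$). If $\overline{G}[A]$ has $\geq 8$ vertices and minimum degree $\geq |A|/2$, it is Hamiltonian by Dirac and in fact contains a $C_8$ (cycles of all lengths up to $|A|$, by Bondy's pancyclicity theorem, unless $\overline{G}[A]=K_{|A|/2,|A|/2}$, an exceptional case I would rule out separately using the surplus from $c_T$). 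Then $u$ together with this $C_8$ gives $W_8\subseteq\overline{G}$, contradiction.

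The main obstacle is the boundary bookkeeping in Case~1 and the exceptional bipartite configurations in Case~2 — precisely the places where the parity of $n$ and the residue of $n$ modulo $4$ enter. Matching the exact constants ($2n-1$ vs.\ $2n$ vs.\ $2n+1$) requires that, when $N$ is one less than the generic bound, I can still force either $T_n\subseteq G$ or $W_8\subseteq\overline{G}$, and the obstruction to that is exactly an extremal graph of the type used in the lower-bound construction; so the hard part is a careful stability analysis showing that a graph on $N-1$ vertices avoiding both $T_n$ and $\overline{W_8}$ must be (close to) that extremal example, and that any deviation lets one of the two embeddings through. For $S_n(3)$ I also need the genuine extra hypothesis $n\geq 9$ (the small cases $n\in\{6,7,8\}$ behaving differently), which I would verify by a direct, possibly computer-assisted, check. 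I expect $S_n(1,2)$ to be the most delicate because of the $\bmod 4$ behaviour, so I would treat it last, reusing the machinery developed for $S_n(3)$ and $S_n(2,1)$.
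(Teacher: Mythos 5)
First, a point of comparison: the paper does not prove this theorem at all --- it is quoted from Hafidh and Baskoro~\cite{HaBa21} and used as an input --- so your proposal can only be judged against the style of argument used there and in Sections~\ref{sec:proofofn-4ABCDE}--\ref{sec:n-5proof} of this paper for the analogous trees. Judged that way, your outline has a genuine gap at its central step. You claim that $T_n\nsubseteq G$ forces $\Delta(G)\leq n+c$ for a small constant $c$, so that the complement is dense and a Dirac/Bondy argument finishes. This implication is false: the star $K_{1,N-1}$ has a vertex of degree $N-1$ yet contains none of $S_n(1,2)$, $S_n(3)$, $S_n(2,1)$, since each of these trees requires an edge outside a single star (a path of length $3$ from the centre, or a second vertex of degree $\geq 2$). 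So your Case~2 cannot be reached merely by excluding Case~1, and Case~1 --- embedding the tree from a high-degree vertex by a ``greedy/alternating-path argument'' --- is exactly where the whole difficulty lives and cannot be waved through. The proofs that actually work (in \cite{HaBa21} and throughout this paper) do not argue from degrees at all: they first locate a copy of a slightly simpler tree ($S_{n-1}$, $S_n(1,1)$, $S_n(3)$, \ldots) in $G$ using previously established Ramsey numbers, then record precisely which adjacencies are forbidden by $T_n\nsubseteq G$, and feed the resulting sparse bipartite configurations into statements like Lemma~\ref{lem:1-1relation} and Corollaries~\ref{cor:bipartite4-6}--\ref{cor:bipartite4-8} to manufacture a $C_8$ plus hub in $\overline{G}$.

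There are also two quantitative slips that would block your write-up even where the strategy is sound. The union of two copies of $K_{n-1}$ has $2n-2$ vertices and complement $K_{n-1,n-1}$, so it certifies only the Chv\'atal--Harary bound $R\geq 2n-1$, not $R\geq 2n$; the bound $2n$ needs a graph of order $2n-1$, typically $K_{n-1}\cup H$ with $H$ of order $n$ avoiding the tree and with $\Delta(\overline H)\leq 3$, and the residue conditions modulo $2$ and $4$ record exactly when such an $H$ exists. In your Case~2, $\delta(\overline G)\geq N-1-\Delta(G)$ falls strictly below $N/2$ once $\Delta(G)$ exceeds $n-1$, so Lemma~\ref{lem:pancyclic} does not apply to $\overline G$, and your estimate for the minimum degree of $\overline G[A]$ (subtracting the \emph{total} $G$-degree from $|A|-1$) can be negative. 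The working arguments apply the pancyclicity lemma only to carefully chosen sets $W$ in which the non-adjacencies are forced by $T_n\nsubseteq G$, not to a generic neighbourhood.
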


The purpose of the present paper is to determine the Ramsey numbers $R(T_n,W_8)$ 
for all tree graphs $T_n$ of order $n\geq 6$ with maximal degree $\Delta(T_n) \geq n-5$; 
see Theorems~\ref{thm:ABCDE}, \ref{thm:R(Sn(4)-Sn[4]-Sn(1,3)-TA-TB-TC-Sn(3,1),W8)} and~\ref{thm:R(Sn(1,4)-TDEFGHJKLM)}
in Sections~\ref{sec:n-4} and~\ref{sec:n-5}.
These Ramsey numbers show that the proportion of tree graphs $T_n$ that satisfy the equality $R(T_n,W_8) = 2n-1$ 
quickly grows as the maximal degree $\Delta(T_n)$ decreases.
When $\Delta(T_n) \geq n-2$, no tree graph $T_n$ satisfies the equality.
In contrast when $\Delta(T_n) = n-3$, 
roughly one third of all tree graphs $T_n$ satisfy the equality; 
see Theorem~\ref{thm:R(Pn-Sn(1,2)-Sn(3)-Sn(2,1),W8)}.
When $\Delta(T_n) = n-4$, 
more than 85\%\ of all tree graphs $T_n$ satisfy the equality; 
see Theorems~\ref{thm:ABCDE} and~\ref{thm:R(Sn(4)-Sn[4]-Sn(1,3)-TA-TB-TC-Sn(3,1),W8)}.
And when $\Delta(T_n) = n-5$, roughly 94.7\%\ of all tree graphs $T_n$ satisfy the equality; 
see Theorem~\ref{thm:R(Sn(1,4)-TDEFGHJKLM)}.
These results thereby lend strong support for the conjecture described above by 
Chen, Zhang and Zhang~\cite{ChZhZh04b} and Hafidh and Baskoro~\cite{HaBa21}.

The contents of the present paper are as follows.
Sections~\ref{sec:n-4} and~\ref{sec:n-5} present the main results, 
namely Theorems~\ref{thm:ABCDE}, 
\ref{thm:R(Sn(4)-Sn[4]-Sn(1,3)-TA-TB-TC-Sn(3,1),W8)} 
and~\ref{thm:R(Sn(1,4)-TDEFGHJKLM)} mentioned above.
Section~\ref{sec:auxiliary_results} provides useful auxiliary results
that are used in the proofs of the main results. 
These proofs are presented in 
Sections~\ref{sec:proofofn-4ABCDE}, \ref{sec:proofofn-4} and~\ref{sec:n-5proof}, 
respectively.

\section{The Ramsey numbers $R(T_n,W_8)$ for $\Delta(T_n)=n-4$}
\label{sec:n-4}

This section presents the Ramsey numbers $R(T_n,W_8)$ for all tree graphs $T_n$ of order $n\geq 6$ with $\Delta(T_n) = n-4$. 
For $n=6$, there is just one such graph, namely the path graph $T_6 = P_6$.
Theorem~\ref{thm:R(Pn,Wm)} provides the Ramsey number $R(P_6,W_8) = 12$.
For $n=7$, there are five tree graphs with $\Delta(T_n) = n-4$, 
namely the graphs $A$, $B$, $C$, $D$ and $E$ shown in Figure~\ref{fig:Tree graph of order 7}. 

\begin{figure}[ht!]
  \centering
  \begin{tikzpicture}[scale=.55]
    \begin{scope}[shift={(0,0)}]
      \foreach \nn/\x/\y in {1/0/1, 2/1/1, 3/2/1/, 4/3/1, 5/4/1, 6/5/1, 7/1/0}{%
        \node [vertex,fill=black,scale=.8] (\nn) at (\x,\y) {};}
      \draw (1) -- (2) -- (3) -- (4) -- (5) -- (6) (7) -- (2);
      \node () at (2.5,-1.25) {$A$};
    \end{scope}
    \begin{scope}[shift={(7,0)}]
      \foreach \nn/\x/\y in {1/0/1, 2/1/1, 3/2/1/, 4/3/1, 5/4/1, 6/5/1, 7/2/0}{%
        \node [vertex,fill=black,scale=.8] (\nn) at (\x,\y) {};}
      \draw (1) -- (2) -- (3) -- (4) -- (5) -- (6) (7) -- (3);
      \node () at (2.5,-1.25) {$B$};
    \end{scope}
    \begin{scope}[shift={(14,0)}]
      \foreach \nn/\x/\y in {1/0/1, 2/1/1, 3/2/1/, 4/3/1, 5/4/1, 6/1/0, 7/2/0}{%
        \node [vertex,fill=black,scale=.8] (\nn) at (\x,\y) {};}
      \draw (1) -- (2) -- (3) -- (4) -- (5) (6) -- (2) (7) -- (3);
      \node () at (2,-1.25) {$C$};
    \end{scope}
    \begin{scope}[shift={(20,0)}]
      \foreach \nn/\x/\y in {1/0/1, 2/1/1, 3/2/1/, 4/3/1, 5/4/1, 6/2/0.25, 7/2/-0.5}{%
        \node [vertex,fill=black,scale=.8] (\nn) at (\x,\y) {};}
      \draw (1) -- (2) -- (3) -- (4) -- (5) (3) -- (6) -- (7);
      \node () at (2,-1.25) {$D$};
    \end{scope}
    \begin{scope}[shift={(26,0)}]
      \foreach \nn/\x/\y in {1/0/1, 2/1/1, 3/2/1/, 4/3/1, 5/4/1, 6/1/0, 7/3/0}{%
        \node [vertex,fill=black,scale=.8] (\nn) at (\x,\y) {};}
      \draw (1) -- (2) -- (3) -- (4) -- (5) (6) -- (2) (7) -- (4);
      \node () at (2,-1.25) {$E$};
    \end{scope}
  \end{tikzpicture}
  \caption{Tree graphs of order $7$ with $\Delta(T_n)=n-4$.}\label{fig:Tree graph of order 7}
\end{figure}
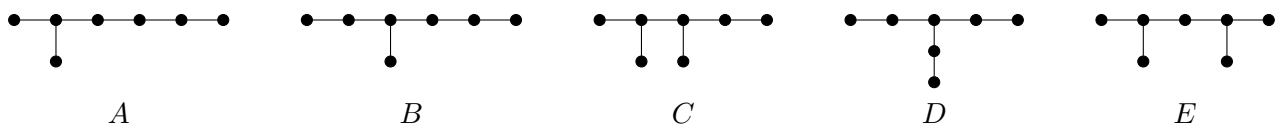

The Ramsey numbers $R(T_n,W_8)$ for these tree graphs are determined as follows.
\begin{theorem}
\label{thm:ABCDE}
$R(T,W_8) = 13$ for each $T\in\{A,B,C\}$, 
$R(D,W_8)=14$ and 
$R(E,W_8)=15$.
\end{theorem}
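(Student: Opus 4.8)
The plan is to establish each Ramsey number via matching lower and upper bounds. For the lower bounds, I would exhibit explicit $2$-colourings (equivalently, graphs on $R-1$ vertices containing no copy of the tree $T$ in red and no $W_8$ in the complement). Since each of $A,B,C,D,E$ has order $7$, the Chv\'atal--Harary bound gives $R(T,W_8)\geq 2\cdot 7-1 = 13$, which already matches the claimed value for $T\in\{A,B,C\}$; so for those three the lower bound is free. For $D$ and $E$ I would need sharper constructions: for $D$, a graph on $13$ vertices whose red part is $T$-free and whose complement is $W_8$-free (e.g.\ built from disjoint small complete graphs or a blow-up of a small graph, chosen so that the complement has independence/cycle structure preventing $W_8$, i.e.\ preventing an $8$-cycle together with a dominating vertex); for $E$, a similar but larger construction on $14$ vertices. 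A natural candidate is a disjoint union of cliques $K_6 \cup K_6$ (on $12$ vertices) whose complement is $K_{6,6}$ — bipartite, hence containing no odd structures but plenty of $C_8$; this suggests instead using something like two cliques of sizes that force the red side to contain $D$ or $E$ only when $n$ is large enough, so I would use unions like $K_{a}\cup K_{a}$ or $K_a \cup K_b$ with the red graph being a union of two cliques whose orders are just below what is needed to embed $D$ (which needs a path structure of length $5$ plus branching), tuning $a,b$ so the complement $K_{a,b}$ (or a perturbation of it) avoids $W_8$.

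For the upper bounds I would argue by contradiction: suppose $G$ is a graph on $N$ vertices (with $N=13,14,15$ respectively) whose complement $\overline{G}$ contains no $W_8$; I must then show $G$ contains the relevant tree. The key structural input is that $\overline G$ being $W_8$-free constrains $G$ strongly — in particular, by a standard argument, if $G$ has a vertex of large degree or $G$ is dense, one extracts an $8$-cycle in $\overline G$ among the non-neighbours of a vertex, yielding $W_8$. So either $G$ has minimum degree close to $N-8$ forcing many edges, or $\overline G$ has small components / bounded structure. I would combine this with the fact that any connected graph with enough vertices and sufficient minimum degree contains every tree of order $7$ (a greedy embedding: a tree of order $7$ embeds in any connected graph on $\geq 7$ vertices with minimum degree $\geq 6$, and more delicately into graphs that are merely dense), treating $A,B,C$ (which are "path-like" with one pendant, hence easiest to embed) uniformly, and handling $D$ and $E$ with their specific branch structure separately since they are the extremal cases requiring the larger $N$. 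The auxiliary results promised in Section~\ref{sec:auxiliary_results} presumably give exactly the needed embedding lemmas for trees of small order into graphs with $W_8$-free complement, and I would invoke those.

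The main obstacle will be the upper bound for $D$ at $N=14$ and for $E$ at $N=15$: one must rule out every graph $G$ on those many vertices that is simultaneously $D$-free (resp.\ $E$-free) in red and $W_8$-free in blue. Because $D$ and $E$ are small, "$G$ is $D$-free" is a restrictive condition that itself forces $G$ to be rather sparse or structured (e.g.\ no vertex of degree $\geq 3$ with the right attached paths), and one must show this sparsity is incompatible with $\overline G$ lacking an $8$-wheel on $14$ or $15$ vertices. Concretely, I expect to do a case analysis on $\Delta(G)$: if $\Delta(G)$ is large, find $D$ (or $E$) directly by greedy embedding using the high-degree vertex as the branch vertex and extending paths into the rest of the (necessarily large) graph; if $\Delta(G)$ is small, then $G$ is sparse, so $\overline G$ is dense with large minimum degree, and I locate a vertex $v$ together with an $8$-cycle in $\overline G[N(v)_{\overline G}]$ — this requires $\overline G$ restricted to a neighbourhood to be Hamiltonian-ish on $8$ vertices, for which I would cite a Chv\'atal--Erd\H{o}s or Ore-type condition, checking the degree arithmetic works out precisely at $N=14,15$ but fails at $N=13$ (explaining the jump in the Ramsey values). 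Matching these two regimes at the threshold degree is the delicate point, and verifying that the extremal colourings for the lower bounds are essentially the only obstructions is what pins down the exact values $14$ and $15$.
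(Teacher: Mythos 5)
There is a genuine gap on both sides of the argument. For the lower bounds, you correctly observe that the Chv\'atal--Harary bound already gives $R(T,W_8)\geq 13$ for the trees of order $7$, but for $D$ and $E$ you never produce the required constructions; you only speculate about unions of cliques, and your discussion of $K_6\cup K_6$ is confused: the presence of $C_8$ in the bipartite complement $K_{6,6}$ is harmless, since a $W_8$ needs a hub adjacent to all eight rim vertices and no vertex of $K_{a,b}$ can dominate a $C_8$ (which must use four vertices from each side). The actual extremal graphs are not ``tuned'' unions of two cliques: the paper uses $K_6\cup H$ for $D$, where $H$ is a specific $7$-vertex graph (a $5$-cycle with two additional vertices of degree $3$) that is $D$-free but dense enough that $\overline{H}$ contributes no usable structure, and $K_6\cup K_{4,4}$ for $E$, exploiting the fact that $E$ requires a bipartition class of size $5$ and hence does not embed in $K_{4,4}$. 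Without exhibiting and verifying such graphs, the values $14$ and $15$ are not established from below.

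For the upper bounds, the central idea of the paper's proof is absent from your plan. The paper does not run a dichotomy on $\Delta(G)$ with Ore/Chv\'atal--Erd\H{o}s conditions; instead it \emph{bootstraps}: since $R(S_7(2,1),W_8)=13$ and $R(S_7(3),W_8)=14$ are already known (Theorem~\ref{thm:R(Pn-Sn(1,2)-Sn(3)-Sn(2,1),W8)}), a graph $G$ of the right order with $W_8\nsubseteq\overline{G}$ must already contain a closely related tree $T'$ of order $7$ (namely $S_7(2,1)$ for $A,B,C$; the tree $B$ itself for $D$; and $S_7(3)$ for $E$), and the proof then shows that $T'$ can be extended to the target tree unless $\overline{G}$ contains $W_8$, via Bondy's pancyclicity lemma applied to carefully chosen $8$-vertex sets and the bipartite-complement lemmas of Section~\ref{sec:auxiliary_results}. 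Your proposed embedding lemma (``a tree of order $7$ embeds in any connected graph on $\geq 7$ vertices with minimum degree $\geq 6$'') is vacuous here --- such a graph is $K_7$ --- and the genuinely hard regime, where $G$ is $T$-free but has no vertex of degree $6$ restricted to a useful set, is exactly where the paper's lengthy case analysis lives; your sketch does not indicate how to close it. As written, the proposal identifies the shape of the problem but supplies neither the extremal constructions nor the extension argument that pin down the three different values $13$, $14$, $15$.
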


For $n\geq 8$, there are $7$ tree graphs $T_n$ of order $n$ with $\Delta(T_n) = n-4$,
namely the graphs $S_n(4)$, $S_n[4]$, $S_n(1,3)$, $S_n(3,1)$, $T_A(n)$, $T_B(n)$ and $T_C(n)$ shown in Figures~\ref{fig:Tree example} and~\ref{fig:Tree graph Delta=n-4},
where $S_n[\ell]$ is the tree graph of order $n$ obtained by adding an edge joining the center of $S_{n-\ell}$ to a degree-one vertex of $S_\ell$; 
see Figure~\ref{fig:Tree example}.

\begin{figure}[ht!]
  \centering
  \begin{tikzpicture}[scale=.5]
    \begin{scope}[shift={(0,0)}]
      \foreach \nn in {1,...,5}{\node[nullvertex] (\nn) at (45+\nn*45:1) {}; \draw[thick] (0,0) -- (\nn);}
      \node[cir] at (0,0)[label=below:$S_{n-4}$,scale=0.8] {};
        \node[vertex,fill=black,scale=.8] (6) at (0,0) {};
        \node[vertex,fill=black,scale=.8] (7) at (2,0) {};
        \node[vertex,fill=black,scale=.8] (8) at (3,1) {};
        \node[vertex,fill=black,scale=.8] (9) at (3,-1) {};
        \node[vertex,fill=black,scale=.8] (10) at (4,1) {};
        \draw[thick](7)--(9)
                    (6)--(7)--(8)--(10);
      \node () at (1,-3.5) {$T_A(n)$};
    \end{scope}
    \begin{scope}[shift={(8,0)}]
      \foreach \nn in {1,...,5}{\node[nullvertex] (\nn) at (45+\nn*45:1) {}; \draw[thick] (0,0) -- (\nn);}
      \node[cir] at (0,0)[label=below:$S_{n-5}$,scale=0.8] {};
        \node[vertex,fill=black,scale=.8] (6) at (0,0) {};
        \node[vertex,fill=black,scale=.8] (7) at (2,1) {};
        \node[vertex,fill=black,scale=.8] (8) at (2,-1) {};
        \node[vertex,fill=black,scale=.8] (9) at (3,1) {};
        \node[vertex,fill=black,scale=.8] (10) at (3,-1) {};
        \node[vertex,fill=black,scale=.8] (11) at (4,1) {};
        \draw[thick](6)--(8)--(10)
                    (6)--(7)--(9)--(11);
      \node () at (1,-3.5) {$T_B(n)$};
    \end{scope}
    \begin{scope}[shift={(16,0)}]
      \foreach \nn in {1,...,5}{\node[nullvertex] (\nn) at (45+\nn*45:1) {}; \draw[thick] (0,0) -- (\nn);}
      \node[cir] at (0,0)[label=below:$S_{n-5}$,scale=0.8] {};
        \node[vertex,fill=black,scale=.8] (6) at (0,0) {};
        \node[vertex,fill=black,scale=.8] (7) at (2,1) {};
        \node[vertex,fill=black,scale=.8] (8) at (2,-1) {};
        \node[vertex,fill=black,scale=.8] (9) at (3,1) {};
        \node[vertex,fill=black,scale=.8] (10) at (3,0) {};
        \node[vertex,fill=black,scale=.8] (11) at (3,-2) {};
        \draw[thick](6)--(7)--(9)
                    (6)--(8)--(10)
                    (8)--(11);
      \node () at (1,-3.5) {$T_C(n)$};
    \end{scope}
    \end{tikzpicture}
  \caption{Three tree graphs with $\Delta(T_n)=n-4$.}\label{fig:Tree graph Delta=n-4}
\end{figure}
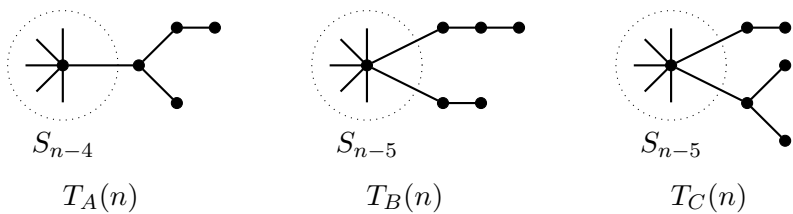

The Ramsey numbers $R(T_n,W_8)$ for these seven tree graphs are determined as follows.
\begin{theorem}
\label{thm:R(Sn(4)-Sn[4]-Sn(1,3)-TA-TB-TC-Sn(3,1),W8)}
If $n\geq 8$, then 
\begin{align*}
  R(S_n(4),W_8)
  &=\begin{cases}
    2n-1 & \text{if $n\geq 9$}\,;\\
    16   & \text{if $n=8$}
  \end{cases}\\
  R(T_n,W_8)
  &=\begin{cases}
    2n-1 & \text{if $n\not\equiv 0 \pmod{4}$}\,;\\
    2n   & \text{otherwise}\\
  \end{cases}\\
  R(T_n',W_8)
  &=2n-1\,,
\end{align*}
for each $T_n\in \{S_n[4],S_n(1,3),T_A(n),T_B(n)\}$ and 
$T_n'\in \{T_C(n),S_n(3,1)\}$.
\end{theorem}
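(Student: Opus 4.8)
The plan is to establish each value in two halves: a lower bound coming from an explicit $2$‑colouring (equivalently, a graph $G$ on the appropriate number of vertices with no $T_n$ in $G$ and no $W_8$ in $\overline{G}$), and a matching upper bound showing every graph on $N$ vertices contains $T_n$ or has $\overline{G}\supseteq W_8$. For the lower bounds I would reuse the standard Chv\'atal--Harary construction: two disjoint cliques of orders $n-1$ and $n-1$ show $R(T_n,W_8)\geq 2n-1$ (the complement is bipartite, hence $W_8$-free since $W_8$ has an odd cycle through its hub, and neither clique is large enough to host a connected graph on $n$ vertices); for the cases where the answer is $2n$ or larger (e.g.\ $T_n\in\{S_n[4],S_n(1,3),T_A,T_B\}$ with $n\equiv 0\pmod 4$, and $S_8(4)$), I would take a slightly larger or perturbed construction on $2n-1$ vertices whose complement is still $W_8$-free but which avoids the relevant tree; typically this is a near‑$(n-1,n-1)$ split with a small adjustment exploiting that these particular trees have no suitable placement of their degree‑$(n-4)$ vertex. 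I expect these constructions to be short and to parallel those in Theorems~\ref{thm:R(Sn(1,1),W8)} and~\ref{thm:R(Pn-Sn(1,2)-Sn(3)-Sn(2,1),W8)}.

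The substance is the upper bound. Fix $N=2n-1$ (or $N=2n$, $16$ in the exceptional cases) and a graph $G$ on $N$ vertices with $\overline{G}$ containing no $W_8$; the goal is to embed $T_n$ in $G$. The first structural step is to invoke the auxiliary results of Section~\ref{sec:auxiliary_results}: a $W_8$-free complement forces $G$ to be ``dense enough'' in a controlled way --- in particular one extracts either a large clique / large minimum-degree subgraph, or a vertex of small degree in $\overline{G}$, i.e.\ large degree in $G$. The standard engine for embedding a tree $T_n$ of order $n$ is the classical fact that any graph with minimum degree at least $n-1$ contains every tree on $n$ vertices; so the core task reduces to showing that if $G$ on $2n-1$ vertices fails to contain $T_n$, then $\overline{G}$ has so much structure concentrated on few vertices that one can locate a wheel $W_8$. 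Concretely I would argue by contradiction: assume no $T_n\subseteq G$ and no $W_8\subseteq\overline G$, bound the number of ``low $G$-degree'' vertices, and then either greedily embed $T_n$ by first placing its unique high-degree vertex $v$ (degree $n-4$, so $v$ needs $n-4$ private neighbours and the remaining at most three pendant/subdivided branches need a bounded number of further vertices) at a high-$G$-degree vertex and extending along the short tails, or, when that fails, harvest from the obstruction a set of $8$ vertices inducing a cycle $C_8$ in $\overline G$ together with a common $\overline G$-neighbour. Because $\Delta(T_n)=n-4$ is large, the trees here are ``almost stars'' with only $O(1)$ extra edges, which makes the greedy embedding robust and lets one push the threshold down to $2n-1$.

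The case analysis then splits along the structure of $T_n$: $S_n(4)$ (a star with one triply‑subdivided‑into‑a‑star edge, i.e.\ a $K_{1,3}$ attached), $S_n[4]$, $S_n(1,3)$, $T_A$, $T_B$, $T_C$ and $S_n(3,1)$ differ only in how the four ``extra'' vertices hang off the centre, so the embedding arguments are near-identical up to which short paths/claws must be routed. For $T_C(n)$ and $S_n(3,1)$ the extra vertices attach in a way (two length‑one branches plus a length‑two branch for $S_n(3,1)$; a specific $3$-vertex configuration for $T_C$) that is flexible enough that the bound stays at exactly $2n-1$ for all $n\geq 8$; for the other four trees the $n\equiv 0\pmod 4$ parity obstruction in $\overline G$ (related to $W_8$ having an $8$-cycle and the complement of a balanced construction being close to $K_{n-1,n-1}$, whose structure interacts with divisibility by $4$) forces the jump to $2n$, exactly as in the $S_n(1,2)$ case of Theorem~\ref{thm:R(Pn-Sn(1,2)-Sn(3)-Sn(2,1),W8)}. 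The main obstacle will be the upper-bound case analysis for the few remaining small $n$ (notably $n=8$, where $R(S_8(4),W_8)=16=2n$ rather than $2n-1$): there the asymptotic greedy argument has no slack, so one must treat $n=8$ essentially by hand, carefully enumerating how a $W_8$-free graph on $15$ vertices could avoid $S_8(4)$ and exhibiting that this is impossible only at $16$ vertices. I would isolate that small-order check as a separate lemma and spend most of the write-up there, since the general $n\geq 9$ argument is comparatively clean.
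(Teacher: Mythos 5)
Your plan has two genuine gaps, one on each side of the argument. On the lower bounds, the exceptional values ($2n$ when $n\equiv 0\pmod 4$ for $S_n[4]$, $S_n(1,3)$, $T_A(n)$, $T_B(n)$, and $16$ for $S_8(4)$) are \emph{not} obtained by a ``near-$(n-1,n-1)$ split with a small adjustment''. The extremal graphs are $G=K_{n-1}\cup K_{4,\ldots,4}$, where the second component is the complete multipartite graph with $n/4$ parts of size $4$ (hence the divisibility condition), and $G=K_7\cup H_8$ for $S_8(4)$, where $H_8$ is a specific $8$-vertex graph. The point is that $K_{4,\ldots,4}$ is $(n-4)$-regular on $n$ vertices --- so it certainly has vertices of degree $n-4$ --- yet still fails to contain these particular trees; this is exactly the exceptional case isolated in Lemma~\ref{lm:R(Sn[4],W8)}. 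Your heuristic that the obstruction comes from ``no suitable placement of the degree-$(n-4)$ vertex'' points in the wrong direction, and without the correct construction the $2n$ lower bounds cannot be established at all.

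On the upper bounds, the missing idea is the bootstrapping induction that organises the whole of Section~\ref{sec:proofofn-4}: one first invokes the already-known Ramsey numbers of ``neighbouring'' trees (Theorems~\ref{thm:R(Sn,W8)} and~\ref{thm:R(Pn-Sn(1,2)-Sn(3)-Sn(2,1),W8)}, then the earlier theorems of this section) to guarantee that $G$ contains a copy of $S_n(3)$, $S_n(4)$, $S_n[4]$, $S_{n-1}$, or $S_{n-1}(2,1)$, and only then uses $T_n\nsubseteq G$ to force a web of non-adjacencies around that embedded subtree, from which Lemmas~\ref{lem:pancyclic} and~\ref{lem:1-1relation} and Corollaries~\ref{cor:bipartite4-6} and~\ref{cor:bipartite4-8} extract a $C_8$ in $\overline{G}$ with an available hub. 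Your proposed direct greedy embedding from a single high-$G$-degree vertex does not obviously close: the $W_8$-free hypothesis only yields $\Delta(G[W])\geq |W|/2$ on well-chosen $8$- to $n$-element sets $W$, which is far short of the minimum degree $n-1$ needed to apply Lemma~\ref{lem:tree}, and the trees here need $n-4$ neighbours of the centre \emph{plus} specific short branches whose endpoints must avoid the centre's neighbourhood. The case distinction you attribute to a parity interaction between $C_8$ and $K_{n-1,n-1}$ is likewise not the mechanism; it is purely the existence or non-existence of the $K_{4,\ldots,4}$ component. Finally, $R(S_8(4),W_8)=16=2n$ is not reached by the general argument ``losing slack'' at $n=8$ --- it is a different answer requiring its own extremal graph, not merely a harder verification of the same bound.
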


Proofs of Theorems~\ref{thm:ABCDE} and~\ref{thm:R(Sn(4)-Sn[4]-Sn(1,3)-TA-TB-TC-Sn(3,1),W8)} are given in 
Sections~\ref{sec:proofofn-4ABCDE} and~\ref{sec:proofofn-4}.

\section{The Ramsey numbers $R(T_n,W_8)$ for $\Delta(T_n)=n-5$}
\label{sec:n-5}

This section presents the Ramsey numbers $R(T_n,W_8)$ for all tree graphs $T_n$ of order $n\geq 7$ with $\Delta(T_n) = n-5$. 
For $n=7$, there is just one such graph, namely the path graph $T_7 = P_7$.
Theorem~\ref{thm:R(Pn,Wm)} provides the Ramsey number $R(P_7,W_8) = 13$.
For $n\geq 8$, there are $19$ tree graphs $T_n$ of order $n$ with $\Delta(T_n) = n-5$,
namely $S_n(1,4)$, $S_n(5)$, $S_n[5]$, $S_n(2,2)$, $S_n(4,1)$ and the tree graphs 
shown in Figure~\ref{fig:Tree graph Delta=n-5}. 

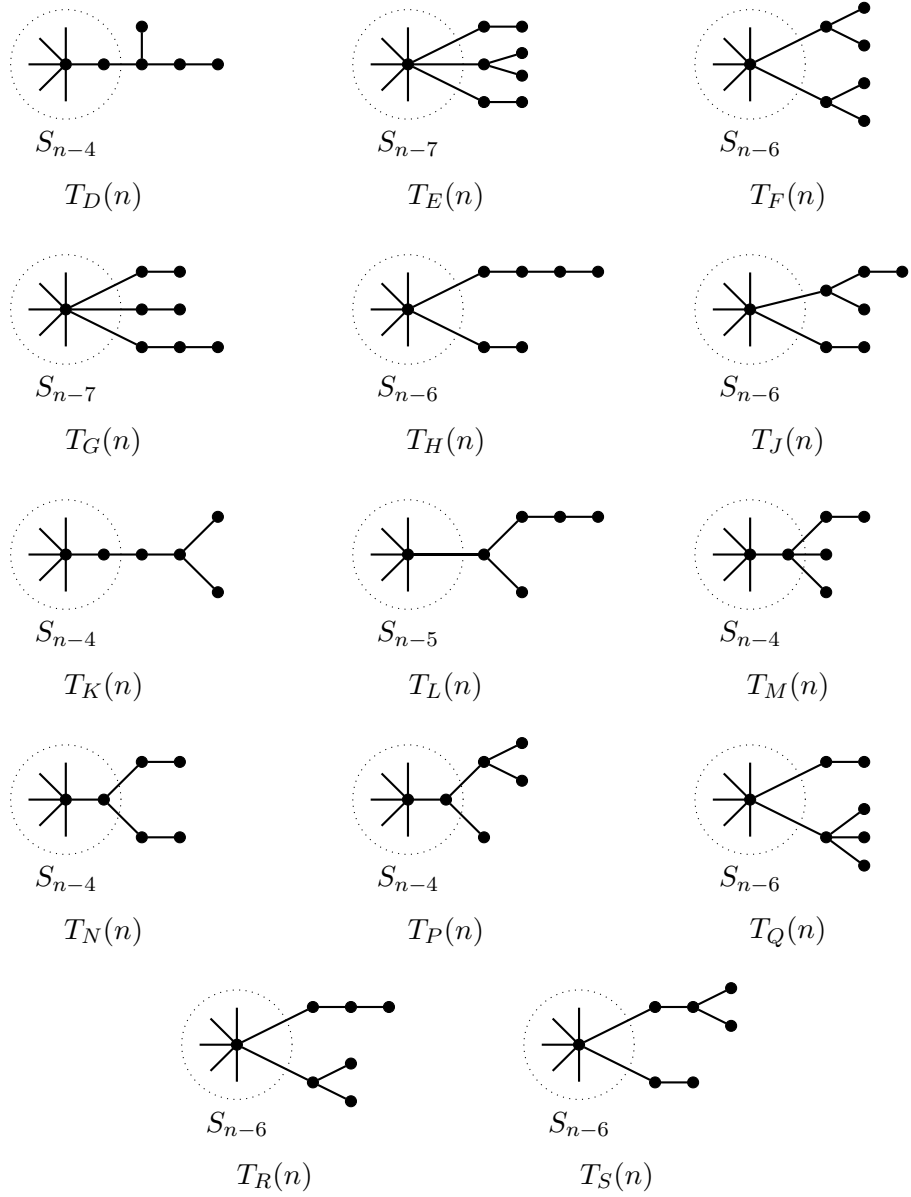
\begin{figure}[ht!]
  \centering
  \begin{tikzpicture}[scale=.5]
    \begin{scope}[shift={( 0,0)}]
      \draw[white] (-1.1,-4) rectangle (5.1,2);
      \foreach \nn in {1,...,5}{\node[nullvertex] (\nn) at (45+\nn*45:1) {}; \draw[thick] (0,0) -- (\nn);}
      \node[cir] at (0,0)[label=below:$S_{n-4}$,scale=0.8] {};
      \node[vertex,fill=black,scale=.8] (6) at (0,0) {};
      \node[vertex,fill=black,scale=.8] (7) at (1,0) {};
      \node[vertex,fill=black,scale=.8] (8) at (2,0) {};
      \node[vertex,fill=black,scale=.8] (9) at (2,1) {};
      \node[vertex,fill=black,scale=.8] (10) at (3,0) {};
      \node[vertex,fill=black,scale=.8] (11) at (4,0) {};
      \draw[thick](8)--(9)
                  (6)--(7)--(8)--(10)--(11);
      \node () at (1,-3.5) {$T_D(n)$};
    \end{scope}
    \begin{scope}[shift={(9,0)}]
      \draw[white] (-1.1,-4) rectangle (5.1,2);
      \foreach \nn in {1,...,5}{\node[nullvertex] (\nn) at (45+\nn*45:1) {}; \draw[thick] (0,0) -- (\nn);}
      \node[cir] at (0,0)[label=below:$S_{n-7}$,scale=0.8] {};
      \node[vertex,fill=black,scale=.8] (6) at (0,0) {};
      \node[vertex,fill=black,scale=.8] (7) at (2,1) {};
      \node[vertex,fill=black,scale=.8] (8) at (3,1) {};
      \node[vertex,fill=black,scale=.8] (9) at (2,0) {};
      \node[vertex,fill=black,scale=.8] (10) at (2,-1) {};
      \node[vertex,fill=black,scale=.8] (11) at (3,-1) {};
      \node[vertex,fill=black,scale=.8] (12) at (3,0.3) {};
      \node[vertex,fill=black,scale=.8] (13) at (3,-0.3) {};
      \draw[thick](11)--(10)--(6)--(7)--(8)
                  (6)--(9)--(12)
                  (9)--(13);
      \node () at (1,-3.5) {$T_E(n)$};
    \end{scope}
    \begin{scope}[shift={(18,0)}]
      \draw[white] (-1.1,-4) rectangle (5.1,2);
      \foreach \nn in {1,...,5}{\node[nullvertex] (\nn) at (45+\nn*45:1) {}; \draw[thick] (0,0) -- (\nn);}
      \node[cir] at (0,0)[label=below:$S_{n-6}$,scale=0.8] {};
      \node[vertex,fill=black,scale=.8] (6) at (0,0) {};
      \node[vertex,fill=black,scale=.8] (7) at (2,1) {};
      \node[vertex,fill=black,scale=.8] (8) at (3,1.5) {};
      \node[vertex,fill=black,scale=.8] (9) at (2,-1) {};
      \node[vertex,fill=black,scale=.8] (10) at (3,-1.5) {};
      \node[vertex,fill=black,scale=.8] (11) at (3,0.5) {};
      \node[vertex,fill=black,scale=.8] (12) at (3,-0.5) {};
      \draw[thick](8)--(7)--(6)--(9)--(10)
                  (7)--(11)
                  (9)--(12);
      \node () at (1,-3.5) {$T_F(n)$};
    \end{scope}
    \begin{scope}[shift={( 0,-6.5)}]
      \draw[white] (-1.1,-4) rectangle (5.1,2);
      \foreach \nn in {1,...,5}{\node[nullvertex] (\nn) at (45+\nn*45:1) {}; \draw[thick] (0,0) -- (\nn);}
      \node[cir] at (0,0)[label=below:$S_{n-7}$,scale=0.8] {};
      \node[vertex,fill=black,scale=.8] (6) at (0,0) {};
      \node[vertex,fill=black,scale=.8] (7) at (2,1) {};
      \node[vertex,fill=black,scale=.8] (8) at (3,1) {};
      \node[vertex,fill=black,scale=.8] (9) at (2,0) {};
      \node[vertex,fill=black,scale=.8] (10) at (3,0) {};
      \node[vertex,fill=black,scale=.8] (11) at (2,-1) {};
      \node[vertex,fill=black,scale=.8] (12) at (3,-1) {};
      \node[vertex,fill=black,scale=.8] (13) at (4,-1) {};
      \draw[thick](10)--(9)--(6)--(7)--(8)
                  (6)--(11)--(12)--(13);
      \node () at (1,-3.5) {$T_G(n)$};
    \end{scope}
    \begin{scope}[shift={(9,-6.5)}]
      \draw[white] (-1.1,-4) rectangle (5.1,2);
      \foreach \nn in {1,...,5}{\node[nullvertex] (\nn) at (45+\nn*45:1) {}; \draw[thick] (0,0) -- (\nn);}
      \node[cir] at (0,0)[label=below:$S_{n-6}$,scale=0.8] {};
      \node[vertex,fill=black,scale=.8] (6) at (0,0) {};
      \node[vertex,fill=black,scale=.8] (7) at (2,-1) {};
      \node[vertex,fill=black,scale=.8] (8) at (3,-1) {};
      \node[vertex,fill=black,scale=.8] (9) at (2,1) {};
      \node[vertex,fill=black,scale=.8] (10) at (3,1) {};
      \node[vertex,fill=black,scale=.8] (11) at (4,1) {};
      \node[vertex,fill=black,scale=.8] (12) at (5,1) {};
      \draw[thick](8)--(7)--(6)--(9)--(10)--(11)--(12);
      \node () at (1,-3.5) {$T_H(n)$};
    \end{scope}
    \begin{scope}[shift={(18,-6.5)}]
      \draw[white] (-1.1,-4) rectangle (5.1,2);
      \foreach \nn in {1,...,5}{\node[nullvertex] (\nn) at (45+\nn*45:1) {}; \draw[thick] (0,0) -- (\nn);}
      \node[cir] at (0,0)[label=below:$S_{n-6}$,scale=0.8] {};
      \node[vertex,fill=black,scale=.8] (6) at (0,0) {};
      \node[vertex,fill=black,scale=.8] (7) at (2,0.5) {};
      \node[vertex,fill=black,scale=.8] (8) at (3,1) {};
      \node[vertex,fill=black,scale=.8] (9) at (4,1) {};
      \node[vertex,fill=black,scale=.8] (10) at (3,0) {};
      \node[vertex,fill=black,scale=.8] (11) at (2,-1) {};
      \node[vertex,fill=black,scale=.8] (12) at (3,-1) {};
      \draw[thick](12)--(11)--(6)--(7)--(8)--(9)
                  (7)--(10);
      \node () at (1,-3.5) {$T_J(n)$};
    \end{scope}
    \begin{scope}[shift={( 0,-13)}]
      \draw[white] (-1.1,-4) rectangle (5.1,2);
      \foreach \nn in {1,...,5}{\node[nullvertex] (\nn) at (45+\nn*45:1) {}; \draw[thick] (0,0) -- (\nn);}
      \node[cir] at (0,0)[label=below:$S_{n-4}$,scale=0.8] {};
      \node[vertex,fill=black,scale=.8] (6) at (0,0) {};
      \node[vertex,fill=black,scale=.8] (7) at (1,0) {};
      \node[vertex,fill=black,scale=.8] (8) at (2,0) {};
      \node[vertex,fill=black,scale=.8] (9) at (3,0) {};
      \node[vertex,fill=black,scale=.8] (10) at (4,1) {};
      \node[vertex,fill=black,scale=.8] (11) at (4,-1) {};
      \draw[thick](6)--(7)--(8)--(9)--(10)
                  (9)--(11);
      \node () at (1,-3.5) {$T_K(n)$};
    \end{scope}
    \begin{scope}[shift={(9,-13)}]
      \draw[white] (-1.1,-4) rectangle (5.1,2);
      \foreach \nn in {1,...,5}{\node[nullvertex] (\nn) at (45+\nn*45:1) {}; \draw[thick] (0,0) -- (\nn);}
      \node[cir] at (0,0)[label=below:$S_{n-5}$,scale=0.8] {};
      \node[vertex,fill=black,scale=.8] (6) at (0,0) {};
      \node[vertex,fill=black,scale=.8] (7) at (2,0) {};
      \node[vertex,fill=black,scale=.8] (8) at (3,-1) {};
      \node[vertex,fill=black,scale=.8] (9) at (3,1) {};
      \node[vertex,fill=black,scale=.8] (10) at (4,1) {};
      \node[vertex,fill=black,scale=.8] (11) at (5,1) {};
      \draw[thick](8)--(7)--(6)--(7)--(9)--(10)--(11);
      \node () at (1,-3.5) {$T_L(n)$};
    \end{scope}
    \begin{scope}[shift={( 18,-13)}]
      \draw[white] (-1.1,-4) rectangle (5.1,2);
      \foreach \nn in {1,...,5}{\node[nullvertex] (\nn) at (45+\nn*45:1) {}; \draw[thick] (0,0) -- (\nn);}
      \node[cir] at (0,0)[label=below:$S_{n-4}$,scale=0.8] {};
      \node[vertex,fill=black,scale=.8] (6) at (0,0) {};
      \node[vertex,fill=black,scale=.8] (7) at (1,0) {};
      \node[vertex,fill=black,scale=.8] (8) at (2,0) {};
      \node[vertex,fill=black,scale=.8] (9) at (2,1) {};
      \node[vertex,fill=black,scale=.8] (10) at (2,-1) {};
      \node[vertex,fill=black,scale=.8] (11) at (3,1) {};
      \draw[thick](7)--(8)
                  (7)--(10)
                  (6)--(7)--(9)--(11);
      \node () at (1,-3.5) {$T_M(n)$};
    \end{scope}
    \begin{scope}[shift={( 0,-19.5)}]
      \draw[white] (-1.1,-4) rectangle (5.1,2);
      \foreach \nn in {1,...,5}{\node[nullvertex] (\nn) at (45+\nn*45:1) {}; \draw[thick] (0,0) -- (\nn);}
      \node[cir] at (0,0)[label=below:$S_{n-4}$,scale=0.8] {};
      \node[vertex,fill=black,scale=.8] (6) at (0,0) {};
      \node[vertex,fill=black,scale=.8] (7) at (1,0) {};
      \node[vertex,fill=black,scale=.8] (8) at (2,1) {};
      \node[vertex,fill=black,scale=.8] (9) at (3,1) {};
      \node[vertex,fill=black,scale=.8] (10) at (2,-1) {};
      \node[vertex,fill=black,scale=.8] (11) at (3,-1) {};
      \draw[thick](11)--(10)--(7)--(8)--(9)
      (6)--(7);
      \node () at (1,-3.5) {$T_N(n)$};
    \end{scope}
    \begin{scope}[shift={(9,-19.5)}]
      \draw[white] (-1.1,-4) rectangle (5.1,2);
      \foreach \nn in {1,...,5}{\node[nullvertex] (\nn) at (45+\nn*45:1) {}; \draw[thick] (0,0) -- (\nn);}
      \node[cir] at (0,0)[label=below:$S_{n-4}$,scale=0.8] {};
      \node[vertex,fill=black,scale=.8] (6) at (0,0) {};
      \node[vertex,fill=black,scale=.8] (7) at (1,0) {};
      \node[vertex,fill=black,scale=.8] (8) at (2,1) {};
      \node[vertex,fill=black,scale=.8] (9) at (2,-1) {};
      \node[vertex,fill=black,scale=.8] (10) at (3,1.5) {};
      \node[vertex,fill=black,scale=.8] (11) at (3,0.5) {};
      \draw[thick](10)--(8)--(7)--(9)
                  (6)--(7)
                  (8)--(11);
      \node () at (1,-3.5) {$T_P(n)$};
    \end{scope}
    \begin{scope}[shift={( 18,-19.5)}]
      \draw[white] (-1.1,-4) rectangle (5.1,2);
      \foreach \nn in {1,...,5}{\node[nullvertex] (\nn) at (45+\nn*45:1) {}; \draw[thick] (0,0) -- (\nn);}
      \node[cir] at (0,0)[label=below:$S_{n-6}$,scale=0.8] {};
      \node[vertex,fill=black,scale=.8] (6) at (0,0) {};
      \node[vertex,fill=black,scale=.8] (7) at (2,1) {};
      \node[vertex,fill=black,scale=.8] (8) at (3,1) {};
      \node[vertex,fill=black,scale=.8] (9) at (2,-1) {};
      \node[vertex,fill=black,scale=.8] (10) at (3,-1.75) {};
      \node[vertex,fill=black,scale=.8] (11) at (3,-0.25) {};
      \node[vertex,fill=black,scale=.8] (12) at (3,-1) {};
      \draw[thick](8)--(7)--(6)--(9)--(10)
                  (9)--(11)
                  (9)--(12);
      \node () at (1,-3.5) {$T_Q(n)$};
    \end{scope}
    \begin{scope}[shift={( 4.5,-26)}]
      \draw[white] (-1.1,-4) rectangle (5.1,2);
      \foreach \nn in {1,...,5}{\node[nullvertex] (\nn) at (45+\nn*45:1) {}; \draw[thick] (0,0) -- (\nn);}
      \node[cir] at (0,0)[label=below:$S_{n-6}$,scale=0.8] {};
      \node[vertex,fill=black,scale=.8] (6) at (0,0) {};
      \node[vertex,fill=black,scale=.8] (7) at (2,1) {};
      \node[vertex,fill=black,scale=.8] (8) at (3,1) {};
      \node[vertex,fill=black,scale=.8] (9) at (4,1) {};
      \node[vertex,fill=black,scale=.8] (10) at (2,-1) {};
      \node[vertex,fill=black,scale=.8] (11) at (3,-1.5) {};
      \node[vertex,fill=black,scale=.8] (12) at (3,-0.5) {};
      \draw[thick](11)--(10)--(12)
                  (6)--(10)
                  (6)--(7)--(8)--(9);
      \node () at (1,-3.5) {$T_R(n)$};
    \end{scope}
    \begin{scope}[shift={(13.5,-26)}]
      \draw[white] (-1.1,-4) rectangle (5.1,2);
      \foreach \nn in {1,...,5}{\node[nullvertex] (\nn) at (45+\nn*45:1) {}; \draw[thick] (0,0) -- (\nn);}
      \node[cir] at (0,0)[label=below:$S_{n-6}$,scale=0.8] {};
      \node[vertex,fill=black,scale=.8] (6) at (0,0) {};
      \node[vertex,fill=black,scale=.8] (7) at (2,-1) {};
      \node[vertex,fill=black,scale=.8] (8) at (3,-1) {};
      \node[vertex,fill=black,scale=.8] (9) at (2,1) {};
      \node[vertex,fill=black,scale=.8] (10) at (3,1) {};
      \node[vertex,fill=black,scale=.8] (11) at (4,1.5) {};
      \node[vertex,fill=black,scale=.8] (12) at (4,0.5) {};
      \draw[thick](11)--(10)--(9)--(6)--(7)--(8)
                  (10)--(12);
      \node () at (1,-3.5) {$T_S(n)$};
    \end{scope}
    \end{tikzpicture}
  \caption{Tree graphs $T_n$ with $\Delta(T_n)=n-5$.}\label{fig:Tree graph Delta=n-5}
\end{figure}

The Ramsey numbers $R(T_n,W_8)$ for these $19$ tree graphs are determined as follows.
\begin{theorem}
\label{thm:R(Sn(1,4)-TDEFGHJKLM)}
If $n\geq 8$, 
then $R(T_n,W_8) = 2n-1$ for all 
\[
  T_n\in \{S_n(1,4), S_n(5), S_n[5],S_n(2,2), S_n(4,1), T_D(n), \ldots, T_S(n)\}
\]
except when $T_n\in\{T_E(8), T_F(8), S_n(1,4), S_n(2,2), T_D(n), T_N(n)\}$ and $n\equiv 0\pmod{4}$,
in which case $R(T_n,W_8) = 2n$.
\end{theorem}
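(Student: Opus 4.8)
\section*{Proof proposal}

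The plan is, for each of the nineteen trees $T_n$, to prove matching lower and upper bounds, the lower bound being $2n-1$ in general and $2n$ in the exceptional cases. The lower bound $R(T_n,W_8)\ge 2n-1$ holds for every tree of order $n$ by the Chv\'atal--Harary inequality, since $c(T_n)=n$ and $\chi(W_8)=3$; concretely, the colouring of $K_{2n-2}$ whose red graph is $K_{n-1}\cup K_{n-1}$ has no red $T_n$ (each component has only $n-1$ vertices) and no blue $W_8$ (the blue graph $K_{n-1,n-1}$ is bipartite, so every neighbourhood is independent and contains no $C_8$). For the six exceptional cases I would instead exhibit, on $2n-1$ vertices, an explicit graph $G$ with $T_n\not\subseteq G$ and $W_8\not\subseteq\overline G$, whose part sizes are chosen so as to use the congruence $n\equiv 0\pmod 4$; verifying such a construction has two halves — that $\overline G$ contains no $W_8$, which I expect to follow from a clean structural property of $\overline G$ (no induced $C_8$ in any neighbourhood), and that $G$ contains none of $S_n(1,4)$, $S_n(2,2)$, $T_D(n)$, $T_N(n)$ (and $T_E(8)$, $T_F(8)$ when $n=8$), which is where the precise way the non-leaf part of each of these trees attaches to the degree-$(n-5)$ vertex is used. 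The second half is the more delicate of the two.

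For the upper bounds, fix a graph $G$ on $N$ vertices, where $N=2n-1$ for the generic trees and $N=2n$ for the exceptional ones, assume $\overline G$ contains no $W_8$, and aim to embed $T_n$ in $G$. The hypothesis says exactly that, for every vertex $v$, the subgraph of $G$ induced on the non-neighbours of $v$ has the property that its complement (on that vertex set) contains no $C_8$. If $\Delta(\overline G)\le 7$ then $\delta(G)\ge N-8\ge n-1$, and $G$ contains every tree on $n$ vertices by the greedy embedding, so we may assume some vertex has at least $8$ non-neighbours and invoke the structural auxiliary results of Section~\ref{sec:auxiliary_results}: these force $G$ to contain a large, highly connected piece — either a vertex of degree close to $N$, or a clique on roughly $n$ vertices, or a spanning subgraph that splits into two cliques of order about $n-1$. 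In the first two of these regimes the embedding is routine, because $T_n$ is a ``broom'': its degree-$(n-5)$ vertex $w$ carries $n-5$ branches, all but a few of which are single edges, so one places $w$ at a high-degree vertex and embeds the remaining at most four-or-five non-trivial branch vertices by hand, using that minimum degree $n-1$ (or $n-2$, with a small correction) suffices to embed any tree on $n$ vertices.

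The heart of the argument is the near-extremal regime, where $G$ is close to $K_{n-1}\cup K_{n-1}$ (or, on $2n$ vertices, close to a disjoint union of two cliques of order about $n$) and the few extra vertices and non-clique edges leave no room for a blind greedy step. Here I would argue tree by tree, since embedding a broom into a configuration of two near-cliques reduces to placing $w$ and then threading its bounded appendage (at most five further vertices) through the vertices not ``spoiled'' by the extra edges. For all but four of the nineteen trees this threading always succeeds and gives $R(T_n,W_8)=2n-1$; for $S_n(1,4)$, $S_n(2,2)$, $T_D(n)$, $T_N(n)$ (and, at $n=8$, additionally $T_E(8)$ and $T_F(8)$) it fails in exactly one near-extremal configuration and exactly when $n\equiv 0\pmod 4$ — matching the lower-bound construction and yielding $R(T_n,W_8)=2n$ — while on $2n$ vertices the threading goes through unconditionally.

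I expect the main obstacle to be precisely this near-extremal analysis: enumerating the admissible ways the two-near-clique structure can sit inside $G$, tracking which vertices are spoiled by each extra edge, and then, for each of the nineteen trees, deciding whether its small appendage can be routed — the bookkeeping is what produces both the divisibility condition $n\equiv 0\pmod 4$ and the isolated exceptions $T_E(8)$ and $T_F(8)$ that disappear for larger $n$. A secondary obstacle is making the structural dichotomy for $\overline G$ $W_8$-free sharp enough that the ``rich'' regimes really are handled by the broom embedding and only the genuine near-extremal configurations remain.
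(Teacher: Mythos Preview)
Your high-level strategy --- establish a global structural trichotomy for graphs $G$ with $W_8\not\subseteq\overline G$, then embed each broom according to the regime --- is not what the paper does, and the structural step you lean on is not supplied by Section~\ref{sec:auxiliary_results}. Those lemmas are local tools (Bondy pancyclicity, bipartite $C_8$ criteria, the $S_4/S_5$ observations) used to manufacture a $C_8$ in $\overline G$ once one already has a convenient partition of $V(G)$; they do not deliver the conclusion ``high-degree vertex, or large clique, or two near-cliques'' that you assert. Proving such a stability statement for $W_8$-free complements on $2n-1$ vertices would be a result in its own right, and without it your ``near-extremal regime'' is never isolated, so the tree-by-tree threading has nothing concrete to thread through. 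This is the genuine gap: the dichotomy you need is neither stated nor implied anywhere in the paper, and your proposal does not sketch how to obtain it.

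The paper's route is instead a bootstrapping one. For each of the nineteen trees it first invokes one of the already-proved $\Delta=n-4$ Ramsey numbers (Theorems~\ref{thm:R(Sn(4),W8)}--\ref{thm:R(Sn(3,1),W8)}) to locate a specific tree $T$ (for instance $S_n(1,3)$, $T_A(n)$, $T_B(n)$, $T_C(n)$, $S_n(3,1)$, or $T_G(n)$, $T_H(n)$ once those are in hand) inside $G$, labels its vertices, sets $U=V(G)\setminus V(T)$, and then reads off many forced non-adjacencies from the hypothesis $T_n\nsubseteq G$. The Section~\ref{sec:auxiliary_results} lemmas are applied to these explicit vertex sets to build a $C_8$ in $\overline G$ with a designated hub, giving the contradiction; several cases (notably $T_E,T_F,T_H,T_K,T_L$) also need the dedicated Lemmas~\ref{lm:R(TF(n),W8)}, \ref{lm:R(TG(n),W8)}, \ref{lm:R(TJLM(n),W8)} showing that any $n$-vertex graph of minimum degree $n-4$ contains the target tree unless it is $K_{4,4}$, which is exactly what produces the sporadic $n=8$ exceptions. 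The lower bounds in the exceptional families come from the explicit constructions $K_{n-1}\cup K_{4,\ldots,4}$ and $K_7\cup K_{4,4}$ (Lemma~\ref{lem:lower-bound-on-S_n(1,4)-T_D(n)-...-T_L(n)}), which you allude to but should name. Your programme might be salvageable, but as written it replaces nineteen concrete bootstrapping arguments with one unproved structure theorem plus an unspecified near-extremal analysis.
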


A proof of this theorem is given in Section~\ref{sec:n-5proof}.

\section{Auxiliary results}
\label{sec:auxiliary_results}

\noindent
To prove the main theorems, 
the following auxiliary results will be used.
For any simple graph $G = (V,E)$, 
let $\delta(G)$ be the minimum degree of any vertex in~$G$, 
and 
let $\overline{G} = \big(V, \binom{V}{2}\backslash E\big)$ be the complement of~$G$.

\begin{lemma}{\rm\cite{Bo71}}
\label{lem:pancyclic}
Let $G$ be a graph of order $n$. 
If $\delta(G)\geq \frac{n}{2}$, 
then either $G$ contains $C_\ell$ for all $3\leq \ell\leq n$, 
or $n$ is even and $G=K_{\frac{n}{2},\frac{n}{2}}$.
\end{lemma}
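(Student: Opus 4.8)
The plan is to establish three facts in turn: that $G$ contains the Hamilton cycle $C_n$, that $G$ contains a triangle $C_3$ unless $G=K_{n/2,n/2}$, and that $G$ contains every intermediate length $C_\ell$ with $4\le\ell\le n-1$. First I would note that the hypothesis $\delta(G)\ge n/2$ is exactly Dirac's classical Hamiltonicity condition, so $G$ admits a Hamilton cycle $C=v_1v_2\cdots v_nv_1$ and hence $C_n\subseteq G$. The same hypothesis records the edge bound I will use throughout: summing degrees gives $|E(G)|\ge \tfrac12\cdot n\cdot \tfrac n2=\tfrac{n^2}{4}$.

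Next I would handle the triangle together with the exceptional case by a single dichotomy. If $G$ contains a triangle we have $C_3$. If instead $G$ is triangle-free, then Mantel's theorem gives $|E(G)|\le\lfloor n^2/4\rfloor$, which combined with $|E(G)|\ge n^2/4$ forces equality; by the equality case of Mantel's theorem this means $n$ is even and $G=K_{n/2,n/2}$. Since $K_{n/2,n/2}$ is bipartite it has no odd cycle at all, so it is genuinely the stated exception (it realizes $C_\ell$ only for even $\ell$). Thus from here on I may assume $G\ne K_{n/2,n/2}$, equivalently that $G$ contains $C_3$, and it remains to produce the intermediate lengths.

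For the intermediate lengths I would work with the Hamilton cycle and its chords. A chord $v_iv_j$ of cyclic distance $s$ (with $2\le s\le n-2$) splits $C$ into two cycles, of lengths $s+1$ and $n-s+1$; in particular a short chord $v_iv_{i+2}$ bypasses $v_{i+1}$ and yields a cycle one shorter. I would run a downward induction from $C_n$: given a cycle of length $\ell+1$, exhibit a chord shortening it by one to obtain $C_\ell$. The degree hypothesis feeds in through the count $|E(G)|\ge n^2/4$, which leaves far more chords than the Hamilton edges and so forces chords at the required cyclic distances to exist, unless those chords are confined to a rigid bipartite pattern.

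The main obstacle is precisely this last implication: converting ``many chords'' into ``a chord of each needed cyclic distance,'' and showing that the only way to avoid some length $\ell$ is the balanced bipartite structure rather than merely \emph{some} sparse configuration. The clean route is Bondy's crossing-chords counting argument: assuming a length is missing, one shows the forbidden distance propagates into a near-bipartition of the chord set, and then the extremal identity $|E(G)|=n^2/4$ together with the equality case of Mantel's theorem pins the graph down to $K_{n/2,n/2}$, contradicting the assumption $G\ne K_{n/2,n/2}$. Carrying out that counting carefully, and in particular verifying that the sole obstruction is the \emph{balanced} complete bipartite graph, is the delicate part of the proof.
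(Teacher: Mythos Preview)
The paper does not prove this lemma; it is cited from Bondy~\cite{Bo71} as a known result, so there is no in-paper proof to compare against. Your reductions via Dirac's theorem (to obtain $C_n$) and via Mantel's theorem (to obtain $C_3$ or else pin down the exception $G=K_{n/2,n/2}$) are correct and standard.

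Step~3 is where the proposal has a genuine gap, and one concrete error. The ``downward induction from $C_n$'' does not work as written: a chord of the Hamilton cycle at cyclic distance $s$ produces cycles of lengths $s+1$ and $n-s+1$, but once you pass to a shorter cycle the global bound $|E(G)|\ge n^2/4$ says nothing about chords of \emph{that} cycle, so the iteration stalls. More seriously, invoking ``the equality case of Mantel's theorem'' in step~3 to identify $K_{n/2,n/2}$ is a non sequitur: Mantel's equality case applies only to triangle-free graphs, and by that point you have already assumed $G$ contains a triangle. Bondy's actual mechanism is different: assuming $C_{n-1}\not\subseteq G$, the crossing-chord observation ``$v_iv_j\in E(G)\Rightarrow v_{i+1}v_{j+2}\notin E(G)$'' (otherwise the two chords give a $C_{n-1}$ omitting $v_{j+1}$) forces $d(v_i)+d(v_{i+1})\le n$ for every consecutive pair on the Hamilton cycle, whence $|E(G)|\le n^2/4$; it is the equality analysis of \emph{this} inequality, not Mantel's, that yields $G=K_{n/2,n/2}$. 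The remaining lengths are then recovered by an induction on $n$, which itself needs care since neither the minimum-degree hypothesis nor the edge count passes cleanly to $G-v$. Your sketch points in the right direction but does not supply the argument, and the misplaced appeal to Mantel indicates the mechanism is not yet in hand.
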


\begin{lemma}{\rm\cite{ChLeZh15}}
\label{lem:tree}
Let $G$ be a graph with $\delta(G)\geq n-1$. 
Then $G$ contains all tree graphs of order $n$.
\end{lemma}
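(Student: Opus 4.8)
The plan is to embed an arbitrary tree $T$ of order $n$ into $G$ by a greedy vertex-by-vertex argument. Observe first that $\delta(G)\geq n-1$ forces $G$ to have at least $n$ vertices, so there is enough room in principle.

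First I would fix a root $r$ of $T$ and order the vertices of $T$ as $v_1 = r, v_2, \ldots, v_n$ so that every vertex precedes all of its children, for instance a breadth-first order. Then each $v_i$ with $i\geq 2$ has exactly one neighbour among $\{v_1,\ldots,v_{i-1}\}$, namely its parent $v_{p(i)}$, and every edge of $T$ has the form $v_i v_{p(i)}$ for a unique $i\geq 2$.

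Next I would construct an injective map $\varphi\colon V(T)\to V(G)$ step by step, maintaining the invariant that $\varphi$ sends each edge $v_k v_{p(k)}$ with $k\leq i-1$ to an edge of $G$. Start by letting $\varphi(v_1)$ be any vertex of $G$. Given $\varphi$ defined on $\{v_1,\ldots,v_{i-1}\}$ with $2\leq i\leq n$, the vertex $\varphi(v_{p(i)})$ has at least $n-1$ neighbours in $G$, while the set of already-used images other than $\varphi(v_{p(i)})$ itself has exactly $i-2\leq n-2$ elements; hence at least $(n-1)-(i-2) = n-i+1\geq 1$ neighbours of $\varphi(v_{p(i)})$ are still unused. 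Choosing one of them to be $\varphi(v_i)$ preserves injectivity and the edge invariant. After $n$ steps, $\varphi$ is an injection that maps every edge $v_i v_{p(i)}$, and therefore every edge, of $T$ to an edge of $G$, so $G$ contains $T$. Since $T$ was an arbitrary tree of order $n$, the lemma follows.

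There is no substantial obstacle here: the argument is a textbook greedy embedding, and the only points needing a line of care are the existence of the parent-before-child ordering and the counting inequality $i-1\leq n-1$ that keeps the greedy step from getting stuck. The one mild subtlety worth stating explicitly is that $\varphi(v_{p(i)})$ is never counted among the blocked vertices, since a vertex is not adjacent to itself.
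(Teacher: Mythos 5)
Your greedy embedding is correct, and the counting step $(n-1)-(i-2)=n-i+1\geq 1$ is exactly the right bookkeeping; the paper itself gives no proof of this lemma (it is quoted from Chartrand, Lesniak and Zhang), and your argument is the standard induction-on-tree-order proof found there.
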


\begin{observation}\label{obs2}
If $G=H_1\cup H_2$ is the disjoint union of graphs $H_1$ and $H_2$, 
where $\overline{H_1}$ contains $S_5$ and $H_2$ is a graph of order at least $4$,
then $\overline{G}$ contains $W_8$.
\end{observation}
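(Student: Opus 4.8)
The plan is to exhibit a copy of $W_8$ in $\overline{G}$ explicitly, taking the center of the embedded star $S_5$ as the hub and building the rim $C_8$ by alternating between the two components of $G$. First I would fix a copy of $S_5$ inside $\overline{H_1}$, writing $c$ for its center and $v_1,v_2,v_3,v_4$ for its four leaves; then $cv_i\in E(\overline{H_1})\subseteq E(\overline{G})$ for each $i\in\{1,2,3,4\}$. Since $H_2$ has order at least $4$, I would next choose any four distinct vertices $u_1,u_2,u_3,u_4\in V(H_2)$.

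The crucial point is that $G=H_1\cup H_2$ is a disjoint union, so $\overline{G}$ contains every edge between $V(H_1)$ and $V(H_2)$; in particular $cu_j\in E(\overline{G})$ and $v_iu_j\in E(\overline{G})$ for all $i,j\in\{1,2,3,4\}$. Consequently, in $\overline{G}$ the hub $c$ is adjacent to each of the eight distinct vertices $v_1,u_1,v_2,u_2,v_3,u_3,v_4,u_4$, and these eight vertices support the $8$-cycle
\[
  v_1\, u_1\, v_2\, u_2\, v_3\, u_3\, v_4\, u_4\, v_1 ,
\]
all of whose edges are of the cross type $v_iu_j$ and hence lie in $\overline{G}$. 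Adding the hub $c$ to this rim cycle produces the desired copy of $W_8$ in $\overline{G}$.

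There is essentially no obstacle here; the only thing to check is that $c$ is distinct from the eight rim vertices, which is immediate because $c,v_1,\ldots,v_4$ are the five distinct vertices of the star $S_5\subseteq\overline{H_1}$ while $u_1,\ldots,u_4$ lie in the other component $H_2$. An alternative phrasing would note that $\overline{G}$ restricted to $\{v_1,\dots,v_4\}\cup\{u_1,\dots,u_4\}$ contains a spanning $K_{4,4}$, which is Hamiltonian and hence contains $C_8$; but writing the rim cycle out explicitly is the cleanest route.
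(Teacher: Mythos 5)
Your argument is correct and is exactly the intended justification: the paper states this as an Observation without proof, and the explicit rim $v_1u_1v_2u_2v_3u_3v_4u_4v_1$ of cross edges together with the star center $c$ as hub is the evident construction. All adjacencies you use (star edges in $\overline{H_1}$ and the complete bipartite cross edges coming from the disjoint union) are verified, so nothing is missing.
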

 
\begin{lemma}\label{lm4:R(Sn(1,2),W8)} 
Let $H_1$ be a graph whose complement $\overline{H_1}$ contains $S_4$, 
and let $H_2$ be a graph of order $m\ge 5$. 
If $G=H_1\cup H_2$, 
then either $\overline{G}$ contains $W_8$, or $H_2$ is $K_m$ or $K_m-e$, 
where $e$ is an edge in $K_m$.
\end{lemma}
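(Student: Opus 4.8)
The plan is to build $W_8$ in $\overline{G}$ with an explicitly chosen hub and rim. Fix a copy of $S_4$ inside $\overline{H_1}$, with center $c$ and leaves $a_1,a_2,a_3$. In $\overline{G}$ the vertex $c$ is adjacent to $a_1,a_2,a_3$ (these edges already lie in $\overline{H_1}$) and, since $G=H_1\cup H_2$ contains no edge between $V(H_1)$ and $V(H_2)$, it is also adjacent to every one of the $m\ge 5$ vertices of $H_2$. Thus $c$ has at least $3+m\ge 8$ neighbours, and I would take $c$ to be the hub of $W_8$; it then remains only to find an $8$-cycle of $\overline{G}$ lying in $N:=\{a_1,a_2,a_3\}\cup V(H_2)$ and avoiding $c$.

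The graph $\overline{G}[N]$ contains the complete bipartite graph with parts $\{a_1,a_2,a_3\}$ and $V(H_2)$, plus, on the $V(H_2)$-side, exactly the edges of $\overline{H_2}$ (because $\overline{G}[V(H_2)]=\overline{H_2}$, as $G$ is a disjoint union). Since any bipartite graph with one part of size $3$ has no cycle of length more than $6$, the bipartite edges alone cannot produce a $C_8$, and this is precisely the point at which the hypothesis on $H_2$ is needed: if $H_2$ is neither $K_m$ nor $K_m-e$, then $\overline{H_2}$ has at least two edges, which are either independent or share an endpoint. In the first case, write the two edges as $b_1b_2$ and $b_4b_5$ and pick any further vertex $b_3\in V(H_2)$ (available since $m\ge 5$); then $a_1,b_1,b_2,a_2,b_3,a_3,b_4,b_5$, closing back to $a_1$, is an $8$-cycle of $\overline{G}$, since every $a$--$b$ step is a bipartite edge and the two $b$--$b$ steps are the chosen edges of $\overline{H_2}$. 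In the second case, write the two edges as a path $b_1b_2b_3$ and pick further vertices $b_4,b_5\in V(H_2)$; then $a_1,b_1,b_2,b_3,a_2,b_4,a_3,b_5$, closing back to $a_1$, works for the same reason. As $c$ is adjacent in $\overline{G}$ to each of these eight rim vertices, adding $c$ yields a copy of $W_8$.

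This argument has no serious obstacle; the only subtle points are conceptual rather than technical. First, one must recognise that, unlike in Observation~\ref{obs2} (where $\overline{H_1}\supseteq S_5$ gives the center four leaves, so four vertices of $H_2$ already complete a $K_{4,4}\supseteq C_8$ without using $\overline{H_2}$), the weaker hypothesis $\overline{H_1}\supseteq S_4$ forces us to borrow two edges from $\overline{H_2}$ to bridge the ``bipartite gap''; this is exactly why the families $H_2=K_m$ and $H_2=K_m-e$ must be admitted as exceptions. Second, the bookkeeping must be checked: the eight rim vertices are pairwise distinct because $V(H_1)\cap V(H_2)=\emptyset$ and $m\ge 5$ supplies enough vertices of $H_2$, and $c$ is used only as the hub. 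Since the statement is a dichotomy, nothing need be proved about the two exceptional cases.
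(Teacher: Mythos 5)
Your proof is correct and is essentially the same as the paper's: both take the center of the $S_4$ in $\overline{H_1}$ as the hub, observe that $\overline{H_2}$ having at least two edges yields either two independent edges or a path on three vertices, and in each case splice those edges into an $8$-cycle on the three star leaves and five vertices of $H_2$. The explicit rims you give coincide (up to relabelling) with the ones in the paper.
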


\begin{proof} 
If $\overline{H_2}$ has at most one edge, 
then $H_2$ is the complete graph $K_m$ or the graph $K_m-e$ obtained from removing an edge $e$ from $K_m$. 
Suppose now that $\overline{H_2}$ has at least two edges.
Consider a star $S_4$ in $\overline{H_1}$ and let $v_0$ be its center and $v_1,v_2,v_3$ its leaves.
Note that each $v_i$ is adjacent to each $a\in V(H_2)$ in~$\overline{G}$.
Choose $5$ vertices $a,b,c,d,e \in V(H_2)$ such that either $ab$ and $cd$ are independent edges,
or $abc$ is a path, in $\overline{H_2}$. 
In both cases, $\overline{G}$ contains $W_8$ with hub $v_0$. 
In the former case, $v_1abv_2cdv_3ev_1$ forms the $C_8$ rim; 
in the latter, $v_1abcv_2dv_3ev_1$ forms the $C_8$ rim.
\end{proof}

The neighbourhood $N_G(v)$ of a vertex $v$ in $G$ is the set of vertices that are adjacent to $v$ in $G$
and $d_G(v)=|N_G(v)|$ is the degree of the vertex $v$. 
For $X, Y\subseteq V$, 
$G[X]$ is the subgraph induced by $X$ in $G$ and 
$E_G(X,Y)$ is the set of edges in $G$ with one endpoint in $X$ and the other in $Y$.
The following lemma provides sufficient conditions for a graph or its complement to contain $C_8$.

\begin{lemma}
\label{lem:1-1relation}
Suppose that $U=\{u_1,\ldots,u_4\}$ and $V=\{v_1,\ldots,v_4\}$ are two disjoint subsets of vertices 
of a graph $G$ for which $|N_{G[V\cup \{u\}]}(u)|\leq 1$ for each $u\in U$
and $|N_{G[U\cup \{v\}]}(v)|\leq 2$ for each $v\in V$.
Then $\overline{G}[U\cup V]$ contains $C_8$.  
\end{lemma}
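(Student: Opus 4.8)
\section*{Proof proposal for Lemma~\ref{lem:1-1relation}}

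The plan is to recast the statement as a purely bipartite Hamiltonicity problem. Since the hypotheses say nothing about the edges of $G$ inside $U$ or inside $V$, the graphs $\overline{G}[U]$ and $\overline{G}[V]$ may be empty (this happens when $G[U]=G[V]=K_4$), so the $C_8$ we produce must be robust against this, i.e.\ it must alternate between $U$ and $V$. Let $B$ be the bipartite graph with parts $U$ and $V$ whose edges are the edges of $G$ joining $U$ to $V$, and let $\overline{B}$ be its bipartite complement (same parts, edge set $\{uv : u\in U,\ v\in V,\ uv\notin E(B)\}$). The hypotheses say exactly that $d_B(u)\le 1$ for every $u\in U$ and $d_B(v)\le 2$ for every $v\in V$, so $d_{\overline B}(u)=4-d_B(u)\ge 3$ and $d_{\overline B}(v)=4-d_B(v)\ge 2$. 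Every edge of $\overline B$ is an edge of $\overline G$, so it suffices to prove that $\overline B$ contains a Hamiltonian cycle: such a cycle has length $8$ and lies in $\overline{G}[U\cup V]$.

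To finish, I would apply the bipartite analogue of Ore's theorem (Moon--Moser): a balanced bipartite graph with parts of size $n\ge 2$ is Hamiltonian provided $d(x)+d(y)\ge n+1$ for every non-adjacent pair $x,y$ in opposite parts. Here $n=4$, and a non-adjacent pair $u\in U$, $v\in V$ of $\overline B$ is precisely a pair with $uv\in E(B)$; then $d_B(u)=1$, so $d_{\overline B}(u)=3$, while $d_{\overline B}(v)\ge 2$, whence $d_{\overline B}(u)+d_{\overline B}(v)\ge 5=n+1$. This produces the required Hamiltonian cycle of $\overline B$ and completes the proof.

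If one prefers to avoid invoking Moon--Moser, the same conclusion follows from a short case check, since Hamiltonicity of $\overline B$ is only made harder by adding edges to $B$. One may first enlarge $B$ to a bipartite graph with exactly $4$ edges in which every $u\in U$ has degree exactly~$1$: this is always possible while keeping $d_B(v)\le 2$, because whenever $|E(B)|<4$ some $u\in U$ and some $v\in V$ have $B$-degree $0$ and the edge $uv$ may be added. Such a maximal $B$ corresponds to a function $U\to V$ with all fibres of size at most $2$, hence, up to relabelling, is one of three graphs: the one whose four edges form a perfect matching, the one with one ``cherry'' (a $v$ of degree $2$) together with two further independent edges, and the one with two disjoint cherries. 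In each of these three cases one simply writes down an explicit Hamiltonian cycle of $\overline B$. The asymmetry between the bounds $1$ and $2$ is essential here: for the symmetric bounds $d_B(u),d_B(v)\le 2$ the statement is false, e.g.\ when $B$ is a disjoint union of two $4$-cycles, since then $\overline B$ is again a disjoint union of two $4$-cycles.

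The only real content is the observation that the desired cycle must be taken alternating between $U$ and $V$; after that the degree counts are immediate and the finish is either a one-line application of the bipartite Ore condition or a routine inspection of three configurations. I expect the most error-prone point to be the bookkeeping in the elementary route, namely verifying that every admissible $B$ is a subgraph of one of the three maximal configurations, but this step is entirely elementary and requires no new input.
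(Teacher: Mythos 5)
Your proposal is correct. The paper's own proof is a direct two-case argument on the bipartite graph $B$ of $G$-edges between $U$ and $V$: if every $v\in V$ has at most one $B$-neighbour then $B$ is a matching and $\overline{G}[U\cup V]$ contains $K_{4,4}$ minus a perfect matching, hence $C_8$; otherwise some $v_1$ has exactly two neighbours $u_1,u_2$, which forces $u_1,u_2$ to be non-adjacent to $v_2,v_3,v_4$, and an explicit alternating $8$-cycle is written down. Your fallback ``three maximal configurations'' route is essentially this same argument made systematic — your matching, one-cherry and two-cherry cases correspond to the paper's two cases, and your one-cherry cycle is literally the cycle the paper exhibits — with the added bookkeeping of first saturating $B$ so that every $u$ has degree exactly one (a harmless step, since adding edges to $B$ only removes edges from $\overline{B}$). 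Your primary route via the Moon--Moser bipartite Ore theorem is genuinely different and arguably cleaner: the key observation that a non-edge of $\overline{B}$ forces $d_{\overline{B}}(u)=3$, so that the degree-sum condition $d_{\overline{B}}(u)+d_{\overline{B}}(v)\ge 5=n+1$ holds, is exactly right (a minimum-degree argument alone would not suffice, since $\delta(\overline{B})\ge 2$ only meets $n/2$), and it buys a one-line proof at the cost of importing an external theorem the paper does not otherwise use. Both of your routes, like the paper's, correctly recognise that the cycle must alternate between $U$ and $V$ because the hypotheses say nothing about edges inside $U$ or $V$; your counterexample for the symmetric bounds $d_B(u),d_B(v)\le 2$ is a nice sanity check on why the asymmetric hypothesis is needed.
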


\begin{proof}
Suppose that $N_{G[U\cup \{v\}]}(v)\le 1$ for each $v\in V$.
Then $\overline{G}[U\cup V]$ contains a subgraph obtained 
by removing a matching from $K_{4,4}$ and therefore contains $C_8$.
Suppose now that $N_{G[U\cup \{v_1\}]}(v_1) = \{u_1,u_2\}$, 
and assume without loss of generality that 
$v_3\notin N_{G[V\cup \{u_3\}]}(u_3)$ and $v_4\notin N_{G[V\cup \{u_4\}]}(u_4)$.
Neither $u_1$ nor $u_2$ is adjacent to $v_2$, $v_3$ or $v_4$, 
so $v_1u_3v_3u_1v_2u_2v_4u_4v_1$ forms~$C_8$ in $\overline{G}[U\cup V]$.
\end{proof}

\begin{lemma}{\rm\cite{Ja81}}
\label{lem:cycleinbipartite}
Let $G(u,v,k)$ be a simple bipartite graph with bipartition $U$ and $V$, 
where $|U|=u\geq 2$ and $|V|=v\geq k$, 
and where each vertex of $U$ has degree of at least $k$. 
If $u\leq k$ and $v\leq 2k-2$, 
then $G(u,v,k)$ contains a cycle of length $2u$.
\end{lemma}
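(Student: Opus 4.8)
The plan is to prove the statement directly by a rotation--extension (Pósa-type) argument, after reducing the conclusion to a covering condition. Since $G$ is bipartite with parts $U$ and $V$, every cycle alternates between the two parts, so a cycle of length $2u$ is precisely a cycle passing through \emph{all} $u$ vertices of $U$ (and through exactly $u$ vertices of $V$, which exist because $v\ge k\ge u$). Thus it suffices to produce a cycle containing every vertex of $U$. The single quantitative input I would extract from the hypotheses is this: since each vertex of $U$ has at least $k$ neighbours, all lying in $V$, and $|V|=v\le 2k-2$, any two vertices $x,y\in U$ satisfy $|N_G(x)\cap N_G(y)|\ge 2k-v\ge 2$. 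That is, \emph{every pair of $U$-vertices has at least two common neighbours in $V$}. This is the lever that both powers the rotations and lets me close a path into a cycle.

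First I would choose, among all paths of $G$, one with the maximum number of $U$-vertices, and among those a longest one; by deleting a $V$-endpoint if necessary (which changes neither the $U$-count nor the maximality) I may assume both endpoints lie in $U$. Write the $U$-vertices along this path $P$ in order as $p_0,p_1,\dots,p_s$, with $V$-vertices interleaved and with $p_0,p_s$ the endpoints. Suppose, for contradiction, that $P$ omits some $w\in U$. Maximality of the $U$-count forces a locality statement: if the endpoint $p_0$ had a neighbour $b\in V\setminus V(P)$ with $b\sim w$, then $w\,b\,p_0\cdots p_s$ would be a path with one more $U$-vertex, a contradiction; hence every common neighbour of $p_0$ and $w$ lies on $P$. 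Since $|N_G(p_0)\cap N_G(w)|\ge 2$, the vertex $w$ shares at least two \emph{internal} $V$-vertices of $P$ with $p_0$.

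Using such an internal common neighbour $b_j$ of $p_0$, I would perform a Pósa rotation: the chord $p_0 b_j$ yields a path on the same vertex set whose $p_0$-side endpoint is the $U$-vertex immediately preceding $b_j$, preserving the $U$-count and keeping both endpoints in $U$. Iterating, let $X\subseteq U$ be the set of all $U$-vertices attainable as the $p_0$-side endpoint through sequences of such rotations (with $p_s$ fixed). The standard Pósa closure property shows that the $V$-neighbours of vertices of $X$ are all internal to the current path and that $X$ is correspondingly large; combining this with the facts that $w$ has at least $k$ neighbours in $V$ while $v\le 2k-2$ should force either a rotated endpoint with a neighbour of $w$ lying off the path (extending the $U$-count, a contradiction), or two distinct rotation endpoints sharing a common neighbour off the path (closing a cycle through all current $U$-vertices and then, via $w$, contradicting maximality). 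Hence $P$ contains all of $U$. Finally, its endpoints $p_0,p_s\in U$ have at least two common neighbours in $V$; I would use one lying off $P$, or produce such a configuration by one further rotation, to join $p_0$ to $p_s$ through a $V$-vertex, closing $P$ into a cycle through every vertex of $U$, namely a $C_{2u}$.

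The main obstacle I anticipate is the rotation bookkeeping in the bipartite setting. Unlike the non-bipartite case, an off-path neighbour of a rotation endpoint cannot be joined to the opposite endpoint by a single edge, so closing the cycle genuinely requires the two-common-neighbour bound $2k-v\ge 2$ to be tracked carefully through the reachable-endpoint set $X$. Verifying that these common neighbours cannot all remain trapped on $P$ — which is exactly where the hypotheses $u\le k$ and $v\le 2k-2$ must be used in full, to prevent the high-degree neighbourhoods of the $U$-vertices from being packed into the $\le 2k-2$ available $V$-vertices — is the crux of the argument.
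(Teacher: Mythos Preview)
The paper does not prove this lemma at all: it is quoted directly from Jackson's paper \cite{Ja81} and used as a black box (to derive Corollaries~\ref{cor:bipartite4-6} and~\ref{cor:bipartite4-8}). So there is no ``paper's own proof'' to compare your proposal against.

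As to your proposal itself: the guiding observation that any two vertices of $U$ share at least $2k-v\ge 2$ common neighbours in $V$ is correct and is indeed the engine behind Jackson's argument. The Pósa-rotation framework is also the right general shape. However, the sketch has two genuine soft spots that you yourself flag. First, in the extension phase, the sentence ``the standard Pósa closure property shows that \ldots\ should force either \ldots\ or \ldots'' is doing a lot of unstated work; in the bipartite setting the usual Pósa counting (that the rotation-reachable endpoint set $X$ and its neighbourhood are large) does not interact with the hypotheses $u\le k$, $v\le 2k-2$ in an off-the-shelf way, and one really must track how many $V$-vertices on $P$ can absorb the neighbourhoods of $X$ and of the missing vertex $w$ simultaneously. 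Second, in the closing phase, the claim that the two common neighbours of the endpoints $p_0,p_s$ let you close the cycle is incomplete: if both common neighbours lie on $P$, a single chord through one of them produces only a new $U$-spanning \emph{path} with different endpoints, not a cycle, and ``one further rotation'' is not by itself a proof that an off-path common neighbour eventually appears. Jackson's original argument resolves exactly these bookkeeping issues, but it takes more than the outline you have given; as written, the proposal is a plausible plan rather than a proof.
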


\begin{corollary}
\label{cor:bipartite4-6}
Suppose that $U$ and $V$ are two disjoint subsets of vertices of a graph $G$ 
for which $|N_{G[V\cup \{u\}]}(u)|\leq 2$ for each $u\in U$. 
If $|U|\geq 4$ and $|V|\geq 6$,
then $\overline{G}[U\cup V]$ contains $C_8$.  
\end{corollary}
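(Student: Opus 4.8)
The plan is to deduce this directly from Lemma~\ref{lem:cycleinbipartite} applied with the parameter $k=4$. First I would fix subsets $U'\subseteq U$ and $V'\subseteq V$ with $|U'|=4$ and $|V'|=6$; these exist since $|U|\ge 4$ and $|V|\ge 6$. Next I would consider the bipartite graph $B$ with parts $U'$ and $V'$ in which $u\in U'$ is joined to $v\in V'$ precisely when $uv\notin E(G)$, that is, the bipartite part of $\overline{G}[U'\cup V']$.

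The key observation is a degree bound on the $U'$-side of $B$. For each $u\in U'$ the hypothesis gives $|N_{G[V\cup\{u\}]}(u)|\le 2$, so $u$ has at most two neighbours in $V$ in $G$, hence at most two in $V'$; therefore $u$ is non-adjacent in $G$ to at least $|V'|-2=4$ vertices of $V'$, so $d_B(u)\ge 4$. Thus $B$ is a bipartite graph of the form $G(u,v,k)$ from Lemma~\ref{lem:cycleinbipartite} with $u=|U'|=4$, $v=|V'|=6$ and $k=4$: indeed $u=4\ge 2$, $v=6\ge k=4$, every vertex of the $U'$-side has degree at least $k$, and the numerical constraints $u\le k$ and $v\le 2k-2$ both hold (with equality). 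Lemma~\ref{lem:cycleinbipartite} then yields a cycle of length $2u=8$ in $B$, and since $B$ is a subgraph of $\overline{G}[U\cup V]$, this cycle is the desired $C_8$.

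There is no substantial obstacle here; the only points requiring a moment's care are that the constraints $u\le k$ and $v\le 2k-2$ are tight, so one must restrict to exactly four vertices of $U$ and exactly six vertices of $V$ rather than using all of them, and that the degree condition in Lemma~\ref{lem:cycleinbipartite} is required only on one side of the bipartition — which is precisely the side controlled by the hypothesis on $U$, so no analogous control on the $V$-side is needed.
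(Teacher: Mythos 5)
Your proof is correct and is essentially identical to the paper's: both restrict to a $4$-subset of $U$ and a $6$-subset of $V$, observe that each chosen vertex of $U$ has at least four non-neighbours among the chosen vertices of $V$, and invoke Lemma~\ref{lem:cycleinbipartite} with parameters $(u,v,k)=(4,6,4)$ to extract a $C_8$ in the complement. Your explicit verification that the tight constraints $u\le k$ and $v\le 2k-2$ hold is a welcome bit of extra care, but the argument is the same.
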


\begin{proof}
Since $|U|\geq 4$ and $|V|\geq 6$, 
we can choose any $4$ vertices from $U$ to form $U'$ and any $6$ vertices from $V$ to form~$V'$. 
We have that $N_{G[V'\cup \{u\}]}(u)\leq 2$ for each $u\in U'$. 
Then each vertex of $U'$ is adjacent to at least $4$ vertices of $V'$ in $\overline{G}$ 
and $\overline{G}[U'\cup V']$ must contain 
a graph with the properties of $G(4,6,4)$ in Lemma~\ref{lem:cycleinbipartite}.
Hence by that lemma, $\overline{G}[U\cup V]$ must contain $C_8$.    
\end{proof}

\begin{corollary}
\label{cor:bipartite4-8}
Suppose that $U$ and $V$ are two disjoint subsets of vertices of a graph $G$ 
for which $|N_{G[V\cup \{u\}]}(u)|\leq 3$ for each $u\in U$. 
If $|U|\geq 4$ and $|V|\geq 8$, 
then $\overline{G}[U\cup V]$ contains $C_8$.  
\end{corollary}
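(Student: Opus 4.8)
The plan is to argue exactly as in the proof of Corollary~\ref{cor:bipartite4-6}, simply reading off the parameters for the $\le 3$ case. Since $|U|\geq 4$ and $|V|\geq 8$, I would first fix a subset $U'\subseteq U$ with $|U'|=4$ and a subset $V'\subseteq V$ with $|V'|=8$; note that here it is important to take $V'$ of size \emph{exactly} $8$ rather than all of $V$, for the reason explained below. The hypothesis $|N_{G[V\cup\{u\}]}(u)|\leq 3$ for each $u\in U$ gives $|N_{G[V'\cup\{u\}]}(u)|\leq 3$ for each $u\in U'$, so each vertex of $U'$ is non-adjacent in $G$ to at least $8-3=5$ vertices of $V'$, i.e.\ each vertex of $U'$ has at least $5$ neighbours in $V'$ in the complement $\overline{G}$.

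Next I would pass to the bipartite subgraph $B$ of $\overline{G}[U'\cup V']$ consisting of the edges between $U'$ and $V'$. This $B$ has bipartition $(U',V')$ with $|U'|=4$ and $|V'|=8$, and every vertex of $U'$ has degree at least $5$ in $B$, so $B$ contains a graph with the properties of $G(u,v,k)$ from Lemma~\ref{lem:cycleinbipartite} with $u=4$, $v=8$ and $k=5$. I would then verify the hypotheses of that lemma: $u=4\geq 2$, $v=8\geq k=5$, $u=4\leq k=5$, and $v=8\leq 2k-2=8$. All hold, so Lemma~\ref{lem:cycleinbipartite} yields a cycle of length $2u=8$ in $B$, hence in $\overline{G}[U\cup V]$, which is the desired $C_8$.

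The only point that requires care—and the reason this is not completely automatic—is the inequality $v\leq 2k-2$, which is satisfied with \emph{equality} when $v=8$ and $k=5$; taking $V'$ any larger would violate it, so the restriction to exactly $8$ vertices of $V$ is essential, as is the fact that reducing $V$ to size $8$ does not decrease the guaranteed complement-degree below $5$. Beyond this bookkeeping there is no real obstacle: the statement is a direct specialisation of Lemma~\ref{lem:cycleinbipartite} entirely parallel to Corollary~\ref{cor:bipartite4-6} (which is the $k=4$, $v=6$ case).
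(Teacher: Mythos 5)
Your proposal is correct and follows essentially the same argument as the paper's proof: restrict to $|U'|=4$ and $|V'|=8$, observe each vertex of $U'$ has at least $5$ neighbours in $V'$ in $\overline{G}$, and apply Lemma~\ref{lem:cycleinbipartite} with $(u,v,k)=(4,8,5)$. Your extra remark about $v\leq 2k-2$ holding with equality, which forces $|V'|$ to be exactly $8$, is a correct and worthwhile observation that the paper leaves implicit.
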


\begin{proof}
Since $|U|\geq 4$ and $|V|\geq 8$, 
we can choose any $4$ vertices from $U$ to form $U'$ and any $8$ vertices from $V$ to form $V'$. 
We have that $N_{G[V'\cup \{u\}]}(u)\leq 3$ for each $u\in U'$. 
Then each vertex of $U'$ is adjacent to at least $5$ vertices of $V'$ in $\overline{G}$ 
and $\overline{G}[U'\cup V']$ must contain a graph with 
the properties of $G(4,8,5)$ in Lemma~\ref{lem:cycleinbipartite}.
Hence by that lemma, $\overline{G}[U\cup V]$ must contain $C_8$. 
\end{proof}

\section{Proof of Theorem~\ref{thm:ABCDE}}
\label{sec:proofofn-4ABCDE}

The proof of Theorem~\ref{thm:ABCDE} is here proved as three theorems, 
the first of which is as follows.

\begin{theorem}
\label{thm:ABC(7)}
$R(T,W_8) = 13$ for each $T\in\{A,B,C\}$.
\end{theorem}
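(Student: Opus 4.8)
The plan is to prove the two inequalities $R(T,W_8)\ge 13$ and $R(T,W_8)\le 13$ separately, for each $T\in\{A,B,C\}$.

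For the lower bound I would take $G=2K_6$, the disjoint union of two copies of $K_6$, on $12$ vertices. Every component of $G$ has order $6<7$, so $G$ contains no tree of order $7$; in particular $A,B,C\not\subseteq G$. Moreover $\overline{G}=K_{6,6}$ is bipartite, hence triangle-free, whereas $W_8$ contains triangles (the hub together with any edge of the rim), so $W_8\not\subseteq\overline{G}$. This certifies $R(T,W_8)\ge 13$, which is precisely the Chv\'atal--Harary bound $(c(T)-1)(\chi(W_8)-1)+1=6\cdot 2+1=13$.

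For the upper bound I would let $G$ be an arbitrary graph on $13$ vertices with $W_8\not\subseteq\overline{G}$ and show $T\subseteq G$. If $\delta(G)\ge 6$, then Lemma~\ref{lem:tree} gives at once that $G$ contains every tree of order $7$, so I may assume $\delta(G)\le 5$; then $\overline{G}$ has a vertex $v$, chosen of maximum $\overline{G}$-degree, with $d_{\overline{G}}(v)\ge 7$. Put $N=N_{\overline{G}}(v)$ and $M=V(G)\setminus(\{v\}\cup N)$, so $|N|=d\ge 7$, $|M|=12-d\le 5$, and in $G$ the vertex $v$ is adjacent to all of $M$ and to none of $N$. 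Since $\overline{G}$ has no $W_8$ with hub $v$, the induced graph $\overline{G}[N]$ contains no $C_8$. When $d\ge 8$ this is a genuine restriction: if $\delta(\overline{G}[N])\ge |N|/2$ then Lemma~\ref{lem:pancyclic} forces $\overline{G}[N]$ to be pancyclic or a balanced complete bipartite graph, each of which contains $C_8$ -- a contradiction -- so $\overline{G}[N]$ has a vertex of degree less than $|N|/2$ in $\overline{G}[N]$, i.e.\ a vertex of large degree in $G[N]$. Iterating this, and applying Corollaries~\ref{cor:bipartite4-6} and~\ref{cor:bipartite4-8} and Lemma~\ref{lem:1-1relation} to a suitable bipartition of a subset of $N$ (each of which would again exhibit a $C_8$ in $\overline{G}[N]$, hence a $W_8$ with hub $v$, unless $G[N]$ has few low-degree vertices), I would force $G[N]$ to contain a piece dense enough to host $T$ -- a large near-clique, or a path with high-degree interior vertices -- using vertices of $M$ or the vertex $v$ as the pendant leaves of $T$ when needed. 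In the sub-cases where $G$ (or $G[N]$) splits off a small part, I would instead invoke Observation~\ref{obs2} and Lemma~\ref{lm4:R(Sn(1,2),W8)} on the small piece $H_1$ and the rest $H_2$: these produce $W_8\subseteq\overline{G}$ unless $H_1$ is so dense that $H_2$ already contains $T$.

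The main obstacle is the borderline case $d=7$, equivalently $\delta(G)=5$: then $\overline{G}[N]$ has only seven vertices, so ``no $C_8$'' is vacuous and the $W_8$-freeness of $\overline{G}$ carries no local information about $N$ whatsoever. Here one is reduced to a purely structural argument, using how the five vertices of $M$ attach to the seven vertices of $N$ and to one another, to either embed $T$ inside $G[N\cup M]$ or contradict $\delta(G)=5$. A secondary difficulty is the bookkeeping: the argument splits into sub-cases according to $d\in\{7,\dots,12\}$ and the shape of the sparsest part of $\overline{G}[N]$, and in each sub-case the embedding must be checked for all three trees, which differ only in the placement of their branch vertices and pendant leaves ($A$ is the spider with legs of lengths $1,1,4$, $B$ the spider with legs $1,2,3$, and $C$ the caterpillar whose spine is a $P_5$ carrying pendant leaves at its second and third vertices; see Figure~\ref{fig:Tree graph of order 7}). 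Setting up one embedding lemma flexible enough to handle $A$, $B$ and $C$ simultaneously is what keeps this case analysis from ballooning.
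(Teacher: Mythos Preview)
Your lower bound is exactly right and matches the paper.

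For the upper bound, however, your plan has a genuine gap and also misses the key leverage that makes the paper's argument work. You yourself identify the case $d=7$ (equivalently $\delta(G)=5$) as the ``main obstacle'' and then say only that ``one is reduced to a purely structural argument'' without supplying one. That case is not a loose end: it is the heart of the problem. With $|N|=7$ the constraint $C_8\not\subseteq\overline{G}[N]$ is vacuous, so none of Lemma~\ref{lem:pancyclic}, Corollary~\ref{cor:bipartite4-6}, or Corollary~\ref{cor:bipartite4-8} gives any information, and your outline provides no substitute. Even for $d\ge 8$ the phrase ``iterating this \dots\ I would force $G[N]$ to contain a piece dense enough to host $T$'' is a hope, not an argument; you have not said what ``dense enough'' means for $A$, $B$, $C$, nor how the pendant leaves get attached when $|M|$ is small.

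The paper avoids this entirely by bootstrapping from an already-known Ramsey number: by Theorem~\ref{thm:R(Pn-Sn(1,2)-Sn(3)-Sn(2,1),W8)}, $R(S_7(2,1),W_8)=13$, so any $G$ of order $13$ with $W_8\not\subseteq\overline{G}$ already contains a copy of $S_7(2,1)$. Labelling that copy and the six leftover vertices $U$, the paper then assumes $T\not\subseteq G$ for each $T\in\{A,B,C\}$ in turn and runs a focused case analysis (using Lemma~\ref{lem:pancyclic} on carefully chosen $8$-vertex sets, together with Lemma~\ref{lem:1-1relation} and Observation~\ref{obs2}) to exhibit $W_8\subseteq\overline{G}$. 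The point is that having a concrete labelled spanning tree of order $7$ inside $G$ replaces your minimum-degree dichotomy, and in particular sidesteps the $d=7$ black hole. If you want to salvage your approach, the place to start is to find some guaranteed substructure in $G$ that plays the role of $S_7(2,1)$; otherwise, adopt the paper's bootstrap.
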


\begin{proof}
Note that $G=2K_6$ does not contain $A$, $B$ or $C$ and that $\overline{G}$ does not contain $W_8$. 
Therefore, $R(T,W_8)\geq 13$ for $T = A,B,C$.

Let $G$ be a graph of order $13$ whose complement $\overline{G}$ does not contain $W_8$. 
By Theorem~\ref{thm:R(Pn-Sn(1,2)-Sn(3)-Sn(2,1),W8)}, $G$ has a subgraph $T=S_7(2,1)$. 
Label $V(T)$ as in Figure~\ref{fig:S_7(2,1) subgraph G}.
Set $U=V(G)-V(T)$; then $|U|=6$. 

First suppose that $A\nsubseteq G$.
Then $v_1$ is not adjacent to $v_2$ or $v_6$,
and $v_2$ and $v_5$ are not adjacent.

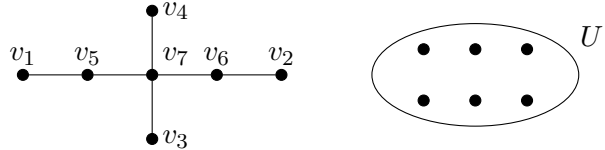
\begin{figure}[ht]
  \centering
  \begin{tikzpicture}[scale=.85]
    \begin{scope}[shift={(0,0)}]
      \foreach \nn/\x/\y in {1/0/1, 2/4/1, 3/2/0/, 4/2/2, 5/1/1, 6/3/1, 7/2/1}{
        \node [vertex,fill=black,scale=0.8] (\nn) at (\x,\y) {};}
      \draw (1) -- (5) -- (7) -- (6) -- (2)  (3) -- (7) -- (4);
      \foreach \nn in {1,5,6,2}{\draw (\nn) node[above](){$v_\nn$};}
      \foreach \nn in {3,4}{    \draw (\nn) node[right](){$v_\nn$};}
      \draw (7) node[above right](){$v_7$};      
    \end{scope}
    \begin{scope}[shift={(7,1)}]
      \foreach \x in {-.8,0,.8}{\foreach \y in {-.4,.4}{\draw (\x,\y) node[vertex,fill=black,scale=0.8] {};}}
      \draw (0,0) ellipse (1.6 and .8);
      \draw (1.8,.6) node {$U$};    
    \end{scope}  
  \end{tikzpicture}
  \caption{$S_7(2,1)$ and $U$ in~$G$.}
  \label{fig:S_7(2,1) subgraph G}
\end{figure}

\noindent {\bf Case 1a}: There is a vertex in $U$, say $u$, that is adjacent to $v_1$.

Since $A$ is not contained in $G$, 
$v_1$ is not adjacent to $v_3$, $v_4$ or any vertex of $U$ other than~$u$. 
Let $W=\{v_2,v_3,v_4,v_6,u_1,\ldots,u_4\}$ for any $4$ vertices $u_1,\ldots,u_4$ in $U$ other than~$u$.
If $\delta(\overline{G}[W])\geq 4$, 
then $\overline{G}[W]$ contains $C_8$ by Lemma~\ref{lem:pancyclic} and, 
together with $v_1$ as hub, forms $W_8$, a contradiction. 
Thus, $\delta(\overline{G}[W])\leq 3$ and $\Delta(G[W])\geq 4$. 
Note that $|N_{G[\{u_1,\ldots,u_4,v_i\}]}(v_i)|\leq 1$ for $i=2,3,4,6$ since $G$ does not contain~$A$. 
It is now straightforward to check that $v_2$, $v_3$, $v_4$ and $v_6$ cannot be the vertex with degree at least $4$. 
Without loss of generality, assume that $u_1$ has degree at least $4$ in $G[W]$.
Then $u_1$ is adjacent to at least one of $v_2,v_3,v_4,v_6$, 
so $G$ contains $A$, a contradiction. 

\smallskip
\noindent {\bf Case 1b}: $v_1$ is not adjacent to any vertices in $U$.

By arguments similar to those in Case~1a, $v_2$ is not adjacent to any vertex in $U$.
Let $W = \{v_2,v_6\}\cup U$.
If $\delta(\overline{G}[W])\geq 4$, 
then $\overline{G}[W]$ contains $C_8$ by Lemma~\ref{lem:pancyclic} which, 
with $v_1$ as hub, forms $W_8$ in $\overline{G}[W]$, 
a contradiction. 
Thus, $\delta(\overline{G}[W])\leq 3$ and $\Delta(G[W])\geq 4$. 
Since $v_2$ is not adjacent to any vertex in~$U$, 
there are only three subcases to be considered.

\smallskip
\noindent {\bf Subcase 1b.1}: $d_{G[W]}(v_6)\geq 4$. 

Label $U = \{u_1,\ldots,u_6\}$ so that $v_6$ is adjacent to $u_1$, $u_2$ and $u_3$ in $G[W]$. 
Since $G$ does not contain $A$, vertices $u_1,u_2,u_3,v_2$ are not adjacent to $v_3$ or $v_4$ in~$G$.
Note that by arguments as in Case 1a, $u_1$, $u_2$ and $u_3$ are isolated vertices in $G[U]$. 
Then $v_1u_4u_2v_3v_2u_5u_3u_6v_1$ and $u_1$ form $W_8$ in $\overline{G}$, a contradiction. 

\smallskip
\noindent {\bf Subcase 1b.2}: $d_{G[W]}(v_6)\leq 3$ and $v_6$ is adjacent to a vertex $u\in U$ with $d_{G[W]}(u)\geq 4$.

The graph $G$ contains $A$, with $u$ as the vertex of degree $3$ in $A$, a contradiction.

\smallskip
\noindent {\bf Subcase 1b.3}: $d_{G[W]}(v_6)\leq 3$ and $v_6$ is not adjacent to any vertex $u\in U$ with $d_{G[W]}(u)\geq 4$.

Label $V(U) = \{u_1,\ldots,u_6\}$ so that $u_6$ is adjacent to $u_2$, $u_3$, $u_4$ and $u_5$ in~$G$. 
Since $A\nsubseteq G$, none of $v_1,\ldots,v_7$ is adjacent in $G$ to any of $u_2,\ldots,u_5$. 
If $v_1$ is not adjacent in $G$ to any two of the vertices $v_3,v_4,v_7$, 
then $\overline{G}$ contains $W_8$ by Observation~\ref{obs2}, a contradiction. 
Therefore, $N_{G[v_3,v_4,v_7]}(v_1)\geq 2$ and, 
similarly, $N_{G[v_3,v_4,v_7]}(v_2)\geq 2$. 
Hence, one of $v_3,v_4,v_7$ is adjacent in $G$ to both $v_1$ and $v_2$. 
If $v_3$ or $v_4$ is adjacent to both $v_1$ and $v_2$, 
then $G$ contains $A$, with $v_7$ as vertex of degree $3$, a contradiction.
Finally, if both $v_1$ and $v_2$ are adjacent in $G$ to $v_7$ 
and each of them is adjacent to a different vertex in $v_3$ and $v_4$, 
then $G$ also contains $A$, where either $v_1$ or $v_2$ is the vertex of degree $3$, a contradiction.

Therefore, $R(A,W_8)\leq 13$, so $R(A,W_8) = 13$. 

Now, suppose that $B\nsubseteq G$.
Then $v_1,v_2,v_5,v_6$ are not adjacent to $v_3$ or $v_4$ in $G$, 
and $v_1$ and $v_2$ are not adjacent to $U$ in~$G$. 
Label the vertices $U=\{u_1,\ldots,u_6\}$ and let $W=\{v_3,v_4\}\cup U$. 
If $\delta(\overline{G}[W])\geq 4$, 
then $\overline{G}[W]$ contains $C_8$ by Lemma~\ref{lem:pancyclic} 
which, with $v_1$ as hub, forms $W_8$, a contradiction. 
Therefore, $\delta(\overline{G}[W])\leq 3$ and $\Delta(G[W])\geq 4$. 
If $v_3$ or $v_4$ is adjacent to the vertex of degree at least $4$ in $G[W]$, 
then $B$ is contained in $G$, with $v_7$ as the vertex of degree $3$. 
Hence, only two cases need to be considered. 

\smallskip
\noindent {\bf Case 2a}: $v_3$ or $v_4$ is the vertex of degree at least $4$ in $G[W]$.

Without loss of generality, assume that $v_3$ is the vertex of degree at least $4$ in $G[W]$. 
As previously shown, $v_3$ is not adjacent to $v_4$.
Therefore, it may be assumed that $v_3$ is adjacent to $u_1$, $u_2$, $u_3$ and $u_4$ in~$G$. 
Since $B\nsubseteq G$, $u_1,\ldots,u_4$ are independent in $G$ and are not adjacent to $\{v_1,v_2,v_4,v_5,v_6\}$.
Also, $v_1$ is not adjacent to $v_6$ and $v_2$ is not adjacent to $v_5$. 
Then $v_1v_6u_2v_2v_5u_3v_4u_4v_1$ and $u_1$ form $W_8$ in $\overline{G}$, a contradiction.

\smallskip

\noindent {\bf Case 2b}: One of the vertices in $U$, say $u_1$, is the vertex of degree at least $4$ in $G[W]$.

As above, $u_1$ is not adjacent to $v_3$ or $v_4$ in~$G$. 
It may then be assumed that $u_1$ is adjacent to $u_2$, $u_3$, $u_4$ and $u_5$. 
Since $B\nsubseteq G$, $v_1,\ldots,v_7$ are not adjacent to $\{u_2,\ldots,u_5\}$. 
Note that $v_3$ is not adjacent to $\{v_1,v_2,v_5,v_6\}$.
By Observation~\ref{obs2}, $\overline{G}$ contains $W_8$, a contradiction.

\smallskip

Therefore, $R(B,W_8)\leq 13$. 

Lastly, suppose that $C\nsubseteq G$.
Then $v_5$ and $v_6$ are not adjacent in $G$ to each other or to $v_3$, $v_4$ or $U$. 
Furthermore, $v_5$ is not adjacent to $v_2$ and $v_6$ is not adjacent to $v_1$. 
Label the vertices $U=\{u_1,\ldots,u_6\}$ and let $W=\{v_3,v_4,v_6,u_1,\ldots,u_5\}$. 
If $\delta(\overline{G}[W])\geq 4$, 
then $\overline{G}[W]$ contains $C_8$ by Lemma~\ref{lem:pancyclic} which, 
with $v_5$ as hub, forms $W_8$, a contradiction. 
Then $\delta(\overline{G}[W])\leq 3$ and $\Delta(G[W])\geq 4$. 
Since $v_6$ is not adjacent to $v_3$, $v_4$ or $U$, 
$v_6$ is not the vertex of degree at least $4$ in $G[W]$ and is not adjacent to that vertex. 
Note that if $v_3$ or $v_4$ is the vertex of degree $4$, 
then $G$ contains $C$, with $v_3$ or $v_4$ and $v_7$ as the vertices of degree $3$. 
Thus, one of the vertices in $U$, say $u_1$, is the vertex of degree at least $4$ in $G[W]$. 
Now, consider the following three cases. 
\smallskip

\noindent {\bf Case 3a}: Both $v_3$ and $v_4$ are adjacent to $u_1$ in $G[W]$. 

Suppose that $u_1$ is also adjacent to $u_2$ and $u_3$ in $G[W]$. 
Since $C\nsubseteq G$, $v_3$ is not adjacent in $G$ to $v_4$ 
and neither $v_3$ nor $v_4$ is adjacent to $\{v_1,v_2,v_5,v_6,u_2,\ldots,u_6\}$. 
Note that $|N_{G[\{v_1,v_2,u_i\}]}(u_i)|\leq 1$ for $i=2,3$ since $C\nsubseteq G$. 
If $v_1$ is adjacent to $u_2$ and $u_3$ in $\overline{G}$, 
then $v_1u_2v_5u_4v_3u_5v_6u_3v_1$ and $v_4$ form $W_8$ in $\overline{G}$, a contradiction. 
Therefore, $v_1$ is adjacent in $G$ to at least one of $u_2$ and $u_3$. 
Similarly, $v_2$ is adjacent to at least one of $u_2$ and $u_3$. 
Since $|N_{G[\{v_1,v_2,u_i\}]}(u_i)|\leq 1$ for $i=2,3$, 
$v_1$ is adjacent to $u_2$ and $v_2$ is adjacent to $u_3$, or vice versa. 
Then neither $u_2$ nor $u_3$ is adjacent in $G$ to $u_4, u_5, u_6$, since $C\nsubseteq G$. 
Therefore, $v_1v_3v_2v_5u_2u_4u_3v_6v_1$ and $v_4$ form $W_8$ in $\overline{G}$, a contradiction.

\smallskip

\noindent {\bf Case 3b}: One of $v_3$ and $v_4$, say $v_3$, is adjacent to $u_1$ in $G[W]$.

Suppose that $u_1$ is adjacent to $u_2$, $u_3$ and $u_4$ in $G[W]$. 
Then $v_1, v_2, v_4, v_5, v_6, u_2, u_3, u_4\notin N_G(v_3)$ 
and $|N_{G[\{v_4,u_2,u_3,u_4\}]}(v_4)|\leq 1$. 
Without loss of generality, assume that $v_4$ is not adjacent to $u_2$ or $u_3$ in~$G$. 
Now, suppose that $v_4$ is adjacent to $u_4$ in~$G$. 
Since $C\nsubseteq G$, $u_4$ is not adjacent to $v_1$ or $v_2$ in~$G$. 
Then $v_1u_4v_2v_5u_2v_4u_3v_6v_1$ and $v_3$ form $W_8$ in $\overline{G}$, a contradiction. 
Otherwise, suppose that $v_4$ is not adjacent to $u_4$ in~$G$. 
Then, $|N_{G[\{u_i,v_1,v_2\}]}(u_i)|\leq 1$ for $i=2,3,4$ and at least two of $u_2$, $u_3$ and $u_4$ is not adjacent to $v_1$ or $v_2$ in~$G$. 
Without loss of generality, assume that $u_2$ and $u_3$ are not adjacent to $v_1$ in~$G$. 
In this case, $v_1u_2v_4u_4v_5u_5v_6u_3v_1$ and $v_3$ form $W_8$ in $\overline{G}$, again a contradiction. 

\smallskip

\noindent {\bf Case 3c}: $v_3$ and $v_4$ are both not adjacent in $G[W]$ to $u_1$. 

Assume that $u_1$ is adjacent to each of $u_2,\ldots,u_5$ in $G[W]$. 
Since $C\nsubseteq G$, $|N_{G[\{v_1,\ldots,v_7,u_i\}]}(u_i)|\leq 1$ for $i=2,\ldots,5$,
and $|N_{G[\{u_2,\ldots,u_5,v_j\}]}(v_j)|\leq 1$ for $j=3,4$. 
Since $|N_{G[\{v_1,v_2,u_i\}]}(u_i)|\leq 1$ for $i=2,\ldots,5$, 
one of $v_1$ and $v_2$, say $v_1$, satisfies $|N_{G[\{u_2,\ldots,u_5,v_1\}]}(v_1)|\leq 2$. 
By Lemma~\ref{lem:1-1relation}, 
$\overline{G}[v_1,v_3,v_4,v_5,u_2,\ldots,u_5]$ contains $C_8$ which, 
with hub $v_6$, forms $W_8$ in $\overline{G}$. 

\smallskip

Therefore, $R(C,W_8)\leq 13$. 
This completes the proof of the theorem. 
\end{proof}

\begin{theorem}\label{thm:R(D,W_8)}
$R(D,W_8)=14$.
\end{theorem}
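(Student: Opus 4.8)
The plan is to mimic the structure of the proof of Theorem~\ref{thm:ABC(7)} but starting from a larger host graph. First I would establish the lower bound $R(D,W_8)\geq 14$ by exhibiting a graph $G$ of order $13$ that contains no copy of $D$ and whose complement contains no $W_8$. The graph $2K_6$ used for $A,B,C$ does contain $D$ (since $D$ fits inside $K_7$, and in fact $D$ has $7$ vertices so it fits in $K_7$ but not $K_6$ — one must check), so a different construction is needed; the natural candidate is a graph on $13$ vertices whose components are small cliques, say $K_6\cup K_6$ with a single vertex joined appropriately, or more likely a graph of the form $G$ where $\overline{G}$ is built from $K_{6,6}$-type pieces so that $\overline{G}$ omits $W_8$ while $G$ still omits the particular spider $D$. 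Identifying the right extremal graph is the first task; I would look for $G$ on $13$ vertices with independence number small enough that $D\not\subseteq G$ (equivalently $\overline{G}$ has no independent-ish structure forcing $D$) and with $\overline{G}$ having no vertex whose non-neighbourhood contains $C_8$.

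For the upper bound $R(D,W_8)\leq 14$, I would take an arbitrary graph $G$ of order $14$ with $W_8\not\subseteq\overline{G}$ and derive a copy of $D$ in $G$. By Theorem~\ref{thm:R(Pn-Sn(1,2)-Sn(3)-Sn(2,1),W8)} we have $R(S_7(2,1),W_8)\le 14$, so $G$ contains $T=S_7(2,1)$; fix a labelled copy as in Figure~\ref{fig:S_7(2,1) subgraph G}, and set $U=V(G)\setminus V(T)$ with $|U|=7$. Assuming $D\not\subseteq G$, I would record the forced non-adjacencies inside $T\cup U$: since $D$ is the spider obtained from $P_5$ by attaching a path of length $2$ at the centre (vertex $3$ of the $P_5$), the absence of $D$ forbids, among other things, $v_7$ (or a vertex playing the role of the centre of a long enough path) from having an extra pendant-path neighbour. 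Then, exactly as in the $A,B,C$ cases, I would choose a set $W$ of $8$ vertices (built from the low-degree-in-$G$ vertices of $T$ together with enough vertices of $U$), observe that if $\delta(\overline{G}[W])\geq 4$ then Lemma~\ref{lem:pancyclic} gives $C_8\subseteq\overline{G}[W]$ and a suitable hub vertex completes $W_8$ in $\overline{G}$, a contradiction; hence $\Delta(G[W])\geq 4$, and a degree-$4$ vertex in $G[W]$ together with the path structure already present in $T$ forces a copy of $D$.

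The main obstacle, as in the sibling proof, will be the case analysis once $\Delta(G[W])\geq 4$: one must show that no choice of the degree-$4$ vertex (whether it lies in $T$ or in $U$) can avoid creating $D$, and when it cannot be ruled out directly one falls back on the bipartite-cycle tools (Lemma~\ref{lem:1-1relation}, Corollaries~\ref{cor:bipartite4-6} and~\ref{cor:bipartite4-8}) or on Observation~\ref{obs2} to find $W_8$ in $\overline{G}$ instead. I expect this to split into three or four subcases mirroring Cases~1a--1b and 3a--3c above, with the extra vertex in $U$ (one more than in the order-$13$ argument) giving slightly more room, which is exactly why the threshold rises from $13$ to $14$. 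The delicate point will be handling the configuration where the degree-$4$ vertex is one of $v_3,v_4,v_7$: here $D$ does not appear automatically the way $A$ did, and one must use the finer adjacency structure of $S_7(2,1)$ — in particular the two pendant vertices at $v_7$ — to either extract $D$ or to locate an $S_5$ in $\overline{G}$ and invoke Observation~\ref{obs2}.
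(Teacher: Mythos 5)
Your overall architecture (extremal graph of order $13$ for the lower bound; host tree of order $7$ plus a degree/pancyclicity dichotomy for the upper bound) matches the paper's, but as written there are genuine gaps in both directions of the inequality. For the lower bound you never actually produce the order-$13$ witness; you only list properties it ought to have and float candidates (``$K_6\cup K_6$ with a single vertex joined appropriately'') without verifying any of them. Note also that your stated reason for discarding $2K_6$ is wrong: $2K_6$ cannot contain $D$, since each component has only six vertices while $D$ has seven; the real defect of $2K_6$ is that it has order $12$ and therefore only yields $R(D,W_8)\geq 13$. The paper's witness is $G=K_6\cup H$, where $H$ is the specific $7$-vertex graph of Figure~\ref{fig:S_7(3)} (a $5$-cycle with two further vertices attached), and one must check both that $H$ has no spanning copy of the spider $D=S_7(3,1)$ and that $\overline{K_6\cup H}$ has no $W_8$. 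Producing and verifying such an $H$ is the entire content of the lower bound, and it is absent from your proposal.

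For the upper bound, starting from $S_7(2,1)\subseteq G$ via Theorem~\ref{thm:R(Pn-Sn(1,2)-Sn(3)-Sn(2,1),W8)} is legitimate and genuinely different from the paper, which instead extracts $B\subseteq G$ from Theorem~\ref{thm:ABC(7)}; $B$ is a convenient host because lengthening its pendant leg at $v_7$ by any single edge already yields $D$, so $v_7$ is instantly forced to be non-adjacent to $v_6$ and to all of $U$ and can serve as a hub throughout. But beyond choosing the host tree you defer the whole argument: ``I expect this to split into three or four subcases'' is a plan, not a proof. In the paper this step is a two-page analysis (Cases 1--3, each with sub-subcases, repeatedly exhibiting explicit $8$-cycles in $\overline{G}$ and invoking Lemma~\ref{lem:1-1relation}), and nothing in your sketch indicates how the genuinely hard configurations --- for instance when the vertex of degree at least $4$ in $G[W]$ lies in $U$ and is not adjacent to the intended hub --- would be resolved. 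As it stands, neither inequality is established.
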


\begin{proof}
Let $G=K_6\cup H$ where $H$ is the graph shown in Figure~\ref{fig:S_7(3)}.

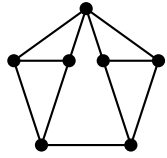
\begin{figure}[ht!]
  \centering
    \begin{tikzpicture}[scale=1,thick]
      \foreach \nn in {1,...,5}{\node[vertex, fill=black,scale=0.8] (\nn) at (72*\nn+90:1) {};}
        \node[vertex,fill=black,scale=.8] (6) at (126:.382) {};
        \node[vertex,fill=black,scale=.8] (7) at ( 54:.382) {};
        \draw (1)--(2)--(3)--(4)--(5)--(1)--(6)--(5) (2)--(6) (3)--(7)--(5) (4)--(7);
    \end{tikzpicture}
  \caption{The graph $H$.}
  \label{fig:S_7(3)}
\end{figure}

Since $G$ does not contain $D$ and $\overline{G}$ does not contain $W_8$, 
$R(D,W_8)\geq 14$.

Now, let $G$ be any graph of order $14$. 
Suppose neither $G$ contains $D$ as a subgraph, nor $\overline{G}$ contains $W_8$ as a subgraph. 
By Theorem~\ref{thm:ABC(7)}, $B\subseteq G$. 
Label the vertices of $B$ as shown in Figure~\ref{fig:B subgraph G} and set $U=\{u_1,\ldots,u_7\}=V(G)-V(B)$. 
Since $D\nsubseteq G$, $v_7$ is non-adjacent to $v_6$ and $U$, 
and $v_4$ is non-adjacent to $v_1$ and $v_2$. 

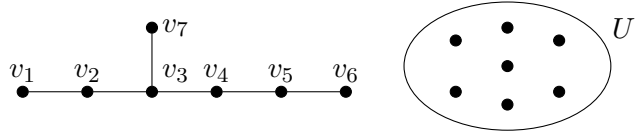
\begin{figure}[ht]
  \centering
  \begin{tikzpicture}[scale=.85]
    \begin{scope}[shift={(0,0)}]
      \foreach \nn/\x/\y in {1/-3/1, 2/-2/1, 3/-1/1/, 4/0/1, 5/1/1, 6/2/1, 7/-1/2}{
        \node [vertex,fill=black,scale=0.8] (\nn) at (\x,\y){};}
      \draw (1) -- (2) -- (3) -- (4) -- (5) -- (6)  (7) -- (3);
      \foreach \nn in {1,2,4,5,6}{\draw (\nn) node[above](){$v_\nn$};}
      \draw (3) node[above right](){$v_3$};
      \draw (7) node[      right](){$v_7$};
    \end{scope}
    \begin{scope}[shift={(4.5,1.4)}]
      \foreach \x in {-.8,.8}{\foreach \y in {-.4,.4}{\draw (\x,\y) node[vertex,fill=black,scale=0.8] {};}}
      \foreach \x in {0}{\foreach \y in {-.6,0,.6}{\draw (\x,\y) node[vertex,fill=black,scale=0.8] {};}}
      \draw (0,0) ellipse (1.6 and 1);
      \draw (1.8,.6) node {$U$};    
    \end{scope}  
  \end{tikzpicture}
  \caption{$B\subseteq G$.}
  \label{fig:B subgraph G}
\end{figure}

Let $W=\{v_6\}\cup U$.
If $\delta(\overline{G}[W])\geq 4$, 
then $\overline{G}[W]$ contains $C_8$ by Lemma~\ref{lem:pancyclic} which, with $v_7$ as hub, forms $W_8$, a contradiction. 
Thus, $\delta(\overline{G}[W])\leq 3$ and $\Delta(G[W])\geq 4$. 
Three cases will now be considered. 

\smallskip
\noindent {\bf Case 1}: $v_6$ is the vertex of degree at least $4$ in $G[W]$. 

Assume that $v_6$ is adjacent to $u_1$, $u_2$, $u_3$ and $u_4$ in $G[W]$. 
Then $v_5$ is adjacent to $v_1$ and $v_2$ in $\overline{G}$ 
 and $v_3$ is adjacent in $\overline{G}$ to $v_6$, $u_1$, $u_2$, $u_3$ and $u_4$. 

\smallskip
\noindent {\bf Subcase 1.1}: $E_G(\{u_1,\ldots,u_4\},\{u_5,u_6,u_7\})\neq \emptyset$. 

Without loss of generality, assume that $u_1$ is adjacent to $u_5$ in~$G$. 
Since $D\nsubseteq G$, 
$\{u_2,u_3,u_4\}$ is independent in $G$ and is adjacent to $v_1$, $v_2$, $u_6$ and $u_7$ in $\overline{G}$;
$v_6$ is adjacent in $\overline{G}$ to $v_1$ and $v_2$; 
$v_4$ and $v_5$ are adjacent in $\overline{G}$ to $u_1$ and $u_5$;
and 
$v_3$ is adjacent in $\overline{G}$ to $u_5$.
If $v_4$ is adjacent to $u_2$ in $G$, 
then $v_5$ is adjacent in $\overline{G}$ to $u_3$ and $u_4$, 
so $v_1v_5v_2u_2u_6v_7u_7u_3v_1$ and $u_4$ form $W_8$ in $\overline{G}$, a contradiction. 
Thus, $v_4$ is adjacent to $u_2$ in $\overline{G}$, 
and $v_1v_4v_2u_4u_6v_7u_7u_3v_1$ and $u_2$ form $W_8$ in $\overline{G}$, again a contradiction.

\smallskip
\noindent {\bf Subcase 1.2}: $\{u_1,\ldots,u_4\}$ is not adjacent to $\{u_5,u_6,u_7\}$ in $G[W]$.

Suppose that $v_5$ is adjacent in $G$ to $v_7$; 
then $v_7$ is not adjacent to $v_1$ or $v_2$. 
If $|N_{G[\{u_1,\ldots,u_4,v_2\}]}(v_2)|\leq 2$, 
then $\overline{G}[u_1,\ldots,u_7,v_2]$ contains $C_8$ by Lemma~\ref{lem:1-1relation} which with $v_7$ forms $W_8$ in $\overline{G}$, a contradiction. 
Thus, $|N_{G[\{u_1,\ldots,u_4,v_2\}]}(v_2)|\geq 3$, 
so $v_1$ is not adjacent to $u_1,\ldots,u_4$ in~$G$. 
By Lemma~\ref{lem:1-1relation}, 
$\overline{G}[u_1,\ldots,u_7,v_1,v_7]$ contains $W_8$, a contradiction.

Hence, $v_5$ is not adjacent to $v_7$ in~$G$. 
If $|N_{G[\{u_1,\ldots,u_4,v_5\}]}(v_5)|\leq 2$, 
then $\overline{G}[u_1,\ldots,u_7,v_5]$ contains $C_8$ by Lemma~\ref{lem:1-1relation} 
which with $v_7$ forms $W_8$ in $\overline{G}$, a contradiction. 
Thus $|N_{G[\{u_1,\ldots,u_4,v_5\}]}(v_5)|\geq 3$, 
so $v_4$ is not adjacent to $\{u_1,\ldots,u_4\}$ in $G$, 
or else $G$ will contain $D$ with $v_4$ be the vertex of degree 3. 
By Lemma~\ref{lem:1-1relation}, $\overline{G}[u_1,\ldots,u_7,v_1]$ contains $C_8$. 
If $v_4$ is not adjacent to $v_7$ in $G$, 
then $\overline{G}$ contains $W_8$, a contradiction. 
Thus, $v_4$ is adjacent to $v_7$, 
and since $D\nsubseteq G$, $v_1$ is not adjacent to $v_7$. 
If $|N_{G[\{u_1,\ldots,u_4,v_1\}]}(v_1)|\leq 2$, 
then $\overline{G}[u_1,\ldots,u_7,v_1]$ contains $C_8$ by Lemma~\ref{lem:1-1relation} 
which with $v_7$ forms $W_8$, a contradiction. 
Thus, $|N_{G[\{u_1,\ldots,u_4,v_1\}]}(v_1)|\geq 3$, 
so $|N_{G[\{u_1,\ldots,u_4,v_1\}]}(v_1)\cap  N_{G[\{u_1,\ldots,u_4,v_5\}]}(v_5)|\geq 2$, 
and $G$ contains $D$ with $v_5$ as the vertex of degree $3$, a contradiction. 

\smallskip
\noindent {\bf Case 2}: $u_1$ is the vertex of degree at least $4$ in $G[W]$ and $v_6$ is adjacent to $u_1$.

Without loss of generality, suppose that $u_1$ is adjacent to $u_2$, $u_3$ and $u_4$ in $G[W]$. 
If $v_5$ is adjacent to $u_1$, then Case 1 applies with $v_6$ replaced by $u_1$.
Suppose then that $v_5$ is not adjacent to $u_1$.
Since $D\nsubseteq G$, $v_1$ and $v_2$ are not adjacent in $G$ to $v_4$, $v_5$ or $v_6$;
$v_3$ is not adjacent to $v_6, u_1, \ldots, u_4$;
and $v_4$ is not adjacent to $u_1,\ldots,u_4$. 

\smallskip
\noindent {\bf Subcase 2.1}: $E_G(\{u_2,u_3,u_4\},\{u_5,u_6,u_7\})\neq \emptyset$. 

Without loss of generality, assume that $u_2$ is adjacent to $u_5$ in~$G$. 
Then $u_3$ and $u_4$ are not adjacent to each other or to $v_1, v_2, u_6, u_7$. 
Also, $u_1$ is not adjacent to $v_1$ or $v_2$, 
and neither $u_2$ nor $u_5$ is adjacent to $v_3, v_4, v_5, v_6$.

Suppose that $v_7$ is adjacent to $v_4$ in~$G$. 
If $u_1$ is adjacent to $v_1$, $u_5$, $u_6$ or $u_7$, 
then Case 1 can be applied through a slight adjustment of the vertex labelings. 
Suppose that $u_1$ is not adjacent to any of these vertices.
Since $D\nsubseteq G$, $v_7$ is not adjacent to $v_1$. 
If $v_6$ is not adjacent to $u_6$,
then $v_1u_1u_5v_6u_6u_3u_7u_4v_1$ and $v_7$ form $W_8$ in $\overline{G}$, a contradiction. 
Similarly, $\overline{G}$ contains $W_8$ if $v_6$ is not adjacent to $u_7$, a contradiction.
Therefore, $v_6$ is adjacent to both $u_6$ and $u_7$ in~$G$. 
Since $D\nsubseteq G$, $u_6$ is not adjacent to $u_7$, 
and neither $u_6$ nor $u_7$ is adjacent to $u_2$. 
Then $v_1u_1u_5v_6u_2u_6u_7u_3v_1$ and $v_7$ form $W_8$ in $\overline{G}$, a contradiction. 

Suppose now that $v_7$ is not adjacent to $v_4$ in~$G$. 
If $v_7$ is adjacent to $v_5$, 
then $v_7$ is not adjacent to $v_1$ or $v_2$, 
and $v_4$ is not adjacent to $v_6$, $u_6$ or $u_7$. 
Then $v_1u_1v_2u_3u_6v_4u_7u_4v_1$ and $v_7$ form $W_8$ in $\overline{G}$, a contradiction. 
Therefore, $v_7$ is not adjacent to $v_5$ in~$G$. 
If $v_6$ is not adjacent to $u_3$, 
then $u_3v_6u_2v_5u_5v_4u_4u_6u_3$ and $v_7$ form $W_8$ in $\overline{G}$, a contradiction.  
Similarly, $\overline{G}$ contains $W_8$ if $v_6$ is not adjacent to $u_4$, a contradiction.
Then $v_6$ is adjacent to both $u_3$ and $u_4$ in $G$, 
so $v_6$ is not adjacent to $u_6$ and $u_7$, or else Case 1 applies. 
Hence, $v_4u_2v_5u_5v_6u_6u_3u_4v_4$ and $v_7$ form $W_8$ in $\overline{G}$, a contradiction.  

\smallskip

\noindent {\bf Subcase 2.2}: $\{u_2,u_3,u_4\}$ is not adjacent to $\{u_5,u_6,u_7\}$ in $G[W]$.

If $|N_{G[\{u_2,u_3,u_4,v_6\}]}(v_6)|\geq 3$ 
or $|N_{G[\{u_5,u_6,u_7,v_6\}]}(v_6)|\geq 3$, then Case 1 applies, 
so $|N_{G[\{u_2,u_3,u_4,v_6\}]}(v_6)|$ $\leq 2$ 
and $|N_{G[\{u_5,u_6,u_7,v_6\}]}(v_6)|\leq 2$. 
Without loss of generality, 
assume that $v_6$ is not adjacent in $G$ to $u_2$ or $u_5$. 

Suppose that $v_4$ is not adjacent to $v_7$ in~$G$. 
If $u_5$ is adjacent to $u_6$ or $u_7$, say $u_6$, 
then $v_4$ is not adjacent to $u_5$ or $u_6$,
so $v_4u_2v_6u_5u_3u_7u_4u_6v_4$ and $v_7$ form $W_8$ in $\overline{G}$, a contradiction. 
If $u_5$ is not adjacent to $u_6$ or $u_7$, 
then $v_4u_2v_6u_5u_6u_3u_7u_4v_4$ and $v_7$ form $W_8$ in $\overline{G}$, a contradiction. 
Suppose that $v_4$ is adjacent to $v_7$ in~$G$. 
By similar arguments to those in Subcase 2.1, 
$u_1$ is not adjacent to $v_1$, $u_5$, $u_6$ or $u_7$, 
and $v_7$ is not adjacent to $v_1$. 
Then $v_1v_6u_5u_2u_6u_3u_7u_1v_1$ and $v_7$ form $W_8$ in $\overline{G}$, a contradiction. 

\smallskip

\noindent {\bf Case 3}: $u_1$ is the vertex of degree at least $4$ in $G[W]$ and $v_6$ is not adjacent to $u_1$.

Assume that $u_1$ is adjacent to $u_2$, $u_3$, $u_4$ and $u_5$ in $G[W]$. 
Since $D\nsubseteq G$, $v_3$ and $v_4$ are not adjacent to $u_1$, $u_2$, $u_3$, $u_4$ or $u_5$ in~$G$. 
If either $v_1$ or $v_5$ are adjacent to $u_1$ in $G$, then Case 1 applies, 
so suppose that $v_1$ and $v_5$ are not adjacent to $u_1$.
In addition, $v_1$ and $v_5$ is not adjacent to $u_2$, $u_3$, $u_4$ or $u_5$ in $G$, 
or else Case 2 applies. 

\noindent {\bf Subcase 3.1}: $N_{G[u_2,\ldots,u_5]}(v_6)\neq \emptyset$.

Assume that $v_6$ is adjacent to $u_2$ in~$G$. 
Note that $v_4$ is not adjacent to $v_6$, $v_7$, $u_6$ or $u_7$ in $G$, 
and $v_3$ is not adjacent to $v_5$ in $G$, 
or else Case 2 applies by slight adjustment of vertex labels. 
Since $D\nsubseteq G$, $v_1$ and $v_2$ are not adjacent in $G$ to $v_5$, $v_6$ or $u_2$, 
and $v_3$ is not adjacent to $v_6$ in~$G$. 

If $u_2$ and $u_6$ are not adjacent in $G$, 
then $v_1u_1v_6v_2u_2u_6v_7u_3v_1$ and $v_4$ form $W_8$ in $\overline{G}$, a contradiction. 
A similar contradiction arises if $u_2$ and $u_7$ not adjacent.
Therefore, $u_2$ is adjacent to both $u_6$ and $u_7$ in $G$, 
and $u_3$, $u_4$ and $u_5$ are not adjacent to $u_6$ or $u_7$ in $G$ since $D\nsubseteq G$. 
Then $v_1u_1v_6v_2u_2v_7u_6u_3v_1$ and $v_4$ form $W_8$ in $\overline{G}$, a contradiction. 

\smallskip

\noindent {\bf Subcase 3.2}: $N_{G[u_2,\ldots,u_5]}(v_6)=\emptyset$.

Suppose that $v_1$ is adjacent to $v_7$ in~$G$. 
Then $v_2$ is not adjacent to $v_5$, $v_6$ or $U$ since $D\nsubseteq G$. 
If $|N_{G[\{u_2,\ldots,u_6\}]}(u_6)|\leq 2$, 
then Lemma~\ref{lem:1-1relation} implies that $\overline{G}[u_2,\ldots,u_5,v_4,v_5,v_6,u_6]$
contains $C_8$ in $\overline{G}$ which with $v_2$ forms $W_8$, a contradiction. 
Thus, $|N_{G[\{u_2,\ldots,u_6\}]}(u_6)|\geq 3$. 
Similarly, $|N_{G[\{u_2,\ldots,u_5,u_7\}]}(u_7)|\geq 3$. 
By the Inclusion-exclusion Principle, 
$|N_{G[\{u_2,\ldots,u_6\}]}(u_6)\cap N_{G[\{u_2,\ldots,u_5,u_7\}]}(u_7)|\geq 2$. 
Without loss of generality, 
    $u_6$ is adjacent to $u_2$, $u_3$ and $u_4$ in $G$, 
and $u_7$ is adjacent to $u_3$ and $u_4$, 
and $G[u_1,\ldots,u_7]$ contains $D$ with $u_3$ or $u_4$ being the vertex of degree 3, a contradiction. 

Now suppose that $v_1$ is not adjacent to $v_7$ in~$G$. 
If $v_7$ is adjacent to $v_4$ in $G$, 
then $v_2$ is not adjacent to any of $u_1, \ldots, u_5$ in $G$, 
or else either Case 1 or~2 applies. 
Also, $|N_{G[\{v_2,v_5,v_7\}]}(v_7)|\leq 1$ since $D\subseteq G$. 
Assume that $v_7$ is not adjacent to $v_2$ in~$G$. 
If $|N_{G[\{u_2,\ldots,u_6\}]}(u_6)|\leq 2$, 
then Lemma~\ref{lem:1-1relation} implies that $\overline{G}[u_2,\ldots,u_5,v_1,v_2,v_6,u_6]$
contains $C_8$ which with $v_7$ forms $W_8$, a contradiction. 
Thus, $|N_{G[\{u_2,\ldots,u_6\}]}(u_6)|\geq 3$. 
Similarly, $|N_{G[\{u_2,\ldots,u_5,u_7\}]}(u_7)|\geq 3$, 
so $|N_{G[\{u_2,\ldots,u_6\}]}(u_6)\cap N_{G[\{u_2,\ldots,u_5,u_7\}]}(u_7)|\geq 2$. 
By arguments similar to those in the previous paragraph, 
$G$ will contain a subgraph $D$, a contradiction.

\smallskip

Thus, $R(D,W_8)\leq 14$ which completes the proof of the theorem. 
\end{proof}

\begin{theorem}
$R(E,W_8)=15$.
\end{theorem}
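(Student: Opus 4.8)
The plan is to exhibit a graph $G$ of order $14$ that does not contain $E$ and whose complement does not contain $W_8$, which will give $R(E,W_8)\geq 15$. A natural candidate is a disjoint union $K_6\cup H$ for a suitable graph $H$ on $8$ vertices: the $K_6$ part contains no $E$ (since $E$ has order $7$), so we only need $H$ to avoid $E$ and the complement $\overline{K_6\cup H}$ to avoid $W_8$. Avoiding $W_8$ in the complement means avoiding a $C_8$ through any common neighbourhood of the right size; a graph $H$ that is regular or near-regular of low degree, chosen so that $\overline{H}$ has no $C_8$ and the bipartite graph between the two parts in $\overline G$ admits no long rim, should work. I would look for an $8$-vertex $H$ (likely $2$-regular, e.g. a union of small cycles, or a cube-like graph) such that $E\not\subseteq H$ and then verify by a short case check on cycle structure that $\overline G$ has no $W_8$.

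**Upper bound.** Let $G$ be any graph of order $15$ with $\overline G$ containing no $W_8$; I must show $E\subseteq G$. The strategy mirrors the proofs of Theorems~\ref{thm:ABC(7)} and~\ref{thm:R(D,W_8)}: by Theorem~\ref{thm:R(D,W_8)} (applied with the fact that $R(D,W_8)=14\leq 15$), $G$ contains a copy of $D$, or alternatively I can start from $R(E,W_8)\geq R(D,W_8)$-style reasoning and use the already-established $B\subseteq G$ or $D\subseteq G$. I would fix such a subgraph $D$ with its vertices labelled as in Figure~\ref{fig:Tree graph of order 7}, set $U = V(G)-V(D)$ with $|U|=8$, and argue by contradiction assuming $E\not\subseteq G$. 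The non-containment of $E$ translates into many forced non-edges of $G$ (equivalently forced edges of $\overline G$) between the spine/leaf vertices of $D$ and $U$; the core mechanism is then: on a suitable $9$-element subset $W$ containing a chosen hub $h$, if $\delta(\overline G[W-h])\geq 4$ then Lemma~\ref{lem:pancyclic} gives $C_8$ in $\overline G[W-h]$, which with $h$ forms $W_8$, a contradiction; hence $\Delta(G[W-h])\geq 4$ and we chase the resulting high-degree vertex, using that $E\not\subseteq G$ forces its neighbourhood to be independent and disjoint from most of $V(D)$, then build an explicit $C_8$ rim in $\overline G$ using the bipartite-cycle Lemmas~\ref{lem:cycleinbipartite}, \ref{lem:1-1relation} and Corollaries~\ref{cor:bipartite4-6}, \ref{cor:bipartite4-8}, plus Observation~\ref{obs2} and Lemma~\ref{lm4:R(Sn(1,2),W8)} to finish branches where a large co-clique appears.

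**Case structure.** The graph $E$ is a path $v_1v_2v_3v_4v_5$ with a pendant at $v_2$ and a pendant at $v_4$ (two separated "decorations" on $P_5$), so the obstructions to $E\subseteq G$ are cleaner than for $D$: avoiding $E$ limits how two vertices at distance three on a path in $G$ can each acquire an extra neighbour. I expect the main case split to be on which vertex realises $\Delta(G[W-h])\geq 4$ — a vertex of $D$ versus a vertex of $U$ — and, within the $U$-vertex case, on the edges between the $\geq 4$ high-degree star's leaves and the rest of $U$, exactly as in Subcases 1.1/1.2, 2.1/2.2, 3.1/3.2 of Theorem~\ref{thm:R(D,W_8)}. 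Since $|U|=8$ here (versus $7$ for $D$), there is more room, so some branches should close immediately via Corollary~\ref{cor:bipartite4-6} or~\ref{cor:bipartite4-8} applied to $U$ against a small set of $D$-vertices.

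**Main obstacle.** The hard part will be the bookkeeping in the branch where the high-degree vertex lies in $U$ and its star-neighbourhood interacts nontrivially with the rest of $U$: there one must track which of the five "named" vertices of $E$ can be embedded at each candidate spot while simultaneously maintaining enough forced $\overline G$-edges to splice together a $C_8$ rim avoiding the hub. This is the same difficulty that makes the proof of $R(D,W_8)=14$ long; for $E$ it should be somewhat more tractable because the two decorations of $E$ are independent (no shared neighbour), which tends to make the "$E\not\subseteq G$" hypothesis kill off configurations faster, but the number of sub-branches to verify explicitly is the real cost.
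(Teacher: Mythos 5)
Your write-up is a plan rather than a proof, and both halves have concrete problems.

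For the lower bound, the candidates you propose for $H$ cannot work. If $H$ is $2$-regular or $3$-regular (a union of small cycles, or the cube), then $\delta(\overline{H})\geq 4=|V(H)|/2$, so Lemma~\ref{lem:pancyclic} gives that $\overline{H}$ contains $C_8$ or $\overline{H}=K_{4,4}$ --- and $K_{4,4}$ itself contains $C_8$. In either case $\overline{H}$ has a $C_8$, and since any vertex of the $K_6$ part is adjacent in $\overline{G}$ to all eight vertices of $H$, this yields $W_8$ in $\overline{G}$. So any admissible $H$ must satisfy $\Delta(H)\geq 4$, ruling out everything you suggest. The paper's construction is $G=K_6\cup K_{4,4}$: one has $E\not\subseteq K_{4,4}$ because $E$ is bipartite with parts of sizes $5$ and $2$, and a hub in the $K_{4,4}$ side has $\overline{G}$-neighbourhood consisting of six mutually non-adjacent vertices together with a triangle, which supports no cycle longer than $C_6$. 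You have not produced a valid extremal graph, and your stated search heuristic points away from the one that works.

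For the upper bound, what you give is a description of the expected shape of the argument, not the argument. The paper does not start from $D$: it takes $T=S_7(3)\subseteq G$ (available since $R(S_7(3),W_8)=14\leq 15$ by Theorem~\ref{thm:R(Pn-Sn(1,2)-Sn(3)-Sn(2,1),W8)}) and splits on whether a leaf of the small star of $T$ has a neighbour in $U$; the substance of the proof is the degree counting in the chosen $8$-sets, the Inclusion--Exclusion step locating a vertex adjacent to two of $u_6,u_7,u_8$, and the explicit $C_8$ rims, none of which you carry out. Starting from $D$ instead is not obviously fatal, but you would then have to verify that the non-adjacencies forced by $E\nsubseteq G$ relative to a copy of $D$ (whose branch vertex has three branches, unlike the two separated degree-$3$ vertices of $E$) are strong enough to drive the same machinery; that verification is exactly the missing proof. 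As it stands, neither inequality is established.
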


\begin{proof}
The graph $G = K_6\cup K_{4,4}$ does not contain $E$
and $\overline{G}$ does not contain $W_8$. 
Thus, $R(E,W_8)\geq 15$.
For the upper bound, let $G$ be any graph of order $15$. 
Suppose that $G$ does not contain $E$ and that $\overline{G}$ does not contain $W_8$. 
By Theorem~\ref{thm:R(Pn-Sn(1,2)-Sn(3)-Sn(2,1),W8)}, $G$ contains a $T=S_7(3)$ subgraph. 
Label the vertices of this subgraph as in Figure~\ref{fig:S_7(3) subgraph G} 
and set $U=V(G)-V(T)$. 
Note that $|U|= 8$.

\begin{figure}[ht]
  \centering
  \begin{tikzpicture}[scale=.85]
    \begin{scope}[shift={(0,0)}]
      \foreach \nn/\x/\y in {1/1/1, 2/0/0, 3/1/-1/, 4/1/0, 5/2/0, 6/2.5/.86, 7/2.5/-.86}{%
        \node [vertex,fill=black,scale=0.8] (\nn) at (\x,\y) {};}
      \draw (2) -- (4) -- (5) -- (6) (1) -- (4) -- (3) (5) -- (7);
      \foreach \nn in {1,3,6,7}{\draw (\nn) node[right](){$v_\nn$};}
      \draw (2) node[above]()      {$v_2$};
      \draw (4) node[above right](){$v_4$};
      \draw (5) node[right]()      {$v_5$};
    \end{scope}
    \begin{scope}[shift={(6,0)}]
      \foreach \x in {-1.5,-.5,.5,1.5}{\foreach \y in {-.4,.4}{\draw (\x,\y) node[vertex,fill=black,scale=0.8] {};}}
      \draw (0,0) ellipse (2.4 and .8);
      \draw (2.65,-.6) node {$U$};
    \end{scope}
  \end{tikzpicture}
  \caption{$S_7(3)$ and $U$ in~$G$.}
  \label{fig:S_7(3) subgraph G}
\end{figure}
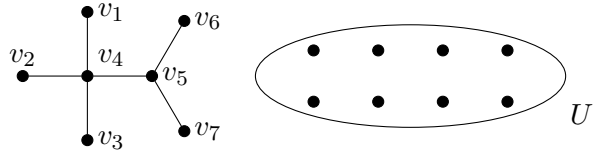

\noindent {\bf Case 1}: Some vertex $u$ in $U$ is adjacent to $v_6$.

Since $E\nsubseteq G$, $v_6$ is not adjacent to $v_1$, $v_2$, $v_3$, $v_7$ or any vertex of $U$ other than $u$. 
Let $W=\{v_1,v_2,v_3,v_7,u_1,\ldots,u_4\}$, 
for any vertices $u_1,\ldots,u_4$ in $U$ other than $u$.
If $\delta(\overline{G}[W])\geq 4$, 
then $\overline{G}[W]$ contains $C_8$ by Lemma~\ref{lem:pancyclic} which with $v_6$ forms $W_8$, 
a contradiction. 
Therefore, $\delta(\overline{G}[W])\leq 3$ and $\Delta(G[W])\geq 4$. 
Since $E\nsubseteq G$, 
$N_{G[\{u_1,\ldots,u_4,v_1,v_7\}]}(v_7)\leq 1$ 
and $N_{G[\{u_1,\ldots,u_4,v_7,v_i\}]}(v_i)\leq 1$ for $i=1,2,3$, 
so none of $v_1,v_2,v_3,v_7$ has degree at least $4$. 
Without loss of generality, assume that $u_1$ has degree at least $4$.
If $u_1$ is adjacent to $v_7$, 
then $G$ contains $E$ with $u_1$ and $v_5$ as the vertices of degree $3$, a contradiction. 
Similarly, if $u_1$ is adjacent to $v_1$, $v_2$ or $v_3$, 
then $G$ contains $E$ with $u_1$ and $v_4$ as the vertices of degree $3$, a contradiction. 
Therefore, $u_1$ is not adjacent to $v_1$, $v_2$, $v_3$ or $v_7$. 
However, then $u_1$ has degree at most $3$ in $G[W]$, a contradiction.

\smallskip

\noindent {\bf Case 2}: $v_6$ is not adjacent to any vertices in $U$.

If $v_7$ is adjacent to some vertex in $U$, 
then Case 1 applies with $v_7$ replacing $v_6$, 
so suppose that $v_7$ is not adjacent to any vertex in $U$.
Now, if $\delta(\overline{G}[U])\geq 4$, 
then $\overline{G}[U]$ contains $C_8$ by Lemma~\ref{lem:pancyclic} which with $v_6$ or $v_7$ forms $W_8$, 
a contradiction. 
Thus, $\delta(\overline{G}[U])\leq 3$ and $\Delta(G[U])\geq 4$. 
Let $V(U)=\{u_1,\ldots,u_8\}$. 
Without loss of generality, 
assume that $u_1$ is adjacent to $u_2$, $u_3$, $u_4$ and $u_5$. 
Since $E\nsubseteq G$, 
$v_4$ is not adjacent in $G$ to any of $u_1,\ldots,u_5$;
$v_5$ is not adjacent to any of $v_1, v_2, v_3, u_1, \ldots, u_5$; 
and $u_1$ is not adjacent to $v_1$, $v_2$ or $v_3$. 
Furthermore, 
$|N_{G[\{u_2,\ldots,u_5,v_i\}]}(v_i)|\leq 1$ for $i=1,2,3$ and 
$|N_{G[\{v_1,v_2,v_3,u_j\}]}(u_j)|\leq 1$  for $j=2,\ldots,5$. 

Suppose that $N_{G[\{v_5,u_6,u_7,u_8\}]}(v_5)=\emptyset$. 
If $|N_{G[\{u_2,\ldots,u_6\}]}(u_6)|\leq 1$, 
then $\overline{G}[u_2,\ldots,u_5,v_1,v_2,v_3,u_6]$ contains $C_8$ by Lemma~\ref{lem:1-1relation} which with $v_5$ forms $W_8$, a contradiction. 
Therefore, $|N_{G[\{u_2,\ldots,u_6\}]}(u_6)|\geq 2$. 
Similarly, 
$|N_{G[\{u_2,\ldots,u_5,u_7\}]}(u_7)|\geq 2$ and 
$|N_{G[\{u_2,\ldots,u_5,u_8\}]}(u_8)|\geq 2$. 
By the Inclusion-Exclusion Principle, 
$u_2$, $u_3$, $u_4$ or $u_5$ is adjacent in $G$ to at least two of $u_6,u_7,u_8$. 
Without loss of generality, 
assume that $u_2$ is adjacent to $u_6$ and $u_7$.
Then $u_2$ is not adjacent to $u_3$, $u_4$ or $u_5$,
Therefore, Lemma~\ref{lem:1-1relation} implies that 
$\overline{G}[u_1,u_3,u_4,u_5,v_1,v_2,v_3,u_2]$ contains $C_8$ which with $v_5$ forms $W_8$, a contradiction. 

On the other hand, if $N_{G[u_6,u_7,u_8]}(v_5)\neq \emptyset$, 
then without loss of generality assume that $u_6$ is adjacent to $v_5$ in~$G$. 
Since $E\nsubseteq G$, 
$v_4$ is not adjacent to $v_6$, $v_7$ or $u_6$ in~$G$. 
Also, $\{v_1,v_2,v_3\}$ and $\{v_6,v_7,u_6\}$ are independent in $G$, 
and $v_1, v_2, v_3, v_6, v_7, u_6\notin N_G(u_i)$ for $i=1,\ldots,5,7,8$, 
or else Case 1 applies with vertex label adjustments. 
Now, if $u_1$ is not adjacent to both $u_7$ and $u_8$ in $G$, 
then $v_1v_2v_3u_7v_6v_7u_6u_8v_1$ and $u_1$ form $W_8$ in $\overline{G}$, a contradiction. 
Therefore, $N_{G[\{u_1,u_7,u_8\}]}(u_1)\neq \emptyset$. 
Without loss of generality, assume that $u_1$ is adjacent to $u_7$ in~$G$. 
Note that for $E\nsubseteq G$, $|N_{G[\{v_4,v_5,u_8\}]}(u_8)|\leq 1$. 
Assume that $u_8$ is not adjacent to $v_4$ in~$G$. 
If $|N_{G[\{u_2,\ldots,u_5,u_8\}]}(u_8)|\leq 3$, 
then assume without loss of generality that $u_8$ is not adjacent to $u_2$ or $u_3$ in~$G$.
Then $v_6u_4v_7u_5u_6u_2u_8u_3v_6$ and $v_4$ form $W_8$ in $\overline{G}$, a contradiction. 
Similar arguments work if $u_8$ is not adjacent to $v_5$ in $G$, 
by replacing $v_4$ with $v_5$ and $v_6,v_7,u_6$ with $v_1,v_2,v_3$, respectively. 
Hence, $|N_{G[\{u_2,\ldots,u_5,u_7,u_8\}]}(u_8)|\geq 4$. 
However, $G$ then contains $E$ with $u_1$ and $u_8$ of degree $3$, a contradiction. 

Thus, $R(E,W_8)\leq 15$. 
This completes the proof of the theorem. 
\end{proof}

\section{Proof of Theorem~\ref{thm:R(Sn(4)-Sn[4]-Sn(1,3)-TA-TB-TC-Sn(3,1),W8)}}
\label{sec:proofofn-4}

Consider the tree graphs $T_n$ of order $n\geq 8$ with $\Delta(T_n) = n-4$, 
namely $S_n(4)$, $S_n[4]$, $S_n(1,3)$, $S_n(3,1)$, $T_A(n)$, $T_B(n)$ and $T_C(n)$; 
see Figures~\ref{fig:Tree example} and~\ref{fig:Tree graph Delta=n-4}. 

\begin{lemma}
\label{lem:lower-bound-on-S_n(4)-S_n[4]-S_n(1,3)-S_n(3,1)-T_A(n)-T_B(n)-T_C(n)}
Let $n\geq 8$. 
Then $R(T_n,W_8)\geq 2n-1$ for each $T_n \in\{S_n(4), S_n(3,1), T_C(n)\}$. 
Also for each $T_n \in \{S_n[4], S_n(1,3), T_A(n), T_B(n)\}$,
$R(T_n,W_8)\geq 2n-1$ if $n\not\equiv 0 \pmod{4}$
and 
$R(T_n,W_8)\geq 2n$ otherwise.
\end{lemma}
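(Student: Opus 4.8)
The standard way to establish a Ramsey lower bound $R(T_n,W_8)\geq N$ is to exhibit a graph $G$ on $N-1$ vertices such that $G$ does not contain $T_n$ as a subgraph and $\overline{G}$ does not contain $W_8$ as a subgraph. The plan is to use disjoint unions of complete graphs, since $\chi(W_8)=3$ forces any $W_8$ in a complement to touch at least three "colour classes." The key observation is that if $G = K_a\cup K_b$ with $a,b\geq 1$, then $\overline{G} = K_{a,b}$ is bipartite and contains no odd cycle, hence no $W_8$ (whose rim is the even cycle $C_8$, but whose hub-plus-rim structure needs $\chi=3$). More carefully: $W_8$ has chromatic number $3$, so it cannot be a subgraph of the bipartite graph $\overline{K_a\cup K_b}=K_{a,b}$; thus for \emph{any} $a,b$ the complement of $K_a\cup K_b$ is $W_8$-free. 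So the whole game reduces to choosing $a+b = N-1$ with $K_a\cup K_b$ not containing the given tree $T_n$.

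For the first family, take $G = K_{n-1}\cup K_{n-1}$ of order $2n-2$. A tree $T_n$ of order $n$ is connected, so any copy of $T_n$ in $G$ must lie entirely within one $K_{n-1}$ component; but $K_{n-1}$ has only $n-1 < n$ vertices, so it contains no copy of $T_n$. Hence $R(T_n,W_8)\geq 2n-1$ for \emph{every} tree of order $n$ — in particular for $S_n(4)$, $S_n(3,1)$ and $T_C(n)$, and also for the others, giving the "$n\not\equiv 0\pmod 4$" part of the second family at once. I would state this uniform bound first.

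For the second, sharper family $\{S_n[4], S_n(1,3), T_A(n), T_B(n)\}$ when $n\equiv 0\pmod 4$, I need a $W_8$-free graph on $2n-1$ vertices not containing the given tree. Writing $n = 4k$, the natural candidate is $G = K_{n-1}\cup K_{4,n-4}$ (order $(n-1)+n = 2n-1$), or more likely $G = K_{n-1}\cup K_{n/2,n/2-1}$ or some union with a balanced complete bipartite block; the complement of $K_{4,n-4}$ on its own is $K_4\cup K_{n-4}$, and one must check the full complement $\overline{G}$ is $W_8$-free and that neither component of $G$ contains the specified tree. The crux is choosing the bipartite block's part sizes so that (a) the block contains no copy of $T_n$ — which works precisely because each of $S_n[4]$, $S_n(1,3)$, $T_A(n)$, $T_B(n)$ has a "spread-out" structure that cannot be embedded in an unbalanced-enough complete bipartite graph on fewer than $n$ total vertices of a suitable shape, while $S_n(4)$, $S_n(3,1)$, $T_C(n)$ \emph{can} be so embedded (which is why the bound is not improved for them) — and (b) the complement of the bipartite block, namely a disjoint union of two cliques, keeps $\overline{G}$ from having a $W_8$: one checks that $\overline{G} = \overline{K_{n-1}}\,\vee$-free... actually $\overline{G} = K_{n-1}\text{'s complement joined appropriately}$; the cleanest route is $G = 2K_{n-1}$ fails to give $2n-1$, so instead take $G$ on $2n-1$ vertices as the join-free union $K_{n-1}\cup H$ where $H$ is chosen with $\overline{H}$ containing no $S_5$ (appealing to Observation~\ref{obs2} in the contrapositive) and $H$ $T_n$-free.

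The main obstacle is the $n\equiv 0\pmod 4$ construction: one must simultaneously verify \emph{both} that $\overline{G}$ is $W_8$-free (this is where the divisibility $n\equiv 0\pmod 4$ enters, presumably forcing a balanced complete bipartite graph $K_{n/2,n/2}$-type component whose complement $2K_{n/2}$ narrowly avoids $W_8$ via a vertex-count argument on the rim) \emph{and} that the particular tree — with its degree-$(n-4)$ vertex and its prescribed pendant-path pattern — does not embed in either component. I would handle the $W_8$-freeness using Lemma~\ref{lem:pancyclic} in reverse (a bipartite complement of the right balanced shape is exactly the $K_{n/2,n/2}$ exception) together with Observation~\ref{obs2}, and the tree-non-containment by a direct structural argument: any embedding of $T_n$ would need to place the high-degree vertex and its pendant structure, and a short case check on where the extra branches can go shows it overflows the available vertices in each component. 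Everything else is routine bookkeeping with vertex counts.
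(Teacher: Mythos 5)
Your first half is correct and is exactly the paper's argument: $G=2K_{n-1}$ has order $2n-2$, contains no connected graph on $n$ vertices, and its complement $K_{n-1,n-1}$ is bipartite and hence $W_8$-free, giving $R(T_n,W_8)\geq 2n-1$ for every tree $T_n$ of order $n$.

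The second half, however, has a genuine gap: you never actually produce a $W_8$-avoiding, tree-avoiding graph of order $2n-1$ for the case $n\equiv 0\pmod 4$, and the candidates you float do not work. The graph $K_{n-1}\cup K_{4,n-4}$ fails because $\overline{K_{4,n-4}}\supseteq K_{n-4}$, so for $n\geq 13$ the complement already contains $W_8$ inside that clique; $K_{n-1}\cup K_{n/2,n/2-1}$ has order $2n-2$, not $2n-1$; and the guess that the divisibility condition comes from a balanced $K_{n/2,n/2}$ is off the mark. The construction the paper uses is $G=K_{n-1}\cup K_{4,\ldots,4}$, where $K_{4,\ldots,4}$ is the \emph{complete multipartite} graph with $n/4$ parts of size $4$ (this is where $4\mid n$ enters — nothing to do with balance of a bipartition). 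Its complement restricted to that component is $\tfrac{n}{4}K_4$, and $\overline{G}$ is the join of an independent set of size $n-1$ with $\tfrac{n}{4}K_4$; a short degree count on a putative rim shows no $C_8$ can use a hub from either side, so $\overline{G}$ is $W_8$-free. For the tree-avoidance, the relevant feature is that every vertex of $K_{4,\ldots,4}$ has degree exactly $n-4$, so the image of the unique degree-$(n-4)$ vertex of the tree forces its three non-neighbours onto the remaining three vertices of its part, which form an independent set; in each of $S_n[4]$, $S_n(1,3)$, $T_A(n)$, $T_B(n)$ those three non-neighbours span at least one edge, so no embedding exists, whereas in $S_n(4)$, $S_n(3,1)$, $T_C(n)$ they are independent, which is precisely why those trees are excluded from the improved bound. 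Without identifying this multipartite construction (or an equivalent one) and carrying out these two checks, the $R(T_n,W_8)\geq 2n$ claim is unproved.
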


\begin{proof}
The graph $G=2K_{n-1}$ clearly does not contain any tree graph of order $n$,
and $\overline{G}$ does not contain $W_8$. 
Finally, if $n\equiv 0 \pmod{4}$, 
then the graph $G=K_{n-1}\cup K_{4,\ldots,4}$ of order $2n-1$ does not contain 
$S_n[4]$, $S_n(1,3)$, $T_A(n)$ or $T_B(n)$; 
nor does the complement $\overline{G}$ contain $W_8$. 
\end{proof}

\begin{theorem}
\label{thm:R(Sn(4),W8)}
If $n\geq 8$, then 
\[
  R(S_n(4),W_8)=\begin{cases}
    2n-1 & \text{if $n\geq 9$}\,;\\
    16   & \text{if $n = 8$}\,.
  \end{cases}
\]
\end{theorem}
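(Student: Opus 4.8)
The plan is to establish matching lower and upper bounds.

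\smallskip
\noindent\emph{Lower bound.}
For $n\ge 9$ the inequality $R(S_n(4),W_8)\ge 2n-1$ is Lemma~\ref{lem:lower-bound-on-S_n(4)-S_n[4]-S_n(1,3)-S_n(3,1)-T_A(n)-T_B(n)-T_C(n)}: the graph $2K_{n-1}$ contains no tree of order $n$, and its complement contains no $W_8$. For $n=8$ this only gives $R\ge 15$, so the extra unit must come from a single tailored graph. I would exhibit a graph $G_0$ of order $15$ with $S_8(4)\not\subseteq G_0$ and $W_8\not\subseteq\overline{G_0}$ (displayed explicitly, in the spirit of the order-$7$ graph used for $R(D,W_8)\ge 14$ in the proof of Theorem~\ref{thm:R(D,W_8)}), which certifies $R(S_8(4),W_8)\ge 16$; verifying the two non-containments is then a finite check.

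\smallskip
\noindent\emph{Upper bound, reduction.}
I would induct on $n$, exploiting that $S_n(4)$ is obtained from $S_{n-1}(4)$ by attaching one extra leaf to the centre of its larger star, and, similarly, that $S_8(4)$ is obtained from the order-$7$ tree $S_7(3)$ by attaching one leaf to its degree-$3$ centre. Fix $n\ge 8$, set $N=16$ if $n=8$ and $N=2n-1$ if $n\ge 9$, let $G$ have order $N$ with $W_8\not\subseteq\overline G$, and suppose for contradiction that $S_n(4)\not\subseteq G$. Since $N=16>14=R(S_7(3),W_8)$ when $n=8$ (Theorem~\ref{thm:R(Pn-Sn(1,2)-Sn(3)-Sn(2,1),W8)}), and $N>R(S_{n-1}(4),W_8)$ when $n\ge 9$ (by the induction hypothesis for $n\ge 10$, and by the $n=8$ case for $n=9$, noting $16<17$), $G$ contains a copy $T$ of $S_{n-1}(4)$ (respectively of $S_7(3)$). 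Let $c$ be the centre of the larger star of $T$ and put $U=V(G)\setminus V(T)$, so that $|U|=N-|V(T)|\ge 9$.

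\smallskip
\noindent\emph{Upper bound, extension step.}
If $c$ has a neighbour in $U$, appending it to $T$ gives $S_n(4)\subseteq G$, a contradiction; more generally $T$ can be grown to $S_n(4)$ whenever $N_G(c)$ is not rigidly confined to $V(T)$, so I may assume $N_G(c)\subseteq V(T)$ and, after a local re-routing of $T$, that $c$ is saturated inside $T$. The remaining task is to force $W_8\subseteq\overline G$. As in the proofs of Theorem~\ref{thm:R(D,W_8)} and its predecessors, I would pick a candidate hub $v$ (a leaf of $T$ non-adjacent to $c$, or $c$ itself), choose a set $W\subseteq N_{\overline G}(v)$ with $|W|\ge 8$ built from leaves of $T$ and vertices of $U$, and show $C_8\subseteq\overline G[W]$: when $\delta(\overline G[W])\ge\tfrac{|W|}{2}$ this is immediate from Bondy's Lemma~\ref{lem:pancyclic} (as $|W|\ge 8$, even the bipartite exception contains $C_8$); otherwise $G[W]$ has a vertex of degree $\ge 4$, and combining this with $S_n(4)\not\subseteq G$ and the confinement of $N_G(c)$ forces one of the configurations handled by Lemma~\ref{lem:1-1relation}, Corollaries~\ref{cor:bipartite4-6} and~\ref{cor:bipartite4-8}, or the splitting of Observation~\ref{obs2}, each again yielding $C_8\subseteq\overline G[W]$. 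Together with $v$ this produces $W_8\subseteq\overline G$, so $R(S_n(4),W_8)\le N$; with the lower bound this proves the theorem.

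\smallskip
\noindent\emph{Main obstacle.}
The delicate part is the extension step: one must organize the case analysis — according to which vertex of $W$ carries a large $G$-degree and to how $N_G(c)$ is distributed among the leaves of $T$ — so that non-extendability of $T$ always triggers one of the auxiliary lemmas, while keeping the sub-cases exhaustive and mutually consistent. This is tightest in the base case $n=8$, where $|U|=9$ leaves almost no slack, so that case likely needs a more hands-on argument (and a tailored treatment of the copy of $S_7(3)$) before the inductive machinery applies.
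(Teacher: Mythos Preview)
Your plan diverges from the paper and, more importantly, leaves the crucial step unjustified. The paper does \emph{not} induct on $n$. For odd $n\ge 9$ and for $n=8$ it uses Theorem~\ref{thm:R(Pn-Sn(1,2)-Sn(3)-Sn(2,1),W8)} to locate a copy of $S_n(3)$ (not $S_{n-1}(4)$) inside $G$; writing $v_0$ for the degree-$(n-3)$ centre and $v_1$ for the degree-$3$ centre, the hypothesis $S_n(4)\not\subseteq G$ immediately forces $v_1$ to be non-adjacent to every vertex of $U\cup V$ (so $v_1$ is the hub) and forces every other leaf $v_i$ of $v_0$ to have at most \emph{two} neighbours in $U$, since a third would make $v_i$ a degree-$4$ vertex attached to the already-large $v_0$. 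With $|V|\ge 4$ and $|U|\ge 8$, Corollary~\ref{cor:bipartite4-6} then yields $C_8\subseteq\overline G[U\cup V]$ in one stroke. For even $n\ge 10$ the paper starts instead from a star $S_{n-1}$ (via Theorem~\ref{thm:R(Sn,W8)}) and runs a separate, more intricate case analysis on $E(L,U)$. The $n=8$ lower bound is witnessed by $K_7\cup H_8$, with $H_8$ the explicit $8$-vertex graph in Figure~\ref{fig:S_8(4)}.

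Starting from $S_{n-1}(4)$ loses exactly the leverage that makes the paper's argument short. Your centre $c$ has degree only $n-5$, so if $N_G(c)=N_T(c)$ then a leaf $\ell_i$ of $c$ can have as many as $n-6$ neighbours in $U$ without creating any $S_n(4)$: there is no degree-$(n-4)$ vertex to hang $\ell_i$ on. Hence neither Corollary~\ref{cor:bipartite4-6} nor Corollary~\ref{cor:bipartite4-8} applies to the $V$--$U$ bipartition, and your sentence ``combining this with $S_n(4)\not\subseteq G$ \ldots\ forces one of the configurations handled by Lemma~\ref{lem:1-1relation}, Corollaries~\ref{cor:bipartite4-6} and~\ref{cor:bipartite4-8}, or the splitting of Observation~\ref{obs2}'' is precisely the missing proof. (There is also a small inconsistency in the base case: for $T=S_7(3)$ the vertex to which a leaf must be appended is the degree-$3$ centre, not ``the centre of the larger star'', which is the degree-$4$ vertex.) The inductive route is not obviously doomed, but you would have to recover degree bounds on the leaves comparable to the ``at most two'' that the paper gets for free from $S_n(3)$, and nothing in the outline indicates how you would do that.
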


\begin{proof}
By Lemma~\ref{lem:lower-bound-on-S_n(4)-S_n[4]-S_n(1,3)-S_n(3,1)-T_A(n)-T_B(n)-T_C(n)},
$R(S_n(4),W_8)\geq 2n-1$ for $n\ge 8$.
For $n = 8$, observe that the graph $G=K_7\cup H_8$, where $H_8$ is the graph of order $8$ as shown in Figure~\ref{fig:S_8(4)}
does not contain $S_8(4)$ and its complement $\overline{G}$ does not contain $W_8$.
Therefore, for $n=8$, we have a better bound of $R(S_8(4),W_8)\ge 16$.

\begin{figure}[ht!]
  \centering
    \begin{tikzpicture}[scale=1]
      \foreach \nn in {1,...,5}{\node[vertex, fill=black,scale=0.8] (\nn) at (72*\nn+90:1) {};}
        \draw[thick] (1)--(2)--(3)--(4)--(5)--(1)--(4)--(2)--(5)--(3)--(1);
      \foreach \a in {-1,0,1}{\node[vertex, fill=black,scale=0.8] () at (72*\a+272:.382) {};}
    \end{tikzpicture}
  \caption{The graphs $H_8$.}
  \label{fig:S_8(4)}
\end{figure}
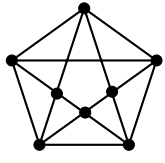

For the upper bound, let $G$ be any graph of order $2n-1$ if $n\ge 9$, and of order $16$ if $n=8$.
Assume that $G$ does not contain $S_n(4)$ and that $\overline{G}$ does not contain~$W_8$.

If $n\ge 9$ is odd or $n=8$, 
then $G$ has a subgraph $T=S_n(3)$ by Theorem~\ref{thm:R(Pn-Sn(1,2)-Sn(3)-Sn(2,1),W8)}.
Let $V(T)=\{v_0, \ldots, v_{n-3}, w_1, w_2\}$
and $E(T)=\{v_0v_1,\ldots,v_0v_{n-3},v_1w_1,v_1w_2\}$.
Also, let $V=\{v_2,\ldots,v_{n-3}\}$ and $U=V(G)-V(T)$;
then $|V|=n-4\ge 5$ and $|U|=n-1\ge 8$ if $n$ is odd, while $|U|=8$ if $n=8$.
Since $S_n(4)\nsubseteq G$,
$v_1$ is not adjacent in $G$ to any vertex of $U\cup V$ in~$G$.
Furthermore, for each $2\leq i\leq n-3$, 
$v_i$ is adjacent to at most two vertices of $U$ in $G$. 
By Corollary~\ref{cor:bipartite4-6}, $\overline{G}[U\cup V]$ contains $C_8$, and together with $v_1$, gives us $W_8$ in $\overline{G}$, a contradiction.

For the remaining case when $n\ge 10$ is even,
$S_{n-1}\subseteq G$ by Theorem~\ref{thm:R(Sn,W8)}.
Let $v_0$ be the center of $S_{n-1}$ 
and set $L=N_{S_{n-1}}(v_0)=\{v_1,\ldots,v_{n-2}\}$ and $U=V(G)-V(S_{n-1})$.
Then $|U|=n$.
Since $G$ does not contain $S_n(4)$, each vertex of $L$ is adjacent to at most two vertices of $U$.
We consider two cases.

\smallskip

\noindent {\bf Case 1}: $E(L,U)=\emptyset$.

If $\Delta(\overline{G}[U])\geq 4$,
then some vertex $u$ in $U$ is adjacent to at least four vertices in $\overline{G}[U]$.
These four vertices and any four vertices from $L$ form $C_8$ in $\overline{G}$ which with $u$ forms $W_8$, 
a contradiction.
Therefore, $\Delta(\overline{G}[U])\leq 3$ and $\delta(G[U])\geq n-4$.
Suppose $\delta(G[U])=n-4+l$ for some $l\ge 0$, 
and let $u_0$ be a vertex in $U$ with minimum degree in $G[U]$.
Label the remaining vertices in $U$ as $u_1,\ldots,u_{n-1}$ 
such that $U_A=\{u_1,\ldots,u_{n-4}\}\subseteq N_G(u_0)$, and let $U_B=\{u_{n-3},u_{n-2},u_{n-1}\}$.
Since $S_n(4)\nsubseteq G$, each vertex in $U_A$ is adjacent to at most two vertices in $U_B$, 
and so $|E_G(U_A,U_B)|\leq 2(n-4)$.
On the other hand, 
noting that $u_0$ is adjacent to exactly $l$ vertices in $U_B$ 
and letting $e_B\le 3$ be the number of edges in $G[U_B]$, 
we see that $|E_G(U_A,U_B)|\ge 3\delta(G[U]) - l - 2e_B = 3(n-4+l) - l - 2e_B$.
Therefore, $2(n-4)\ge |E_G(U_A,U_B)|\ge 3n - 12 + 2l - 2e_B$, 
implying that $n+2l\le 4+2e_B\le 10$, 
which is only possible when $n=10$, $l=0$, $e_B=3$, and $|E_G(U_A,U_B)| = 2(n-4) = 12$.
For such scenario where $n=10$, noting that $u_0$ was an arbitrary vertex with minimum degree in $G[U]$, 
it is straightforward to deduce that 
the only possible edge set of $G[U]$ (up to isomorphism) with $S_{10}(4)\nsubseteq G[U]$ is 
$\{u_0u_1,\ldots,u_0u_6\}\cup
 \{u_1u_7,\ldots,u_4u_7\}\cup
 \{u_1u_8,u_2u_8,u_5u_8,u_6u_8\}\cup
 \{u_3u_9,\ldots,u_6u_9\}\cup\{u_1u_2,u_3u_4,u_5u_6\} \cup \{u_1u_3,u_1u_5,u_3u_5\} 
\cup\{u_2u_4,u_2u_6, u_4u_6\}\cup \{u_7u_8,u_7u_9,u_8u_9\}$.
Observe now that $\overline{G}[U]$ contains $C_8$ which forms $W_8$ in $\overline{G}$ 
with any vertex in $L$ as hub, a contradiction.
\smallskip

\noindent {\bf Case 2}: $E(L,U)\neq \emptyset$.

Without loss of generality, assume that $v_1$ is adjacent to $u_1$ in~$G$.
Since $S_n(4)\nsubseteq G$, $v_1$ is adjacent to at most one vertex of $U\cup L\setminus\{u_1\}$ in~$G$.
Therefore, we can find a $4$-vertex set $V'\subseteq V\setminus \{v_1\}$
and an $8$-vertex set $U'\subseteq U\setminus \{u_1\}$ 
such that $v_1$ is not adjacent in $G$ to any vertex of $U'\cup V'$.
Note that each vertex of $V'$ is adjacent to at most two vertices of $U'$ in $G$, so $|E(V',U')|\leq 8$.
This implies that there are four vertices in $U'$ 
that are each adjacent in $G$ to at most one vertex of $V'$, 
and so $\overline{G}$ contains $C_8$ by Lemma~\ref{lem:1-1relation} which with $v_1$ forms $W_8$, 
a contradiction.

\smallskip

Thus, $R(S_n(4),W_8)\leq 2n-1$ when $n\ge 9$ and $R(S_n(4),W_8)\le 16$ when $n=8$.
This completes the proof of the theorem.
\end{proof}

\begin{lemma}\label{lm:R(Sn[4],W8)} 
Let $H$ be a graph of order $n\geq 8$ with minimum degree $\delta(H)\geq n-4$. 
Then either $H$ contains $S_n[4]$ and $T_A(n)$, 
or $n\equiv 0 \pmod{4}$ and $\overline H$ is the disjoint union of $\frac{n}{4}$ copies of $K_4$, 
i.e., $\overline H=\frac{n}{4} K_4$.
\end{lemma}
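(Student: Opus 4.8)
The plan is to pass to the complement $F:=\overline H$, which by hypothesis satisfies $\Delta(F)\le 3$, and to show that unless $F=\frac n4 K_4$ there is a single vertex $c$ that can play the role of the degree-$(n-4)$ vertex in embeddings of \emph{both} $S_n[4]$ and $T_A(n)$ into $H$. Call a vertex $c$ \emph{suitable} if $F[N_F(c)]$ has at most one edge; note that every vertex with $d_F(c)\le 2$ is automatically suitable, and that on a three-vertex set ``at most one edge'' is the same as ``having an isolated vertex''.

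\textbf{Step 1: a suitable vertex $c$ yields both trees.} For $S_n[4]$ I would designate four ``tail'' vertices — a branch vertex $y$ with pendants $z_1,z_2$, and a vertex $x$ joining $y$ to $c$ — and let the remaining $n-5$ vertices be the pendant leaves of $c$; these leaves are automatically $H$-adjacent to $c$ as soon as $N_F(c)\subseteq\{x,y,z_1,z_2\}$, and since $cx\in E(H)$ this forces $N_F(c)\subseteq\{y,z_1,z_2\}$. When $d_F(c)=3$, writing $N_F(c)=\{a,b,e\}$, suitability gives an isolated vertex of $F[\{a,b,e\}]$, which I take as $y$ (so $\{z_1,z_2\}=\{a,b,e\}\setminus\{y\}$, whence $yz_1,yz_2\in E(H)$), and then $x$ may be chosen from $V(H)\setminus\big(\{c\}\cup N_F(c)\cup N_F(y)\big)$, a set of size at least $n-6>0$. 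When $d_F(c)\le 2$ the construction has even more slack. The case of $T_A(n)$ is analogous, the tail now being the vertices $7,8,9,10$ of the figure attached at $7=x$: when $d_F(c)=3$ one uses a \emph{non-edge} of $F[N_F(c)]$ for the pendant edge $8$--$10$, and then picks $x$ non-adjacent in $F$ to the two remaining tail vertices. The only configurations in which the counts are tight occur at $n=8$; there I would invoke two structural facts: a neighbour of $c$ all three of whose $F$-neighbours lie inside $\{c\}\cup N_F(c)$ contributes no new forbidden vertices, and three pairwise-disjoint $2$-element sets cannot lie inside a $4$-element set (this settles the case $d_F(c)=3$ with $F[N_F(c)]$ edgeless).

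\textbf{Step 2: if $F$ has no suitable vertex then $F=\frac n4 K_4$.} If $F$ is not $3$-regular then some vertex has degree $\le 2$ and is suitable, so assume $F$ is $3$-regular; then for each $c$ the graph $F[N_F(c)]$ is a three-vertex graph with at least two edges, hence $P_3$ or $K_3$. Suppose some $F[N_F(c)]$ is a path with middle vertex $b$ and ends $a,e$. Then $N_F(b)=\{a,e,c\}$ exactly; the third neighbour $a'$ of $a$ avoids $\{a,b,c,e\}$ (it is not $e$, since $a\nadj e$ in $F$), so $F[N_F(a)]=F[\{b,c,a'\}]$ has only the edge $bc$, making $a$ suitable — a contradiction. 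Hence every $F[N_F(c)]$ is a triangle; combined with $3$-regularity this forces $\{c\}\cup N_F(c)$ to induce a $K_4$ component of $F$, and since this holds for every $c$ we get $F=\frac n4 K_4$ (in particular $4\mid n$).

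Putting the two steps together finishes the proof: if $\overline H=F\ne\frac n4 K_4$, Step~2 produces a suitable vertex $c$, and Step~1 then builds copies of $S_n[4]$ and $T_A(n)$ in $H$, which is exactly the assertion. I expect the main obstacle to be Step~1 — organising the two tail embeddings uniformly and verifying the forbidden-vertex counts in the handful of tight $n=8$ configurations — whereas Step~2 is short and purely structural.
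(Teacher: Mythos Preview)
Your approach is correct and is a genuine reorganisation of the paper's argument. The paper works directly in $H$, splitting first on whether $\Delta(H)\ge n-3$ (equivalently $\delta(F)\le 2$) and then, in the $(n-4)$-regular case, on whether $H[\{u_{n-3},u_{n-2},u_{n-1}\}]$ has an edge; in each subcase it constructs the two trees by hand, and only when that three-vertex set is independent (i.e.\ $F[N_F(u_0)]=K_3$) does it conclude the $K_4$-component structure. Your route is more conceptual: the single notion of a \emph{suitable} vertex replaces all of those subcases, and your Step~2 reduction --- showing that a $P_3$ neighbourhood in $F$ forces a suitable vertex elsewhere --- is a neat trick the paper does not use (the paper instead builds the trees directly when $F[N_F(u_0)]$ has one or two edges). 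The trade-off is that your Step~1 must absorb all the embedding work in one place, and the $n=8$ counts are genuinely tight for $T_A(8)$ in the edgeless case; your pigeonhole observation (three pairwise disjoint $2$-sets cannot sit in a $4$-set) does settle it, since if no choice of the pair $\{8,9\}$ leaves room for $x$ then the three sets $N_F(a)\setminus\{c\}$, $N_F(b)\setminus\{c\}$, $N_F(e)\setminus\{c\}$ are forced to be pairwise disjoint $2$-subsets of $V(H)\setminus\{c,a,b,e\}$. Just be sure, when you write it out, to note that in the one-edge case you may always take $\{8,9\}$ to be the $F$-edge itself, which immediately drops the forbidden count to at most~$6$ and avoids the pigeonhole entirely; and do spell out the $d_F(c)\le 2$ cases rather than dismissing them as ``more slack'', since at $n=8$ even those need a line of care.
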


\begin{proof}
Let $V(H)=\{u_0,\ldots,u_{n-1}\}$.
First consider the case where $H$ has a vertex of degree at least $n-3$, 
say $u_0$,
and that $\{u_1,\ldots,u_{n-3}\}\subseteq N_H(u_0)$. 

Suppose $u_{n-2}$ is adjacent to $u_{n-1}$ in $H$. 
Since $\delta(H)\ge n-4$, 
$u_{n-2}$ is adjacent to at least $n-6\ge 2$ vertices of $\{u_1,\ldots, u_{n-3}\}$, 
say $u_1$ and $u_2$, and so $H$ contains $S_n[4]$.
Furthermore by the minimum degree condition, 
$u_1$ is adjacent to at least $n-7\ge 1$ vertices of $\{u_1,\ldots, u_{n-3}\}$, 
and so $H$ contains $T_A(n)$.

Suppose now that $u_{n-2}$ is not adjacent to $u_{n-1}$ in $H$. 
Then by the minimum degree condition, there is a vertex in $\{u_1,\ldots, u_{n-3}\}$, say $u_1$, 
that is adjacent to both $u_{n-2}$ and $u_{n-1}$.
The vertices $u_1$ and $u_{n-2}$ must also each be adjacent to a vertex of $\{u_2,\ldots, u_{n-3}\}$, 
and so $H$ contains both $S_n[4]$ and $T_A(n)$.

For the remaining case, suppose that $H$ is $(n-4)$-regular and that $N_H(u_0)=\{u_1,\ldots,u_{n-4}\}$.
Let $U=\{u_{n-3},u_{n-2},u_{n-1}\}$ and suppose that $H[U]$ has an edge, 
say $u_{n-3}u_{n-2}$. 
Since $u_{n-3}$ must be adjacent in $H$ to some vertex of $N_H(u_0)$, 
it follows that $H$ contains $S_n[4]$ if $u_{n-3}$ or $u_{n-2}$ is adjacent to $u_{n-1}$.
Suppose then that neither $u_{n-3}$ nor $u_{n-2}$ is adjacent to $u_{n-1}$. 
Then $u_{n-1}$ is adjacent to every vertex of $N_H(u_0)$. 
Note that $d_{H[N_H(u_0)\cup \{u_{n-3}\}]}(u_{n-3})=n-5$
and let $u$ be the vertex of $N_H(u_0)$ that is not adjacent in $H$ to $u_{n-3}$. 
Since $d_H(u)=n-4$, 
$u$ is adjacent in $H$ to some vertex in $N_H(u_{n-3})$, so $H$ contains $S_n[4]$. 
Also, note that $u_{n-3}$ is adjacent in $H$ to at least $n-6$ vertices of $N_H(u_0)$.
If $u_{n-1}$ is adjacent to some vertex of $N_{H[N_H(u_0)\cup \{u_{n-3}\}]}(u_{n-3})$, 
then $H$ contains $T_A(n)$.
Note that this will always happen for $n\geq 9$. 
For $n=8$, there is a case 
where $|N_{H[N_H(u_0)\cup \{u_{n-3}\}]}(u_{n-3})|=|N_{H[N_H(u_0)\cup \{u_{n-1}\}]}(u_{n-1})|=2$ 
and $N_{H[N_H(u_0)\cup \{u_{n-3}\}]}(u_{n-3})\cap N_{H[N_H(u_0)\cup \{u_{n-1}\}]}(u_{n-1})=\emptyset$,  
so $u_{n-1}$ is adjacent to $u_{n-3}$ and $u_{n-2}$, 
giving $T_A(n)$ in~$H$. 

Now, suppose that $H[U]$ contains no edge.
Then $U_1=U\cup \{u_0\}$ is an independent set in $H$.
Furthermore, $N_H(u)=\{u_1,\ldots, u_{n-4}\}$ for every $u\in U$, 
as every vertex has degree $n-4$.
Therefore, $\overline H[U_1]$ is a $K_4$ component in $\overline H$. 
Repeating the above proof for each vertex $u$ of $H$ shows that 
either $u$ is contained in a $K_4$ component of $\overline H$,
or $H$ contains both $S_n[4]$ or $T_A(n)$.
In other words, either $H$ contains both $S_n[4]$ and $T_A(n)$,
or $\overline H$ is the disjoint union of $\frac{n}{4}$ copies of $K_4$, 
and so $n\equiv 0 \pmod{4}$.
\end{proof}

\begin{theorem}\label{thm:R(Sn[4],W8)}
If $n\geq 8$, then 
\[
  R(S_n[4],W_8)=\begin{cases}
    2n-1 & \text{if $n\not\equiv 0 \pmod{4}$}\,;\\
    2n   & \text{otherwise}\,.
  \end{cases}
\]
\end{theorem}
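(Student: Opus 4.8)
The lower bounds are exactly Lemma~\ref{lem:lower-bound-on-S_n(4)-S_n[4]-S_n(1,3)-S_n(3,1)-T_A(n)-T_B(n)-T_C(n)}, so only the upper bounds $R(S_n[4],W_8)\le 2n-1$ (when $n\not\equiv 0\pmod 4$) and $R(S_n[4],W_8)\le 2n$ (when $n\equiv 0\pmod 4$) have to be shown. Write $N$ for the relevant value $2n-1$ or $2n$, let $G$ be a graph of order $N$, and suppose towards a contradiction that $G$ contains no $S_n[4]$ and $\overline G$ contains no $W_8$. The plan is to (i) exhibit in $G$ a vertex of degree at least $n-4$ inside a convenient large subtree, (ii) use the absence of $S_n[4]$ to force the rest of $G$ to be sparse, (iii) extract an induced subgraph $H$ on exactly $n$ vertices with $\delta(H)\ge n-4$ and apply Lemma~\ref{lm:R(Sn[4],W8)}, and (iv) rule out the single configuration that lemma leaves open.

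For step~(i), Theorem~\ref{thm:R(Sn(4),W8)} gives $R(S_n(4),W_8)\le 2n-1$ for $n\ge 9$ and $R(S_8(4),W_8)\le 16$, and since $N\ge 2n-1$ for $n\ge 9$ while $N=16$ for $n=8$, in every case $G$ contains a copy of $S_n(4)$; in particular $G$ has a vertex $v_0$ with $d_G(v_0)\ge n-4$ one of whose neighbours, $a$, satisfies $d_G(a)\ge 4$. (When $n$ is even one may instead take the centre of an $S_{n-1}\subseteq G$, of degree $n-2$, via Theorem~\ref{thm:R(Sn,W8)}; this extra room is convenient but not essential.) Set $L=N_G(v_0)$ and $U=V(G)\setminus(\{v_0\}\cup L)$.

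For steps~(ii)--(iii), the basic observation is that a copy of $S_n[4]$ is produced by taking $c=v_0$ together with any path $v_0\,e\,d$ with $d\ne v_0$ for which $d$ has two further $G$-neighbours that, together with $e$, $d$, and $n-5$ other vertices of $L$, can be made pairwise distinct. Ruling out every such configuration forces: each vertex of $U$ adjacent in $G$ to a vertex of $L$ has $G$-degree at most $2$; $G[L]$ and $G[U]$ have small maximum degree; and, in the branch where $E_G(L,U)=\emptyset$, the graph between $L$ and $U$ is complete in $\overline G$. A short case analysis then follows, with the main split on whether $E_G(L,U)$ is empty. Whenever a set $X$ of vertices all non-adjacent to $v_0$ in $G$ (so $X\subseteq U$) spans a $C_8$ in $\overline G$, the vertex $v_0$ serves as a hub and we reach a contradiction, so $\overline G[U]$ must avoid $C_8$; Lemma~\ref{lem:pancyclic} (together with the observation that $K_{4,4}$ itself contains $C_8$), Corollaries~\ref{cor:bipartite4-6} and~\ref{cor:bipartite4-8}, and Lemma~\ref{lem:1-1relation} then pin down the structure enough to produce an $n$-vertex induced subgraph $H$ of $G$ with $\Delta(\overline G[H])\le 3$, i.e.\ $\delta(H)\ge n-4$.

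Applying Lemma~\ref{lm:R(Sn[4],W8)} to $H$ yields $S_n[4]\subseteq H\subseteq G$ unless $n\equiv 0\pmod 4$ and $\overline H=\tfrac n4 K_4$, i.e.\ $H=K_{4,4,\dots,4}$. In that last case the extra vertex afforded by $N=2n$ over $2n-1$, together with the complete bipartite subgraph that $\overline G$ then contains between $H$ and $V(G)\setminus V(H)$, is used to build a $C_8$ in $\overline G$ with a hub still available, contradicting $\overline G\not\supseteq W_8$; this is precisely where the hypothesis $n\equiv 0\pmod 4$ enters, and why the bound jumps from $2n-1$ to $2n$ there. I expect this final extremal step — and the near-extremal configurations feeding into it, where $\overline G$ locally resembles a disjoint union of copies of $K_4$ and there is essentially no slack — to be the main difficulty; the small cases $n=8,9$ and, if one prefers the star route, the odd-$n$ start of step~(i), will also need separate verification.
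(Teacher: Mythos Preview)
Your outline has a real gap at step~(iii): you assert that the auxiliary lemmas ``pin down the structure enough to produce an $n$-vertex induced subgraph $H$ of $G$ with $\delta(H)\ge n-4$'', but you give no mechanism for isolating such an $H$, and this is precisely the crux. The paper only reaches such a subgraph in the case $n\equiv 2\pmod 4$ (where it is forced to start from $S_n(4)$ because $R(S_n(3),W_8)=2n>2n-1$), and even there it first establishes $E_G(V(T),U)=\emptyset$ via Lemma~\ref{lm4:R(Sn(1,2),W8)} and a degree count, and then uses Observation~\ref{obs2} to conclude $\delta(G[V(T)])\ge n-4$. For $n$ odd and for $n\equiv 0\pmod 4$ the paper does not use Lemma~\ref{lm:R(Sn[4],W8)} at all: it starts from a copy of $S_n(3)$ supplied by Theorem~\ref{thm:R(Pn-Sn(1,2)-Sn(3)-Sn(2,1),W8)}, notes that the two pendant leaves $w_1,w_2$ are nonadjacent in $G$ to $U\cup V$, and then locates a $C_8$ in $\overline G$ directly via Lemmas~\ref{lem:pancyclic} and~\ref{lem:1-1relation}, with $w_1$ serving as hub.

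Your step~(ii) is also imprecise. When $|L|=n-4$, the claim ``each vertex of $U$ adjacent in $G$ to a vertex of $L$ has $G$-degree at most~$2$'' is false: such a vertex $u$ may have arbitrarily many further neighbours inside $L$ without producing $S_n[4]$, because using those as the leaves of the $S_4$ part eats into the $n-5$ vertices you still need for the $S_{n-4}$ part. What $S_n[4]\nsubseteq G$ actually forbids (with $|L|\ge n-4$) is that $u$ have two further neighbours \emph{outside} $L$; the correct constraint is $|N_G(u)\cap U|\le 1$, not $d_G(u)\le 2$. Finally, step~(iv) presupposes $E_G(H,V(G)\setminus V(H))=\emptyset$, which you have not established; once that is in hand the $K_{4,\dots,4}$ endgame can indeed be closed (a hub in $H$ together with its three $\overline G$-partners and two $K_4$-blocks on the other side yields $W_8$), but this does not follow from what you have sketched.
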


\begin{proof}
Lemma~\ref{lem:lower-bound-on-S_n(4)-S_n[4]-S_n(1,3)-S_n(3,1)-T_A(n)-T_B(n)-T_C(n)}
provides the lower bounds, so it remains to prove the upper bounds.
Now let $G$ be a graph that does not contain $S_n[4]$ and assume that $\overline{G}$ does not contain $W_8$.

First suppose that $G$ has order $2n$ if $n\equiv 0 \pmod{4}$ and $G$ has order $2n-1$ if $n$ is odd.
By Theorem~\ref{thm:R(Pn-Sn(1,2)-Sn(3)-Sn(2,1),W8)}, 
$G$ has a subgraph $T=S_n(3)$.
Let $V(T)=\{v_0,\ldots,v_{n-3},w_1,w_2\}$
and $E(T)=\{v_0v_1,\ldots,v_0v_{n-3}\}\cup \{v_1w_1,v_1w_2\}$.
Set $U=V(G)-V(T)$ and $V=\{v_2,\ldots,v_{n-3}\}$.
Then $|U|=n-j$, for $j=0$ if $n\equiv 0 \pmod{4}$ and $j=1$ if $n$ is odd, 
and $|V|=n-4$.
Since $G$ does not contain $S_n[4]$,
$v_1$ is not adjacent to any vertex of $V$ in $G$, 
and each vertex of $V$ is adjacent to at most $n-6$ vertices of $U\cup V$ in~$G$.
Noting also that $w_1$ and $w_2$ each is adjacent to at most one vertex of $\{w_1,w_2\}\cup U$ in~$G$, 
we consider two cases.

\smallskip
\noindent {\bf Case 1}: 
At least one of $w_1$ and $w_2$ is not an isolated vertex in $G[\{w_1,w_2\}\cup U]$.

Without loss of generality, 
assume that $w_1$ is adjacent to some vertex $u\in \{w_2\}\cup U$ in~$G$.
Let $Z = \big(V\cup U\cup \{w_2\}\big)\setminus \{u\}$ 
and note that $|Z|=2n-4-j$.
Since $S_n[4]\nsubseteq G$, 
$w_1$ is not adjacent to any vertex of $Z$ in~$G$.
If $\delta(\overline{G}[Z])\geq \lceil\frac{2n-4-j}{2}\rceil$,
then $\overline{G}[Z]$ contains $C_8$ by Lemma~\ref{lem:pancyclic} 
which with $w_1$, forms $W_8$ in $\overline{G}$, a contradiction.
Therefore, $\delta(\overline{G}[Z])\leq \lceil\frac{2n-4-j}{2}\rceil-1$ 
and $\Delta(G[Z])\geq \lfloor\frac{2n-4-j}{2}\rfloor = n-2-j$.
Since each $v$ of $V$ is adjacent to at most $n-6$ vertices of $U\cup V$ in $G$,
and $w_2$ is adjacent to at most one vertex of $U$ in $G$, 
a vertex with maximum degree in $G[Z]$ must be a vertex of $U\setminus\{u\}$.
So let $u_2$ be a vertex of $U$ with $d_{G[Z]}(u_2)\geq n-2$.
As $S_n[4]\nsubseteq G$, observe that $N_{G[Z]}(u_2)\subseteq U$; 
each vertex of $V$ is adjacent to at most one vertex of $N_{G[Z]}(u_2)$ in $G$; 
and each vertex of $N_{G[Z]}(u_2)$ is adjacent to at most one vertex of $V$ in~$G$.
Then by Lemma $\ref{lem:1-1relation}$, 
any four vertices from $V$ and any four vertices from $N_{G[Z]}(u_2)$
form $C_8$ in $\overline{G}$ which with $w_1$ forms $W_8$ in $\overline{G}$, 
a contradiction.

\smallskip

\noindent {\bf Case 2}: 
$w_1$ and $w_2$ are isolated vertices in $G[\{w_1,w_2\}\cup U]$.

If $\delta(\overline{G}[U])\geq \frac{n-j}{2}$,
then $\overline{G}[U]$ contains $C_8$ by Lemma~\ref{lem:pancyclic} 
which with $w_1$ forms $W_8$, a contradiction.
Thus, $\delta(\overline{G}[U])\leq \frac{n-j}{2}-1$,
and $\Delta(G[U])\geq \frac{n-j}{2}$.
Let $u_1$ be a vertex of $U$ with $d_{G[U]}\geq \frac{n-j}{2}$.
Since $S_n[4]\nsubseteq G$, $v_0$ is not adjacent to any vertex of $N_{G[U]}(u_1)$ in~$G$.
Now, if $v_1$ is adjacent to some vertex $u$ of $N_{G[U]}(u_1)$ in $G$, 
then apply Case 1 with $w_1$ and $u$ interchanged.
So assume that $v_1$ is not adjacent to any vertex of $N_{G[U]}(u_1)$ in~$G$.

If $E(V,N_{G[U]}(u_1))=\emptyset$ in $G$,
then any four vertices of $V$ and any four vertices of $N_{G[U]}(u_1)$ form $C_8$ in $\overline{G}$, 
and with $v_1$, form $W_8$ in $\overline{G}$, a contradiction.
So without loss of generality, 
assume that $v_2$ is adjacent to some vertex $u_2$ of $N_{G[U]}(u_1)$ in~$G$.
Since $S_n[4]\nsubseteq G$, 
$u_2$ is not adjacent to any vertex of $U\setminus\{u_1\}$.
Then $v_0,v_1,w_1,w_2$ and any four vertices from $U\setminus\{u_1,u_2\}$, 
at least three of which are from $N_{G[U]}(u_1)\setminus \{u_2\}$,
form $C_8$ in $\overline{G}$ and, with $u_2$, form $W_8$ in $\overline{G}$, 
a contradiction.

In either case, $R(S_n[4],W_8)\leq 2n$ for $n\equiv 0 \pmod{4}$ 
and $R(S_n[4],W_8)\leq 2n-1$ for odd $n$.

Next, suppose that $n\equiv 2 \pmod{4}$ and $G$ has order $2n-1$. 
If $G$ contains a subgraph $S_n(3)$, 
then the previous arguments show that $R(S_n[4],W_8)\leq 2n-1$.
Hence, we only need to consider the case where $G$ does not contain $S_n(3)$. 
Now, by Theorem~\ref{thm:R(Sn(4),W8)}, 
$G$ has a subgraph $T=S_n(4)$. 
Let $V(T)=\{v_0,\ldots,v_{n-4},w_1,w_2,w_3\}$ and $E(T)=\{v_0v_1,\ldots,v_0v_{n-4},v_1w_1,v_1w_2,v_1w_3\}$. 
Let $U=V(G)-V(T)$; then $|U|=n-1$. 
Since $G$ does not contain $S_n(3)$ and $S_n[4]$, $v_0$ is not adjacent in $G$ to $w_1$, $w_2$, $w_3$ or $U$. 
Now, set $U'=N_{G[U\cup \{w_1\}]}(w_1)\cup N_{G[U\cup \{w_2\}]}(w_2)\cup N_{G[U\cup \{w_3\}]}(w_3)$.
Then $|U'|\leq 3$ and $w_1$, $w_2$ and $w_3$ are not adjacent in $G$ to any vertex of $U\setminus U'$. 
By Lemma~\ref{lm4:R(Sn(1,2),W8)}, $G[U\setminus U']$ is either $K_{n-1-|U'|}$ or $K_{n-1-|U'|}-e$. 
If $d_{\overline{G}[U\setminus U']}(u')\geq 2$ for some vertex $u'$ in $U'$, 
then at least two vertices of $U\setminus U'$ are not adjacent to $u'$ in~$G$. 
Let $X$ be a set containing these two vertices and any other two vertices in $U\setminus U'$,
and set $Y=\{w_1,w_2,w_3,u'\}$. 
Note that $\overline{G}[X\cup Y]$ contains $C_8$ by Lemma~\ref{lem:1-1relation} which with $v_0$ forms $W_8$, 
a contradiction. 
Therefore, every vertex of $U'$ is adjacent in $G$ to at least $n-2-|U'|$ vertices of $U\setminus U'$. 
Hence, $\delta(G[U])\geq n-5$, 
and since $S_n[4]\nsubseteq G$, $E_G(T,U)=\emptyset$. 
Now, if $\overline{G}[V(T)]$ contains $S_5$, 
then $\overline{G}$ contains $W_8$ by Observation~\ref{obs2}, a contradiction. 
Therefore, $\delta(G[V(T)])\geq n-4$. 
By Lemma~\ref{lm:R(Sn[4],W8)}, $G$ contains $S_n[4]$, a contradiction. 
Thus, $R(S_n[4],W_8)\leq 2n-1$ for $n\equiv 2 \pmod{4}$.
%
\end{proof}

\begin{theorem}\label{thm:R(Sn(1,3),W8)}
If $n\geq 8$, then 
\[
  R(S_n(1,3),W_8)=\begin{cases}
    2n-1 & \text{if $n\not\equiv 0 \pmod{4}$}\,;\\
    2n   & \text{otherwise}\,.
  \end{cases}
\]
\end{theorem}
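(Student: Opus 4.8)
The lower bounds are precisely those furnished by Lemma~\ref{lem:lower-bound-on-S_n(4)-S_n[4]-S_n(1,3)-S_n(3,1)-T_A(n)-T_B(n)-T_C(n)}, so the plan is to establish the matching upper bounds. Let $G$ be a graph of order $2n$ if $n\equiv 0\pmod 4$ and of order $2n-1$ otherwise, and assume $\overline G$ contains no $W_8$; the goal is to exhibit $S_n(1,3)$ inside $G$. The first step is to secure a convenient auxiliary tree: when $n\equiv 0\pmod 4$ or $n$ is odd, the order of $G$ equals $R(S_n(3),W_8)$, which Theorem~\ref{thm:R(Pn-Sn(1,2)-Sn(3)-Sn(2,1),W8)} gives as $2n$ for even $n$ and $2n-1$ for odd $n\geq 9$, so $S_n(3)\subseteq G$; when $n\equiv 2\pmod 4$ the order $2n-1$ is one less than $R(S_n(3),W_8)=2n$, so I would first observe that if $G$ contains $S_n(3)$ the earlier argument applies and otherwise appeal to Theorem~\ref{thm:R(Sn(4),W8)} to get $S_n(4)\subseteq G$ (valid since $n\equiv 2\pmod 4$ forces $n\geq 10$, whence $R(S_n(4),W_8)=2n-1$).

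In the main case $T=S_n(3)\subseteq G$, I would write $v_0$ for the big centre (of degree at least $n-3$ in $G$), $v_1$ for the small centre, $w_1,w_2$ for the two leaves at $v_1$, $V=\{v_2,\dots,v_{n-3}\}$ for the remaining leaves at $v_0$, and $U=V(G)\setminus V(T)$, so that $|V|=n-4$ and $|U|\in\{n-1,n\}$. The driving observation is that $S_n(1,3)\subseteq G$ as soon as some neighbour of $v_0$ begins a $G$-path of length $3$ that avoids $v_0$ and keeps at least $n-5$ of the neighbours of $v_0$ off the path: for instance $w_i\adj v_j$ with $j\geq 2$ yields the pendant path $v_0\,v_j\,w_i\,v_1\,w_{3-i}$, so the assumption $S_n(1,3)\nsubseteq G$ forces $w_1,w_2\nadj V$; likewise $w_i\adj u$ with $u\in U$ forces $N_G(u)\subseteq\{v_0,v_1,w_i\}$, and each $v_j\in V$ becomes tightly restricted in how it may reach $U$ (any $U$-neighbour of $v_j$ that starts a further path of length $2$ again produces the tree). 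Feeding these forced non-adjacencies into Corollary~\ref{cor:bipartite4-6}, Corollary~\ref{cor:bipartite4-8}, Lemma~\ref{lem:1-1relation} and the pancyclicity Lemma~\ref{lem:pancyclic} --- splitting on the adjacencies of $w_1,w_2$ into $U$ and then on the maximum degree of $G$ on the remaining vertices, broadly as in the main case of Theorem~\ref{thm:R(Sn[4],W8)} --- should produce a $C_8$ in $\overline G$ lying inside the non-neighbourhood of $v_1$, of some $w_i$, or of a high-degree vertex of $U$; adjoining that vertex as a hub gives $W_8\subseteq\overline G$, a contradiction.

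For the case $n\equiv 2\pmod 4$ with $T=S_n(4)\subseteq G$ --- centre $v_0$ of degree $n-4$, small centre $v_1$ with leaves $w_1,w_2,w_3$, $U=V(G)\setminus V(T)$ with $|U|=n-1$ --- I would adapt the corresponding part of the proof of Theorem~\ref{thm:R(Sn[4],W8)}. The assumptions $S_n(3),S_n(1,3)\nsubseteq G$ force $v_0$ to miss $w_1,w_2,w_3$ and all of $U$ and force every $U$-vertex adjacent to some $w_i$ to be near-isolated in $G$; Lemma~\ref{lm4:R(Sn(1,2),W8)} then forces $G[U]$ to be $K_{n-1}$ or $K_{n-1}-e$ after deleting a bounded set of vertices, a short counting argument with Lemma~\ref{lem:1-1relation} upgrades this to $\delta(G[U])\geq n-5$ and hence $E_G(T,U)=\emptyset$, and Observation~\ref{obs2} forces $\delta(G[V(T)])\geq n-4$. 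At that point I would invoke an auxiliary lemma --- which I would prove in the style of Lemma~\ref{lm:R(Sn[4],W8)}, splitting on whether $H$ has a vertex of degree at least $n-3$ or is $(n-4)$-regular --- asserting that a graph $H$ of order $n\geq 8$ with $\delta(H)\geq n-4$ contains $S_n(1,3)$ unless $n\equiv 0\pmod 4$ and $\overline H=\frac n4 K_4$. Applying it to the $n$-vertex graph $G[V(T)]$ and using $4\nmid n$ to discard the exception yields $S_n(1,3)\subseteq G$, finishing this case; the same exceptional graph, now reinstated, is exactly what makes the Ramsey value jump to $2n$ when $4\mid n$ (via the lower-bound graph $K_{n-1}\cup K_{4,\dots,4}$ of order $2n-1$).

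The main obstacle I anticipate is the arm analysis in the main case. Because the pendant path of $S_n(1,3)$ has length $4$, it can be completed in $G$ from several structurally inequivalent starting configurations --- a leaf of $v_1$ reaching outward, a neighbour of $v_0$ in $U$ reaching outward, two leaves of $v_1$ chaining, and so on --- and each must be either shown to yield the tree, under the delicate constraint that $v_0$ must still retain $n-5$ of its neighbours as untouched leaves while only $d_G(v_0)-(n-5)$ of them are free, or else suppressed tightly enough that the forced non-edges produce a $C_8$ in $\overline G$. Making this case split exhaustive, and keeping the edge counts sharp enough to isolate $K_{n-1}\cup K_{4,\dots,4}$ as the sole obstruction at the boundary $4\mid n$ and for small $n$, is where the real work lies.
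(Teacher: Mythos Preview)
Your plan is workable in principle but is substantially more laborious than necessary, and the casework you anticipate as the ``main obstacle'' can be avoided entirely. The paper's proof is a single short paragraph, and the key difference is the choice of auxiliary tree: instead of starting from $S_n(3)$ (or $S_n(4)$ when $n\equiv 2\pmod 4$), start from $S_n[4]$.

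The point is that $R(S_n[4],W_8)$ has already been computed in Theorem~\ref{thm:R(Sn[4],W8)} and takes exactly the same values you are trying to prove for $S_n(1,3)$: $2n-1$ if $n\not\equiv 0\pmod 4$ and $2n$ otherwise. So a graph $G$ of the appropriate order with $W_8\nsubseteq\overline G$ already contains $T=S_n[4]$, with big centre $v_0$, leaf $v_1$, $w_1$ attached to $v_1$, and $w_2,w_3$ attached to $w_1$. Now $S_n(1,3)$ is obtained from $S_n[4]$ by extending the path $v_0\,v_1\,w_1\,w_2$ by a \emph{single} edge. Hence if $S_n(1,3)\nsubseteq G$, the leaves $w_2$ and $w_3$ are non-adjacent to each other and to every vertex of $U\cup V$ (where $V=\{v_2,\dots,v_{n-4}\}$, $U=V(G)\setminus V(T)$). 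This immediately supplies two hubs with a common non-neighbourhood $U\cup V$ of size at least $2n-6$. One application of Lemma~\ref{lem:pancyclic} yields a vertex $u$ of degree at least $n-3$ in $G[U\cup V]$; the assumption $S_n(1,3)\nsubseteq G$ forces $u\in U$, $u\nadj V$, and $E_G(V,N_{G[U]}(u))=\emptyset$; then $w_3$, any three vertices of $V$, and any four vertices of $N_{G[U]}(u)$ form $C_8$ in $\overline G$ with hub $w_2$. No case split on $n\bmod 4$ is needed in the upper-bound argument, and no new structural lemma is required.

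By contrast, your route from $S_n(3)$ must extend a pendant path by \emph{two} edges rather than one, which is precisely why the arm analysis fragments into the several inequivalent configurations you describe, and why you are forced into a separate $S_n(4)$ argument and a new auxiliary lemma for $n\equiv 2\pmod 4$. All of this is bypassed by picking the auxiliary tree that is exactly one edge short of the target.
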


\begin{proof}
Lemma~\ref{lem:lower-bound-on-S_n(4)-S_n[4]-S_n(1,3)-S_n(3,1)-T_A(n)-T_B(n)-T_C(n)}
provides the lower bounds, so it remains to prove the upper bounds.
Let $G$ be any graph of order $2n$ if $n\equiv 0\pmod{4}$ and of order $2n-1$ if $n\not\equiv 0\pmod{4}$.
Assume that $G$ does not contain $S_n(1,3)$ and that $\overline{G}$ does not contain $W_8$.
By Theorem~\ref{thm:R(Sn[4],W8)}, $G$ has a subgraph $T = S_n[4]$.
Let 
$V(T)=\{v_0,\ldots,v_{n-4},w_1,w_2,w_3\}$ and 
$E(T)=\{v_0v_1,\ldots,v_0v_{n-4},w_1v_1,w_1w_2,w_1w_3\}$.
Set $V=\{v_2,\ldots,v_{n-4}\}$ and $U=V(G)-V(T)$.
Since $S_n(1,3)\nsubseteq G$, 
$w_2$ and $w_3$ are not adjacent to each other, or to any vertex in $U\cup V$.
Since $C_8\nsubseteq\overline{G}[U\cup V]$ as $W_8\nsubseteq\overline{G}$,
Lemma~\ref{lem:pancyclic} implies that $G[U\cup V]$ has a vertex $u$ of degree at least $n-3$ in $G[U\cup V]$.
Since $S_n(1,3)\nsubseteq G$, $u\in U$ and $u$ is not adjacent to any vertices in $V$.
Furthermore, $E(V,N_{G[U]}(u))=\emptyset$.
Finally, note that $w_3$, any $3$ vertices in $V$ and any $4$ vertices in $N_{G[U]}(u)$ form $C_8$ in $\overline{G}$ which, with $w_2$ as hub, form $W_8$, a contradiction.
\end{proof}

\begin{theorem}\label{thm:R(TA,W8)}
If $n\geq 8$, then 
\[
  R(T_A(n),W_8)=\begin{cases}
    2n-1 & \text{if $n\not\equiv 0 \pmod{4}$}\,;\\
    2n   & \text{otherwise}\,.
  \end{cases}
\]
\end{theorem}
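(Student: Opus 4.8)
The plan is to follow the template of the proof of Theorem~\ref{thm:R(Sn[4],W8)}, exploiting the fact that Lemma~\ref{lm:R(Sn[4],W8)} produces a copy of $T_A(n)$ from a minimum-degree hypothesis. The lower bounds are supplied by Lemma~\ref{lem:lower-bound-on-S_n(4)-S_n[4]-S_n(1,3)-S_n(3,1)-T_A(n)-T_B(n)-T_C(n)}, so only the upper bound requires work. Let $G$ have order $2n$ if $n\equiv 0\pmod 4$ and order $2n-1$ otherwise, assume $\overline G$ contains no $W_8$, and suppose for contradiction that $T_A(n)\nsubseteq G$.

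When $n\equiv 0\pmod 4$ or $n$ is odd, the order of $G$ equals $R(S_n(3),W_8)$ by Theorem~\ref{thm:R(Pn-Sn(1,2)-Sn(3)-Sn(2,1),W8)}, so $G$ has a subgraph $T=S_n(3)$ with $V(T)=\{v_0,\ldots,v_{n-3},w_1,w_2\}$, $v_0v_i\in E(T)$ for $1\le i\le n-3$, and $v_1w_1,v_1w_2\in E(T)$. Put $V=\{v_2,\ldots,v_{n-3}\}$ and $U=V(G)\setminus V(T)$. The first step records the consequences of $T_A(n)\nsubseteq G$. If $w_1$ (or $w_2$) had a neighbour $x$ in $U\cup V$, then taking $v_0$ as centre, $v_1$ as the degree-three vertex, $w_2$ (resp.\ $w_1$) as a leaf of $v_1$, the path $v_1w_1x$ (resp.\ $v_1w_2x$), and $n-5$ further leaves of $v_0$ chosen to avoid $x$, one gets a copy of $T_A(n)$; hence $N_G(w_1)\subseteq\{v_0,v_1,w_2\}$ and $N_G(w_2)\subseteq\{v_0,v_1,w_1\}$, so $w_1,w_2\in N_{\overline G}(x)$ for every $x\in U\cup V$. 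Consequently $\overline G[U\cup V]$, a graph on $|G|-4\ge 12$ vertices, contains no $C_8$ (otherwise a hub $w_1$ would complete a $W_8$), so Lemma~\ref{lem:pancyclic} forces a vertex $z\in U\cup V$ with $d_{G[U\cup V]}(z)\ge n-3$. If $z=v_i\in V$, then $z$ is also adjacent to $v_0$ and $T_A(n)$ embeds with centre $z$ (using $v_0$ and $n-5$ of the neighbours of $z$ in $U\cup V$), degree-three vertex $v_0$, a leaf $v_j$ of $v_0$, and path $v_0v_1w_1$ --- a contradiction; and the same kind of embedding shows $z$ cannot be adjacent to $v_0$ or to $v_1$ either. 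So $z\in U$ with $N_G(z)\subseteq U\cup V$.

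At this point one analyses the dense vertex $z$ and the set $N_G(z)$: either enough structure is present to assemble a copy of $T_A(n)$ directly (there is a lot of room, since $v_0$ has degree $n-3$ and every vertex of $V$ is adjacent to $v_0$), or one isolates a set of $n$ vertices inducing minimum degree at least $n-4$ in $G$ and applies Lemma~\ref{lm:R(Sn[4],W8)}. In the latter situation one obtains $T_A(n)\subseteq G$ unless the induced complement is $\tfrac n4 K_4$; but $T_A(n)$ has exactly three vertices outside the closed neighbourhood of its degree-$(n-4)$ centre, two of which are adjacent, so $T_A(n)\nsubseteq K_{4,\ldots,4}$, and tracking the edges from this $n$-set to the rest of $G$ produces an $S_5$ in $\overline G$ inside one part, hence $W_8\subseteq\overline G$ by Observation~\ref{obs2} --- the final contradiction. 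For the remaining case $n\equiv 2\pmod 4$ (so $n\ge 10$ and $R(S_n(4),W_8)=2n-1=|G|$): if $G$ contains $S_n(3)$ the above applies; otherwise $G$ has a subgraph $S_n(4)$, and one argues exactly as in the last part of the proof of Theorem~\ref{thm:R(Sn[4],W8)} --- using $S_n(3)\nsubseteq G$, $T_A(n)\nsubseteq G$, Lemma~\ref{lm4:R(Sn(1,2),W8)} and Observation~\ref{obs2} to reach a minimum-degree-$(n-4)$ subgraph on $n$ vertices, then Lemma~\ref{lm:R(Sn[4],W8)} together with the remark above about $K_{4,\ldots,4}$. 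Altogether $R(T_A(n),W_8)\le 2n$ for $n\equiv 0\pmod 4$ and $R(T_A(n),W_8)\le 2n-1$ otherwise, which with the lower bounds proves the theorem.

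I expect the middle stage to be the real obstacle. Unlike the situation for $S_n[4]$, the restriction ``$v_1$ has no neighbour in $V$'' does not follow immediately from $T_A(n)\nsubseteq G$, because the degree-three vertex of $T_A(n)$ must sit adjacent to the centre rather than at distance two. The analysis of $z$ therefore needs a careful case split on the behaviour of $v_1$, on whether $w_1w_2\in E(G)$, and on the edges from $V$ into $U$, and in every branch the copy of $T_A(n)$ being assembled must be checked to use exactly $n$ distinct vertices --- in particular one must always keep $n-5$ genuine pendant leaves available at the chosen centre, which is the delicate bookkeeping point.
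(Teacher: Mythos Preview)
Your overall architecture matches the paper's: lower bounds from Lemma~\ref{lem:lower-bound-on-S_n(4)-S_n[4]-S_n(1,3)-S_n(3,1)-T_A(n)-T_B(n)-T_C(n)}, an $S_n(3)$ subgraph when the order permits, the observation that $w_1,w_2$ have no neighbours in $U\cup V$, a high-degree vertex $z$ via Lemma~\ref{lem:pancyclic}, the reduction $z\in U$, and for $n\equiv 2\pmod 4$ an $S_n(4)$ fallback exactly as in the last paragraph of the proof of Theorem~\ref{thm:R(Sn[4],W8)}. That part is fine.

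The problem is that the ``middle stage'' you flag as the obstacle is not a proof step but an aspiration. You write that ``either enough structure is present to assemble a copy of $T_A(n)$ directly \ldots\ or one isolates a set of $n$ vertices inducing minimum degree at least $n-4$,'' but you never establish this dichotomy, and ``tracking the edges from this $n$-set to the rest of $G$ produces an $S_5$'' is not an argument. As written, the proposal is a plan, not a proof.

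The paper does not go through a minimum-degree-$(n-4)$ subgraph at all in the $S_n(3)$ case. Instead it uses two sharp observations you are missing. First, because $T_A(n)\nsubseteq G$, every $v\in V$ has $d_{G[U'\cup V']}(v)\le n-6$ (if $v$ had $n-5$ such neighbours it would serve as centre, with $v_0$ as the degree-$3$ vertex, $v_1$ and a leaf as its children, and $w_1$ as the pendant of $v_1$); this forces $z=u\in U$, and then $u$ has at least two neighbours in $U$. Second --- and this is the key structural fact --- every $v\in V$ is adjacent to at most \emph{one} vertex of $N_{G[U]}(u)$: two such neighbours would let $u$ serve as centre of a $T_A(n)$. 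The argument then splits on $|N_{G[U]}(u)|$. If $|N_{G[U]}(u)|\ge n-4$, then symmetrically each vertex of $N_{G[U]}(u)$ is adjacent to at most one vertex of $V$, and one gets $C_8\subseteq\overline G[V\cup N_{G[U]}(u)]$ directly, hence $W_8$ with hub $w_1$. Otherwise at least three vertices of $V$, say $v_2,v_3,v_4$, are adjacent to $u$; pick $a,b\in N_{G[U]}(u)$. Now each of $v_2,v_3,v_4$ is adjacent only to $v_0$ and $u$ in $G$ (any further neighbour would again produce a $T_A(n)$ with centre $u$), and one writes down an explicit $W_8$: $w_1v_5w_2v_3av_1bv_4w_1$ with hub $v_2$. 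No appeal to Lemma~\ref{lm:R(Sn[4],W8)} is needed here; that lemma is only used in the $S_n(4)$ fallback, exactly as you already describe.
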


\begin{proof}
Lemma~\ref{lem:lower-bound-on-S_n(4)-S_n[4]-S_n(1,3)-S_n(3,1)-T_A(n)-T_B(n)-T_C(n)}
provides the lower bounds, so it remains to prove the upper bounds.
Let $G$ be any graph of order $2n$ if $n\equiv 0\pmod 4$ and of order $2n-1$ if $n\not\equiv 0\pmod 4$.
Assume that $G$ does not contain $T_A(n)$ and that $\overline{G}$ does not contain $W_8$.

\smallskip

Suppose first that $G$ has a subgraph $T=S_n(3)$.
Let $V(T)=\{v_0,\ldots,v_{n-3},w_1,w_2\}$
and $E(T)=\{v_0v_1,\ldots,v_0v_{n-3},v_1w_1,v_1w_2\}$.
Set $V=\{v_2,\ldots,v_{n-3}\}$ and $U=V(G)-V(T)$. 
Since $G$ does not contain $T_A(n)$,
$w_1$ and $w_2$ are not adjacent to any vertex of $U\cup V$ in~$G$.
Let $V'$ be the set of any $n-5$ vertices in $V$, and $U'$ be the set of any $n-1$ vertices in $U$.
If $\delta(\overline{G}[U'\cup V'])\geq n-3$,
then $\overline{G}[U'\cup V']$ contains $C_8$ by Lemma~\ref{lem:pancyclic} which,
with $w_1$ as hub, form $W_8$, a contradiction.
Therefore, $\delta(\overline{G}[U'\cup V'])\leq n-4$ and $\Delta(G[U'\cup V'])\geq n-3$.
Since $T_A(n)\nsubseteq G$, $d_{G[U'\cup V']}(v)\leq n-6$ for each $v\in V'$.
Hence, some vertex $u\in U'$ satisfies $d_{G[U'\cup V']}(u)\geq n-3$, which also implies that $u$ is adjacent to at least two vertices of $U$.

Since $T_A(n)\nsubseteq G$, each vertex of $V$ is adjacent to at most one vertex of $N_{G[U]}(u)$.
If $|N_{G[U]}(u)|\ge n-4$, 
then each vertex of $N_{G[U]}(u)$ is adjacent to at most one vertex of $V$, 
and so $\overline{G}[V\cup N_{G[U]}(u)]$ contains $C_8$ by Lemma~\ref{lem:pancyclic} 
which with $w_1$ forms $W_8$, a contradiction.
Thus, at least three vertices of $V'$ (and so of $V$),
say $v_2, v_3, v_4$, are adjacent to $u$ in $G$.
Let $a$ and $b$ be any two vertices in $N_{G[U]}(u)$.
As $T_A(n)\nsubseteq G$, each of $v_2,v_3,v_4$ is not adjacent to any vertex of $V(G)\setminus\{u,v_0\}$.
Then $w_1v_5w_2v_3av_1bv_4w_1$ and $v_2$ form $W_8$ in $\overline{G}$, a contradiction.

\smallskip
By Theorem~\ref{thm:R(Pn-Sn(1,2)-Sn(3)-Sn(2,1),W8)}, 
$R(S_n(3),W_8)\leq 2n$ for $n\equiv 0 \pmod{4}$.
So now assume that $G$ has order $2n-1$ with $n\not\equiv 0 \pmod{4}$
and that $G$ does not contain $S_n(3)$.
By Theorem~\ref{thm:R(Sn(4),W8)}, $G$ has a subgraph $T=S_n(4)$.
Let $V(T)=\{v_0,\ldots,v_{n-4},w_1,w_2,w_3\}$ and $E(T)=\{v_0v_1,\ldots,v_0v_{n-4},v_1w_1,v_1w_2,v_1w_3\}$.
Then $U=V(G)-V(T)$ and $|U|=n-1$.
Since $T_A(n)\nsubseteq G$, $w_1,w_2,w_3$ are not adjacent to each other in $G$ or to any vertex of $U$.
Since $S_3(n)\nsubseteq G$, $v_0$ is not adjacent any vertex of $U\cup\{w_1,w_2,w_3\}$.
By Lemma~\ref{lm4:R(Sn(1,2),W8)}, $G[U]$ is $K_{n-1}$ or $K_{n-1}-e$.
Since $T_A(n)\nsubseteq G$, each vertex of $T$ is not adjacent to any vertex of $U$ in $G$,
and so $\delta(G[V(T)])\geq n-4$ by Observation~\ref{obs2}, which in turn implies that $G[V(T)]$ contains $T_A(n)$ by Lemma~\ref{lm:R(Sn[4],W8)}, a contradiction.

This completes the proof of the theorem.
\end{proof}

\begin{theorem}\label{thm:R(TB,W8)}
If $n\geq 8$, then 
\[
  R(T_B(n),W_8)=\begin{cases}
    2n-1 & \text{if $n\not\equiv 0 \pmod{4}$}\,;\\
    2n   & \text{otherwise}\,.
  \end{cases}
\]
\end{theorem}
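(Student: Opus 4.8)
The plan is to follow the template of the preceding proofs in this section (Theorems~\ref{thm:R(TA,W8)}, \ref{thm:R(Sn(1,3),W8)} and~\ref{thm:R(Sn[4],W8)}). The lower bounds are supplied by Lemma~\ref{lem:lower-bound-on-S_n(4)-S_n[4]-S_n(1,3)-S_n(3,1)-T_A(n)-T_B(n)-T_C(n)}, so only the upper bounds need proof. I would let $G$ be a graph of order $2n$ when $n\equiv 0\pmod 4$ and of order $2n-1$ otherwise, assume $T_B(n)\nsubseteq G$ and $W_8\nsubseteq\overline{G}$, and derive a contradiction. The first case is that $G$ contains a copy of $S_n(3)$; by Theorem~\ref{thm:R(Pn-Sn(1,2)-Sn(3)-Sn(2,1),W8)} this always holds unless $n\equiv 2\pmod 4$, so the second case --- $G$ contains no $S_n(3)$ --- occurs only for $n\equiv 2\pmod 4$ (hence $n\ge 10$), exactly as in the proof of Theorem~\ref{thm:R(Sn[4],W8)}.

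In the first case I would label $V(S_n(3))=\{v_0,\dots,v_{n-3},w_1,w_2\}$ with $E(S_n(3))=\{v_0v_1,\dots,v_0v_{n-3},v_1w_1,v_1w_2\}$ and set $V=\{v_2,\dots,v_{n-3}\}$, $U=V(G)-V(S_n(3))$. The key observation is that $v_0$ already has degree $n-3\ge n-4$ and carries the pendant path $v_0-v_1-w_1$ of length~$2$, so a copy of $T_B(n)$ (whose centre has degree $n-4$, with one pendant leg of length~$2$ and one of length~$3$) would be completed the moment a second neighbour of $v_0$ starts a suitable pendant leg, with $n-6$ of the other $v_i$ playing the leaves and $v_0-v_1-w_1$ supplying either the length-$2$ leg or, after one more step through $U\cup V$, the length-$3$ leg. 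Turning $T_B(n)\nsubseteq G$ into adjacency restrictions this way should severely limit the edges that $w_1,w_2$ and the vertices of $V$ send into $U\cup V$. Since $\overline{G}[U\cup V]$ contains no $C_8$, Lemma~\ref{lem:pancyclic} (or Corollary~\ref{cor:bipartite4-6}) then produces a vertex $u\in U$ of large degree in $G[U\cup V]$ that is not adjacent to $V$ and whose $G[U]$-neighbourhood sends few edges to $V$; I would finish by assembling a $C_8$ in $\overline{G}$ from three vertices of $V$ and four vertices of $N_{G[U]}(u)$ and taking $w_1$ or $w_2$ as the hub --- the same endgame as in Theorems~\ref{thm:R(TA,W8)}--\ref{thm:R(Sn[4],W8)}.

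In the second case ($n\equiv 2\pmod 4$, $|V(G)|=2n-1$, $S_n(3)\nsubseteq G$), Theorem~\ref{thm:R(Sn(4),W8)} gives a copy $T=S_n(4)$ with $V(T)=\{v_0,\dots,v_{n-4},w_1,w_2,w_3\}$, $E(T)=\{v_0v_1,\dots,v_0v_{n-4},v_1w_1,v_1w_2,v_1w_3\}$ and $U=V(G)-V(T)$, $|U|=n-1$. As in the $T_A(n)$ argument, $S_n(3)\nsubseteq G$ forces $v_0$ to have no neighbour in $U\cup\{w_1,w_2,w_3\}$, and $T_B(n)\nsubseteq G$ restricts the adjacencies among $w_1,w_2,w_3$ and their degrees into $U$; Lemma~\ref{lm4:R(Sn(1,2),W8)} then forces $G$ restricted to $U$ (minus a few vertices) to be complete or complete-minus-an-edge, and combining this with Observation~\ref{obs2} to rule out $S_5\subseteq\overline{G[V(T)]}$ gives $\delta\big(G[V(T)]\big)\ge n-4$. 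What is then needed is an embedding statement, analogous to Lemma~\ref{lm:R(Sn[4],W8)}, that every graph of order $n\ge 9$ with minimum degree at least $n-4$ contains $T_B(n)$ --- and, since $4\nmid n$ here, there is no exceptional family --- which yields $T_B(n)\subseteq G[V(T)]$, the desired contradiction. I expect the main obstacle to be precisely the length-$3$ leg of $T_B(n)$: unlike $T_A(n)$, $S_n(1,3)$ and $S_n[4]$, whose extra structure is concentrated at a single neighbour of the centre, $T_B(n)$ distributes it over two neighbours, so $T_B(n)\nsubseteq G$ yields weaker local constraints and more subcases must be eliminated --- both when ruling out the various ways a length-$3$ leg could be built from $v_0$, $v_1$, the $w_i$ and $U$ in the first case, and when proving the $(n-4)$-minimum-degree embedding lemma, which will require a careful split on whether a chosen high-degree vertex has degree $n-1$, $n-2$, $n-3$ or exactly $n-4$.
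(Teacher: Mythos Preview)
Your route differs from the paper's and is considerably more roundabout.  The paper does \emph{not} start from $S_n(3)$ but from $S_n[4]$, which is available in $G$ for every $n\ge 8$ by the already-proved Theorem~\ref{thm:R(Sn[4],W8)} (with exactly the same threshold $2n$ or $2n-1$).  This is the decisive simplification: in $S_n[4]$ the centre $v_0$ already carries a pendant path of length~$3$, namely $v_0\!-\!v_1\!-\!w_1\!-\!w_2$ (or $-w_3$), so $T_B(n)\nsubseteq G$ immediately forces $E_G(V,U)=\emptyset$ and $w_2,w_3\notin N_G(V)$, since any edge from some $v_i\in V$ to $U\cup\{w_2,w_3\}$ supplies the missing length-$2$ leg.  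The remainder is then a two-line dichotomy on how many vertices of $U$ are adjacent to $w_2$: either four are not (and those four together with four vertices of $V$ give a $C_8$ in $\overline G$ with hub $w_2$), or almost all are, in which case $w_3$ is non-adjacent to them (else $w_2$ becomes a $T_B(n)$-centre via $w_2\!-\!w_1\!-\!v_1\!-\!v_0$ and $w_2\!-\!u\!-\!w_3$) and the same $C_8$ works with hub~$w_3$.  No separate case for $n\equiv 2\pmod 4$ is needed, and no new embedding lemma is required.

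By contrast, starting from $S_n(3)$ you only have a built-in pendant of length~$2$ ($v_0\!-\!v_1\!-\!w_1$), so $T_B(n)\nsubseteq G$ does \emph{not} directly constrain $w_1$ or $w_2$: it yields only the disjunctive statement ``either $w_1$ has no neighbour in $U\cup V\cup\{w_2\}$, \emph{or} no $v_i$ ($i\ge 2$) has the right external neighbour''.  Hence your claim that $w_1$ or $w_2$ can serve as the $W_8$-hub is not justified without further case-splitting (and note that your proposed rim uses ``three vertices of $V$ and four vertices of $N_{G[U]}(u)$'', which is only seven vertices).  Your second case would also require proving from scratch an embedding lemma for $T_B(n)$ analogous to Lemma~\ref{lm:R(Sn[4],W8)}.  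All of this is avoidable: bootstrap from $S_n[4]$ rather than $S_n(3)$, and the proof collapses to a few lines.
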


\begin{proof}
Lemma~\ref{lem:lower-bound-on-S_n(4)-S_n[4]-S_n(1,3)-S_n(3,1)-T_A(n)-T_B(n)-T_C(n)}
provides the lower bounds, so it remains to prove the upper bounds.
Let $G$ be a graph with no $T_B(n)$ subgraph 
whose complement $\overline{G}$ does not contain $W_8$. 

Suppose that $n\equiv 0 \pmod{4}$ and that $G$ has order $2n$. 
By Theorem~\ref{thm:R(Sn[4],W8)}, $G$ has a subgraph $T=S_n[4]$. 
Let $V(T)=\{v_0,\ldots,v_{n-4},w_1,w_2,w_3\}$ 
and $E(T)=\{v_0v_1,\ldots,v_0v_{n-4},v_1w_1,w_1w_2,w_1w_3\}$. 
Set $V=\{v_2,\ldots,v_{n-4}\}$ and $U=V(G)-V(T)$; 
then $|V|=n-5$ and $|U|=n$. 
Since $T_B(n)\nsubseteq G$, 
$E_G(U,V)=\emptyset$ and neither $w_2$ nor $w_3$ is adjacent in $G$ to $V$. 
Suppose that $n\geq 12$.
If $w_2$ is non-adjacent to some $4$ vertices from $U$, 
then these $4$ vertices and any $4$ vertices from $V$ form $C_8$ in $\overline{G}$ 
that with $w_2$ forms $W_8$, 
a contradiction.
Otherwise, $w_2$ must be adjacent to at least $n-3$ vertices of $U$ in $G$. 
Since $T_B(n)\nsubseteq G$, $w_3$ must not be adjacent to these $n-3$ vertices;
then any $4$ vertices from these $n-3$ vertices and $4$ vertices from $V$ form $C_8$ in $\overline{G}$ 
and with $w_3$ forms $W_8$, again a contradiction. 
For $n=8$, $|V|=3$ and $|U|=8$. 
If $w_2$ is not adjacent to any vertex of $U$ in $G$, 
then by Lemma~\ref{lm4:R(Sn(1,2),W8)}, 
$G[U]$ is $K_8$ or $K_8 - e$ which contains $T_B(8)$, a contradiction. 
Otherwise, suppose that $w_2$ is adjacent to $u\in U$. 
Since $T_B(8)\nsubseteq G$, 
$w_1$ must not be adjacent to $(U\cup V)\setminus \{u\}$ in $G$. 
Now, if $w_3$ is not adjacent to $v_0$ in $G$, 
then by Observation~\ref{obs2}, $\overline{G}$ contains $W_8$, a contradiction. 
Otherwise, $u$ is not adjacent to $V\cup \{w_3\}$, 
and again by Observation \ref{obs2}, 
$\overline{G}$ contains $W_8$, another contradiction. 
Thus, $R(T_B(n),W_8)\leq 2n$ for $n\equiv 0 \pmod{4}$. 

Next, suppose that $n\not\equiv 0 \pmod{4}$ and that $G$ has order $2n-1$.
By Theorem~\ref{thm:R(Sn[4],W8)}, $G$ has a subgraph $T=S_n[4]$. 
Let $V(T)=\{v_0,\ldots,v_{n-4},w_1,w_2,w_3\}$ 
and $E(T)=\{v_0v_1,\ldots,v_0v_{n-4},v_1w_1,w_1w_2,w_1w_3\}$. 
Set $V=\{v_2,\ldots,v_{n-4}\}$ and $U=V(G)-V(T)$; 
then $|V|=n-5$ and $|U|=n-1$. 
Since $T_B(n)\nsubseteq G$, 
$E_G(U,V)=\emptyset$ and neither $w_2$ nor $w_3$ is adjacent in $G$ to $V$. 
For $n\geq 9$, if $w_2$ is non-adjacent to some $4$ vertices from $U$, 
then these $4$ vertices and any $4$ vertices from $V$ form $C_8$ in $\overline{G}$ and with $w_2$ form $W_8$, 
a contradiction. 
Otherwise, $w_2$ is adjacent to at least $n-4$ vertices of $U$ in $G$. 
Since $T_B(n)\nsubseteq G$, 
$w_3$ is not adjacent to these $n-4$ vertices, 
so any $4$ vertices from these $n-4$ vertices and $4$ vertices from $V$ 
form $C_8$ in $\overline{G}$ which with $w_3$ form $W_8$, again a contradiction. 
Therefore, $R(T_B(n),W_8)\leq 2n-1$ for $n\not\equiv 0 \pmod{4}$.

This completes the proof.
\end{proof}

\begin{theorem}
\label{thm:R(TC,W8)}
For $n\geq 8$, $R(T_C(n),W_8)=2n-1$.
\end{theorem}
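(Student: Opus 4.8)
The lower bound $R(T_C(n),W_8)\ge 2n-1$ comes from Lemma~\ref{lem:lower-bound-on-S_n(4)-S_n[4]-S_n(1,3)-S_n(3,1)-T_A(n)-T_B(n)-T_C(n)}. For the upper bound, let $G$ have order $2n-1$ with $W_8\nsubseteq\overline G$; the goal is $T_C(n)\subseteq G$, and the plan is to promote a copy of $S_n(4)$ to a copy of $T_C(n)$. Assume first that $n\ge 9$. By Theorem~\ref{thm:R(Sn(4),W8)}, $G$ contains a copy $T$ of $S_n(4)$; write $V(T)=\{v_0,v_1,\dots,v_{n-4},w_1,w_2,w_3\}$ with $v_0v_1,\dots,v_0v_{n-4}\in E(G)$ and $v_1w_1,v_1w_2,v_1w_3\in E(G)$, and set $V^\ast=\{v_2,\dots,v_{n-4}\}$, $W=\{w_1,w_2,w_3\}$, $U=V(G)\setminus V(T)$, so $|V^\ast|=n-5$ and $|U|=n-1$. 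The observation driving the proof is that $v_1$ together with two vertices of $W$ is exactly the degree-$3$ branch that $T_C(n)$ attaches to its centre: if some $v_i$ with $2\le i\le n-4$ has a neighbour $q\in W\cup U$, then taking $v_0$ as centre, $v_1$ with two suitable vertices of $W$ as the ``$2$-pendant'' branch (chosen to avoid $q$ if $q\in W$), $v_i$ with $q$ as the ``$1$-pendant'' branch, and the remaining $n-6$ vertices of $V^\ast$ as leaves produces a copy of $T_C(n)$ in $G$. Hence, assuming $T_C(n)\nsubseteq G$, we get $E_G(V^\ast,W\cup U)=\emptyset$; that is, in $\overline G$ every vertex of $V^\ast$ is adjacent to every vertex of $W\cup U$.

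Next I rule out high $\overline G$-degree inside $W\cup U$. If some $h\in W\cup U$ had four $\overline G$-neighbours $b_1,b_2,b_3,b_4$ in $W\cup U$, then picking $v_{i_1},v_{i_2},v_{i_3},v_{i_4}\in V^\ast$ (possible since $|V^\ast|=n-5\ge 4$) the $8$-cycle $v_{i_1}b_1v_{i_2}b_2v_{i_3}b_3v_{i_4}b_4v_{i_1}$ lies in $\overline G$ and $h$ is $\overline G$-adjacent to all eight of its vertices, so $W_8\subseteq\overline G$, a contradiction. Therefore $\overline G[W\cup U]$ has maximum degree at most $3$, i.e.\ $\delta\big(G[W\cup U]\big)\ge (n+1)-3=n-2$. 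Finally, a graph $H$ of order $n+2$ with $\delta(H)\ge n-2$ contains $T_C(n)$ when $n\ge 9$: every vertex has at most $3$ non-neighbours, so picking any vertex $c$, a set $N'$ of $n-4$ of its neighbours, and a vertex $y\in N'$, the vertex $y$ is adjacent to at least $2$ of the $5$ vertices outside $N'\cup\{c\}$ (take two of these as $y$'s pendants), while if no vertex of $N'\setminus\{y\}$ were adjacent to any of the remaining $3$ outside vertices, each of those $3$ would have at least $n-5\ge 4$ non-neighbours --- impossible. Applying this with $H=G[W\cup U]$ yields $T_C(n)\subseteq G$, a contradiction, which finishes the case $n\ge 9$.

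It remains to handle $n=8$, which is not covered above because $R(S_8(4),W_8)=16>15$, so no suitably dense tree is guaranteed in a $15$-vertex graph. I would argue directly: if $\delta(G)\ge 7$ then Lemma~\ref{lem:tree} gives $T_C(8)\subseteq G$; otherwise some $v$ has $|N_{\overline G}(v)|\ge 8$, $\overline G[N_{\overline G}(v)]$ has no $C_8$, Lemma~\ref{lem:pancyclic} forces a vertex of $G$-degree at least $4$ inside $N_{\overline G}(v)$, and a case analysis of how that vertex and the remaining six vertices of $G$ are joined produces $T_C(8)$ (in the style of the proofs of Theorems~\ref{thm:ABC(7)}--\ref{thm:R(D,W_8)}). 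The main obstacle in the whole argument is precisely this case $n=8$: absent a dense starting subgraph it must be done by hand, and one has to check carefully that the configurations forced by ``$T_C(8)\nsubseteq G$ and $W_8\nsubseteq\overline G$'' are genuinely impossible. The case $n\ge 9$, by contrast, is short once the reduction $E_G(V^\ast,W\cup U)=\emptyset$ is in place.
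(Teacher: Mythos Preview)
Your argument for $n\ge 9$ is correct and follows a genuinely different route from the paper's. You start from $S_n(4)$ (available at order $2n-1$ by Theorem~\ref{thm:R(Sn(4),W8)} precisely when $n\ge 9$), force $E_G(V^\ast,W\cup U)=\emptyset$, bound $\Delta(\overline G[W\cup U])\le 3$, and then embed $T_C(n)$ directly in the dense $(n+2)$-vertex graph $G[W\cup U]$ via a short greedy argument. The paper instead bifurcates: either some $n$-set $X$ has $\delta(G[X])\ge n-4$, in which case a counting argument finds $T_C(n)$ inside $G[X]$; or every $n$-set has a vertex of $G$-degree at most $n-5$, in which case the paper starts from the smaller tree $S_{n-1}(3)$, uses $T_C(n)\nsubseteq G$ to get $E_G(U,V)=\emptyset$, and finishes via Observation~\ref{obs2} with the $S_5$ guaranteed in $\overline G[U]$. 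Your approach is the more streamlined one for $n\ge 9$; the paper's buys uniformity, since $S_{n-1}(3)$ is available at order $2n-1$ for every $n\ge 8$.

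For $n=8$ you correctly identify the obstruction (no $S_8(4)$ is guaranteed at order $15$) but leave the argument as a sketch: the line ``a case analysis of how that vertex and the remaining six vertices of $G$ are joined produces $T_C(8)$'' is exactly where the work lies, and it is not supplied. The paper does not need a separate ad hoc analysis here: within its framework $S_7(3)\subseteq G$ since $R(S_7(3),W_8)=14\le 15$, and after forcing $E_G(U,V)=\emptyset$ (now $|V|=3$) it handles one extra sub-case---$v_1$ adjacent to some $u\in U$---by showing $u$ is isolated from $V\cup(U\setminus\{u\})$, applying Lemma~\ref{lm4:R(Sn(1,2),W8)} to get $G[U\setminus\{u\}]\in\{K_7,K_7-e\}$, and then invoking Observation~\ref{obs2} once more. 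As written, your $n=8$ case is a genuine gap.
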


\begin{proof}
Lemma~\ref{lem:lower-bound-on-S_n(4)-S_n[4]-S_n(1,3)-S_n(3,1)-T_A(n)-T_B(n)-T_C(n)}
provides the lower bound, so it remains to prove the upper bound.
Let $G$ be any graph of order $2n-1$
and assume that $G$ does not contain $T_C(n)$ and that $\overline{G}$ does not contain $W_8$.

Suppose first that there is a subset $X\subseteq V(G)$ of size $n$ with $\delta(G[X])\ge n-4$.
If $\delta(G[X]) = n-4$, then let $x\in X$ be such that $d_{G[X]}(x) = n-4$, 
and set $Y = X\setminus (\{x\}\cup N_{G[X]}(x))$ where $|Y|=3$.
Noting that $3(n-6)>n-4$ for $n\ge 8$, 
there must be two vertices of $Y$ that are adjacent to a common vertex of $N_{G[X]}(x)$ in $G$, 
say to $x'\in N_{G[X]}(x)$.
Then the remaining vertex of $Y$ is not adjacent to any vertex of $N_{G[X]}(x)\setminus\{x'\}$,
as $T_C(n)\nsubseteq G$, contradicting $\delta(G[X])\geq n-4$.
So $\delta(G[X]) \ge n-3$.
Pick any vertex $x\in X$ and any subset $X'\subseteq N_{G[X]}(x)$ of size $n-3$.
Set $Y = X\setminus (\{x\}\cup X')$ where $|Y|=2$.
As $2(n-5)>n-3$ for $n\ge 8$, 
the two vertices of $Y$ must be adjacent to a common vertex of $X'$ in $G$, say $x'$.
Then $G[X'\setminus\{x'\}]$ is an empty graph as $T_C(n)\nsubseteq G$, 
contradicting $\delta(G[X])\geq n-3$.

Now assume that $\delta(G[X])\le n-5$ whenever $X\subseteq V(G)$ is of size $n$.
By Theorem~\ref{thm:R(Pn-Sn(1,2)-Sn(3)-Sn(2,1),W8)},
$G$ has a subgraph $T=S_{n-1}(3)$.
Let $V(T)=\{v_0,\ldots,v_{n-4},w_1,w_2\}$ and $E(T)=\{v_0v_1,\ldots,v_0v_{n-4},v_1w_1,v_1w_2\}$.
Set $V=\{v_2,v_3,\ldots,v_{n-4}\}$ and $U=V(G)-V(T)$; then $|V|=n-5$ and $|U|=n$.
Since $T_C(n)\nsubseteq G$, $E_G(U,V)=\emptyset$.

For the case $n=8$ such that $v_1$ is not adjacent to any vertex of $U$ in $G$, or the case $n\ge 9$, 
there are four vertices of $V(T)$ that are not adjacent to any vertex of $U$ in~$G$.
Since $\delta(G[U])\le n-5$, $\overline{G}[U]$ contains $S_5$, and so $\overline{G}$ contains $W_8$ by Observation~\ref{obs2}, a contradiction.

For the final case $n=8$ with $v_1$ adjacent to some vertex $u$ of $U$ in $G$, 
observe that since $T_C(8)\nsubseteq G$, 
the vertex $u$ is not adjacent to any vertex of  $\{v_2, v_3, v_4\}\cup U$.
By Lemma~\ref{lm4:R(Sn(1,2),W8)}, $G[U\setminus\{u\}]$ is $K_7$ or $K_7 - e$, 
which implies that no vertex of $V(T)\cup \{u\}$ is adjacent to any vertex of $U\setminus\{u\}$ in $G$, 
as $T_C(8)\nsubseteq G$.
Since $\delta(G[V(T)\cup\{u\}])\le n-5$, $\overline{G}[V(T)\cup\{u\}]$ contains $S_5$, 
and so $\overline{G}$ contains $W_8$ by Observation~\ref{obs2}, a contradiction.

This completes the proof of the theorem.
\end{proof}

\begin{theorem}
\label{thm:R(Sn(3,1),W8)}
For $n\geq 8$, $R(S_n(3,1),W_8)=2n-1$. 
\end{theorem}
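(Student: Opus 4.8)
The lower bound $R(S_n(3,1),W_8)\ge 2n-1$ is supplied by Lemma~\ref{lem:lower-bound-on-S_n(4)-S_n[4]-S_n(1,3)-S_n(3,1)-T_A(n)-T_B(n)-T_C(n)}, so the work lies in the upper bound: assuming $G$ has order $2n-1$ and $\overline{G}$ contains no $W_8$, I must exhibit $S_n(3,1)\subseteq G$. Following the template of the proof of Theorem~\ref{thm:R(TC,W8)}, the first step is to rule out any $n$-vertex set $X\subseteq V(G)$ with $\delta(G[X])\ge n-4$. Given such an $X$, take a vertex $v$ of minimum degree in $G[X]$, let $N=N_{G[X]}(v)$, and select $n-4$ members of $N$ as the branch vertices of a prospective $S_n(3,1)$ centred at $v$; the three remaining vertices of $X$ each have degree $\ge n-4$ in $G[X]$ and miss $v$, hence are each joined to at least $n-6$ of the branch vertices, and for $n\ge 9$ Hall's theorem produces the three disjoint pendant edges that complete $S_n(3,1)$. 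For $n=8$ the only configuration that survives is that all three leftover vertices are adjacent solely to a common pair $\{a_1,a_2\}\subseteq N$; re-rooting the target tree at $a_1$ and using $\delta(G[X])\ge4$ to fill in $G[X]$, one checks by hand that $S_8(3,1)\subseteq G[X]$ anyway. In particular $\delta(G[U])\le n-5$ for every $n$-set $U\subseteq V(G)$, so $\overline{G}[U]$ always contains $S_5$.

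Next I would introduce a large subtree. Since $R(S_{n-1}(3),W_8)\le 2n-2$ by Theorem~\ref{thm:R(Pn-Sn(1,2)-Sn(3)-Sn(2,1),W8)}, the graph $G$ contains $T=S_{n-1}(3)$; write $v_0$ for its centre of degree $n-4$, write $v_1,\ldots,v_{n-4}$ for the neighbours of $v_0$, write $w_1,w_2$ for the two pendant vertices at $v_1$, and set $V=\{v_2,\ldots,v_{n-4}\}$ and $U=V(G)\setminus V(T)$, so that $|V|=n-5$ and $|U|=n$. The crucial observation is that, because $v_1$ already carries the cherry $v_1w_1,\,v_1w_2$, an $S_n(3,1)$ centred at $v_0$ appears as soon as $G$ contains a matching of size two between $V$ and $U$: that matching together with the edge $v_1w_1$ gives three disjoint pendant edges at three neighbours of $v_0$, with the remaining $n-7$ neighbours of $v_0$ serving as leaves. (More precisely, $S_n(3,1)\subseteq G$ whenever there is a matching of size three between $\{v_1,\ldots,v_{n-4}\}$ and $\{w_1,w_2\}\cup U$.) Hence $S_n(3,1)\nsubseteq G$ forces the bipartite graph $E_G(V,U)$ to have a vertex cover of size at most one, and I would split into the cases $E_G(V,U)=\emptyset$, all of its edges meeting a fixed vertex of $V$, and all of its edges meeting a fixed vertex $z\in U$; in each I would combine the freedom to re-root the target tree and re-choose its branch set with the hypothesis $S_n(3,1)\nsubseteq G$ to also pin down the few possible edges from $U$ to $\{v_0,v_1,w_1,w_2\}$. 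When the cross-edges turn out to be sparse enough, four vertices of $V(T)$ have no $G$-neighbour in $U$, and these together with an $S_5$ inside $\overline{G}[U]$ (from the first step) give $W_8\subseteq\overline{G}$ by Observation~\ref{obs2}, a contradiction; in the remaining cases Lemma~\ref{lm4:R(Sn(1,2),W8)} forces $G[U]$ (after absorbing the exceptional vertex, if any) to be $K_n$ or $K_n-e$, which contains the non-star tree $S_n(3,1)$ (for $K_n$ this is Lemma~\ref{lem:tree}), again a contradiction. The base case $n=8$, where $|V|=3$ makes several counts tight, I would treat directly, in the spirit of the $n=8$ analyses in Lemma~\ref{lm:R(Sn[4],W8)} and Theorem~\ref{thm:R(TC,W8)}, starting from $S_8(4)\subseteq G$ (Theorem~\ref{thm:R(Sn(4),W8)}) if that is cleaner than from $S_7(3)$.

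I expect the main obstacle to be exactly this last case analysis on the edges between $U$ and $V(T)$. For $T_C(n)$ a single cross-edge between $U$ and $V$ already completes the target tree, so $E_G(U,V)=\emptyset$ comes for free and the endgame is short; for $S_n(3,1)$ the centre requires three independent pendant edges, so one cross-edge is not enough, and one must show that it essentially takes a size-two cross-matching plus the free cherry at $v_1$ to finish, then eliminate each near-degenerate bipartite configuration as well as the edges into $\{v_0,v_1,w_1,w_2\}$, which the non-containment $S_n(3,1)\nsubseteq G$ does not rule out by itself. Keeping this case split exhaustive and non-redundant, and at each branch choosing the most economical re-rooting of the candidate $S_n(3,1)$, is the delicate part of the argument.
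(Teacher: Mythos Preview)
Your overall plan---first rule out dense $n$-sets, then anchor on a large subtree and analyse the cross-edges---matches the paper, and your first step is essentially the paper's (with the caveat that your claim ``miss $v$'' is only automatic when $d_{G[X]}(v)=n-4$; when the minimum degree is higher, some of the three leftover vertices may lie in $N(v)$ and the neighbour count in $N'$ drops from $n-6$ to $n-7$, which is why the paper treats $n\ge10$, $n=9$ and $n=8$ separately at this stage).

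The real divergence is your choice of anchor tree. The paper uses $S_{n-1}(2,1)$ rather than $S_{n-1}(3)$: with $S_{n-1}(2,1)$ two of the three required pendant edges ($v_1w_1$ and $v_2w_2$) are already in place, so a \emph{single} edge from $V=\{v_3,\ldots,v_{n-4}\}$ to $U$ would complete $S_n(3,1)$. Hence $E_G(V,U)=\emptyset$ comes for free, exactly as in the $T_C(n)$ proof, and the three-way vertex-cover split you anticipate never arises. For $n\ge10$ this plus the $S_5\subseteq\overline{G}[U]$ from the first step finishes in one line via Observation~\ref{obs2}; for $n=9$ the paper upgrades to the full $S_9(2,1)$ to pick up an extra neighbour of $v_0$ in $U$, and for $n=8$ it restarts from an $S_7$ and does a longer degree-sequence analysis on the bipartite edges between its leaves and the complement. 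Your $S_{n-1}(3)$ route is viable, but the case where all cross-edges pass through a single $z\in U$ is more delicate than you indicate: Lemma~\ref{lm4:R(Sn(1,2),W8)} needs an $S_4$ in the complement of a set disjoint from $U$, which you have not set up, and even if $G[U\setminus\{z\}]$ is forced to be complete or near-complete it has only $n-1$ vertices, so ``contains $S_n(3,1)$'' does not follow directly; you would have to re-attach $z$ or a vertex of $V(T)$ and argue further. Switching the anchor to $S_{n-1}(2,1)$ eliminates all of this.
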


\begin{proof}
Lemma~\ref{lem:lower-bound-on-S_n(4)-S_n[4]-S_n(1,3)-S_n(3,1)-T_A(n)-T_B(n)-T_C(n)}
provides the lower bound, so it remains to prove the upper bound.
Let $G$ be any graph of order $2n-1$.
Assume that $G$ does not contain $S_n(3,1)$ and that $\overline{G}$ does not contain $W_8$.

Suppose first that there is a subset $X\subseteq V(G)$ of size $n$ with $\delta(G[X])\ge n-4$.
Let $x_0$ be any vertex of $X$, and pick a subset $X'\subseteq N_{G[X]}(x_0)$ of size $n-4$.
Set $Y = X\setminus (\{x_0\}\cup X')$, and so $|Y|=3$.
Since $\delta(G[X])\ge n-4$, each vertex of $Y$ is adjacent to at least $n-7$ vertices of $X'$ in~$G$.
For $n\ge 10$, it is straightforward to see that there is a matching from $Y$ to $X'$ in $G$;
hence, $G$ contains $S_n(3,1)$, a contradiction.
For $n=9$, if $d_{G[X]}(x_0) = n-4 = 5$, 
then we can similarly deduce the contradiction that $G$ contains $S_9(3,1)$, 
since in this case, each vertex of $Y$ is adjacent to at least $n-6=3$ vertices of $X'$ in~$G$.
As $x_0$ was arbitrary, we may assume for the case when $n=9$ that $\delta(G[X])\ge n-3 = 6$, 
which again leads to the contradiction that $G$ contains $S_9(3,1)$.

Now for $n=8$, suppose $d_{G[X]}(x_0) = 4$.
Let $X' = \{x_1,x_2,x_3,x_4\}$ and $Y = \{x_5,x_6,x_7\}$.
Since $\delta(G[X])\ge n-4$ and $S_8(3,1)\nsubseteq G$, $G[Y]$ is $K_3$; 
all three vertices of $Y$ are adjacent to exactly two common vertices of $X'$ in $G$, 
say to $x_1$ and $x_2$; 
and neither $x_3$ nor $x_4$ are adjacent to any vertex of $Y$ in~$G$.
By the minimum degree condition, 
$x_3$ and $x_4$ are then adjacent in $G$, and each is also adjacent to both $x_1$ and $x_2$.
This implies that $G$ contains $S_8(3,1)$, with $x_1$ being the vertex with degree four, a contradiction.
As $x_0$ was arbitrary, assume for the case when $n=8$ that $\delta(G[X])\ge 5$, 
which again leads to the contradiction that $G$ contains $S_8(3,1)$.

Now assume that $\delta(G[X])\le n-5$ whenever $X\subseteq V(G)$ is of size $n$.
Recall that $G$ has order $2n-1$, and so by Theorem~\ref{thm:R(Pn-Sn(1,2)-Sn(3)-Sn(2,1),W8)}, 
$G$ has a subgraph $T=S_{n-1}(2,1)$.
Let $V(T)=\{v_0,\ldots,v_{n-4},w_1,w_2\}$ and $E(T)=\{v_0v_1,\ldots,v_0v_{n-4},v_1w_1,v_2w_2\}$.
Set $V=\{v_3,v_4,\ldots,v_{n-4}\}$ and $U=V(G)-V(T)$; then $|V|=n-6$ and $|U|=n$.
Since $S_n(3,1)\nsubseteq G$, $E_G(U,V)=\emptyset$.
Now as $\delta(G[U])\le n-5$, $\overline{G}[U]$ contains $S_5$, 
and so for $n\ge 10$, $\overline{G}$ contains $W_8$ by Observation~\ref{obs2}, a contradiction.

For $n=9$, Theorem~\ref{thm:R(Pn-Sn(1,2)-Sn(3)-Sn(2,1),W8)} shows that $G$ has a subgraph $T=S_9(2,1)$, 
so without loss of generality assume that $v_0$ is adjacent to some vertex $u$ in $U$.
Since $S_9(3,1)\nsubseteq G$, 
$G[V\cup\{u\}]$ is an empty graph and $u$ is not adjacent to any vertex of $U$ in~$G$.
By Lemma~\ref{lm4:R(Sn(1,2),W8)}, $G[U\setminus\{u\}]$ is $K_8$ or $K_8 - e$, 
which implies that no vertex of $V(T)\cup \{u\}$ is adjacent to any vertex of $U\setminus\{u\}$ in $G$, 
as $S_9(3,1)\nsubseteq G$.
Since $\delta(G[V(T)\cup\{u\}])\le n-5$, $\overline{G}[V(T)\cup\{u\}]$ contains $S_5$, 
and so $\overline{G}$ contains $W_8$ by Observation~\ref{obs2}, a contradiction.

Finally for $n=8$, recall that $G$ has order $15$, and so $G$ has a subgraph $T'=S_7$ by Theorem~\ref{thm:R(Sn,W8)}.
Let $V(T')=\{v_0',\ldots,v_6'\}$ and $E(T')=\{v_0'v_1',\ldots,v_0'v_6'\}$.
Set $V'=\{v_1',\ldots,v_6'\}$ and $U'=V(G)-V(T')$, then $|U'| = 8$.
Suppose that $v_2'$ and $v_3'$ are adjacent to a common vertex $u$ of $U'$ in $G$, 
while $v_1'$ is adjacent to another vertex $u'\neq u$ of $U'$ in~$G$.
Then as $S_8(3,1)\nsubseteq G$, 
no vertex of $\{v_4',v_5',v_6'\}\cup (U'\setminus\{u,u'\})$ is adjacent to 
any vertex of $V'\setminus\{v_1'\}$ in~$G$.
Now $G[V'\setminus\{v_1'\}]$ contains $S_5$ and $|U'\setminus\{u,u'\}|=6$, 
and so $\overline{G}$ contains $W_8$ by Observation~\ref{obs2}, a contradiction.
Similar arguments lead to the same contradiction when the roles of $v_1',v_2'$, and $v_3'$ 
are replaced by any three vertices of $V'$.
So assume that it is not the case that 
two vertices of $V'$ are adjacent to a common vertex of $U'$ in $G$ 
while a third vertex of $V'$ is adjacent to another vertex of $U'$ in~$G$.

For $1\le i\le 6$, 
let $d_i = |E_G(\{v_i'\}, U)|$ be the number of vertices of $U'$ that are adjacent to $v_i'$.
Without loss of generality, assume that $d_1\ge d_2\ge \cdots \ge d_6$.
Recalling that $\delta(G[U'])\le 3$ and so $S_5\subseteq\overline{G}[U']$, 
Observation~\ref{obs2} implies that $d_3\ge 1$.
If $d_1\ge 3$ and $d_2\ge 2$, 
then it is trivial that $G$ contains $S_8(3,1)$, a contradiction.
By our assumption on the adjacencies of vertices in $V'$ to vertices of $U'$ in $G$, 
it is clear that when $(d_1,d_2,d_3)$ is of the form  $(\ge 3, 1, 1)$, $(2,2,2)$, or $(2,2,1)$, 
there is a matching from $\{v_1',v_2',v_3'\}$ to $U'$ in $G$, 
as $v_2'$ and $v_3'$ are adjacent to different vertices of $U'$ in~$G$.
This implies that $G$ contains $S_8(3,1)$, a contradiction.
If $(d_1,d_2,d_3) = (2,1,1)$, 
then, similarly, $v_2'$ and $v_3'$ are adjacent to different vertices of $U'$ in $G$, 
say to $u$ and $u'$, respectively, 
which in turn implies that $v_1'$ is adjacent to two vertices in $U'\setminus\{u,u'\}$.
So $G$ contains $S_8(3,1)$, again a contradiction.

For the final case when $d_1=d_2=d_3=1$, 
our assumption implies that $v_1'$, $v_2'$ and $v_3'$ must be adjacent to a common vertex $u$ of $U'$ in $G$
to avoid a matching from $\{v_1',v_2',v_3'\}$ to $U'$ in~$G$.
Furthermore, no vertex of $\{v_4',v_5',v_6'\}$ is adjacent to any vertex of $U'\setminus\{u\}$ in~$G$.
Now if $S_5\subseteq\overline{G}[V']$, 
then $\overline{G}$ contains $W_8$ by Observation~\ref{obs2}, a contradiction.
So $\delta(G[V'])\ge 2$, and in particular, $v_4'$ is adjacent to some vertex of $V'$ in~$G$.
Without loss of generality, $v_4$ is adjacent to either $v_1$ or $v_5$ in~$G$.
Since $S_8(3,1)\nsubseteq G$, $\overline{G}[\{v_5',v_2',v_3',v_6'\}]$ contains $S_4$ 
if $v_4'$ is adjacent to $v_1'$ in $G$, 
while $\overline{G}[\{v_6',v_1',v_2',v_3'\}]$ contains $S_4$ if $v_4'$ is adjacent to $v_5'$ in~$G$.
By Lemma~\ref{lm4:R(Sn(1,2),W8)}, $G[U'\setminus\{u\}]$ is $K_7$ or $K_7 - e$, 
which implies that no vertex of $V(T')\cup \{u\}$ is adjacent to any vertex of $U'\setminus\{u\}$ in $G$, 
as $S_8(3,1)\nsubseteq G$.
Since $\delta(G[V(T')\cup\{u\}])\le 3$, $\overline{G}[V(T)\cup\{u\}]$ contains $S_5$, 
and so $\overline{G}$ contains $W_8$ by Observation~\ref{obs2}, a contradiction.

Thus, $R(S_n(3,1),W_8)\leq 2n-1$ for $n\geq 8$ which completes the proof.
\end{proof}

\section{Proof of Theorem~\ref{thm:R(Sn(1,4)-TDEFGHJKLM)}}
\label{sec:n-5proof}

\begin{lemma}
\label{lem:lower-bound-on-S_n(1,4)-T_D(n)-...-T_L(n)}
Let $n\geq 8$. 
Then $R(T_n,W_8)\geq 2n-1$ for each 
\[
  T_n \in\{S_n(1,4),S_n(5), S_n[5], S_n(2,2), S_n(4,1), T_D(n), \ldots, T_S(n)\}\,.
\]
Also, $R(T_n,W_8)\geq 2n$ if $n\equiv 0 \pmod{4}$ 
and $T_n \in \{S_n(1,4), S_n(2,2), T_D(n), T_N(n)\}$
or if $T_n \in \{T_E(8), T_F(8)\}$.
\end{lemma}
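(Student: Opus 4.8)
The plan is to produce explicit extremal graphs, exactly in the spirit of the proof of Lemma~\ref{lem:lower-bound-on-S_n(4)-S_n[4]-S_n(1,3)-S_n(3,1)-T_A(n)-T_B(n)-T_C(n)}. Two constructions do all the work: the graph $2K_{n-1}$ of order $2n-2$, which gives $R(T_n,W_8)\ge 2n-1$ for every tree $T_n$ in the list, and the graph $G=K_{n-1}\cup K_{4,\dots,4}$ of order $2n-1$, where the complete multipartite factor has $n/4$ parts of size $4$ (hence $n$ vertices), which gives $R(T_n,W_8)\ge 2n$ when $n\equiv 0\pmod 4$.

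For the first bound, each $T_n$ is connected on $n$ vertices and so cannot embed into a component of $2K_{n-1}$, while $\overline{2K_{n-1}}=K_{n-1,n-1}$ is bipartite, hence triangle-free, hence $W_8$-free (a wheel contains triangles through its hub). For the second bound, since $T_n$ is connected on $n$ vertices and $K_{n-1}$ has only $n-1$ vertices, any copy of $T_n$ in $G$ must lie in the factor $K_{4,\dots,4}$, which has exactly $n$ vertices; so I must check (i) that $\overline G$ is $W_8$-free and (ii) that $T_n$ is not a spanning subgraph of $K_{4,\dots,4}$, equivalently that $T_n$ admits no proper colouring into $n/4$ colour classes each of size exactly $4$. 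For (i), write $\overline G=\overline{K_{n-1}}+\tfrac{n}{4}K_4$, the join of an independent set $I$ with $|I|=n-1$ and a disjoint union of $n/4$ copies of $K_4$; if $\overline G$ contained $W_8$ with hub $h$ and rim on a set $R$ of eight vertices, then $h$ is $\overline G$-adjacent to all of $R$, so either $h\in I$ and $R$ lies inside the $K_4$-factor, which has no connected eight-vertex subgraph, or $h$ lies in a $K_4$-component $Q$ and $R\subseteq I\cup(Q\setminus\{h\})$; but the rim $C_8$ cannot have two consecutive vertices in the independent set $R\cap I$, so at least four rim vertices lie in $Q\setminus\{h\}$, which has only three vertices. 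Either way we reach a contradiction.

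For (ii), note that in $K_{4,\dots,4}$ each colour class is an independent set of size $4$, so if $T_n$ were a spanning subgraph then the colour class of its maximum-degree vertex $v$ (of degree $n-5$, hence with exactly four non-neighbours) would consist of $v$ together with three pairwise non-adjacent non-neighbours of $v$. Reading the structure off Figures~\ref{fig:Tree example} and~\ref{fig:Tree graph Delta=n-5}, for $T_n\in\{S_n(1,4),T_D(n)\}$ the four non-neighbours of $v$ induce a path on four vertices, and for $T_n\in\{S_n(2,2),T_N(n)\}$ they induce a disjoint union of two edges; in either case there is no independent set of size $3$ among them, so $T_n\not\subseteq K_{4,\dots,4}$. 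Finally, the case $n=8$ of the second construction is $G=K_7\cup K_{4,4}$ of order $15$, and since each of $T_E(8)$ and $T_F(8)$ has its unique bipartition into classes of sizes $5$ and $3$, neither embeds into $K_{4,4}$; with the $W_8$-freeness of $\overline G$ from (i), this gives $R(T_n,W_8)\ge 16=2n$ for $T_n\in\{T_E(8),T_F(8)\}$. The only real obstacle here is bookkeeping rather than ideas: correctly reading off, for each named tree, the induced subgraph on the four non-neighbours of the maximum-degree vertex (equivalently, checking by hand that these trees are not spanning subgraphs of $K_{4,\dots,4}$), together with the routine but necessary verification that $\overline G$ contains no $W_8$.
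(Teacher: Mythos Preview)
Your proof is correct and uses precisely the constructions the paper uses: $2K_{n-1}$ for the general bound and $K_{n-1}\cup K_{4,\dots,4}$ (specialising to $K_7\cup K_{4,4}$ when $n=8$) for the sharper one. Your write-up is in fact more detailed than the paper's---you spell out why $\overline G$ is $W_8$-free and why each tree fails to embed in $K_{4,\dots,4}$ via the colour-class argument---and you explicitly cover $T_N(n)$, which the paper's terse proof actually leaves unmentioned.
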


\begin{proof}
The graph $G=2K_{n-1}$ clearly does not contain any tree graph of order $n$,
and $\overline{G}$ does not contain $W_8$. 
Furthermore, if $n\equiv 0 \pmod{4}$, 
then the graph $G=K_{n-1}\cup K_{4,\ldots,4}$ of order $2n-1$ does not contain 
$S_n(1,4)$, $T_D(n)$ or $S_n(2,2)$; 
nor does the complement $\overline{G}$ contain $W_8$. 
Finally, the graph $G = K_7\cup K_{4,4}$ does not contain $T_E(8)$ or $T_F(8)$ 
and $\overline{G}$ does not contain $W_8$.
\end{proof}

\begin{theorem}\label{thm:R(Sn(1,4),W8)}
If $n\geq 8$, then 
\[
  R(S_n(1,4),W_8)=\begin{cases}
    2n-1 & \text{if $n\not\equiv 0 \pmod{4}$}\,;\\
    2n   & \text{otherwise}\,.
  \end{cases}
\]
\end{theorem}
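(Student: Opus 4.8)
The lower bounds are supplied by Lemma~\ref{lem:lower-bound-on-S_n(1,4)-T_D(n)-...-T_L(n)}, so it remains to prove the matching upper bounds, and for this the plan is to follow the short template of the proof of Theorem~\ref{thm:R(Sn(1,3),W8)}. Since the claimed value of $R(S_n(1,4),W_8)$ equals $R(S_n(1,3),W_8)$ for every $n\geq 8$, I would take a graph $G$ of order $N:=R(S_n(1,3),W_8)$ with $S_n(1,4)\nsubseteq G$ and $W_8\nsubseteq\overline{G}$, and derive a contradiction. By Theorem~\ref{thm:R(Sn(1,3),W8)}, $G$ contains a copy $T=S_n(1,3)$; write $V(T)=\{v_0,\ldots,v_{n-4},w_1,w_2,w_3\}$ with centre $v_0$ and $E(T)=\{v_0v_1,\ldots,v_0v_{n-4}\}\cup\{v_1w_1,w_1w_2,w_2w_3\}$, and set $V=\{v_2,\ldots,v_{n-4}\}$ and $U=V(G)-V(T)$, so that $|V|=n-5$ and $|U|=N-n\geq n-1$.

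The first step is to observe that $w_3$ has no neighbour in $U\cup V$: otherwise the path $v_0v_1w_1w_2w_3$ lengthens to a path of length $5$ starting at $v_0$, and since $v_0$ still retains at least $n-6$ neighbours inside $V$, a copy of $S_n(1,4)$ appears. Hence $w_3$ is adjacent in $\overline{G}$ to every vertex of $U\cup V$, so $\overline{G}[U\cup V]$ contains no $C_8$ (it would otherwise form $W_8$ with hub $w_3$). As $|U\cup V|=N-5\geq 11$, Lemma~\ref{lem:pancyclic} yields a vertex $u\in U\cup V$ with $d_{G[U\cup V]}(u)\geq n-3$, the complete bipartite alternative being harmless since each side has at least four vertices. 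Next I would show, by exhibiting an explicit $S_n(1,4)$ whenever a forbidden edge occurs, that $u\in U$, that $u$ has no neighbour in $V$, and that $E_G(V,N_{G[U]}(u))=\emptyset$: in each case the copy uses $u$ (or the offending leaf $v_i$) as its degree-$(n-5)$ centre and routes the length-$5$ path through a short prefix of $v_0v_1w_1w_2w_3$ together with one or two $U$-neighbours of $u$, the large neighbourhood of $u$ then furnishing the $\geq n-6$ pendant leaves.

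For $n\geq 9$ the argument now closes cleanly. Since $|V|=n-5\geq 4$ and $|N_{G[U]}(u)|\geq n-3\geq 4$ and $E_G(V,N_{G[U]}(u))=\emptyset$, the bipartite graph $\overline{G}[V\cup N_{G[U]}(u)]$ contains $K_{4,4}$ and hence $C_8$; and $w_3$ is adjacent in $\overline{G}$ to all of $V\cup N_{G[U]}(u)\subseteq U\cup V$, so this $C_8$ with hub $w_3$ is a $W_8$ in $\overline{G}$, a contradiction. Thus $R(S_n(1,4),W_8)\leq N$ whenever $n\geq 9$.

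The case $n=8$ (where $N=16$) is the main obstacle: now $|V|=3$, too few vertices to assemble the above $C_8$ from $V$ and $N_{G[U]}(u)$ alone, and the natural second ``spare'' vertex $w_2$ is useless as a hub because $w_2w_3\in E(G)$ makes $w_2$ non-adjacent to $w_3$ in $\overline{G}$. To deal with it I would first extract sharper facts specific to $n=8$: traversing the path $v_0v_1w_1w_2w_3$ from the $w_3$-end one checks that every $v_i\in V$ has at most one neighbour in $U$ and that $w_2$ has no neighbour in $U$, since in each case a stray edge either already produces $S_8(1,4)$ or exposes a low-degree vertex available as a hub. Using these facts, together with $|U|=8$ and the constraints already derived, I would then locate a $C_8$ in $\overline{G}$ with a legitimate hub --- chosen from $U\setminus(\{u\}\cup N_{G[U]}(u))$, from $N_{G[U]}(u)$, or from $\{w_2,w_3\}$ according to the local structure of $G[U]$ --- giving $R(S_8(1,4),W_8)\leq 16$. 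The careful case analysis needed to carry out this last step is where the bulk of the effort lies.
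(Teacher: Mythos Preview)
Your argument for $n\geq 9$ is correct and is essentially the paper's own proof: find $S_n(1,3)\subseteq G$, observe $w_3$ is isolated from $U\cup V$, use Lemma~\ref{lem:pancyclic} to get a high-degree $u\in U$, show $V$ is anticomplete to $\{u\}\cup N_{G[U]}(u)$, and read off a $C_8$ with hub $w_3$.

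For $n=8$ you have both overestimated the difficulty and misidentified where the argument goes. Two concrete issues. First, your standalone claim that ``$w_2$ has no neighbour in $U$'' does not follow from $S_8(1,4)\nsubseteq G$ alone: an edge $w_2u_0$ only extends $v_0v_1w_1w_2$ to a path of length~$4$, and you have no control over $u_0$'s further neighbours. Second, your list of candidate hubs omits $V$, which is where one of the two hubs actually lives.

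The paper's $n=8$ case is in fact five lines, and the missing idea is to use $w_1$, not $w_2$, as the fourth ``row'' vertex. Since an edge $w_1u'$ with $u'\in N_{G[U]}(u)$ gives the length-$5$ path $u\,u'\,w_1\,v_1\,v_0\,v_2$ and hence $S_8(1,4)$ centred at $u$, the vertex $w_1$ is anticomplete in $G$ to $N_{G[U]}(u)$. Now split on whether $w_1w_3\in E(G)$. If $w_1\not\sim w_3$, then $w_1u_1v_2u_2v_3u_3v_4u_4w_1$ (with $u_1,\ldots,u_4\in N_{G[U]}(u)$) is a $C_8$ in $\overline G$ and $w_3$ is a valid hub. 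If $w_1\sim w_3$, then any edge $w_2x$ with $x\in U\cup V$ yields the length-$5$ path $v_0v_1w_1w_3w_2x$, so now $w_2$ really is anticomplete to $U\cup V$; since $d_{G[V]}(v_i)\leq 1$ on three vertices, some $v_j\in V$ is isolated in $G[V]$, and $u_1w_2u_2w_3u_3v_{j'}u_4v_{j''}u_1$ with hub $v_j$ finishes. No further case analysis is required.
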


\begin{proof}
Lemma~\ref{lem:lower-bound-on-S_n(1,4)-T_D(n)-...-T_L(n)}
provides the lower bound, so it remains to prove the upper bound.
Let $G$ be a graph with no $S_n(1,4)$ subgraph whose complement $\overline{G}$ does not contain $W_8$. 
Suppose that $G$ has order $2n$ if $n\equiv 0 \pmod{4}$ 
and that $G$ has order $2n-1$ if $n\not\equiv 0 \pmod{4}$. 
By Theorem~\ref{thm:R(Sn(1,3),W8)}, 
$G$ has a subgraph $T=S_n(1,3)$. 
Let $V(T)=\{v_0,\ldots,v_{n-4},w_1,w_2,w_3\}$ 
and $E(T)=\{v_0v_1,\ldots,v_0v_{n-4},v_1w_1,w_1w_2,w_2w_3\}$. 
Set $V=\{v_2,\ldots,v_{n-4}\}$ and $U=V(G)-V(T)$;
then $|V|=n-5$ and $|U|=j$ 
where $j=n$ if $n\equiv 0 \pmod{4}$ and $j=n-1$ if $n\not\equiv 0 \pmod{4}$. 
Since $S_n(1,4)\nsubseteq G$, 
$w_3$ is not adjacent in $G$ to any vertex of $U\cup V$ 
and $d_{G[U\cup V]}(v_i)\leq n-7$ for each $v_i\in V$. 
If $\delta(\overline{G}[U\cup V])\geq \lceil\frac{n-5+j}{2}\rceil\geq \frac{n-5+j}{2}$, 
then $\overline{G}[U\cup V]$ contains $C_8$ by Lemma~\ref{lem:pancyclic} and thus $W_8$ with $w_3$ as hub,
a contradiction. 
Therefore, $\delta(\overline{G}[U\cup V])\leq \lceil\frac{n-5+j}{2}\rceil-1$
and 
$\Delta(G[U\cup V])
 \geq n-5+j-\lceil\frac{n-5+j}{2}\rceil 
   =  \lfloor\frac{n-5+j}{2}\rfloor
 \geq n-3$.
Since $d_{G[U\cup V]}(v_i)\leq n-7$ for each $v_i\in V$, 
$d_{G[U\cup V]}(u)\geq n-3$ for some vertex $u\in U$. 
Since $S_n(1,4)\nsubseteq G$, 
no vertex of $V$ is adjacent to $\{u\}\cup N_{G[U\cup V]}(u)$ in~$G$. 

For $n\geq 9$, any $4$ vertices from $V$ and any $4$ vertices from $\{u\}\cup N_{G[U\cup V]}(u)$ form $C_8$ in $\overline{G}$ and, with $w_3$ as hub, form $W_8$, a contradiction. 
Suppose that $n=8$; then $V=\{v_2,v_3,v_4\}$. 
Let $\{u_1,\ldots,u_4\}$ be $4$ vertices in $N_{G[U\cup V]}(u)$. 
Since $S_8(1,4)\nsubseteq G$, $w_1$ is not adjacent to $N_{G[U\cup V]}(u)$.
If $w_1$ is not adjacent to $w_3$, 
then $w_1u_1v_2u_2v_3u_3v_4u_4w_1$ and $w_3$ form $W_8$ in $\overline{G}$, a contradiction. 
Therefore, $w_1$ is adjacent to $w_3$ in~$G$. 
Then $w_2$ is not adjacent to any vertex of $U\cup V$ in~$G$. 
Since $d_{G[V]}(v_i)\leq 1$ for $i=2,3,4$, 
one of the vertices of $V$, say $v_2$, is not adjacent to the other two vertices of $V$. 
Then $u_1w_2u_2w_3u_3v_3u_4v_4u_1$ and $v_2$ form $W_8$ in $\overline{G}$, a contradiction.
Thus, $R(S_n(1,4),W_8)\leq 2n$ for $n\equiv 0 \pmod{4}$ and $R(S_n(1,4),W_8)\leq 2n-1$ for $n\not\equiv 0 \pmod{4}$. 

This completes the proof. 
\end{proof}

\begin{theorem}
\label{thm:R(Sn(5),W8)}
If $n\geq 9$, then $R(S_n(5),W_8)=2n-1$.
\end{theorem}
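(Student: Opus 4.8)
The plan is to get the lower bound $R(S_n(5),W_8)\geq 2n-1$ immediately from Lemma~\ref{lem:lower-bound-on-S_n(1,4)-T_D(n)-...-T_L(n)}, and then to obtain the matching upper bound by a single bootstrapping step. So I would take an arbitrary graph $G$ of order $2n-1$, assume for contradiction that $S_n(5)\nsubseteq G$ and $W_8\nsubseteq\overline G$, and derive a contradiction. Since $n\geq 9$, Theorem~\ref{thm:R(Sn(4),W8)} gives $R(S_n(4),W_8)=2n-1$, so $G$ contains a copy $T=S_n(4)$; I would label it so that $v_0$ is the center of degree $n-4$, with bridge neighbour $v_1$ and leaf set $\{v_2,\ldots,v_{n-4}\}$, and $v_1$ is the center of degree $4$, with leaves $w_1,w_2,w_3$. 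Put $V=\{v_2,\ldots,v_{n-4}\}$, $U=V(G)\setminus V(T)$ and $W=U\cup V$, so that $|U|=n-1\geq 8$, $|V|=n-5\geq 4$ and $|W|=2n-6$.

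The key step is to read off two adjacency restrictions from $S_n(5)\nsubseteq G$. First, $v_1$ is not adjacent in $G$ to any $x\in W$: otherwise $S_n(5)$ appears, with $v_1$ as the degree-$5$ center carrying leaves $w_1,w_2,w_3,x$, with $v_0$ as the degree-$(n-5)$ center reached via the bridge $v_0v_1$, and with $n-6$ vertices of $V$ (all of $V\setminus\{x\}$ when $x\in V$) as the leaves of $v_0$. Hence $v_1$ is adjacent in $\overline G$ to every vertex of $W$. Second, each $v_j\in V$ has at most three neighbours in $U$ in $G$: four such neighbours would be the leaves of the degree-$5$ center $v_j$, with $v_0$ again the degree-$(n-5)$ center via the bridge $v_0v_j$ and the $n-6$ vertices of $V\setminus\{v_j\}$ as its leaves, again producing $S_n(5)$. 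Thus $|N_{G[U\cup\{v_j\}]}(v_j)|\leq 3$ for each $v_j\in V$. I would verify in each case that the exhibited tree has exactly $n$ distinct vertices; this is where the counts $|V|=n-5$ and ``$n-6$ leaves of $v_0$'' must be checked.

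These two facts are precisely what Corollary~\ref{cor:bipartite4-8} requires: apply it with $V$ (of size $n-5\geq 4$) in the role of ``$U$'' and $U$ (of size $n-1\geq 8$) in the role of ``$V$'', to obtain a copy of $C_8$ in $\overline G[W]$. Since $v_1$ is joined in $\overline G$ to all of $W$, this $C_8$ with hub $v_1$ is a $W_8$ in $\overline G$, contradicting our assumption; hence $R(S_n(5),W_8)\leq 2n-1$.

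I do not expect a real obstacle: once $S_n(4)$ is used as the seed subgraph, the argument is short. The only delicate points are the vertex-count checks in the two embeddings above and the observation that the size hypotheses of Corollary~\ref{cor:bipartite4-8} hold exactly because $n\geq 9$, with $n=9$ the tight case $|V|=4$, $|U|=8$.
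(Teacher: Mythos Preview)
Your proposal is correct and follows essentially the same approach as the paper's own proof: seed with an $S_n(4)$ subgraph via Theorem~\ref{thm:R(Sn(4),W8)}, observe that $S_n(5)\nsubseteq G$ forces $v_1$ to be nonadjacent to all of $U\cup V$ and each $v_j\in V$ to have at most three neighbours in $U$, then apply Corollary~\ref{cor:bipartite4-8} to produce $C_8$ in $\overline G[U\cup V]$ with hub $v_1$. Your version is in fact slightly more explicit than the paper in spelling out the two $S_n(5)$-embeddings and the vertex counts.
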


\begin{proof}
Lemma~\ref{lem:lower-bound-on-S_n(1,4)-T_D(n)-...-T_L(n)} provides the lower bound, 
so it remains to prove the upper bound. 
Let $G$ be any graph of order $2n-1$. 
Assume that $G$ does not contain $S_n(5)$ and that $\overline{G}$ does not contain $W_8$. 
By Theorem~\ref{thm:R(Sn(4),W8)}, $G$ has a subgraph $T=S_n(4)$. 
Let $V(T)=\{v_0,\ldots,v_{n-4},w_1,w_2,w_3\}$ 
and $E(T)=\{v_0v_1,\ldots,v_0v_{n-4},v_1w_1,v_1w_2,v_1w_3\}$. 
Set $V=\{v_2,v_3,\ldots,v_{n-4}\}$ and $U=V(G)-V(T)$; 
then $|V|=n-5$ and $|U|=n-1$. 
Since $S_n(5)\nsubseteq G$, 
$v_1$ is not adjacent to any vertex of $U\cup V$ in~$G$. 
Furthermore, for each $v_i$ in $V$, $v_i$ is adjacent to at most three vertices of $U$ in~$G$.

For $n\geq 9$, $|V|\geq 4$ and $|U|\geq 8$. 
By Corollary~\ref{cor:bipartite4-8}, 
$\overline{G}[U\cup V]$ contains $C_8$
which together with $v_1$ gives $W_8$ in $\overline{G}$, a contradiction. 
Thus, $R(S_n(5),W_8)\leq 2n-1$ which completes the proof.
\end{proof}

\begin{theorem}
If $n\geq 9$, then $R(S_n[5],W_8)=2n-1$.
\end{theorem}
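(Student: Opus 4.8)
The lower bound $R(S_n[5],W_8)\ge 2n-1$ is furnished by Lemma~\ref{lem:lower-bound-on-S_n(1,4)-T_D(n)-...-T_L(n)}, via $G=2K_{n-1}$. For the upper bound, let $G$ be a graph of order $2n-1$ whose complement $\overline G$ contains no $W_8$; the goal is to exhibit a copy of $S_n[5]$ in $G$. Since $n\ge 9$, Theorem~\ref{thm:R(Sn(5),W8)} provides a copy $T=S_n(5)\subseteq G$. Write $v_0$ for its degree-$(n-5)$ centre, $v_1,\dots,v_{n-6}$ for the leaves of $T$ at $v_0$, $w_0$ for the secondary centre and $w_1,w_2,w_3,w_4$ for the leaves of $T$ at $w_0$; put $V=\{v_1,\dots,v_{n-6}\}$ and $U=V(G)\setminus V(T)$, so $|U|=n-1$. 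Assume for contradiction that $G$ contains no $S_n[5]$.

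First I would translate this assumption into adjacency restrictions. A copy of $S_n[5]$ is a quadruple: a vertex $a$ with $d_G(a)\ge n-5$, a neighbour $b$ of $a$, a neighbour $c\ne a$ of $b$, and three further neighbours of $c$, with all $n$ vertices distinct. Testing $a=v_0$ against every bridge $b\in N_G(v_0)$ and every secondary centre $c\in N_G(b)\setminus\{v_0\}$, and testing $a=u$ for $u\in U$ of large degree, one obtains: no $w_i$ may have three ``free'' neighbours outside $\{w_0\}$; $w_0$ may have only boundedly many neighbours in $U\cup V$; and, crucially, some vertex $h\in\{w_1,w_2,w_3,w_4\}$ is non-adjacent in $G$ to every vertex of an $8$-element set $Z\subseteq U\cup V$. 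This $h$ will play the role of hub.

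With such $h$ and $Z$ fixed, I would run the now-standard dichotomy. If $\delta(\overline G[Z])\ge\lceil |Z|/2\rceil$, then $\overline G[Z]$ contains $C_8$ by Lemma~\ref{lem:pancyclic}, and $h$ together with this $C_8$ forms $W_8$ in $\overline G$, a contradiction. Otherwise $G[Z]$ has a vertex $x$ with $d_{G[Z]}(x)\ge\lfloor |Z|/2\rfloor\ge n-3$. Since the leaves in $V$ have very few neighbours once $S_n[5]\not\subseteq G$ has been used, $x$ must lie in $U$; and then four vertices of $N_{G[Z]}(x)$ together with four vertices taken from $V$ and the $w_i$'s satisfy the degree hypotheses of Lemma~\ref{lem:1-1relation}, so $\overline G$ contains a $C_8$ on those eight vertices, which again closes up with a hub among the $w_i$'s (or with $v_1$) to give $W_8$ in $\overline G$, a contradiction. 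For the smallest values of $n$, where $|V|=n-6$ is tiny, I would instead assemble the rim $C_8$ by hand from $V$, the four leaves $w_1,\dots,w_4$ and vertices of $U$, exactly as in the $n=8$ parts of the proofs of Theorems~\ref{thm:R(Sn(1,4),W8)} and~\ref{thm:R(Sn[4],W8)}.

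The main obstacle is the first step. Unlike the ``extend-the-path'' obstructions underlying $S_n(1,3)$ and $S_n(1,4)$, the secondary centre of $S_n[5]$ has degree $4$ and lies at distance $2$ from the degree-$(n-5)$ centre, so there are several genuinely distinct ways a stray edge could complete an $S_n[5]$: a vertex of $U$ can replace $v_0$ as the centre, the bridge can be any neighbour of the centre, and some $w_i$ can double as a leaf of the centre. Keeping the forbidden-edge bookkeeping tight enough to extract a hub $h$ and an $8$-set $Z$ with $E_G(\{h\},Z)=\emptyset$, while accounting for the single-vertex slack caused by $|U|=n-1$, is where essentially all of the work lies; once this is in place, the $C_8$-plus-hub conclusions are routine given the auxiliary results of Section~\ref{sec:auxiliary_results}.
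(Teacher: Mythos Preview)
Your proposal is a plan rather than a proof, and you acknowledge as much: you say the ``main obstacle is the first step'' and that extracting a hub $h$ with an $8$-set $Z$ of non-neighbours ``is where essentially all of the work lies''. That work is not done, so as it stands there is a genuine gap.

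More importantly, you are looking for the hub in the wrong place. You try to make one of the leaves $w_1,\dots,w_4$ into the hub; the paper instead uses the two centres $v_0$ and $w_0$ (your notation), and this makes the argument far shorter than you anticipate. The only forbidden-edge facts needed are: (i) $v_0$ is non-adjacent to each $w_i$ (else $v_0$--$w_i$--$w_0$ with three remaining $w_j$ gives $S_n[5]$), and (ii) each $w_i$ has at most two neighbours in $U$ (else $v_0$--$w_0$--$w_i$ plus three $U$-leaves gives $S_n[5]$). The case split is then on $|N_G(v_0)\cap U|$. If $v_0$ has at least six non-neighbours in $U$, apply Corollary~\ref{cor:bipartite4-6} to $\{w_1,\dots,w_4\}$ versus six such non-neighbours to produce $C_8\subseteq\overline G$, and take $v_0$ as hub. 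If instead $v_0$ is adjacent to at least $n-6$ vertices of $U$, fix $U'\subseteq N_G(v_0)\cap U$ with $|U'|=n-6$; now any edge from $w_0$ to $V\cup U'$ would let you swap that endpoint in as a leaf of $v_0$ and push $w_0$ to the secondary-centre role, yielding $S_n[5]$. Hence $w_0$ is non-adjacent to the $(2n-12)$-set $V\cup U'$, and Lemma~\ref{lem:pancyclic} applied to $\overline G[V\cup U']$ either produces $C_8$ (hub $w_0$) or a vertex of degree $\ge n-6$ in $G[V\cup U']$, which is itself an $S_n[5]$ centre bridged through $v_0$ to $w_0$. No hand-built rims or small-$n$ subcases are needed.

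Your instinct that the secondary centre being degree~$4$ at distance~$2$ complicates the bookkeeping is reasonable, but the complication dissolves once you notice that the natural hubs are the two centres of the $S_n(5)$ you already have, not its pendant leaves.
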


\begin{proof}
Lemma~\ref{lem:lower-bound-on-S_n(1,4)-T_D(n)-...-T_L(n)} provides the lower bound, 
so it remains to prove the upper bound. 
Let $G$ be any graph of order $2n-1$. 
Assume that $G$ does not contain $S_n[5]$ and that $\overline{G}$ does not contain~$W_8$. 
By Theorem~\ref{thm:R(Sn(5),W8)}, $G$ has a subgraph $T=S_{n}(5)$. 
Let $V(T)=\{v_0,\ldots,v_{n-5},w_1,\ldots,w_4\}$ 
and $E(T)=\{v_0v_1,\ldots,v_0v_{n-5},v_1w_1,\ldots,v_1w_4\}$. 
Set $V=\{v_2,\ldots,v_{n-5}\}$ and $U=V(G)-V(T)$; 
then $|V|=n-6$ and $|U|=n-1$. 
Since $S_n[5]\nsubseteq G$, 
$v_0$ is not adjacent to $w_1,\ldots,w_4$ in $G$ 
and $w_1,\ldots,w_4$ are each adjacent to at most two vertices of $U$ in~$G$. 
Now, suppose that $v_0$ is non-adjacent to at least six vertices of $U$ in~$G$. 
By Corollary~\ref{cor:bipartite4-6}, six of these vertices together with $w_1,\ldots,w_4$ contain $C_8$ in $\overline{G}$ which with $v_0$ gives $W_8$ in $\overline{G}$, a contradiction. 
Then, suppose that $v_0$ is adjacent to at least $n-6$ vertices of $U$ in~$G$. 
Choose a set $U'$ of $n-6$ of these vertices.
Since $S_n[5]\nsubseteq G$, $v_1$ is not adjacent to any vertex of $V\cup U'$ in~$G$. 
If $\delta(\overline{G}[V\cup U'])\geq n-6$, 
then by Lemma~\ref{lem:pancyclic}, 
$\overline{G}[V\cup U']$ contains $C_8$ which with $v_1$ gives $W_8$ in $\overline{G}$, a contradiction. 
Therefore, $\delta(\overline{G}[V\cup U'])\leq n-7$ and $\Delta(G[V\cup U'])\geq n-6$. 
However, this gives $S_n[5]$ in $G$ with $u$ and $v_1$ as the center of $S_{n-5}$ and $S_5$, respectively, 
where $u$ is a vertex in $V\cup U'$ with $d_{G[V\cup U']}(u)\geq n-6$,
a contradiction. 
Thus, $R(S_n[5],W_8)\leq 2n-1$ which completes the proof.
\end{proof}

\begin{theorem}
If $n\geq 8$, then 
\[
  R(S_n(2,2),W_8)=\begin{cases}
    2n-1 & \text{if $n\not\equiv 0 \pmod{4}$}\,;\\
    2n   & \text{otherwise}\,.
  \end{cases}
\]  
\end{theorem}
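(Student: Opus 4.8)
The lower bounds are given by Lemma~\ref{lem:lower-bound-on-S_n(1,4)-T_D(n)-...-T_L(n)}, so it remains to prove the matching upper bounds: assuming $G$ has order $2n$ when $n\equiv 0\pmod{4}$ and order $2n-1$ otherwise, that $S_n(2,2)\not\subseteq G$ and that $W_8\not\subseteq\overline{G}$, we seek a contradiction. The plan is to run an argument parallel to that of Theorem~\ref{thm:R(Sn(1,4),W8)}. Since $R(S_n(1,3),W_8)$ equals the target value by Theorem~\ref{thm:R(Sn(1,3),W8)}, the graph $G$ contains a copy $T=S_n(1,3)$; fix its centre $v_0$, its subdivided branch $v_0v_1w_1w_2w_3$, the set $V=\{v_2,\ldots,v_{n-4}\}$ of remaining pendant leaves, and $U=V(G)\setminus V(T)$, so that $|V|=n-5$ and $|U|\in\{n-1,n\}$.

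The first step is to extract the consequences of $S_n(2,2)\not\subseteq G$. One obtains a copy of $S_n(2,2)$ by keeping $v_0v_1w_1w_2$ as one length-$3$ branch, discarding $w_3$, and building a second length-$3$ branch $v_0v_iy_1y_2$ from a pendant leaf $v_i\in V$ with $y_1\in N_G(v_i)\cap U$ and $y_2\in N_G(y_1)\cap U$ (there are plenty of remaining leaves to supply the $n-7$ pendant vertices). Hence non-containment forces that every $U$-neighbour of a leaf of $V$ is isolated in $G[U]$, hence that $E_G(V,U_+)=\emptyset$ where $U_+$ is the set of non-isolated vertices of $G[U]$, together with the symmetric statements obtained by instead regrowing the branch out of $v_1$. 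These play the role of the bound $d_{G[U\cup V]}(v_i)\le n-7$ in the proof of Theorem~\ref{thm:R(Sn(1,4),W8)}, but are slightly more delicate, since here the discarded tip $w_3$ need not be isolated from $U\cup V$.

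These restrictions are then fed into the cycle-finding machinery. If $\delta(\overline{G}[U\cup V])\ge\lceil(|U|+|V|)/2\rceil$, then Lemma~\ref{lem:pancyclic} produces $C_8\subseteq\overline{G}[U\cup V]$, and a vertex among $v_0,w_1,w_2,w_3$ that the non-adjacency deductions keep off this cycle acts as hub, giving $W_8\subseteq\overline{G}$. Otherwise $G[U\cup V]$ has a vertex $u$ of large degree; the leaf deductions force $u\in U$, no leaf of $V$ is adjacent in $G$ to $\{u\}\cup N_{G[U\cup V]}(u)$, and Lemma~\ref{lem:1-1relation} (or Corollary~\ref{cor:bipartite4-6}) supplies a $C_8$ in $\overline{G}$ on four leaves of $V$ together with four vertices of $\{u\}\cup N_{G[U\cup V]}(u)$, again completable to $W_8$.

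The main obstacles, I expect, are the residual cases. The case $n=8$ has $|V|=3$, too small for the clean bipartite-cycle counting, and must be handled by hand as in the $n=8$ parts of Theorems~\ref{thm:R(Sn(1,4),W8)} and~\ref{thm:R(TB,W8)}: track which of $v_0,w_1,w_2,w_3$ can serve as hub, and use Lemma~\ref{lm4:R(Sn(1,2),W8)} to pin $G[U]$ down as $K_{|U|}$ or $K_{|U|}-e$ once enough non-edges into $U$ are forced. One must also dispose of the near-$K_{4,\ldots,4}$ structures on $n$ vertices that produce the $n\equiv 0\pmod{4}$ exception, by showing that such an $(n-4)$-minimum-degree configuration already contains $S_n(2,2)$ except in the constructions realizing $R=2n$. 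Combining the cases yields $R(S_n(2,2),W_8)\le 2n$ for $n\equiv 0\pmod{4}$ and $\le 2n-1$ otherwise, which with the lower bounds completes the proof.
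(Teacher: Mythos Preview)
Your overall strategy—find a tree close to $S_n(2,2)$ inside $G$, derive non-adjacency constraints, and combine Lemma~\ref{lem:pancyclic} with a hub to produce $W_8$ in $\overline{G}$—is exactly the right shape. The gap is in the choice of starting tree and, as a direct consequence, in the hub step.

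Starting from $T=S_n(1,3)$ with branch $v_0v_1w_1w_2w_3$, you assert that ``a vertex among $v_0,w_1,w_2,w_3$ that the non-adjacency deductions keep off this cycle acts as hub.'' But none of these is forced off $U\cup V$ by $S_n(2,2)\nsubseteq G$: the vertex $v_0$ is adjacent in $G$ to every vertex of $V$ and so can never be a hub for a $C_8$ meeting $V$; and an edge from $w_1$, $w_2$, or $w_3$ into $U\cup V$ does not by itself create $S_n(2,2)$, since from $S_n(1,3)$ you only have one usable length-$3$ branch $v_0v_1w_1w_2$, and the tip $w_3$ sits at distance $4$ from $v_0$. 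Your own deduction that $E_G(V,U_+)=\emptyset$ is correct, but it says nothing about the $w_j$'s. So the sentence ``completable to $W_8$'' is exactly where the argument breaks; you would be forced into a substantial case analysis on the adjacencies of $w_1,w_2,w_3$ that you have not outlined.

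The paper sidesteps this by starting instead from $T_B(n)$ (Theorem~\ref{thm:R(TB,W8)} gives it the same Ramsey threshold), which already has one length-$3$ branch $v_0v_1w_1w_2$ and one length-$2$ branch $v_0v_2w_3$. Now $T_B(n)$ is literally one edge short of $S_n(2,2)$ at the tip $w_3$, so $S_n(2,2)\nsubseteq G$ forces $w_3$ to be non-adjacent to every vertex of $U\cup V$, handing you a ready-made hub. The remaining pancyclic/high-degree dichotomy then runs very close to what you wrote, with the small-$n$ cases (especially $n=8,9$) handled by explicit cycle constructions. The fix to your proof is therefore a single change of seed tree: replace $S_n(1,3)$ by $T_B(n)$ and take $w_3$ as the hub throughout.
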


\begin{proof}
Lemma~\ref{lem:lower-bound-on-S_n(1,4)-T_D(n)-...-T_L(n)}
provides the lower bound, so it remains to prove the upper bound.
Assume that $G$ is a graph with no $S_n(2,2)$ subgraph whose complement $\overline{G}$ does not contain $W_8$.
Suppose that $n\equiv 0 \pmod{4}$ and that $G$ has order $2n$. 
By Theorem~\ref{thm:R(TB,W8)}, $G$ has a subgraph $T=T_B(n)$. 
Let $V(T)=\{v_0,\ldots,v_{n-4},w_1,w_2,w_3\}$ 
and $E(T)=\{v_0v_1,\ldots,v_0v_{n-4},v_1w_1,w_1w_2,v_2w_3\}$. 
Set $V=\{v_3,\ldots,v_{n-4}\}$ and $U=V(G)-V(T)$; then $|V|=n-6$ and $|U|=n$. 
Since $S_n(2,2)\nsubseteq G$, $w_3$ is not adjacent in $G$ to $U\cup V$ and $v_2$ is not adjacent to $V$. 
If $\delta(\overline{G}[U\cup V])\geq \frac{2n-6}{2}=n-3$, 
then $\overline{G}[U\cup V]$ contains $C_8$ by Lemma~\ref{lem:pancyclic} which with $w_2$ forms $W_8$, 
a contradiction. 
Therefore, $\delta(\overline{G}[U\cup V])\leq n-4$, and $\Delta(G[U\cup V])\geq n-3$. 
Now, there are two cases to be considered. 

\smallskip
\noindent {\bf Case 1a}: One of the vertices of $V$, say $v_3$, is a vertex of degree at least $n-3$ in $G[U\cup V]$. 

Note that in this case, there are at least $4$ vertices from $U$, say $u_1,\ldots,u_4$, that are adjacent to $v_3$ in~$G$. 
Since $S_n(2,2)\nsubseteq G$, these $4$ vertices are independent and are not adjacent to any other vertices of $U$. 
Since $n\geq 8$, $U$ contains at least $4$ other vertices, say $u_5,\ldots,u_8$, 
so $u_1u_5u_2u_6u_3u_7u_4u_8u_1$ and $w_3$ forms $W_8$ in $\overline{G}$, a contradiction.

\smallskip
\noindent {\bf Case 1b}: Some vertex $u\in U$ has degree at least $n-3$ in $G[U\cup V]$.

Since $S_n(2,2)\nsubseteq G$, $u$ is not adjacent to any vertex of $V$ in~$G$. 
Therefore, $u$ must be adjacent to at least $n-3$ vertices of $U$ in~$G$. 
Without loss of generality, suppose that $u_1,\ldots,u_{n-3}\in N_{G[U]}(u)$. 
Note that $V$ is not adjacent to $N_{G[U]}(u)$, or else there will be $S_n(2,2)$ in $G$, a contradiction. 
If $n\geq 12$, 
then any $4$ vertices from $N_{G[U]}(u)$ and any $4$ vertices from $V$ form $C_8$ in $\overline{G}$ which, with $w_3$ as hub, forms $W_8$, a contradiction. 
Suppose that $n=8$ and let the remaining two vertices be $u_6$ and $u_7$. 
If $|N_{G[\{u_1,\ldots,u_5,u_i\}}(u_i)|\leq 1$ for $i=6,7$, 
then let $X=\{u_1,\ldots,u_4\}$ and $Y=\{v_3,v_4,u_6,u_7\}$. 
By Lemma~\ref{lem:1-1relation}, $\overline{G}[X\cup Y]$ contains $C_8$ and, with $w_3$ as hub, forms $W_8$ in $\overline{G}$, a contradiction. 
Therefore, one of $u_6$ and $u_7$, say $u_6$, is adjacent to at least two of $u_1,\ldots,u_5$, 
say $u_1$ and $u_2$. 
Since $S_8(2,2)\nsubseteq G$, $u_7$ is adjacent in $\overline{G}$ to at least two of $u_3,u_4,u_5$, 
say $u_3$ and $u_4$, 
and $v_0,\ldots,v_4,w_1$ are not adjacent in $G$ to $u,u_1,\ldots,u_6$. 
Now, if $w_3$ is not adjacent to some vertex $a\in\{v_0,v_1,w_1\}$, 
then $u_1v_3u_2v_4u_3u_7u_4au_1$ and $w_3$ form $W_8$ in $\overline{G}$, a contradiction. 
Hence, $w_3$ is adjacent to $v_0$, $v_1$ and $w_1$ in~$G$. 
Similarly, $v_2$ is not adjacent to $u_7$ and $v_2$ is adjacent to $v_1$ and $w_1$. 
Since $S_8(2,2)\nsubseteq G$, $w_2$ is not adjacent to $U\cup V$, and $w_1$ is not adjacent to $V$. 
Then $u_1v_2u_2w_1u_3w_2u_4w_3u_1$ and $v_3$ forms $W_8$ in $\overline{G}$, a contradiction. 

In either case, $R(S_n(2,2),W_8)\leq 2n$. 

Suppose that $n\not\equiv 0 \pmod{4}$ and that $G$ has order $2n-1$. 
By Theorem~\ref{thm:R(TB,W8)}, $G$ has a subgraph $T=T_B(n)$. 
Let $V(T)=\{v_0,\ldots,v_{n-4},w_1,w_2,w_3\}$ 
and $E(T)=\{v_0v_1,\ldots,v_0v_{n-4},v_1w_1,w_1w_2,v_2w_3\}$. 
Set $V=\{v_3,\ldots,v_{n-4}\}$ and $U=V(G)-V(T)$; 
then $|V|=n-6$ and $|U|=n-1$. 
Since $S_n(2,2)\nsubseteq G$, 
$w_3$ is not adjacent in $G$ to $U\cup V$. 
If $\delta(\overline{G}[U\cup V])\geq \lceil\frac{2n-5}{2}\rceil$, 
then $\overline{G}[U\cup V]$ contains $C_8$ by Lemma~\ref{lem:pancyclic} which with $w_3$ forms $W_8$, 
a contradiction. 
Therefore, $\delta(\overline{G}[U\cup V])\leq \lceil\frac{2n-5}{2}\rceil-1=n-3$, 
and $\Delta(G[U\cup V])\geq n-3$. 
Again, there are two cases to be considered. 

\smallskip
\noindent {\bf Case 2a}: A vertex of $V$, say $v_3$, has degree at least $n-3$ in $G[U\cup V]$. 

There must be at least $4$ vertices from $U$, say $u_1,\ldots,u_4$ that are adjacent to $v_3$ in~$G$. 
Since $S_n(2,2)\nsubseteq G$, $u_1,\ldots,u_4$ are independent and are not adjacent to any other vertex of $U$. 
Since $n\geq 9$, there are at least $4$ other vertices of $U$, say $u_5,\ldots,u_8$, 
and $u_1u_5u_2u_6u_3u_7u_4u_8u_1$ and $w_3$ form $W_8$ in $\overline{G}$, a contradiction.

\smallskip
\noindent {\bf Case 2b}: A vertex $u\in U$ has degree at least $n-3$ in $G[U\cup V]$.

Since $S_n(2,2)\nsubseteq G$, no vertex of $V$ is adjacent to $u$ or to $N_{G[U]}(u)$.
Then $u$ is adjacent to at least $n-3$ vertices of $U$ in $G$; 
suppose without loss of generality that $u_1,\ldots,u_{n-3}\subseteq N_{G[U]}(u)$. 
If $n\geq 10$, 
then any $4$ vertices from $N_{G[U]}(u)$, any $4$ vertices from $V$ and $w_3$ form $W_8$ in $\overline{G}$, 
a contradiction. 
Suppose that $n=9$ and let $u_7$ be the vertex in $U\setminus\{u,u_1,\ldots,u_{n-3}\}$. 
If $u_7$ is adjacent in $\overline{G}$ to at least two of $u_1,\ldots,u_6$, say $u_1$ and $u_2$, 
then $u_1u_7u_2v_3u_3v_4u_4v_5u_1$ and $w_3$ form $W_8$ in $\overline{G}$, a contradiction. 
Therefore, $u_7$ is adjacent in $G$ to at least $5$ of the vertices $u_1,\ldots,u_6$, 
say $u_1,\ldots,u_5$. 
Since $S_9(2,2)\nsubseteq G$, 
$U$ is not adjacent in $G$ to $\{v_0,v_1,v_2,w_1\}\cup V$
and $w_2$ is not adjacent to $u$ or $u_7$.
If $w_3$ is not adjacent to some vertex $a\in\{v_0,v_1,w_1,w_2\}$, 
then $uv_3u_1v_4u_2v_5u_7au$ and $w_3$ form $W_8$ in $\overline{G}$, a contradiction. 
Hence, $w_3$ is adjacent to $v_0$, $v_1$, $w_1$ and $w_2$ in~$G$. 
Similarly, $v_2$ is adjacent to $v_1$, $w_1$ and $w_2$. 
Since $S_9(2,2)\nsubseteq G$, $w_2$ is non-adjacent to at least one of $v_3,v_4,v_5$, 
say $v_3$ without loss of generality. 
If $v_1$ is also not adjacent to $v_3$, 
then $uw_2u_7v_1u_1v_2u_2w_3u$ and $w_3$ form $W_8$ in $\overline{G}$, a contradiction. 
Thus, $v_1$ is adjacent to $v_3$, 
then $v_3$ is not adjacent to both $v_4$ and $v_5$, or else $G$ contains $S_9(2,2)$.
Without loss of generality, assume that $v_3$ is not adjacent to $v_4$ in~$G$. 
Then $uw_2u_7v_4u_1v_2u_2w_3u$ and $w_3$ form $W_8$ in $\overline{G}$, a contradiction. 

In either case, $R(S_n(2,2),W_8)\leq 2n-1$ for $n\not\equiv 0 \pmod{4}$, which completes the proof. 
\end{proof}

\begin{theorem}
If $n\geq 9$, then $R(S_n(4,1),W_8)=2n-1$.
\end{theorem}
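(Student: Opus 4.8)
The lower bound $R(S_n(4,1),W_8)\ge 2n-1$ is provided by Lemma~\ref{lem:lower-bound-on-S_n(1,4)-T_D(n)-...-T_L(n)}, so the plan is to establish the matching upper bound. Fix a graph $G$ of order $2n-1$ whose complement contains no $W_8$; it suffices to show that $G$ contains a copy of $S_n(4,1)$, the spider whose centre has degree $n-5$, with four of its neighbours carrying a pendant vertex and the remaining $n-9$ neighbours being leaves. I would argue as in the proof of Theorem~\ref{thm:R(Sn(3,1),W8)}, splitting according to whether some $n$-vertex subset of $V(G)$ induces a graph of minimum degree at least $n-5$.

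\textbf{Case 1: some $X\subseteq V(G)$ with $|X|=n$ has $\delta(G[X])\ge n-5$.} Here I would show $G[X]$ itself contains $S_n(4,1)$. Pick $c\in X$, a set $A\subseteq N_{G[X]}(c)$ with $|A|=n-5$, and put $B=X\setminus(\{c\}\cup A)$, so $|B|=4$. Each $b\in B$ has at most four neighbours in $(\{c\}\cup B)\setminus\{b\}$, hence at least $n-9$ neighbours in $A$; for $n\ge 13$ a greedy selection yields distinct $a_1,\dots,a_4\in A$ with $a_ib_i\in E(G)$, so that $c$ (centre), $A$ (its $n-5$ neighbours), the $a_i$ (midpoints) and the $b_i$ (pendants) form $S_n(4,1)$. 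For $9\le n\le 12$ a Hall failure is possible, but it forces each $b\in B$ to be joined to all of $(\{c\}\cup B)\setminus\{b\}$ and to a fixed small part $A'\subseteq A$, which in turn makes $G[\{c\}\cup B]$ and $G[A]$ nearly complete; a short case check then still produces $S_n(4,1)$, now with a suitable vertex of $A\cup B$ as centre.

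\textbf{Case 2: $\delta(G[X])\le n-6$ for every $n$-subset $X\subseteq V(G)$.} Since $R(S_{n-1}(3,1),W_8)=2n-3\le 2n-1$ by Theorem~\ref{thm:R(Sn(3,1),W8)} (valid as $n-1\ge 8$), $G$ contains a copy $T$ of $S_{n-1}(3,1)$; write $V(T)=\{v_0,v_1,\dots,v_{n-5},w_1,w_2,w_3\}$ with $v_0$ the centre adjacent to $v_1,\dots,v_{n-5}$ and $w_i$ a pendant on $v_i$. Put $V=\{v_4,\dots,v_{n-5}\}$ (so $|V|=n-8$) and $U=V(G)\setminus V(T)$ (so $|U|=n$). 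If some $u\in U$ were adjacent to some $v_i\in V$, then $v_0$, its neighbours $v_1,\dots,v_{n-5}$, and the four paths $v_0v_1w_1$, $v_0v_2w_2$, $v_0v_3w_3$, $v_0v_iu$ would form a copy of $S_n(4,1)$ in $G$; hence $E_G(U,V)=\emptyset$, so $G[U\cup V]$ is the disjoint union of $G[U]$ and $G[V]$. Applying the Case 2 hypothesis with $X=U$ gives $\delta(G[U])\le n-6$, whence $\overline{G}[U]$ has a vertex of degree at least $(n-1)-(n-6)=5$ and therefore contains $S_5$. For $n\ge 12$ we have $|V|=n-8\ge 4$, so Observation~\ref{obs2} with $H_1=G[U]$ and $H_2=G[V]$ gives $W_8\subseteq\overline{G}[U\cup V]\subseteq\overline{G}$, a contradiction. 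For $n\in\{9,10,11\}$, where $|V|<4$, one has to work harder: first note that if $v_0$ is adjacent to some $u\in U$ then $u$ is isolated in $G[U]$ (else the fourth leg $v_0uu'$ completes $S_n(4,1)$); peel off such a vertex and invoke Lemma~\ref{lm4:R(Sn(1,2),W8)} to force $G$ restricted to the remainder of $U$ to be $K_m$ or $K_m-e$. A complete graph of order $n$ already contains $S_n(4,1)$; in the $K_m-e$ case the one missing edge leaves an $S_5$ in $\overline{G}$ lying in a set that is independent in $G$ from a set of at least four vertices, so Observation~\ref{obs2} again yields $W_8\subseteq\overline{G}$ — this is the pattern used for the $n=8,9$ cases of Theorem~\ref{thm:R(Sn(3,1),W8)}.

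The main obstacle is the small-order bookkeeping. In Case 1 the Hall-failure configurations for $9\le n\le 12$ must be disposed of by an explicit, if routine, case analysis; in Case 2 the values $n\in\{9,10,11\}$ require extracting enough structure about the edges between $U$ and $V(T)$ to bring Lemma~\ref{lm4:R(Sn(1,2),W8)} and Observation~\ref{obs2} to bear. The generic ranges $n\ge 13$ in Case 1 and $n\ge 12$ in Case 2 are handled quickly by the arguments above.
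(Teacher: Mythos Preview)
Your two–case split and the generic arguments (Case~1 for $n\ge 13$, Case~2 for $n\ge 12$) are exactly the paper's strategy and are correct as far as they go. Two remarks, one minor and one substantive.

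The paper splits at threshold $n-4$, not $n-5$: Case~1 assumes $\delta(G[X])\ge n-4$ and Case~2 assumes $\delta(G[X])\le n-5$ for every $n$-set $X$. Your weaker Case~1 hypothesis costs you, since each $b\in B$ then has only $\ge n-9$ neighbours in $A$ rather than $\ge n-8$; with the paper's threshold the matching argument already runs cleanly for all $n\ge 11$, leaving only $n\in\{9,10\}$ to check by hand. In Case~2 the stronger hypothesis $\delta\le n-6$ buys you nothing, since $\delta\le n-5$ already yields $S_5\subseteq\overline{G}[U]$.

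The real gap is your Case~2 sketch for $n\in\{9,10,11\}$. After peeling off $u$, the set $U\setminus\{u\}$ has only $n-1$ vertices, so the sentence ``a complete graph of order $n$ already contains $S_n(4,1)$'' does not apply; and $\overline{K_m-e}$ consists of a single edge, not an $S_5$. Moreover, for $n\le 10$ the set $V\cup\{u\}$ is too small to supply the $S_4$ that Lemma~\ref{lm4:R(Sn(1,2),W8)} requires. The paper's actual mechanism is different: once $G[U\setminus\{u\}]$ is forced to be $K_{n-1}$ or $K_{n-1}-e$, one argues (from $S_n(4,1)\nsubseteq G$) that \emph{no} vertex of $V(T)\cup\{u\}$ is adjacent to $U\setminus\{u\}$; then the Case~2 hypothesis applied to the $n$-set $V(T)\cup\{u\}$ produces $S_5$ in its complement, and Observation~\ref{obs2} finishes. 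For $n=10$ and $n=9$ the paper in fact restarts from different base trees ($S_{10}(3,1)$ and $S_9(2,1)$ respectively) and carries out considerably longer case analyses than your outline anticipates.
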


\begin{proof}
Lemma~\ref{lem:lower-bound-on-S_n(1,4)-T_D(n)-...-T_L(n)}
provides the lower bound, so it remains to prove the upper bound.
Let $G$ be any graph of order $2n-1$.
Assume that $G$ does not contain $S_n(4,1)$ and that $\overline{G}$ does not contain $W_8$.

Suppose first that there is a subset $X\subseteq V(G)$ of size $n$ with $\delta(G[X])\ge n-4$.
Let $x_0$ be any vertex of $X$, and pick a subset $X'\subseteq N_{G[X]}(x_0)$ of size $n-5$.
Set $Y = X\setminus (\{x_0\}\cup X')$, and so $|Y|=4$.
Since $\delta(G[X])\ge n-4$, each vertex of $Y$ is adjacent to at least $n-8$ vertices of $X'$ in $G$ 
and each vertex of $X'$ is adjacent to at least one vertex of $Y$ in~$G$.
Hence, for $n\ge 11$, it is straightforward to see that there is a matching from $Y$ to $X'$ in $G$;
hence, $G$ contains $S_n(4,1)$, a contradiction.

For $n=10$ and $\delta(G[X])\ge n-4=6$, let $X=\{x_0,\ldots,x_9\}$ and $\{x_1,\ldots,x_6\}\subseteq N_{G[X]}(x_0)$. 
Since $\delta(G[X])\ge 6$, vertices $x_7$, $x_8$ and $x_9$ must each be adjacent to at least $3$ vertices of $x_1,\ldots,x_6$. 
It is straightforward to see that there is a matching from $\{x_7,x_8,x_9\}$ to $\{x_1,\ldots,x_6\}$ in $G$;
without loss of generality, assume that $x_i$ is adjacent to $x_{i+6}$ in $G$ for $i=1,2,3$. 
Now, if there is any edge in $G[\{x_4,x_5,x_6\}]$, 
then $S_{10}(4,1)\subseteq G$, a contradiction. 
Otherwise, $G[\{x_4,x_5,x_6\}]$ must be independent 
and each of $x_4,x_5,x_6$ must be adjacent to at least two vertices of $x_7,x_8,x_9$ in $G$. 
Without loss of generality, 
assume that $x_4$ is adjacent to $x_7$ and $x_8$ in $G$. 
Since $S_{10}(4,1)\nsubseteq G$, 
$x_5$ cannot be adjacent to $x_1$ and $x_2$ in $G$, 
but this is impossible since $\delta(G[X])\ge 6$. 

Now for $n=9$, suppose that $d_{G[X]}(x_0) = n-4 = 5$.
Let $N_{G[X]}(x_0) = \{x_1,\ldots,x_5\}$ and $Y = \{x_6,x_7,x_8\}$.
Then, three vertices of $Y$ are each adjacent to at least $n-6=3$ vertices of $N_{G[X]}(x_0)$ in~$G$. 
Without loss of generality, assume that 
$x_1$ is adjacent to $x_6$, $x_2$ is adjacent to $x_7$ and $x_3$ is adjacent to $x_8$, respectively. 
Now, if $x_4$ is adjacent to $x_5$, then $G$ contains $S_9(4,1)$, a contradiction. 
Otherwise, $x_4$ and $x_5$ must each be adjacent to at least one of $x_6$, $x_7$ and $x_8$.
Assume that $x_4$ is adjacent to $x_6$. 
Then $x_5$ is not adjacent to $x_1$ and $x_4$ in $G$, or else $G$ contains $S_9(4,1)$.
If $x_5$ is adjacent to $x_6$, 
then $x_1,x_4,x_5$ must be independent in $G$, 
and they are each adjacent to $x_7$ or $x_8$ in~$G$; 
assume that $x_1$ is adjacent to $x_7$. 
Then, $x_4$ and $x_5$ are not adjacent to $x_2$ in $G$, and since $\delta(G[X])\ge 5$, 
they are adjacent to $x_7$ and $x_8$ in $G$, and $G$ contains $S_9(4,1)$, a contradiction. 
If $x_5$ is not adjacent to $x_6$, then since $d_{G[X]}(v_0) \ge 5$, 
$x_5$ is adjacent to $x_2$, $x_3$, $x_7$ and $x_8$ in~$G$. 
Then, $x_4$ is not adjacent to $x_2$ and $x_3$ in $G$, 
and $x_4$ is adjacent to $x_1$, $x_6$, $x_7$ and $x_8$ in $G$, 
and this gives us $S_9(4,1)$ in $G$, a contradiction. 
As $x_0$ was arbitrary, assume for the case when $n=9$ that $\delta(G[X])\ge n-3 = 6$, 
which again leads to the contradiction that $G$ contains $S_9(4,1)$.

Now assume that $\delta(G[X])\le n-5$ whenever $X\subseteq V(G)$ is of size $n$.
Recall that $G$ has order $2n-1$, and so by Theorem~\ref{thm:R(Sn(3,1),W8)}, 
$G$ has a subgraph $S_n(3,1)$ and thus a subgraph $T=S_{n-1}(3,1)$. 
Let $V(T)=\{v_0,\ldots,v_{n-5},w_1,w_2,w_3\}$ 
and $E(T)=\{v_0v_1,\ldots,v_0v_{n-5},v_1w_1,v_2w_2,v_3w_3\}$. 
Set $V=\{v_4,\ldots,v_{n-5}\}$ and $U=V(G)-V(T)=\{u_1,\ldots,u_n\}$; 
then $|V|=n-8$ and $|U|=n$. 
Since $S_n(4,1)\nsubseteq G$, 
$V$ is not adjacent to any vertex of $U$ in~$G$. 
Now as $\delta(G[U])\le n-5$, $\overline{G}[U]$ contains $S_5$,
and so for $n\ge 12$, $\overline{G}$ contains $W_8$ by Observation~\ref{obs2}, a contradiction.

Suppose that $n=11$. 
If $v_0$ is not adjacent to any vertex of $U$ in $G$,
then $\overline{G}$ contains $W_8$ by Observation~\ref{obs2}, a contradiction.
Assume that $v_0$ is adjacent to some vertex $u\in U$. 
Since $S_{11}(4,1)\nsubseteq G$, 
$G[V\cup\{u\}]$ is an empty graph and $u$ is not adjacent to any vertex of $U$ in~$G$.
By Lemma~\ref{lm4:R(Sn(1,2),W8)}, 
$G[U\setminus\{u\}]$ is $K_{10}$ or $K_{10} - e$, 
so no vertex of $V(T)\cup \{u\}$ is adjacent to any vertex of $U\setminus\{u\}$ in $G$, 
as $S_{11}(4,1)\nsubseteq G$.
Since $\delta(G[V(T)\cup \{u\}])\le n-5$, 
$\overline{G}[V(T)\cup \{u\}]$ contains $S_5$, 
so $\overline{G}$ contains $W_8$ by Observation~\ref{obs2}, a contradiction.

Now, suppose that $n=10$. 
Then $G$ has order $19$, 
and by Theorem~\ref{thm:R(Sn(4)-Sn[4]-Sn(1,3)-TA-TB-TC-Sn(3,1),W8)}, 
$G$ has a subgraph $T'=S_{10}(3,1)$. 
Let $V(T')=\{v_0',\ldots,v_6',w_1',w_2',w_3'\}$ 
and $E(T')=\{v_0'v_1',\ldots,v_0'v_6',v_1'w_1',v_2'w_2',v_3'w_3'\}$. 
Set $V'=\{v_4',v_5',v_6'\}$ and $U'=V(G)-V(T')=\{u_1',\ldots,u_9'\}$. 
Since $S_{10}(4,1)\nsubseteq G$, $V'$ must be independent in $G$ and is not adjacent to any vertex of $U'$ in $G$. 
If $v_0'$ is adjacent to some vertices in $U'$ in $G$, say $u_1'$.
Since $S_{10}(4,1)\nsubseteq G$, $u_1'$ is not adjacent to any vertex of $V'$ or $U'\setminus \{u_1'\}$ in $G$. 
Then, by Lemma~\ref{lm4:R(Sn(1,2),W8)}, 
$G[U'\setminus\{u_1'\}]$ is $K_8$ or $K_8 - e$, 
so no vertex of $V(T')$ is adjacent to any vertex of $U'\setminus\{u_1'\}$ in $G$, 
as $S_{10}(4,1)\nsubseteq G$.
Since $\delta(G[V(T')])\le 5$, 
$\overline{G}[V(T')]$ contains $S_5$, 
so $\overline{G}$ contains $W_8$ by Observation~\ref{obs2}, a contradiction. 
Now, suppose that $v_0'$ is not adjacent to any vertex of $U'$ in $G$. 
Note that $|U'\cup \{w_1'\}|=n$; 
therefore, $\delta(G[U'\cup \{w_1'\}])\le 5$, 
and so $\overline{G}[U'\cup \{w_1'\}]$ contains $S_5$. 
If $w_1'$ is not adjacent to any vertex from $V'\cup \{v_0'\}$, 
then by Observation~\ref{obs2}, $\overline{G}$ contains $W_8$, a contradiction. 
Otherwise, there are two cases to be considered. 

\smallskip
\noindent {\bf Case 1a}: $w_1'$ is adjacent to some vertices of $V'$ in~$G$.

Without loss of generality, 
assume that $w_1'$ is adjacent to $v_4'$ in~$G$. 
In this case, $v_1'$ is not adjacent to $U'\cup \{v_5',v_6'\}$. 
Then by Lemma~\ref{lm4:R(Sn(1,2),W8)}, 
$G[U']$ is $K_9$ or $K_9 - e$, 
so no vertex of $V(T')$ is adjacent to 
any vertex of $U'$ in $G$, as $S_{10}(4,1)\nsubseteq G$.
Since $\delta(G[V(T')])\le 5$, $\overline{G}[V(T')]$ contains $S_5$, 
and so $\overline{G}$ contains $W_8$ by Observation~\ref{obs2}, a contradiction.

\smallskip
\noindent {\bf Case 1b}: $w_1'$ is non-adjacent to each vertex of $V'$ in~$G$.

In this case, $w_1'$ is adjacent to $v_0'$ in $G$. 
Note that $w_1'$ is not adjacent to $U'$, 
since this would revert to the case where $v_0'$ is adjacent to some vertex of $U'$. 
Then again by Lemma~\ref{lm4:R(Sn(1,2),W8)}, 
$G[U']$ is $K_9$ or $K_9 - e$, 
so no vertex of $V(T')$ is adjacent to 
any vertex of $U'$ in $G$, as $S_{10}(4,1)\nsubseteq G$.
Since $\delta(G[V(T')])\le 5$, 
$\overline{G}[V(T')]$ contains $S_5$, 
and so $\overline{G}$ contains $W_8$ by Observation~\ref{obs2}, a contradiction. 

\smallskip

Finally, suppose that $n=9$. 
Then $G$ has order $17$, 
and so $G$ has a subgraph $T'=S_9(2,1)$ by Theorem~\ref{thm:R(Pn-Sn(1,2)-Sn(3)-Sn(2,1),W8)}.
Let $V(T')=\{v_0',\ldots,v_6',w_1',w_2'\}$ 
and $E(T')=\{v_0'v_1',\ldots,v_0'v_6',v_1'w_1',v_2'w_2'\}$.
Set $V'=\{v_3',\ldots,v_6'\}$ and $U'=V(G)-V(T')=\{u_1',\ldots,u_8'\}$.

Now, suppose that $E_G(V',U')\neq \emptyset$.
Without loss of generality, assume that $v_3'$ is adjacent to $u_1'$ in~$G$. 
Since $S_9(4,1)\nsubseteq G$, 
$v_4',v_5',v_6'$ are independent 
and not adjacent to any vertex of $U'\setminus \{u_1'\}$ in~$G$. 

Suppose that $v_0'$ is adjacent to some vertex of $U'\setminus \{u_1'\}$, say $u_2'$. 
Then $u_2'$ is non-adjacent to $\{v_4',v_5',v_6'\}\cup U'\setminus \{u_1',u_2'\}$ in~$G$. 
Since $\delta(G[\{w_1',w_2'\}\cup U'\setminus \{u_2'\}])\le n-5$, 
  $\overline{G}[\{w_1',w_2'\}\cup U'\setminus \{u_2'\}]$ contains $S_5$.
If $v_4'$, $v_5'$, $v_6'$ and $u_2'$ are not adjacent to $w_1'$, $w_2'$ or $u_1'$ in $G$, 
then $\overline{G}$ contains $W_8$
by Observation~\ref{obs2}, a contradiction. 
Assume that $v_4'$ is adjacent to $w_1'$ in~$G$. 
In this case, $v_1'$ is not adjacent to $\{v_5',v_6'\}\cup U'\setminus \{u_1'\}$ in $G$, 
and $v_1'u_3'v_4'u_4'v_6'u_7'u_2'u_8'v_1'$ and $v_5'$ form $W_8$ in $\overline{G}$, 
a contradiction.
Similar contradictions occur if we assume that $v_5'$, $v_6'$ or $u_2'$ are adjacent to $w_1'$, $w_2'$ or $u_1'$ in $G$.

Thus, $v_0'$ is not adjacent to any vertex of $U'\setminus \{u_1'\}$ in~$G$. 
Since $\delta(G[\{w_1',w_2'\}\cup U'\setminus \{u_1'\}])\le n-5$, 
  $\overline{G}[\{w_1',w_2'\}\cup U'\setminus \{u_1'\}]$ contains $S_5$.
If $v_0'$, $v_4'$, $v_5'$ and $v_6'$ are not adjacent to $w_1'$ or $w_2'$ in $G$, 
then $\overline{G}$ contains $W_8$ by Observation~\ref{obs2}, a contradiction. 
There are two cases to be considered.

\smallskip
\noindent {\bf Case 2a}: $v_0'$ is adjacent to $w_1'$ or $w_2'$ in~$G$.

Without loss of generality, 
assume that $v_0'$ is adjacent to $w_1'$ in~$G$. 
Note that $v_1'$ and $w_1'$ are not adjacent to $U'\setminus \{u_1'\}$, 
since this would revert to the case where $v_0'$ is adjacent to some vertex of $U'\setminus \{u_1'\}$. 
Again, since $\delta(G[\{w_2'\}\cup U'])\le n-5$, 
$\overline{G}[\{w_2'\}\cup U'\}]$ contains $S_5$. 
If $v_1'$, $v_4'$, $v_5'$ and $v_6'$ are not adjacent to $w_2'$ and $u_1'$ in $G$, 
then $\overline{G}$ contains $W_8$ by Observation~\ref{obs2}, 
a contradiction. 

Suppose that $v_1'$ is adjacent to $w_2'$ or $u_1'$, say $w_2'$, in~$G$. 
If $w_1'$ is not adjacent to $v_4'$, $v_5'$ or $v_6'$, 
then by Lemma~\ref{lm4:R(Sn(1,2),W8)}, 
$G[U'\setminus\{u_1'\}]$ is $K_7$ or $K_7 - e$, 
so no vertex of $V(T')\cup \{u_1'\}$ is adjacent to 
any vertex of $U'\setminus\{u_1'\}$ in $G$, as $S_9(4,1)\nsubseteq G$.
Since $\delta(G[V(T')])\le n-5$, $\overline{G}[V(T')]$ contains $S_5$, 
and so $\overline{G}$ contains $W_8$ by Observation~\ref{obs2}, a contradiction. 
Otherwise, $w_1'$ is adjacent to at least one of $v_4',v_5',v_6'$ in $G$, say $v_4'$. 
Then, $v_2'$ is not adjacent to $\{v_5',v_6'\}\cup U'\setminus \{u_1'\}$, 
since $G$ does not contain $S_9(4,1)$. 
Similarly, by Lemma~\ref{lm4:R(Sn(1,2),W8)}, 
$G[U'\setminus\{u_1'\}]$ is $K_7$ or $K_7 - e$, 
so no vertex of $V(T')\cup \{u_1'\}$ is adjacent to 
any vertex of $U'\setminus\{u_1'\}$ in $G$, as $S_9(4,1)\nsubseteq G$.
Again, since $\delta(G[V(T')])\le n-5$, $\overline{G}[V(T')]$ contains $S_5$, 
and so $\overline{G}$ contains $W_8$ by Observation~\ref{obs2}, a contradiction. 

Now suppose that $v_1'$ is non-adjacent to both $w_2'$ and $u_1'$ in~$G$. 
Then, one of $v_4',v_5',v_6'$ is adjacent to $w_2'$ or $u_1'$ in~$G$.
Without loss of generality, assume that $v_4'$ is adjacent to $w_2'$ in~$G$. 
In this case, $v_2'$ is not adjacent to $\{v_5',v_6'\}\cup U'\setminus \{u_1'\}$.
Then, again, by Lemma~\ref{lm4:R(Sn(1,2),W8)}, 
$G[U'\setminus\{u_1'\}]$ is $K_7$ or $K_7 - e$, 
so no vertex of $V(T')\cup \{u_1'\}$ is adjacent to 
any vertex of $U'\setminus\{u_1'\}$ in $G$, as $S_9(4,1)\nsubseteq G$.
Since $\delta(G[V(T')])\le n-5$, $\overline{G}[V(T')]$ contains $S_5$, 
and so $\overline{G}$ contains $W_8$ by Observation~\ref{obs2}, a contradiction.

\smallskip
\noindent {\bf Case 2b}: $v_0'$ is non-adjacent to both $w_1'$ and $w_2'$ in~$G$.

In this case, one of $v_4',v_5',v_6'$ is adjacent to $w_1'$ or $w_2'$ in $G$, 
say $v_4'$ to $w_1'$ in~$G$. 
Since $S_9(4,1)\nsubseteq G$, 
$v_1'$ is not adjacent to $\{v_5',v_6'\}\cup U'\setminus \{u_1'\}$ in~$G$. 
By Lemma~\ref{lm4:R(Sn(1,2),W8)}, 
$G[U'\setminus\{u_1'\}]$ is $K_7$ or $K_7 - e$, 
so no vertex of $V(T')\cup \{u_1'\}$ is adjacent to 
any vertex of $U\setminus\{u_1'\}$ in $G$, 
as $S_9(4,1)\nsubseteq G$.
Since $\delta(G[V(T')])\le n-5$, $\overline{G}[V(T')]$ contains $S_5$, 
and so $\overline{G}$ contains $W_8$ by Observation~\ref{obs2}, a contradiction. 

\smallskip

Now suppose that $E_G(V',U')= \emptyset$. 
If $\delta(G[V'])=0$, then by Lemma~\ref{lm4:R(Sn(1,2),W8)}, 
$G[U']$ is $K_8$ or $K_8 - e$, 
and no vertex of $V(T')$ is adjacent to any vertex of $U'$ in $G$, 
as $S_9(4,1)\nsubseteq G$. 
Since $\delta(G[V(T')])\le n-5$, $\overline{G}[V(T')]$ contains $S_5$, 
and so $\overline{G}$ contains $W_8$ by Observation~\ref{obs2}, a contradiction. 
Hence, $\delta(G[V'])\ge 1$, and since $S_9(4,1)\nsubseteq G$, 
one of the vertices in $V'$ is adjacent to other three in~$G$. 
Without loss of generality, 
assume that $v_3'$ is adjacent to $v_4'$, $v_5'$ and $v_6'$ in~$G$. 
Since $G$ does not contain $S_9(4,1)$, $v_4',v_5',v_6'$ are independent in~$G$. 
Furthermore, $v_0'$ is not adjacent to $U'$ in $G$ or else this reverts to the case 
where $v_3'$ is adjacent to $u_1'$ and $v_0'$ is adjacent to any vertex of $U'\setminus \{u_1'\}$. 
Since $\delta(G[\{w_1'\}\cup U'])\le n-5$, 
$\overline{G}[\{w_1'\}\cup U']$ contains $S_5$.
If $v_0'$, $v_4'$, $v_5'$ and $v_6'$ are non-adjacent to $w_1'$ in $G$,
then $\overline{G}$ contains $W_8$ by Observation~\ref{obs2}, a contradiction. 
Again, there are two cases to be considered.

\smallskip
\noindent {\bf Case 3a}: $v_0'$ is adjacent to $w_1'$ in~$G$.

Note that $v_1'$ and $w_1'$ are not adjacent to $U'$, 
or else this reverts to the case where $v_3'$ is adjacent to $u_1'$ 
and $v_0'$ is adjacent to any vertex of $U'\setminus \{u_1'\}$. 
Now, since $\delta(G[\{w_2'\}\cup U'])\le n-5$, 
$\overline{G}[\{w_2'\}\cup U'\}]$ contains $S_5$.
If $v_0'$, $v_4'$, $v_5'$ and $v_6'$ are non-adjacent to $w_2'$ in $G$, 
then $\overline{G}$ contains $W_8$ by Observation~\ref{obs2}, a contradiction. 

Suppose that $v_0'$ is adjacent to $w_2'$ in~$G$. 
Again, $v_2'$ and $w_2'$ are non-adjacent to $U'$, 
or else else this reverts to the case where $v_3'$ is adjacent to $u_1'$ 
and $v_0'$ is adjacent to any vertex of $U'\setminus \{u_1'\}$. 
Now, $E_G(V(T'),U')= \emptyset$, and since $\delta(G[V(T')])\le n-5$, 
$\overline{G}[V(T')]$ contains $S_5$, 
and so $\overline{G}$ contains $W_8$ by Observation~\ref{obs2}, a contradiction. 

Therefore, $w_2'$ is adjacent to at least one of $v_4'$, $v_5'$ and $v_6'$ in $G$,
say $v_4'$. 
Then, $v_2'$ is not adjacent to $v_5'$, $v_6'$ or $U'$, as $S_9(4,1)\nsubseteq G$, 
a contradiction. 
By Lemma~\ref{lm4:R(Sn(1,2),W8)}, $G[U']$ is $K_8$ or $K_8 - e$, 
so no vertex of $V(T')$ is adjacent to any vertex of $U'$ in $G$, as $S_9(4,1)\nsubseteq G$.
Again, since $\delta(G[V(T')])\le n-5$, $\overline{G}[V(T')]$ contains $S_5$, 
and so $\overline{G}$ contains $W_8$ by Observation~\ref{obs2}, a contradiction.

\smallskip
\noindent {\bf Case 3b}: $v_0'$ is not adjacent to $w_1'$ in~$G$.

In this case, one of $v_4',v_5',v_6'$ is adjacent to $w_1'$ in $G$, say $v_4'$.
Since $S_9(4,1)\nsubseteq G$, $v_1'$ is not adjacent to $v_5'$, $v_6'$ or $U'$ in~$G$. 
By Lemma~\ref{lm4:R(Sn(1,2),W8)}, $G[U']$ is $K_8$ or $K_8 - e$, 
so no vertex of $V(T')\cup \{u_1'\}$ is adjacent to any vertex of $U'$ in $G$, 
as $S_9(4,1)\nsubseteq G$.
Since $\delta(G[V(T')])\le n-5$, $\overline{G}[V(T')]$ contains $S_5$, 
and so $\overline{G}$ contains $W_8$ by Observation~\ref{obs2}, a contradiction. 

\smallskip

Thus, $R(S_n(4,1),W_8)\leq 2n-1$ for $n\geq 9$ which completes the proof.
\end{proof}

\begin{theorem}
If $n\geq 8$, then 
\[
  R(T_D(n),W_8)=\begin{cases}
    2n-1 & \text{if $n\not\equiv 0 \pmod{4}$}\,;\\
    2n   & \text{otherwise}\,.
  \end{cases}
\]  
\end{theorem}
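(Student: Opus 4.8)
The lower bounds $R(T_D(n),W_8)\ge 2n-1$ in general, and $\ge 2n$ when $n\equiv 0\pmod 4$, are already supplied by Lemma~\ref{lem:lower-bound-on-S_n(1,4)-T_D(n)-...-T_L(n)}, so only the matching upper bounds need proof, and the plan is to follow the template of the preceding theorems in this section. Let $G$ be a graph of order $2n$ if $n\equiv 0\pmod 4$ and of order $2n-1$ otherwise, and suppose for contradiction that $G$ contains no copy of $T_D(n)$ while $\overline G$ contains no $W_8$. Since $|V(G)|=R(S_n(1,4),W_8)$ by Theorem~\ref{thm:R(Sn(1,4),W8)} and $\overline G$ has no $W_8$, the graph $G$ contains a copy $T=S_n(1,4)$; I would write $v_0$ for its centre (of degree $n-5$ in $T$), $v_2,\dots,v_{n-5}$ for its $n-6$ direct leaves, and $v_0v_1w_1w_2w_3w_4$ for its length-five arm, and put $V=\{v_2,\dots,v_{n-5}\}$ and $U=V(G)\setminus V(T)$, so $|U|=n$ when $n\equiv 0\pmod4$ and $|U|=n-1$ otherwise.

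The key step is to extract, from $T_D(n)\nsubseteq G$, the structural fact that the second arm vertex $w_1$ has no neighbour in $U$: a neighbour $u\in U$ of $w_1$ would serve as the pendant vertex of $T_D(n)$ attached to $w_1$, with $w_1w_2w_3$ playing the remaining length-two branch and $v_0$ together with $n-6$ of its leaves playing the star, so that $T_D(n)\subseteq G$. Re-rooting a putative embedding of $T_D(n)$ at other arm vertices or at a leaf should yield analogous restrictions -- in particular each $v_i\in V$ should have few $G$-neighbours in $U\cup V$ -- but these refinements will have to be verified carefully. Since $w_1$ is then adjacent in $\overline G$ to every vertex of $U\cup V$, it suffices to find a $C_8$ inside $\overline G[U\cup V]$: if $\delta(\overline G[U\cup V])\ge\frac12|U\cup V|$ then Lemma~\ref{lem:pancyclic} provides one (using $|U\cup V|\ge 9$ and that $K_{m,m}$ with $m\ge 4$ contains $C_8$), while if not, some vertex of large $G$-degree in $U\cup V$ is forced by the $V$-degree bound to lie in $U$, after which $T_D(n)\nsubseteq G$ prevents the vertices of $V$ from being adjacent in $G$ to that vertex or its $G$-neighbourhood, and Lemma~\ref{lem:1-1relation} or Corollary~\ref{cor:bipartite4-6} then supplies the $C_8$. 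In every case, $C_8$ with the hub $w_1$ gives $W_8\subseteq\overline G$, the desired contradiction.

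The main obstacle I anticipate is the small case $n=8$, where $|V|=2$ and the bipartite cycle lemmas no longer apply; as in the proofs of Theorems~\ref{thm:R(Sn(1,4),W8)} and~\ref{thm:R(TB,W8)}, I would treat it separately, invoking Lemma~\ref{lm4:R(Sn(1,2),W8)} to deduce that $G[U]$ is $K_{|U|}$ or $K_{|U|}-e$ unless $\overline G$ already contains $W_8$, and then tracing the adjacencies between $U$ and the handful of vertices $v_0,v_1,w_1,w_2,w_3,w_4$ to reach a contradiction. A secondary, more routine, difficulty is the bookkeeping needed to cover every way $T_D(n)$ can embed into $G$ (which vertex is its degree-three vertex, which vertex its centre), so that the forbidden-adjacency statements are formulated with enough generality for the high-degree-vertex analysis; this is tedious but should present no real conceptual hurdle.
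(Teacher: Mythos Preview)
Your plan has a concrete gap at the hub step. You argue that $w_1$ (the arm vertex at distance~2 from $v_0$) has no $G$-neighbour in $U$, and this is correct; but you then assert that $w_1$ is non-adjacent in $G$ to every vertex of $U\cup V$, and this does \emph{not} follow. If $w_1\sim v_i$ for some $v_i\in V$, your intended embedding of $T_D(n)$ breaks down: taking $v_0$ as the centre, $v_1$ as the bridge vertex, $w_1$ as the degree-$3$ vertex, $w_2w_3$ as the length-two branch and $v_i$ as the pendant, you have used up $v_i$ and are left with only $n-7$ plain leaves $\{v_2,\dots,v_{n-5}\}\setminus\{v_i\}$ at $v_0$, one short of the $n-6$ required. (Indeed, $S_n(1,4)$ together with a single extra edge $w_1v_2$ contains no $T_D(n)$.) Since $v_0$ has degree exactly $n-5$ in $S_n(1,4)$, there is no spare leaf to absorb this loss, and no alternative rooting repairs it: using $v_i$ as the bridge vertex instead forces $v_1$ into the role of pendant, with the same shortfall.

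The paper sidesteps this by starting from $T=S_n[4]$ rather than $S_n(1,4)$. There the centre $v_0$ has degree $n-4$, one more than $\Delta(T_D(n))$, and $w_1$ already has degree~$3$ with two pendants $w_2,w_3$. Now if $w_2$ (or $w_3$) is adjacent to any $x\in U\cup V$, one obtains $T_D(n)$ with $v_0$ as centre, $v_1$ as bridge, $w_1$ as the degree-$3$ vertex, $w_3$ as pendant, and $w_2x$ as the length-two branch; when $x=v_i\in V$ one simply drops $v_i$ from the centre's leaves and still has $n-5$ left. Thus $w_2$ and $w_3$ are both complement-universal to $U\cup V$, giving a reliable hub, after which the pancyclic/high-degree dichotomy you describe goes through (the paper treats the two cases --- high-degree vertex in $V$ versus in $U$ --- separately, and for $n=8$ the argument is direct since $V$ is then independent and anticomplete to $U$). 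Your overall strategy is right; the fix is to change the seed tree to $S_n[4]$.
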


\begin{proof}
Lemma~\ref{lem:lower-bound-on-S_n(1,4)-T_D(n)-...-T_L(n)}
provides the lower bound, so it remains to prove the upper bound.
Let $G$ be a graph with no $T_D(n)$ subgraph whose complement $\overline{G}$ does not contain $W_8$. 
Suppose that $n\equiv 0 \pmod{4}$ and that $G$ has order~$2n$. 
By Theorem~\ref{thm:R(Sn[4],W8)}, $G$ has a subgraph $T=S_n[4]$. 
Let $V(T)=\{v_0,\ldots,v_{n-4},w_1,w_2,w_3\}$ 
and $E(T)=\{v_0v_1,\ldots,v_0v_{n-4},v_1w_1,w_1w_2,w_1w_3\}$. 
Set $V=\{v_2,\ldots,v_{n-4}\}$ and $U=V(G)-V(T)$; 
then $|V|=n-5$ and $|U|=n$. 
Since $T_D(n)\nsubseteq G$, neither $w_2$ nor $w_3$ is adjacent in $G$ to $U\cup V$. 

Suppose that $n=8$.
Since $G$ does not contain $T_D(n)$, $V$ must be independent and non-adjacent to $U$ in~$G$. 
Then for any vertices $u_1,\ldots,u_4$ in $U$, 
$v_3u_1v_4u_2w_2u_3w_3u_4v_3$ and $v_2$ form $W_8$ in $\overline{G}$, a contradiction. 
Suppose that  $n\geq 12$. 
Then $|U\cup V|=2n-5$. 
If $\delta(\overline{G}[U\cup V])\geq \lceil\frac{2n-5}{2}\rceil$, 
then $\overline{G}[U\cup V]$ contains $C_8$ by Lemma~\ref{lem:pancyclic} which with $w_2$ forms $W_8$,
a contradiction. 
Thus, $\delta(\overline{G}[U\cup V])\leq \lceil\frac{2n-5}{2}\rceil-1=n-3$, 
and $\Delta(G[U\cup V])\geq n-3$. 
Now, there are two cases to consider. 

\smallskip
\noindent {\bf Case 1}: One of the vertices of $V$, say $v_2$, is a vertex of degree at least $n-3$ in $G[U\cup V]$. 

Since $T_D(n)\nsubseteq G$, 
$v_1$ is not adjacent in $G$ to $w_2$, $w_3$ or $U\cup V\setminus \{v_2\}$. 
Let $U'=\{w_2,w_3\}\cup U\cup V\setminus \{v_2\}$; then $|U'|=2n-4$. 
Now, if $\delta(\overline{G}[U'])\geq \frac{2n-4}{2}=n-2$, 
then $\overline{G}[U']$ contains $C_8$ by Lemma~\ref{lem:pancyclic} which with $v_1$ forms $W_8$, 
a contradiction. 
Hence, $\delta(\overline{G}[U'])\leq n-3$, and $\Delta(G[U'])\geq n-2$. 
Note that neither $w_2$ nor $w_3$ have degree $\Delta(G[U'])$. 
Therefore, $d_{G[U']}(u')\geq n-2$ for some vertex $u'\in U\cup V\setminus \{v_2\}$. 
By the Inclusion-Exclusion Principle, 
some vertex $a\in U\cup V\setminus \{v_2\}$ is adjacent in $G$ to both $u'$ and $v_2$.
Then $G$ has a subgraph $T_D(n)$ in which $u'$ is the vertex of degree $n-5$ and $v_2$ is the vertex of degree $3$, a contradiction. 

\smallskip
\noindent {\bf Case 2}: Some vertex $u\in U$ has degree at least $n-3$ in $G[U\cup V]$.

Suppose that there is at least one vertex in $V$ that is adjacent to $u$ in $G$, say $v_2$. 
Then $G$ has a subgraph $T_D(n)$ in which $u$ is the vertex of degree $n-5$ 
and $v_0$ is the vertex of degree~$3$, a contradiction. 
Similarly, no other vertex of $V$ is adjacent to $u$. 
Now, since $T_D(n)\nsubseteq G$, 
$d_{G[N_{G[U]}(u)\cup \{v\}]}(v)\leq 1$ 
and $d_{G[V\cup \{x\}]}(x)\leq 1$, 
for any $v\in V$ and $x\in N_{G[U]}(u)$. 
Then, by Lemma \ref{lem:1-1relation}, 
$\overline{G}[V\cup N_{G[U]}(u)]$ must contain $C_8$, 
which with $w_2$ as hub, forms $W_8$ in $\overline{G}$, a contradiction. 

Now, suppose that $n\not\equiv 0 \pmod{4}$ and that $G$ has order $2n-1$. 
By Theorem~\ref{thm:R(Sn[4],W8)}, $G$ has a subgraph $T=S_n[4]$. 
Let $V(T)=\{v_0,\ldots,v_{n-4},w_1,w_2,w_3\}$ 
and $E(T)=\{v_0v_1,\ldots,v_0v_{n-4},v_1w_1,w_1w_2,w_1w_3\}$. 
Set $V=\{v_2,\ldots,v_{n-4}\}$ and $U=V(G)-V(T)$; 
then $|V|=n-5$ and $|U|=n-1$. 
Since $T_D(n)\nsubseteq G$, 
neither $w_2$ nor $w_3$ is adjacent to $U\cup V$ in~$G$. 
If $\delta(\overline{G}[U\cup V])\geq \frac{2n-6}{2}=n-3$,
then $\overline{G}[U\cup V]$ contains $C_8$ by Lemma~\ref{lem:pancyclic} which with $w_2$ forms $W_8$ in $\overline{G}$, a contradiction. 
Thus, $\delta(\overline{G}[U\cup V])\leq n-4$, 
and $\Delta(G[U\cup V])\geq n-3$. 
The arguments of the preceding cases then lead to contradictions. 

Thus, $R(T_D(n),W_8)\leq 2n$, which completes the proof.
\end{proof}

\begin{lemma}\label{lm:R(TF(n),W8)} 
Each graph $H$ of order $n\geq 8$ with minimal degree at least $n-4$
contains $T_E(n)$ unless $n=8$ and $H=K_{4,4}$.
\end{lemma}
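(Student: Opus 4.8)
The plan is to produce an embedding of $T_E(n)$ into $H$ more or less greedily, exploiting that $\delta(H)\ge n-4$ is equivalent to $\Delta(\overline H)\le 3$. Recall that $T_E(n)$ is a ``double spider'': a centre $c$ of degree $n-5$ bearing $n-8$ pendant leaves $\ell_1,\dots,\ell_{n-8}$, together with two length-two arms $c\,a_1\,a_2$ and $c\,b_1\,b_2$ and one ``cherry'' $c\,d$ in which $d$ is in addition joined to two leaves $e_1,e_2$. Since $|V(T_E(n))|=|V(H)|=n$, an embedding is a bijection $\phi\colon V(T_E(n))\to V(H)$, and the only constraints are that $\phi(c)$ be $H$-adjacent to $\phi(\ell_1),\dots,\phi(\ell_{n-8}),\phi(a_1),\phi(b_1),\phi(d)$, that $\phi(a_1)\sim\phi(a_2)$ and $\phi(b_1)\sim\phi(b_2)$, and that $\phi(d)\sim\phi(e_1)$ and $\phi(d)\sim\phi(e_2)$ in $H$. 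In particular, the four ``peripheral'' images $\phi(a_2),\phi(b_2),\phi(e_1),\phi(e_2)$ are the only images permitted to be non-neighbours of $\phi(c)$, so each non-neighbour of $\phi(c)$ in $H$ is forced into a peripheral role. I would split into the case where $H$ has a vertex of degree at least $n-3$ and the case where $H$ is $(n-4)$-regular; these exhaust the possibilities.

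\textbf{Case 1: some vertex $v$ has $d_H(v)\ge n-3$.} Set $\phi(c):=v$; then at most two vertices $F=V(H)\setminus(\{v\}\cup N_H(v))$ lie outside $N_H(v)$, and all of $F$ is used peripherally. I would then pick branch vertices consecutively inside $N_H(v)$: first $\phi(d)\in N_H(v)$ together with two of its $H$-neighbours, taken from $F\cup N_H(v)$ and different from $v$, to serve as $\phi(e_1),\phi(e_2)$; then $\phi(a_1)\in N_H(v)$ with an $H$-neighbour $\phi(a_2)$ from what remains; then $\phi(b_1)$ with an $H$-neighbour $\phi(b_2)$; finally the remaining $n-8$ vertices of $N_H(v)$ become the leaves. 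Each selection is available by a short degree count using $\delta(H)\ge n-4$ and $|F|\le 2$; the only tight instance is $n=8$ with $d_H(v)=5$, which leaves a handful of configurations that are checked directly. Since $K_{4,4}$ is $4$-regular it never falls into this case, so Case~1 yields no exception.

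\textbf{Case 2: $H$ is $(n-4)$-regular, i.e.\ $\overline H$ is $3$-regular.} This is the crux. Fix a vertex $c$; it has exactly three non-neighbours $f_1,f_2,f_3$ (its $\overline H$-neighbours). By the observation above, three of the four peripheral vertices must be mapped onto $\{f_1,f_2,f_3\}$ and the fourth onto an $H$-neighbour of $c$. Finding the embedding then amounts to choosing pairwise distinct $\phi(d),\phi(a_1),\phi(b_1)\in N_H(c)$ so that $\phi(d)$ is $H$-adjacent to two of the four peripheral vertices while $\phi(a_1)$ and $\phi(b_1)$ absorb the other two; the remaining $n-8$ neighbours of $c$ become leaves. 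Because $\Delta(\overline H)\le 3$, each $f_i$ has only two $\overline H$-neighbours besides $c$, so these adjacency demands can be met for a suitable choice of $c$ except in one rigid configuration, namely $\overline H=2K_4$ (equivalently $H=K_{4,4}$, which by regularity forces $n=8$): there every $H$-neighbour of $c$ has its whole $H$-neighbourhood inside $\{c,f_1,f_2,f_3\}$, so all four peripheral vertices would have to come from the three-element set $\{f_1,f_2,f_3\}$, which is impossible. Conversely, for any other $3$-regular $\overline H$ one can exhibit a valid assignment; in particular one checks directly that $T_E(n)\subseteq K_{4,\dots,4}=\overline{\tfrac n4 K_4}$ whenever $n\ge 12$.

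\textbf{Main obstacle.} Case~1 is routine bookkeeping. The real difficulty is Case~2: proving that $2K_4$ is the \emph{only} $3$-regular complement that blocks $T_E(n)$. The pinch is numerical---the chosen centre has just three non-neighbours, but $T_E(n)$ has four peripheral vertices, and moreover two of them ($e_1,e_2$) must be absorbed by a \emph{single} branch vertex $d$---so one has to argue that both a good centre and a good pairing of peripheral vertices to branch vertices always exist, except in the one rigid case. I expect this to need a local analysis of $\overline H$ on $\{c,f_1,f_2,f_3\}$ and their neighbourhoods, combined with the freedom to re-choose the centre $c$ whenever a first choice is obstructed.
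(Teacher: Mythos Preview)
Your high-level plan coincides with the paper's: the same two-case split on whether $\Delta(H)\ge n-3$ or $H$ is $(n-4)$-regular, the same realisation that the (at most three) non-neighbours of the chosen centre must all land in the four ``peripheral'' positions $a_2,b_2,e_1,e_2$, and the same identification of $K_{4,4}$ as the unique obstruction. So the strategy is sound.

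Two places in the sketch deserve tightening. In Case~1 your greedy order (``pick $d$, then $e_1,e_2$, then $a_1,a_2$, then $b_1,b_2$'') does not by itself guarantee that every vertex of $F$ ends up peripheral: if $|F|=2$ and you happen to choose $e_1,e_2,a_2$ all inside $N_H(v)$, the single remaining peripheral slot cannot absorb both elements of $F$. The paper avoids this by anchoring the argument on the pair $u_{n-2},u_{n-1}$ outside $N_H(u_0)$ and splitting on whether they are adjacent; in each branch it first finds a neighbour $u_1\in N_H(u_0)$ adjacent to one or both of them, and only then looks for the remaining edges. This is morally your plan, but with $F$ placed \emph{first} rather than opportunistically. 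The paper also needs explicit checks for $n\in\{8,9,10\}$ in the non-adjacent subcase, not only $n=8$, so ``the only tight instance is $n=8$ with $d_H(v)=5$'' understates the work slightly.

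In Case~2 you correctly isolate why $K_{4,4}$ fails, but the converse---that every other $3$-regular $\overline H$ admits the embedding---is exactly the part you flag as unfinished, and it is where most of the paper's effort in this case sits. The paper argues by asking whether the three non-neighbours $u_{n-3},u_{n-2},u_{n-1}$ of the centre form an independent set in $H$: if so, $(n-4)$-regularity forces $H=K_{4,4}$ (so $n=8$); if not, one of them plays the role of the extra peripheral vertex from $N_H(\text{centre})$ after relabelling, and explicit small-$n$ checks (separately for $n=8$ and $n\ge 10$) finish the job. Your remark about $K_{4,\dots,4}$ for $n\ge12$ is correct but covers only the case $\overline H=\tfrac{n}{4}K_4$; the general $3$-regular $\overline H$ on, say, $10$ vertices still needs the argument.

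In short: same approach as the paper, correct structural insight, but the ``short degree counts'' you defer are precisely the substance of the paper's proof, and the greedy embedding in Case~1 needs to place the non-neighbours of the centre first rather than hope they get absorbed along the way.
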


\begin{proof}
Let $V(H)=\{u_0,\ldots,u_{n-1}\}$. 
First, suppose that $\Delta(H)\geq n-3$ 
and assume without loss of generality that $u_1,\ldots,u_{n-3}\in N_H(u_0)$. 
Suppose that $u_{n-2}$ and $u_{n-1}$ are adjacent in~$H$. 
Since $\delta(H)\geq n-4$, 
$N_H(u_0)\cap N_H(u_{n-2})\neq \emptyset$, 
so assume without loss of generality that $u_1$ is adjacent to $u_{n-2}$ in $H$. 
Furthermore, $u_1$ must be adjacent to at least $n-7$ vertices from $\{u_2,\ldots,u_{n-3}\}$ in $H$. 
Without loss of generality, assume that $u_1$ is adjacent to $u_2,\ldots,u_{n-6}$ in $H$. 
Now, if any vertex of $\{u_2,\ldots,u_{n-6}\}$ is adjacent to $u_{n-5}$, $u_{n-4}$ or $u_{n-3}$ in $H$, then we have $T_E(n)$ in $H$. 
Suppose that is not the case; 
then each vertex of $\{u_2,\ldots,u_{n-6}\}$ must be adjacent to each other and to $u_0$, $u_1$, $u_{n-2}$ and $u_{n-1}$ in $H$. 
Since $d_H(u_{n-3})\ge n-4$,
$u_{n-3}$ is adjacent to at least one of $u_1$, $u_{n-2}$ and $u_{n-1}$ in $H$, 
so $H$ contains $T_E(n)$, a contradiction.

Suppose that $u_{n-2}$ is not adjacent to $u_{n-1}$ in $H$. 
Since $\delta(H)\geq n-4$, 
$u_{n-2}$ and $u_{n-1}$ are each adjacent to at least $n-5$ vertices in $N_H(u_0)$, 
so at least one vertex of $N_H(u_0)$, say $u_1$, is adjacent in $H$ to both $u_{n-2}$ and $u_{n-1}$. 
If $H[\{u_2,\ldots,u_{n-3}\}]$ contains subgraph $2K_2$, then H contains subgraph $T_E(n)$. Note that this will always happens for $n\ge 11$, since $\delta(H)\ge n-4$. 

Suppose that $n=10$.
Since $\delta(H)\ge 6$, $u_2$ must be adjacent in $H$ to at least two vertices of $u_3,\ldots,u_7$, without loss of generality say $u_3$ and $u_4$. 
If $H[\{u_4,\ldots,u_7\}]$ contains any edge, then $H$ contains $T_E(10)$. 
Otherwise, $\{u_4,\ldots,u_7\}$ must be independent in $H$ 
and each of these vertices must be adjacent to $u_0$, $u_1$, $u_2$, $u_3$, $u_8$ and $u_9$; 
this also gives a subgraph $T_E(10)$ in $H$. 

Similarly, for $n=9$, $u_2$ must be adjacent to at least one of $u_3,\ldots,u_6$, say $u_3$, in $H$. 
If $H[\{u_4,u_5,u_6\}]$ contains any edge, then $H$ contains $T_E(9)$. 
Otherwise, $\{u_4,u_5,u_6\}$ is independent in $H$ and since $\delta(H)\ge 5$, 
$u_4$ is adjacent to at least one of $u_2$ and $u_3$, and 
$u_5$ is adjacent to at least one of $u_7$ and $u_8$. 
Again, this gives a subgraph $T_E(9)$ in $H$. 

For $n=8$, if $u_2,\ldots,u_5$ are independent in $H$, 
then they are each adjacent to $u_0$, $u_1$, $u_6$ and $u_7$ in $H$,
which gives $T_E(8)$ in $H$. 
Otherwise, we can assume that $u_2$ is adjacent to $u_3$ in $H$. 
If $u_4$ is adjacent to $u_5$ in $H$, we will have $T_E(8)$ in $H$; 
otherwise, assume that $u_4$ is not adjacent to $u_5$. 
Now, suppose that $u_4$ is adjacent to $u_2$ or $u_3$ in $H$. 
If $u_5$ is adjacent to $u_6$ or $u_7$ in $H$, then $H$ contains $T_E(8)$. 
Otherwise, $u_5$ must be adjacent to $u_0$, $u_1$, $u_2$ and $u_3$ since $\delta(H)\ge 4$. 
However, this also gives $T_E(8)$ in $H$. 
On the other hand, suppose that $u_4$ is adjacent to neither $u_2$ nor $u_3$ in $H$. 
Similarly, $u_5$ is not adjacent to $u_2$ or to $u_3$ in $H$. 
Since $\delta(H)\ge 4$, both $u_4$ and $u_5$ must be adjacent to $u_0$, $u_1$, $u_6$ and $u_7$ in $H$, and this also gives $T_E(8)$ in $H$. 

Suppose that $H$ is $(n-4)$-regular and that $N_H(u_0)=\{u_1,\ldots,u_{n-4}\}$. 
By the Handshaking Lemma, this only happens when $n$ is even. 

Suppose that $n\geq 10$.
Note that $u_{n-3}$, $u_{n-2}$ and $u_{n-1}$ are each adjacent to at least $n-6$ vertices of $N_H(u_0)$ in $H$. 
By the Inclusion-Exclusion Principle, 
at least one of $u_1,\ldots,u_{n-4}$ is adjacent to two of $u_{n-3},u_{n-2},u_{n-1}$ in $H$,
say $u_1$ to $u_{n-3}$ and $u_{n-2}$, 
and there must be another vertex, say $u_2$, that is adjacent to $u_{n-1}$ in $H$. 
Now, if there is any edge in $H[\{u_3,\ldots,u_{n-4}\}]$, 
then $T_E(n)\subseteq H$, and this always happens for $n\ge 12$. 
For $n=10$, since $d_H(u_1)=6$, $u_1$ is non-adjacent in $H$
to at least one of $u_3,\ldots,u_6$, say $u_3$. 
Since $d_H(u_3)=6$, $u_3$ is adjacent to one of $u_4,u_5,u_6$, 
giving $T_E(10)$ in $H$. 

Now suppose that $n=8$.
If $u_5$, $u_6$ and $u_7$ are independent in $H$, then $H=K_{4,4}$. 
Otherwise, we can assume that $u_5$ is adjacent to $u_6$ in $H$. 
If $u_5$ is also adjacent to $u_7$ in $H$, 
then $u_5$ is adjacent in $H$ to two vertices of $N_H(u_0)$, say $u_1$ and~$u_2$. 
Suppose that $u_6$ is adjacent to $u_1$ or $u_2$, say $u_1$, in $H$. 
Since $d_H(u_6)=4$, $u_6$ is also adjacent to at least one of $u_2,u_3,u_4,u_7$, so $T_E(8)\subseteq H$. 
Otherwise, suppose that neither $u_6$ nor $u_7$ is adjacent to $u_1$ or $u_2$ in $H$. 
Since $H$ is a $4$-regular graph, 
$u_6$ and $u_7$ are both adjacent to $u_3$ and $u_4$ in $H$, 
and $u_1$ is adjacent to at least one of $u_3$ and $u_4$ in $H$. 
This gives $T_E(8)$ in $H$. 
On the other hand, suppose that $u_5$ is not adjacent to $u_7$ in $H$. 
Then, similarly, $u_6$ is not adjacent to $u_7$ in $H$, 
so $u_7$ is adjacent to $u_1$, $u_2$, $u_3$ and $u_4$ in $H$, and $H$ contains $T_E(8)$. 
\end{proof}

\begin{theorem}
For $n\geq 8$,  
\[
  R(T_E(n),W_8)=\begin{cases}
    2n-1 & \text{if $n\geq 9$}\,;\\
    16   & \text{if $n=8$}\,.
  \end{cases}
\]  
\end{theorem}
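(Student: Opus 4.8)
The lower bounds are exactly those furnished by Lemma~\ref{lem:lower-bound-on-S_n(1,4)-T_D(n)-...-T_L(n)}: the graph $2K_{n-1}$ gives $R(T_E(n),W_8)\ge 2n-1$ for all $n\ge 8$, and $K_7\cup K_{4,4}$ gives $R(T_E(8),W_8)\ge 16$, the point being that $K_{4,4}$ is the one $(n-4)$-regular $8$-vertex graph that avoids $T_E(8)$, cf.\ Lemma~\ref{lm:R(TF(n),W8)}. For the upper bound I would take a graph $G$ of order $2n-1$ if $n\ge 9$ and of order $16$ if $n=8$, assume $\overline G$ contains no $W_8$, and show $T_E(n)\subseteq G$. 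Following the proofs of Theorems~\ref{thm:R(TC,W8)} and~\ref{thm:R(Sn(3,1),W8)}, I split on whether $G$ has a dense $n$-vertex part. \textbf{Case A:} some $X\subseteq V(G)$ with $|X|=n$ satisfies $\delta(G[X])\ge n-4$. By Lemma~\ref{lm:R(TF(n),W8)}, $T_E(n)\subseteq G[X]$ when $n\ge 9$, a contradiction; the only loose end is $n=8$ with $G[X]\cong K_{4,4}$. Writing $X=A\cup B$ for the colour classes and $Y=V(G)\setminus X$, so $|Y|=8$ and $\overline G[X]=2K_4$: if $G$ has an edge between $X$ and $Y$, say from $x\in B$ to $y\in Y$, then since $T_E(8)$ is bipartite with classes of sizes $5$ and $3$ it embeds in $G$ using all of $X$ together with $y$ (centre in $A$; three neighbours in $B$ including $x$; pendants of two of them taken in $A$; pendant of $x$ equal to $y$). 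If $G$ has no edge between $X$ and $Y$, then either $\delta(G[Y])\le 3$, so $\overline G[Y]\supseteq S_5$ and Observation~\ref{obs2} with four vertices of $X$ gives $W_8\subseteq\overline G$, or $\delta(G[Y])\ge 4$ and Lemma~\ref{lm:R(TF(n),W8)} gives $T_E(8)\subseteq G[Y]$ unless $G[Y]\cong K_{4,4}$, in which last case $\overline G$ already has a $W_8$ with hub in one colour class of $G[X]$ and rim consisting of the three other vertices of that class and five vertices of $Y$, with a consecutive block of three rim vertices lying in one colour class of $G[Y]$.

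\textbf{Case B:} every $n$-set $X$ satisfies $\delta(G[X])\le n-5$, hence $\overline G[X]\supseteq S_5$. Using an earlier Ramsey number I fix a tree $T$ on $n-1$ vertices in $G$ with a centre $v_0$ of degree $n-5$; concretely $T\cong S_{n-1}(4)$, which exists since $R(S_{n-1}(4),W_8)<|V(G)|$ for all $n\ge 8$ (with $S_7(4)=S_7(3)$ and $R(S_7(3),W_8)=14$ when $n=8$). I set $U=V(G)\setminus V(T)$, so $|U|\ge n$; let $v_1$ be the neighbour of $v_0$ carrying three pendants $w_1,w_2,w_3$ and $\ell_1,\dots,\ell_{n-6}$ the other neighbours of $v_0$. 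If some $\ell_i$ is $G$-adjacent to some $w_j$ and some $\ell_k$ with $k\ne i$ is $G$-adjacent to some $u\in U$, then $T_E(n)\subseteq G$: take $v_0$ as centre, $v_1$ as the degree-three branch with two of $w_1,w_2,w_3$ as its pendants, $\ell_i$ with pendant the remaining $w$, $\ell_k$ with pendant $u$, and the other $\ell$'s as leaves of $v_0$. Otherwise some $V\subseteq\{\ell_1,\dots,\ell_{n-6}\}$ with $E_G(V,U)=\emptyset$ has $|V|\ge 4$ when $n\ge 11$; then $H_1=G[U]$ has $\overline{H_1}\supseteq S_5$ (as $U$ is an $n$-set), $H_2$ is four vertices of $V$, and Observation~\ref{obs2} yields $W_8\subseteq\overline G$, a contradiction.

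The main obstacle is the small instances $n\in\{8,9,10\}$ of Case B, where the set $V$ above has fewer than four vertices; there one refines the analysis by splitting on whether $v_0$, $v_1$, or the pendants $w_1,w_2,w_3$ send a $G$-edge into $U$. Where such an edge exists, the incident vertex is removed and Lemma~\ref{lm4:R(Sn(1,2),W8)} forces $G[U]$ minus that vertex to be complete or complete-minus-an-edge, which is incompatible with the Case~B degree bound unless the remaining vertices can be arranged to complete a copy of $T_E(n)$ outright; where no such edge exists, a disjoint union exhibiting $S_5$ in the complement of a large part appears and Observation~\ref{obs2} applies. I expect this finite case-check for $n\le 10$, together with the $K_{4,4}$ subcase of Case~A, to be the only genuinely delicate part; the rest is a routine application of the auxiliary results of Section~\ref{sec:auxiliary_results} along the lines already used in Sections~\ref{sec:proofofn-4} and~\ref{sec:n-5proof}.
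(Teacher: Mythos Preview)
Your Case~A is correct and essentially matches the paper's endgame: Lemma~\ref{lm:R(TF(n),W8)} plus the $K_{4,4}$ analysis for $n=8$ is exactly what the paper uses to finish. But your route to that endgame is much longer than the paper's, and your Case~B has a real gap.

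The paper does not split into Cases~A/B at all. It starts instead from $T=S_n(3,1)$ (available by Theorem~\ref{thm:R(Sn(3,1),W8)} for every $n\ge 8$, including $n=8$ since $R(S_8(3,1),W_8)=15<16$). The point is that $S_n(3,1)$ already has a centre $v_0$ of degree $n-4$ with \emph{three} neighbours $v_1,v_2,v_3$ each carrying a pendant. A single extra edge from any $v_i$ into $U\cup\{v_4,\dots,v_{n-4}\}$ immediately gives $T_E(n)$ (drop one leaf of $v_0$ to reduce its degree to $n-5$, and let that $v_i$ be the branch with two pendants). So $v_1,v_2,v_3$ are $\overline G$-complete to $U\cup V$, and each $v_j\in V$ has at most one $G$-neighbour in $U$. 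Pick $W\subseteq U$ of size $n-2$ avoiding $v_4$'s $G$-neighbour; then $\overline G[\{v_1,v_2,v_3,v_4\}]\supseteq S_4$, and Lemma~\ref{lm4:R(Sn(1,2),W8)} makes $G[W]$ equal to $K_{n-2}$ or $K_{n-2}-e$. Now $T_E(n)\nsubseteq G$ forces $E_G(V(T),W)=\emptyset$, so Observation~\ref{obs2} gives $\delta(G[V(T)])\ge n-4$, and Lemma~\ref{lm:R(TF(n),W8)} finishes (with the short $K_{4,4}$ tail for $n=8$). No case split on $n$, no separate small-$n$ analysis.

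Your choice of $S_{n-1}(4)$ in Case~B is what causes trouble: its centre has the right degree $n-5$, but only \emph{one} neighbour carries pendants, so you need \emph{two} new pendant-edges to build $T_E(n)$, not one. Your ``otherwise'' clause is then wrong: you only ruled out the configuration $(\ell_i\sim w_j)\wedge(\ell_k\sim u)$, but the negation of this does \emph{not} give $E_G(V,U)=\emptyset$ for a large $V$. In the subcase where no $\ell_i$ is adjacent to any $w_j$, two $\ell$'s could still be adjacent to the \emph{same} vertex $u^*\in U$ and to nothing else in $U$; your stated embedding does not fire, and you have no $V$ with $E_G(V,U)=\emptyset$. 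Patching this requires several further subcases (and the promised $n\le 10$ checks), whereas choosing $S_n(3,1)$ as the starting tree eliminates the problem in one stroke.
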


\begin{proof}
Lemma~\ref{lem:lower-bound-on-S_n(1,4)-T_D(n)-...-T_L(n)}
provides the lower bound, so it remains to prove the upper bound.
Let $G$ be any graph of order $2n-1$ if $n\ge 9$ and of order $16$ if $n = 8$.
Assume that $G$ does not contain $T_E(n)$ and that $\overline{G}$ does not contain $W_8$.

By Theorem~\ref{thm:R(Sn(3,1),W8)}, $G$ has a subgraph $T=S_n(3,1)$.
Let $V(T)=\{v_0,\ldots,v_{n-4},w_1,w_2,w_3\}$
and $E(T)=\{v_0v_1,\ldots,v_0v_{n-4},v_1w_1,v_2w_2,v_3w_3\}$.
Set $V=\{v_4,\ldots,v_{n-4}\}$ and $U=V(G)-V(T)$.
Then $|V|=n-7$ and $|U|\ge n-1$.
Since $T_E(n)\nsubseteq G$, 
each of $v_1,v_2,v_3$ is not adjacent to any vertex of $V\cup U$ in $G$, 
and each vertex of $V$ is adjacent to at most one vertex of $U$ in~$G$.
Let $W$ be a set of $n-2$ vertices of $U$ that are not adjacent to $v_4$ in~$G$.
By Lemma~\ref{lm4:R(Sn(1,2),W8)}, $G[W]$ is $K_{n-2}$ or $K_{n-2}-e$.
Since $T_E(n)\nsubseteq G$, 
no vertex of $T$ is adjacent to any vertex of $W$, 
and so $\delta(G[V(T)])\geq n-4$ by Observation~\ref{obs2}.

Lemma~\ref{lm:R(TF(n),W8)} implies that 
$G[V(T)]$ contains $T_E(n)$ if $n\ge 9$, a contradiction, 
and so $n=8$ and $G[V(T)] = K_{4,4}$.
Note that $|U| = 8$, and as $T_E(8)\nsubseteq G$, 
no vertex of $U$ is adjacent to any vertex of $G[V(T)]$.
By Lemma~\ref{lm4:R(Sn(1,2),W8)}, 
$G[U]$ is $K_8$ or $K_8-e$, and thus contains $T_E(8)$, a contradiction.

Therefore, $R(T_E(n),W_8)\leq 2n-1$ when $n\geq 9$ and $R(T_E(n),W_8)\leq 16$ when $n=8$.
\end{proof}

\begin{lemma}\label{lm:R(TG(n),W8)} 
Each graph $H$ of order $n\geq 8$ with minimal degree at least $n-4$
contains $T_F(n)$ unless $n=8$ and $H=K_{4,4}$.
\end{lemma}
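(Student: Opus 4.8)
The plan is to adapt the proof of Lemma~\ref{lm:R(TF(n),W8)}, since $T_F(n)$ and $T_E(n)$ have the same rough shape: a centre vertex of degree $n-5$ together with two short pendant branches (for $T_F(n)$, two neighbours of the centre, each carrying two further leaves). Write $V(H)=\{u_0,\dots,u_{n-1}\}$ and split into the case $\Delta(H)\geq n-3$ and the case that $H$ is $(n-4)$-regular (these are exhaustive, since $\delta(H)\geq n-4$ and $\Delta(H)\leq n-4$ together force regularity). In the first case, fix $u_0$ with $\{u_1,\dots,u_{n-3}\}\subseteq N_H(u_0)$ and aim to embed $T_F(n)$ with centre $u_0$; as $u_0$ has at most two non-neighbours $u_{n-2},u_{n-1}$ and these cannot be leaves of $u_0$, the whole task reduces to choosing two neighbours $a,b\in\{u_1,\dots,u_{n-3}\}$ of $u_0$ and four further vertices $p_1,p_2,q_1,q_2$ with $\{u_{n-2},u_{n-1}\}\subseteq\{p_1,p_2,q_1,q_2\}$, $a\sim p_1,p_2$, $b\sim q_1,q_2$ and $\{p_1,p_2\}\cap\{q_1,q_2\}=\emptyset$; the remaining $n-7$ neighbours of $u_0$ then become its leaves. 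Using $\delta(H)\geq n-4$, each candidate for $a$ or $b$ has at least $n-6$ neighbours inside $\{u_1,\dots,u_{n-3}\}$, and a short inclusion-exclusion count inside this set of size $n-3$ produces the disjoint cherry-pendants; the subcases are organised by whether $u_{n-2}u_{n-1}\in E(H)$ and, if not, by a common neighbour of $u_{n-2}$ and $u_{n-1}$ inside $N_H(u_0)$, exactly as in the $T_E(n)$ argument.

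In the $(n-4)$-regular case, $n$ is even by the handshake lemma and $u_0$ has exactly three non-neighbours $u_{n-3},u_{n-2},u_{n-1}$, all of which must occupy cherry-pendant slots, while the fourth pendant and the two cherry-centres $a,b$ must lie in $N_H(u_0)=\{u_1,\dots,u_{n-4}\}$. One subdivides according to the edges of $H$ among $\{u_{n-3},u_{n-2},u_{n-1}\}$ and their neighbourhoods inside $N_H(u_0)$; two disjoint cherries can always be assembled, the only obstruction being $n=8$ together with $\{u_{n-3},u_{n-2},u_{n-1}\}$ independent, in which case $|N_H(u_0)|=4$ and the degree bookkeeping forces $H=K_{4,4}$. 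That $K_{4,4}$ genuinely fails to contain $T_F(8)$ is immediate: $T_F(8)$ is a connected bipartite graph whose (unique) colour classes have sizes $5$ and $3$, so it cannot embed in $K_{4,4}$, whose two sides both have size $4$.

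The main obstacle will be the small values $n\in\{8,9,10\}$, exactly as in Lemma~\ref{lm:R(TF(n),W8)}: for $n\geq 11$ the slack in $\delta(H)\geq n-4$ lets one choose the cherry-pendants greedily, but for $n=8,9,10$ the neighbourhood sizes are too tight, and one must run through the adjacency patterns among the non-neighbours of $u_0$ and among the low-degree neighbours of $u_0$ by hand, treating the $4$-regular subcases separately and, for $n=8$, isolating $K_{4,4}$ as the unique failure. A secondary technical point is bookkeeping: the two-cherry decomposition of $T_F(n)$ differs from the one-cherry-plus-two-short-paths decomposition of $T_E(n)$ in which neighbours of $u_0$ may be recycled as leaves, so the arithmetic thresholds in the counting steps must be re-verified for $T_F(n)$ rather than quoted from the proof of Lemma~\ref{lm:R(TF(n),W8)}.
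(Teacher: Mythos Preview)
Your proposal is a plan rather than a proof: you correctly identify the target structure (centre vertex plus two disjoint cherries) and the unique exception $K_{4,4}$ at $n=8$, but you explicitly defer the small-$n$ casework, which in the $T_E(n)$ lemma you are imitating occupies most of the space. Carried out in full, the strategy would work, but it is not the route the paper takes.

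The paper's proof is markedly shorter because it makes a different initial choice: instead of splitting into $\Delta(H)\ge n-3$ versus $(n-4)$-regular and centring at a \emph{maximum}-degree vertex, it picks $u_0$ of \emph{minimum} degree, sets $V=\{u_1,\dots,u_{n-4}\}\subseteq N(u_0)$ and $U=\{u_{n-3},u_{n-2},u_{n-1}\}$. The payoff of choosing minimum degree is that each $u\in U$ automatically has at least $n-6$ neighbours in $V$ (if $\delta(H)=n-4$ then $u_0$ is already one of the three non-neighbours of $u$; if $\delta(H)\ge n-3$ then $u$ has at most two non-neighbours altogether), so a single pigeonhole gives a common $V$-neighbour $u_1$ of two members of $U$, and a $V$-neighbour $u_2$ of the third. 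Then $T_F(n)$ embeds with centre $u_0$, cherry-centres $u_1,u_2$, and cherry-leaves $u_{n-3},u_{n-2}$ at $u_1$ and $u_{n-1}$ together with any $V\setminus\{u_1\}$-neighbour of $u_2$; this last neighbour exists for free once $n\ge 10$, and one short paragraph each for $n=9$ and $n=8$ finishes the argument, isolating $K_{4,4}$ at $n=8$.

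So the min-degree choice simultaneously fixes $|U|=3$ and gives the $n-6$ bound on $|N(u)\cap V|$ for $u\in U$, which is exactly what collapses your $\Delta$-versus-regular case split and shrinks the small-$n$ analysis. Your max-degree approach trades a larger $N(u_0)$ for weaker control on the outside vertices, and that is why you are forced to replay essentially the whole $T_E(n)$ case analysis rather than the two-line counting the paper gets away with.
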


\begin{proof}
Let $V(H) = \{u_0, u_1\ldots, u_{n-1}\}$ so that 
$d(u_0) = \delta(H)$ and $V = \{u_1,\ldots, u_{n-4}\}\subseteq N(u_0)$.
Set $U = \{u_{n-3}, u_{n-2}, u_{n-1}\}$.
By the minimum degree condition, every vertex of $U$ is adjacent to at least $n-6$ vertices of $V$.
It is straightforward to see that some pair of vertices in $U$ has a common neighbour in $V$, 
and moreover for $n\ge 9$ that every pair of vertices in $U$ has a common neighbour in $V$.

Assume without loss of generality that $u_1$ is adjacent to both $u_{n-3}$ and $u_{n-2}$, 
and that $u_2$ is adjacent to $u_{n-1}$.
If $u_2$ is adjacent to a vertex of $V\setminus\{u_1\}$, which is the case when $n\ge 10$, 
then $H$ contains $T_F(n)$.
Assume now that $n\le 9$ and that $u_2$ is not adjacent to any vertex of $V\setminus\{u_1\}$.

For the case when $n=9$, 
$u_{n-1}$ is adjacent to at least $n-6=3$ vertices of $V$, 
and so it is adjacent to another vertex, say to $u_3$.
As above, assume that $u_3$ is not adjacent to any vertex of $V\setminus\{u_1\}$.
By the minimum degree condition, 
each of $u_2$ and $u_3$ is adjacent to every vertex of $\{u_1\} \cup U$, giving $T_F(9)$ in $H$.

For the final case when $n=8$, 
the minimum degree condition implies that $u_2$ is adjacent to at least two vertices of $\{u_1,u_5,u_6\}$.
If $u_2$ is adjacent to $u_1$, then $H$ contains $T_F(8)$.
Remaining is the case when $u_2$ is not adjacent to $u_1$ but is adjacent to both $u_5$ and $u_6$.
Exchanging the roles of $u_1$ and $u_2$, 
we may assume that $u_1$ is adjacent to $u_7$ but not adjacent to any vertex of $V$.
From the minimum degree condition on $u_3$ and $u_4$, 
it is easy to see that either $H$ contains $T_F(8)$ or $H = K_{4,4}$.
\end{proof}

\begin{theorem}
For $n\geq 8$,  
\[
  R(T_F(n),W_8)=\begin{cases}
    2n-1 & \text{if $n\geq 9$}\,;\\
    16   & \text{if $n=8$}\,.
  \end{cases}
\]  
\end{theorem}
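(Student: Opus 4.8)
The plan is to mirror the proof just given for $R(T_E(n),W_8)$, since the two Ramsey numbers have exactly the same form and $T_F(n)$ is only a small modification of $T_E(n)$. Lemma~\ref{lem:lower-bound-on-S_n(1,4)-T_D(n)-...-T_L(n)} provides the lower bound, so it suffices to show $R(T_F(n),W_8)\le 2n-1$ for $n\ge 9$ and $R(T_F(8),W_8)\le 16$. Let $G$ be a graph of order $2n-1$ (of order $16$ if $n=8$) with $T_F(n)\nsubseteq G$ and $W_8\nsubseteq\overline G$. First I would extract a seed tree $T\subseteq G$ on $n$ vertices from an earlier theorem whose Ramsey number matches this form: the natural choice is $T=T_E(n)$ (guaranteed by the preceding theorem), although Theorem~\ref{thm:R(Sn(3,1),W8)} would also supply $T=S_n(3,1)$. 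The reason $T_E(n)$ is convenient is that adding a single edge to $T_E(n)$ at one of its two degree-two ``path'' vertices (with the new edge landing on the pendant of the other path) already produces a copy of $T_F(n)$; hence $T_F(n)\nsubseteq G$ will impose strong restrictions on the adjacencies between $V(T)$ and $U:=V(G)\setminus V(T)$.

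Using those restrictions, I would show that the degree-two and leaf vertices of $T$ have only boundedly many neighbours in $U$, so one can select four pairwise non-adjacent vertices of $V(T)$ together with a set $W\subseteq U$ of size $n-2$ having no edges to those four vertices in $G$. Applying Lemma~\ref{lm4:R(Sn(1,2),W8)} to the induced subgraph on these $n+2$ vertices (whose complement has no $W_8$) then forces $G[W]$ to be $K_{n-2}$ or $K_{n-2}-e$. A further use of $T_F(n)\nsubseteq G$ rules out any edge from $V(T)$ into this near-complete set $W$, because such an edge would let one complete a copy of $T_F(n)$ on $V(T)\cup W$. Consequently $G[V(T)]$ and $G[W]$ are vertex-disjoint with no edges between them, so by Observation~\ref{obs2} we must have $\delta(G[V(T)])\ge n-4$: otherwise $\overline{G[V(T)]}$ contains $S_5$ and $\overline G$ would contain $W_8$.

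The theorem then follows from Lemma~\ref{lm:R(TG(n),W8)}, which says that a graph on $n$ vertices with minimum degree at least $n-4$ contains $T_F(n)$ unless $n=8$ and the graph is $K_{4,4}$. For $n\ge 9$ this already contradicts $T_F(n)\nsubseteq G$, so $R(T_F(n),W_8)\le 2n-1$. For $n=8$ the only remaining possibility is $G[V(T)]=K_{4,4}$; in that case $|U|=8$, and $T_F(8)\nsubseteq G$ forbids edges between $U$ and $V(T)$ (since $K_{4,4}$ with one pendant edge already contains $T_F(8)$), so $\overline{G[V(T)]}=2K_4$ contains $S_4$ and Lemma~\ref{lm4:R(Sn(1,2),W8)} forces $G[U]$ to be $K_8$ or $K_8-e$; either contains the tree $T_F(8)$, a contradiction, giving $R(T_F(8),W_8)\le 16$.

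The hard part will be the middle step: pinning down exactly which edges between $V(T)$ and $U$ (and among the vertices of $T$) are excluded by $T_F(n)\nsubseteq G$. Because $T_F(n)$ carries two heavy branch vertices (each with two extra leaves) rather than $T_E(n)$'s single one, a copy of $T_F(n)$ can be completed in several structurally distinct ways, and one must verify that in every configuration either a copy of $T_F(n)$ or a forbidden $W_8$ appears; as in the earlier theorems of this section, the small orders $n=8,9,10$ will most likely have to be treated separately.
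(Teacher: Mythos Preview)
Your outline follows the right template, but the choice of seed tree creates a real obstacle that the paper avoids. The paper seeds with $T=T_C(n)$ (via Theorem~\ref{thm:R(TC,W8)}), not with $T_E(n)$. With $T_C(n)$ labelled so that $v_0$ is the centre, $v_1$ has pendant $w_1$, and $v_2$ has pendants $w_2,w_3$, the condition $T_F(n)\nsubseteq G$ forces $v_1$ to be non-adjacent to \emph{all} of $U\cup V$ (where $V=\{v_3,\dots,v_{n-4}\}$): any such edge makes $v_1$ a second claw vertex alongside $v_2$. This hands you an immediate hub for a $W_8$ in $\overline G$, and for $n\ge 10$ the rim $C_8$ comes directly from four vertices of $V$ (each with at most one $U$-neighbour) and four of $U$. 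Only the cases $n=8,9$ then need Lemma~\ref{lm4:R(Sn(1,2),W8)} and Lemma~\ref{lm:R(TG(n),W8)} in roughly the way you describe.

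With $T_E(n)$ as seed your restrictions are genuinely weaker. Write the two degree-two path vertices as $p_1,p_2$ with pendants $q_1,q_2$, the claw vertex as $r$ with leaves $s_1,s_2$, and the plain leaves of the centre as $V$ (so $|V|=n-8$). Then $T_F(n)\nsubseteq G$ gives only that $p_1,p_2$ have no neighbours in $U$ and that each $v\in V$ has at most one. The outer leaves $q_1,q_2,s_1,s_2$ are \emph{not} restricted: $q_1$ may be adjacent to arbitrarily many vertices of $U$ without producing $T_F(n)$, since $q_1$ is not adjacent to the centre and so cannot serve as a claw vertex there. Thus your claim that ``the degree-two and leaf vertices of $T$ have only boundedly many neighbours in $U$'' fails for these four leaves. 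For $n=8$ the set $V$ is empty, leaving only $p_1,p_2$ restricted --- not enough for the four-vertex input to Lemma~\ref{lm4:R(Sn(1,2),W8)}. And for every $n$, nothing in $T_F(n)\nsubseteq G$ prevents the vertices of $\{p_1,p_2\}\cup V$ from being mutually adjacent in $G$: for instance $p_1\sim v$ with $v\in V$ yields a claw at $p_1$, but then only $n-8$ plain leaves of the centre remain available, one short of the $n-7$ that $T_F(n)$ requires. Hence neither your ``four pairwise non-adjacent vertices'' nor even the weaker $S_4$-in-complement hypothesis of Lemma~\ref{lm4:R(Sn(1,2),W8)} is guaranteed. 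Switching the seed to $T_C(n)$ removes all of these difficulties at once.
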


\begin{proof}
Lemma~\ref{lem:lower-bound-on-S_n(1,4)-T_D(n)-...-T_L(n)}
provides the lower bound, so it remains to prove the upper bound.
Let $G$ be a graph with no $T_F(n)$ subgraph whose complement $\overline{G}$ does not contain $W_8$. 
Suppose that $n=8$ and that $G$ has order $16$. 
By Theorem~\ref{thm:R(TC,W8)}, $G$ has a subgraph $T=T_C(8)$. 
Let $V(T)=\{v_0,\ldots,v_4,w_1,w_2,w_3\}$ 
and $E(T)=\{v_0v_1,\ldots,v_0v_4,v_1w_1,v_2w_2,v_2w_3\}$. 
Set $U=V(G)-V(T)=\{u_1,\ldots,u_8\}$; 
then $|U|=8$. 
Since $T_F(8)\nsubseteq G$, 
$v_1$ is not adjacent in $G$ to $v_2,v_3,v_4$ or any vertex of $U$, 
and $d_{G[U]}(v)\leq 1$ for $v=v_3,v_4,w_2,w_3$.  

Suppose that $v_1$ is adjacent to $w_2$ or $w_3$, without loss of generality say $w_2$.
Since $T_F(8)\nsubseteq G$, $v_2$ is not adjacent to $\{v_3,v_4\}\cup U$.
If neither $v_3$ nor $v_4$ are adjacent to $U$, 
then by Lemma~\ref{lm4:R(Sn(1,2),W8)}, $G[U]$ is $K_8$ or $K_8-e$, so $G[U]$ contains $T_F(8)$, a contradiction. 
Suppose that only one of the vertices $v_3$ and $v_4$ is adjacent to $U$ in $G$, say $v_3$. 
By Lemma~\ref{lm4:R(Sn(1,2),W8)}, $G[U\setminus \{u_1\}]$ is $K_7$ or $K_7-e$, 
and $G[V(T)\cup \{u_1\}]$ is not adjacent to $G[U\setminus \{u_1\}]$.
By Observation~\ref{obs2}, $\delta(G[V(T)\cup \{u_1\}])\geq 5$, 
and by Lemma~\ref{lm:R(TG(n),W8)}, $G[V(T)\cup \{u_1\}]$ contains $T_F(9)$ and hence $T_F(8)$, a contradiction. 
Suppose that both $v_3$ and $v_4$ are adjacent to $U$ in $G$
and assume that $v_3$ is adjacent to $u_1$ and that $v_4$ is adjacent to $u_2$.
By Lemma~\ref{lm4:R(Sn(1,2),W8)}, $G[U\setminus \{u_1,u_2\}]$ is $K_6$ or $K_6-e$.
At most one vertex from $G[V(T)\cup \{u_1,u_2\}]$ is adjacent to $G[U\setminus \{u_1,u_2\}]$ or else $G$ contains $T_F(8)$. 
Therefore, $9$ vertices from $G[V(T)\cup \{u_1,u_2\}]$ form a vertex set $W$ that is not adjacent to $U\setminus \{u_1,u_2\}$. 
By Observation~\ref{obs2}, $\delta(G[W])\geq 5$, 
and by Lemma~\ref{lm:R(TG(n),W8)}, $G[W]$ contains $T_F(9)$ and hence $T_F(8)$, a contradiction. 

Suppose then that $v_1$ is not adjacent to $w_2$ or $w_3$. 
Since $d_{G[U]}(v)\leq 1$ for $v=v_3,v_4,w_2,w_3$, 
there are $4$ vertices from $U$ that are not adjacent to $\{v_3,v_4,w_2,w_3\}$.
These $8$ vertices form $C_8$ in $\overline{G}$ and thus, with $v_1$ as hub, $W_8$, a contradiction. 

Thus, $R(T_F(8),W_8)\leq 16$. 

Now, suppose that $n\geq 9$ and that $G$ has order $2n-1$. 
By Theorem~\ref{thm:R(TC,W8)}, $G$ has a subgraph $T=T_C(n)$. 
Let $V(T)=\{v_0,\ldots,v_{n-4},v_4,w_1,w_2,w_3\}$ 
and $E(T)=\{v_0v_1,\ldots,v_0v_{n-4},v_1w_1,v_2w_2,v_2w_3\}$. 
Set $V=\{v_3,\ldots,v_{n-4}\}$ and $U=V(G)-V(T)=\{u_1,\ldots,u_{n-1}\}$; 
then $|V|=n-6$ and $|U|=n-1$. 
Since $T_F(n)\nsubseteq G$, $v_1$ is not adjacent in $G$ to any vertex of $U\cup V$, 
and $d_{G[U]}(v)\leq 1$ for $v\in V$. 
Since $n\geq 10$, there are $4$ vertices from $U$, $4$ vertices from $V$ and $v_1$ that form $W_8$ in $\overline{G}$, 
a contradiction. 
Thus, $R(T_F(n),W_8)\leq 2n-1$ for $n\geq 10$.

Suppose that $n=9$ and let $m$ be the number of vertices of $U$ that are adjacent in $G$ to at least one vertex of $V$. 
Since $d_{G[U]}(v)\leq 1$ for $v\in V$, $0\leq m\leq 3$. 
If $m=0$, then $G[U]$ is $K_8$ or $K_8-e$ by Lemma~\ref{lm4:R(Sn(1,2),W8)}, 
so $G[V(T)]$ is not adjacent to $G[U]$. 
By Observation~\ref{obs2}, $\delta(G[V(T)])\geq 5$, 
and $G[V(T)]$ contains $T_F(9)$ by Lemma~\ref{lm:R(TG(n),W8)}, a contradiction. 
Suppose that $m=1$. 
Assume without loss of generality that 
$u_1$ is adjacent to some vertex of $V$, and that $E_G(V,U\setminus \{u_1\}) = \emptyset$.
By Lemma~\ref{lm4:R(Sn(1,2),W8)}, $G[U\setminus \{u_1\}]$ is $K_7$ or $K_7-e$, 
and at most one vertex from $G[V(T)\cup \{u_1\}]$ is adjacent to $G[U\setminus \{u_1\}]$ or else $G$ contains $T_F(9)$. 
There are then $9$ vertices from $G[V(T)\cup \{u_1\}]$ 
that form a vertex set $W_1$ that is not adjacent to $U\setminus \{u_1\}$. 
By Observation~\ref{obs2}, $\delta(G[W_1])\geq 5$, and $G[W_1]$ contains $T_F(9)$ by Lemma~\ref{lm:R(TG(n),W8)}, 
a contradiction. 
Suppose that $m=2$. 
Assume that $u_1$ and $u_2$ are adjacent to some vertices of $V$ and that $E_G(V,U\setminus \{u_1,u_2\})=\emptyset$. 
By Lemma~\ref{lm4:R(Sn(1,2),W8)}, $G[U\setminus \{u_1,u_2\}]$ is $K_6$ or $K_6-e$. 
If at least three vertices in $U\setminus \{u_1,u_2\}$ are adjacent to $V(T)\cup \{u_1\}$, 
then $T_F(9)\subseteq G$. 
If at most two vertices in $U\setminus \{u_1,u_2\}$ are adjacent to $V(T)\cup \{u_1\}$, 
then there are $4$ vertices in $U\setminus \{u_1,u_2\}$ that are not adjacent to $V(T)$. 
Then Observation~\ref{obs2} gives $\delta(G[V(T)])\geq 5$, 
and $G[V(T)]$ contains $T_F(9)$ by Lemma~\ref{lm:R(TG(n),W8)}, a contradiction. 
Suppose that $m=3$. 
Assume that $u_1,u_2,u_3$ are each adjacent to some vertex of $V$ and that $E_G(V,U\setminus \{u_1,u_2,u_3\})=\emptyset$.
Without loss of generality, assume that $u_i$ is adjacent to $v_{i+2}$ for $i=1,2,3$. 
By Lemma~\ref{lm4:R(Sn(1,2),W8)}, $G[U\setminus \{u_1,u_2,u_3\}]$ is $K_5$ or $K_5-e$. 
Since $T_F(9)\nsubseteq G$, 
$\{v_1,v_3,v_4,v_5\}$ is independent and $V(T)\setminus \{w_1\}$ is not adjacent to $U\setminus \{u_1,u_2,u_3\}$. 
Then by Observation~\ref{obs2}, $\delta(G[V(T)\setminus \{w_1\}])\geq 4$, 
and $v_1$, $v_3$, $v_4$ and $v_5$ are each adjacent to $v_2$, $w_2$ and $w_3$ in~$G$. 
This gives $T_F(9)$ in~$G$. 
Therefore, $T_F(9)\leq 17=2n-1$. 
\end{proof}

\begin{theorem}
\label{thm:R(TH(n),W8)}
If $n\geq 8$, then $R(T_G(n),W_8)=2n-1$. 
\end{theorem}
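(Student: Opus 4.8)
The plan is to follow the template already used above for $T_E(n)$ and $T_F(n)$. The lower bound $R(T_G(n),W_8)\ge 2n-1$ is immediate from Lemma~\ref{lem:lower-bound-on-S_n(1,4)-T_D(n)-...-T_L(n)}, since $T_G(n)$ is not one of the graphs listed there for which the stronger bound $2n$ is claimed. For the upper bound, I let $G$ be a graph of order $2n-1$, assume $T_G(n)\nsubseteq G$ and $W_8\nsubseteq\overline G$, and aim for a contradiction. It is convenient first to record the shape of $T_G(n)$: it is the spider obtained from a centre $v_0$ by attaching $n-8$ pendant leaves together with three legs, of lengths $2$, $2$ and $3$. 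Since $R(S_n(3,1),W_8)=2n-1$ by Theorem~\ref{thm:R(Sn(3,1),W8)}, $G$ contains a copy $T=S_n(3,1)$; I write $V(T)=\{v_0,\ldots,v_{n-4},w_1,w_2,w_3\}$ with $E(T)=\{v_0v_1,\ldots,v_0v_{n-4},v_1w_1,v_2w_2,v_3w_3\}$, and I set $V=\{v_4,\ldots,v_{n-4}\}$ (so $|V|=n-7$) and $U=V(G)\setminus V(T)$ (so $|U|=n-1$).

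The next step is to read off the forbidden adjacencies. If some $w_i$ is $G$-adjacent to a vertex $z$ of $V\cup U$, then $v_0v_iw_iz$ is a leg of length $3$, and keeping the legs $v_jw_j$ for $j\in\{1,2,3\}\setminus\{i\}$ together with $n-8$ of the vertices of $V$ as pendant leaves embeds $T_G(n)$ in $G$; hence each $w_i$ is $G$-adjacent only to vertices of $\{v_0,v_1,v_2,v_3\}$. Similarly, if $v\in V$ is $G$-adjacent to some $u\in U$ that itself has a $G$-neighbour $u'$ in $U$, then $v_0vuu'$ is a length-$3$ leg, and together with the legs $v_1w_1$ and $v_2w_2$ and $n-8$ pendant leaves taken from $(V\setminus\{v\})\cup\{v_3\}$ it again embeds $T_G(n)$; so every $U$-neighbour of a vertex of $V$ is isolated in $G[U]$. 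Writing $I$ for the set of vertices of $U$ isolated in $G[U]$, it follows that all $U$-neighbours of vertices of $V$ lie in $I$ and that $\overline G[I]$ is a complete graph; if $|I|\ge 8$ then $\overline G[I]$ contains $C_8$, which with $w_1$ as hub gives $W_8\subseteq\overline G$, so $|I|\le 7$.

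Now I would run the clique argument of the $T_E(n)$/$T_F(n)$ proofs. Fixing $v_4\in V$ and letting $W$ be the set of vertices of $U$ having no $G$-neighbour equal to $v_4$, the bound $|I|\le 7$ gives $|W|\ge n-8$, which is at least $5$ once $n\ge 13$. Because $w_1,w_2,w_3$ are not $G$-adjacent to $U$ and $v_4$ is $G$-adjacent to neither $w_1,w_2,w_3$ nor (by choice) $W$, Lemma~\ref{lm4:R(Sn(1,2),W8)} applied to $H_1=\{w_1,w_2,w_3,v_4\}$, whose complement contains the star $S_4$ centred at $v_4$, and $H_2=G[W]$ shows that $G[W]$ is $K_{|W|}$ or $K_{|W|}-e$. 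A single $G$-edge from a vertex of $T$ to $W$, possibly combined with further vertices of $U$ (whose status as isolated or clique vertices of $G[U]$ is now known), then embeds $T_G(n)$ into this near-clique with a short path attached; so no vertex of $T$ is $G$-adjacent to $W$. By Observation~\ref{obs2} this forces $\overline G[V(T)]$ to contain no $S_5$, i.e. $\delta(G[V(T)])\ge n-4$.

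Finally, I would prove (as a separate lemma, in the style of Lemmas~\ref{lm:R(TF(n),W8)} and~\ref{lm:R(TG(n),W8)}) that every graph $H$ of order $n\ge 8$ with $\delta(H)\ge n-4$ contains $T_G(n)$; unlike for $T_E$ and $T_F$, I expect \emph{no} exceptional graph here, since $T_G(8)$ embeds even into $K_{4,4}$. Applying this lemma to $G[V(T)]$ produces $T_G(n)\subseteq G$, the final contradiction. The two places I expect to do real work are: (i) the structural lemma for the tight cases $n=8,9$, where the degree slack is minimal and an explicit analysis of the near-extremal graphs (those whose complement has maximum degree $3$) is needed; and (ii) the range $8\le n\le 12$, where $W$ or $I$ may be too small on its own to contain $C_8$, so one must instead combine $I$ (or $W$) with $w_1,w_2,w_3$, and possibly $v_0$ or the vertices of $U\setminus W$, to produce the required $C_8$ together with a hub that sees all of it in $\overline G$.
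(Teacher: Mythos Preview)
Your plan follows the $T_E/T_F$ template and could plausibly be completed, but it is substantially more work than needed and leaves two genuine pieces unfinished (the auxiliary lemma ``$\delta(H)\ge n-4\Rightarrow T_G(n)\subseteq H$'' and the range $8\le n\le 12$). The paper's argument is much shorter and avoids both.

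The key observation you miss is an additional forbidden adjacency: since $T_G(n)\nsubseteq G$, not only are $w_1,w_2,w_3$ non-adjacent to $U\cup V$, but also $v_1,v_2,v_3$ are non-adjacent to $V$. (If, say, $v_1\sim v_4$, then $v_0v_4v_1w_1$ is a length-$3$ leg from $v_0$; together with the legs $v_0v_2w_2$, $v_0v_3w_3$ and pendants $v_5,\ldots,v_{n-4}$ this embeds $T_G(n)$.) With this in hand the paper proceeds directly: apply Lemma~\ref{lem:pancyclic} to $\overline G[U]$ (for $n\ge 9$; for $n=8$ use $\overline G[\{v_4\}\cup U]$). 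Either $\overline G[U]$ already contains $C_8$ and $w_1$ serves as hub, or some $u\in U$ has $|N_{G[U]}(u)|\ge 4$. In the latter case no vertex of $N_{G[U]}(u)$ is $G$-adjacent to any $v_i$ with $1\le i\le n-4$, since such an adjacency would again give a length-$3$ leg and hence $T_G(n)$. Then four vertices of $N_{G[U]}(u)$ together with $v_1,v_2,v_3,w_1$ form a bipartite $C_8$ in $\overline G$, and any vertex of $V$ is a valid hub (it is $\overline G$-adjacent to $w_1$, to $v_1,v_2,v_3$, and to all of $N_{G[U]}(u)$). This handles all $n\ge 9$ uniformly; $n=8$ needs only the minor adjustment of enlarging $U$ by $v_4$.

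So your near-clique/minimum-degree machinery and the proposed structural lemma are unnecessary here: the extra non-adjacency $\{v_1,v_2,v_3\}\not\sim V$ supplies four vertices on the $T$ side that pair with four vertices of $N_{G[U]}(u)$ to build $C_8$ directly, bypassing both the lemma and the small-$n$ casework you anticipated.
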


\begin{proof}
Lemma~\ref{lem:lower-bound-on-S_n(1,4)-T_D(n)-...-T_L(n)}
provides the lower bound, so it remains to prove the upper bound.
Let $G$ be any graph of order $2n-1$.
Assume that $G$ does not contain $T_G(n)$ and that $\overline{G}$ does not contain $W_8$. 
By Theorem~\ref{thm:R(Sn(3,1),W8)}, $G$ has a subgraph $T=S_n(3,1)$. 
Let $V(T)=\{v_0,\ldots,v_{n-4},w_1,w_2,w_3\}$ 
and $E(T)=\{v_0v_1,\ldots,v_0v_{n-4},v_1w_1,v_2w_2,v_3w_3\}$. 
Set $V=\{v_4,v_5,\ldots,v_{n-4}\}$ and $U=V(G)-V(T)$; 
then $|V|=n-7$ and $|U|=n-1$. 
Since $T_G(n)\nsubseteq G$, 
$w_1,w_2,w_3$ are not adjacent to $U\cup V$ in $G$, 
and $v_1,v_2,v_3$ are not adjacent to $V$. 

Suppose that $n\geq 9$; then $|U|\geq 8$. 
If $\delta(\overline{G}[U])\geq \frac{n-1}{2}$,
then $\overline{G}[U]$ contains $C_8$ by Lemma~\ref{lem:pancyclic} which, with $w_2$ as hub, forms $W_8$, a contradiction. 
Therefore, $\delta(\overline{G}[U])<\frac{n-1}{2}$, 
and $\Delta(G[U\cup V])\geq \frac{n-1}{2}\geq 4$. 
Therefore, some vertex $u\in U$ satisfies $|N_{G[U]}(u)|\geq 4$. 
Since $T_G(n)\nsubseteq G$, $N_{G[U]}(u)$ is not adjacent in $G$ to $N_{G[V(T)]}(v_0)$. 
Hence, $4$ vertices from $N_{G[U]}(u)$, $v_1,v_2,v_3,w_1$ and any vertex from $V$ form $W_8$ in $\overline{G}$, a contradiction. 
Thus, $R(T_G(n),W_8)\leq 2n-1$ for $n\geq 9$. 

Suppose that $n=8$ and let $U=\{u_1,\ldots,u_7\}$ and $W=\{v_4\}\cup U$. 
If $\delta(\overline{G}[W])\geq 4$, 
then $\overline{G}$ contains $C_8$ by Lemma~\ref{lem:pancyclic} and thus $W_8$, with $w_1$ as hub, a contradiction. 
Therefore, $\delta(\overline{G}[W])\leq 3$, and $\Delta(G[W])\geq 4$. 
Now, suppose that $d_{G[W]}(v_4)\geq 4$.
Then without loss of generality, assume that $u_1,\ldots,u_4\in N_G(v_4)$.
Then $u_1,\ldots,u_4,w_1,w_2,w_3$ are independent and are not adjacent to $u_5$, $u_6$ or $u_7$, 
giving $W_8$, a contradiction. 
On the other hand, suppose that some vertex in $U$, say $u_1$, satisfies $d_{G[W]}(u_1)\geq 4$. 
Then $v_4$ is not adjacent to $u_1$; therefore, assume that $u_2,\ldots,u_5\in N_G(u_1)$. 
Then $v_1,\ldots,v_4$ are not adjacent to $\{u_1,\ldots,u_5\}$, 
so $v_1u_1v_2u_2v_3u_3w_1u_4v_1$ and $v_4$ form $W_8$ in $\overline{G}$, a contradiction. 
Thus, $R(T_L(8),W_8)\leq 15$.  
\end{proof}

\begin{lemma}\label{lm:R(TJLM(n),W8)} 
Each graph $H$ of order $n\geq 8$ with minimal degree at least $n-4$ contains $T_H(n)$, $T_K(n)$ and $T_L(n)$. 
\end{lemma}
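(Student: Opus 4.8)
Write $V(H)=\{u_0,\dots,u_{n-1}\}$; since $\delta(H)\ge n-4$, every vertex of $H$ has at most three non-neighbours, so $H$ is very dense. Each of $T_H(n)$, $T_K(n)$, $T_L(n)$ has order $n$ and a unique vertex $h$ of degree $n-5$, which I call the hub; the rest of the tree is a short ``tail'' of four further vertices attached to one or two neighbours of $h$. Concretely: in $T_H(n)$ the hub has $n-7$ leaf neighbours together with two further neighbours $a,b$, where $a$ has a pendant $a'$ and $b$ starts a path $b$-$c$-$d$-$e$; in $T_K(n)$ the hub has $n-6$ leaf neighbours together with one further neighbour $a$, followed by a path $a$-$b$-$c$ with two pendants $d,e$ at $c$; in $T_L(n)$ the hub has $n-6$ leaf neighbours together with one further neighbour $a$ that has a pendant $b$ and starts a path $a$-$c$-$d$-$e$. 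In each case exactly four vertices are neither the hub nor forced to be a neighbour of the hub; call these the \emph{tail slots}. The plan is to embed each of the three trees with $u_0$ as the hub -- valid since $d_H(u_0)\ge n-4>n-5$ -- routing the (at most three) non-neighbours of $u_0$ into tail slots and using the remaining vertices of $N:=N_H(u_0)$ as the hub's leaves.

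Put $U:=V(H)\setminus(\{u_0\}\cup N)$, so $|U|\le 3$. Mirroring Lemmas~\ref{lm:R(Sn[4],W8)}, \ref{lm:R(TF(n),W8)} and~\ref{lm:R(TG(n),W8)}, I would first treat the case $\Delta(H)\ge n-3$ by choosing $u_0$ of maximum degree, so that $|U|\le 2$, and then the case where $H$ is $(n-4)$-regular, which by the handshake lemma forces $n$ even and gives $|U|=3$. Building the required tail amounts to finding, inside $H[N\cup U]$, a short path (a $P_4$ for $T_H(n)$; a $P_4$, respectively a $P_3$ hanging from a branch vertex, for $T_K(n)$ and $T_L(n)$) plus at most one more independent edge, with the designated branch endpoints lying in $N$. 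The degree hypothesis forces each $u_i\in U$ to have at least $n-6\ge 2$ neighbours in $N$, forces $H[U]$ to have at most three edges, and makes the non-neighbour relation on $N$ have maximum degree at most three; this provides ample slack, and when $|U|\le 2$ the tail can be built almost entirely within $N$, which settles the case $\Delta(H)\ge n-3$ quickly.

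In the regular case, write $U=\{u_{n-3},u_{n-2},u_{n-1}\}$ and case-split on the number of edges of $H[U]$ (anything from $0$ to $3$). In each sub-case one uses the degree condition to choose common neighbours in $N$ of the appropriate pairs among $u_0,u_{n-3},u_{n-2},u_{n-1}$, so that $U$ becomes a connected -- or nearly connected -- segment of the tail; the remaining $n-5$ leaves and the few internal tail edges are then supplied by $N$. For $n\ge 12$ this goes through automatically; the small even values $n=8$ and $n=10$ require an explicit check, the tightest instance being $n=8$ with $H=K_{4,4}$. Here one notes that each of $T_H(8)$, $T_K(8)$, $T_L(8)$ is bipartite with both colour classes of size $4$, hence embeds into $K_{4,4}$; this is why -- unlike Lemmas~\ref{lm:R(TF(n),W8)} and~\ref{lm:R(TG(n),W8)} -- the present lemma needs no exceptional case.

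\textbf{Main obstacle.} The delicate part is the tight end of the regular case for small even $n$: after deleting the star $K_{1,n-5}$ at the hub only a little room remains, so the four tail vertices have to be threaded through $U$ and the handful of leftover vertices of $N$, and one must verify that none of the non-adjacencies forced by $(n-4)$-regularity -- equivalently, the structure of the cubic-or-sparser complement -- obstructs the path pieces needed for any of the three trees. The bookkeeping involved in keeping the hub's non-neighbours inside tail slots while preserving every internal tail edge is where essentially all the work lies; the rest is forced by the density of $H$.
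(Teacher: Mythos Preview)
Your plan is sound and would succeed; it is, at heart, the same case analysis the paper carries out. The one structural difference is that the paper does \emph{not} first separate $\Delta(H)\ge n-3$ from the $(n-4)$-regular case. Instead it fixes any $u_0$ with $\{u_1,\dots,u_{n-4}\}\subseteq N_H(u_0)$, sets $U=\{u_{n-3},u_{n-2},u_{n-1}\}$ (not assumed disjoint from $N_H(u_0)$), and splits directly on whether some vertex of $U$ is adjacent in $H$ to both of the others. This collapses your two outer cases into one, at the cost of slightly longer sub-arguments; your organisation, modelled on Lemmas~\ref{lm:R(Sn[4],W8)}, \ref{lm:R(TF(n),W8)} and~\ref{lm:R(TG(n),W8)}, trades that for an extra layer of case distinction. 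Either way the real work is exactly where you locate it: threading the tail through $U$ for small even $n$.

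Your bipartition remark for $n=8$, $H=K_{4,4}$ is a genuinely cleaner touch than the paper's treatment. The paper never isolates $K_{4,4}$ explicitly here; it simply grinds through the $n=8$ subcases and finds the trees each time. Observing once that $T_H(8)$, $T_K(8)$, $T_L(8)$ all have a $4{+}4$ bipartition disposes of that extremal instance in one line and explains \emph{why} this lemma, unlike Lemmas~\ref{lm:R(TF(n),W8)} and~\ref{lm:R(TG(n),W8)}, carries no $K_{4,4}$ exception.
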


\begin{proof}
Let $V(H)=\{u_0,\ldots,u_{n-1}\}$ where $u_1,\ldots,u_{n-4}\in N_H(u_0)$. 
Suppose that $u_{n-3}$, $u_{n-2}$ or $u_{n-1}$, 
say $u_{n-3}$, is adjacent in $H$ to the two others. 

Since $\delta(H)\geq n-4$, 
$u_{n-3}$ is adjacent to at least one of  $u_1,\ldots,u_{n-4}$, say $u_1$. 
If $u_1$ is adjacent to another vertex in $\{u_2,\ldots,u_{n-4}\}$, 
then $H$ contains $T_K(n)$.
Note that this always happens for $n\geq 9$. 
Suppose that $n=8$ and that $u_1$ is not adjacent to any of $u_2,u_3,u_4$.
Then $u_1$ is adjacent to $u_6$ and $u_7$.
Since $\delta(H)\geq n-4$, 
$u_2$ is adjacent to at least one of $u_5, u_6, u_7$, 
giving $T_K(n)$ in $H$.

Similarly, since $\delta(H)\geq n-4$, 
$u_{n-2}$ is adjacent to at least $n-7$ vertices of $\{u_1,\ldots,u_{n-4}\}$. 
Suppose that $u_{n-2}$ is adjacent to $u_1$. 
If $n\geq 10$, 
then at least two of $u_2,\ldots,u_{n-4}$ are adjacent, 
so $H$ contains $T_H(n)$. 
If $n\geq 9$, 
then $u_1$ is adjacent to at least one of $u_2,\ldots,u_{n-4}$, 
so $H$ contains $T_L(n)$.
Now suppose that $n=9$. 
If any of $u_2,\ldots,u_5$ are adjacent to each other, 
then $H$ contains $T_H(9)$. 
Otherwise, $u_2,\ldots,u_5$ are each adjacent to $u_6$, $u_7$ and $u_8$, 
and so $H$ contains $T_H(9)$. 
Finally, suppose that $n=8$.
If any two of $u_2,u_3,u_4$ are adjacent, 
then $H$ contains $T_H(8)$; 
otherwise, they are each adjacent to $u_6$ or $u_7$. 
Now, if $u_1$ is adjacent to any of $u_2,u_3,u_4$, 
then $H$ contains $T_H(8)$. 
Otherwise, $u_1,\ldots,u_4$ are each adjacent to $u_5$, $u_6$ and $u_7$, 
and $H$ also contains $T_H(8)$. 
Furthermore, if $u_1$ is adjacent to $u_2$, $u_3$ or $u_4$, 
then $H$ contains $T_L(8)$.
If $u_1$ is not adjacent to $u_2$, $u_3$ or $u_4$, 
then $u_6,u_7,u_8$ are adjacent to $u_2,u_3,u_4$, 
and 
then $H$ contains $T_L(8)$. 
Now if $u_{n-2}$ is adjacent to some $u_2,\ldots,u_{n-4}$, say $u_2$, 
then similar arguments apply by interchanging $u_1$ and $u_2$.

Suppose now that none of $u_{n-3},u_{n-2},u_{n-1}$ is adjacent to both of the others.
Then one of these, say $u_{n-3}$, is adjacent to neither of the others. 
Since $\delta(H)\geq n-4$, 
$u_{n-3}$ is adjacent to at least $n-5$ of the vertices $u_1,\ldots,u_{n-4}$. 
Without loss of generality, assume that $u_1,\ldots,u_{n-5}\in N_H(u_{n-3})$. 
Then $u_{n-2}$ is adjacent to at least $n-7$ of the vertices $u_1,\ldots,u_{n-5}$ 
including, without loss of generality, the vertex $u_1$. 
Also, $u_{n-1}$ is adjacent to at least one of $u_2,\ldots,u_{n-4}$, 
so $H$ contains $T_H(n)$.
If $u_{n-2}$ is adjacent to $u_{n-1}$, then $H$ also contains $T_L(n)$.
If $u_{n-2}$ is not adjacent to $u_{n-1}$, 
then $u_{n-2}$ is adjacent to at least $n-6$ vertices of $u_1,\ldots,u_{n-5}$, 
so $H$ contains $T_L(n)$.
Now, suppose that $n\geq 9$.
Then $u_{n-2}$ and $u_{n-1}$ are each adjacent to at least $3$ of $u_1,\ldots,u_5$, 
and one of those vertices must be adjacent to both $u_{n-2}$ and $u_{n-1}$;
thus, $H$ contains $T_K(n)$.
Finally, suppose that $n = 8$.
If $u_6$ and $u_7$ are each adjacent to at least two of the vertices $u_1,u_2,u_3$, 
then one of those vertices must be adjacent to both $u_6$ and $u_7$;
thus, $H$ contains $T_K(8)$.
Otherwise, $u_6$ or $u_7$, say $u_6$, is non-adjacent to at least two of $u_1,u_2,u_3$, say $u_1$ and $u_2$.
Then $u_6$ is adjacent to $u_0$, $u_3$, $u_4$ and $u_7$,
and so $H$ contains $T_K(8)$.
\end{proof}

\begin{theorem}
\label{thm:R(TJ(n),W8)}
If $n\geq 8$, then $R(T_H(n),W_8)=2n-1$. 
\end{theorem}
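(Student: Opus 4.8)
The lower bound $R(T_H(n),W_8)\ge 2n-1$ is supplied by Lemma~\ref{lem:lower-bound-on-S_n(1,4)-T_D(n)-...-T_L(n)}, so the task is to prove $R(T_H(n),W_8)\le 2n-1$. Suppose for contradiction that $G$ is a graph of order $2n-1$ with $T_H(n)\not\subseteq G$ and $W_8\not\subseteq\overline G$. I would first dispose of the ``dense'' situation: if some $X\subseteq V(G)$ with $|X|=n$ has $\delta(G[X])\ge n-4$, then Lemma~\ref{lm:R(TJLM(n),W8)} gives $T_H(n)\subseteq G[X]\subseteq G$, a contradiction; hence I may assume that every $n$-element subset of $V(G)$ induces a subgraph of minimum degree at most $n-5$.

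Since $2n-1=R(S_n(3,1),W_8)$ by Theorem~\ref{thm:R(Sn(3,1),W8)} and $\overline G$ has no $W_8$, $G$ contains a copy $T=S_n(3,1)$; write $V(T)=\{v_0,\dots,v_{n-4},w_1,w_2,w_3\}$ with centre $v_0$ and edges $v_iw_i$ for $i=1,2,3$, and set $W=\{v_1,\dots,v_{n-4},w_1,w_2,w_3\}$ and $U=V(G)-V(T)$, so $|W|=|U|=n-1$. The structural point is that $T_H(n)$ is a spider about a centre of degree $n-5$ with one leg of length $4$, one of length $2$ and $n-7$ of length $1$; since $v_0$ has degree $n-4$ one of its neighbours may be left unused, so a suitable path of length $4$ out of $v_0$ completes to a copy of $T_H(n)$. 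Following the $T_G(n)$ argument of Theorem~\ref{thm:R(TH(n),W8)}: as $\overline G[U]$ has no $C_8$, Lemma~\ref{lem:pancyclic} yields $u\in U$ with $d_{G[U]}(u)\ge\tfrac{n-1}{2}\ge 4$; picking four $G$-neighbours $a_1,\dots,a_4$ of $u$ in $U$, one checks that if some $a_i$ were $G$-adjacent to a vertex $v_j\in\{v_1,\dots,v_{n-4}\}$ then the leg $v_0v_ja_iua_{i'}$ (with $i'\ne i$) would give $T_H(n)\subseteq G$, and if some $a_i$ were $G$-adjacent to $w_k$ then $v_0v_kw_ka_iu$ would; hence $\{a_1,\dots,a_4\}$ is $\overline G$-completely joined to $W$.

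It then remains to assemble a $W_8$ in $\overline G$ out of $\{a_1,\dots,a_4\}$ and $W$. Here I would invoke the minimum-degree hypothesis on the $n$-set $V(T)=\{v_0\}\cup W$: since $d_{G[V(T)]}(v_0)\ge n-4$, some $b\in W$ has $d_{G[W]}(b)\le n-5$, hence at least three $\overline G$-neighbours in $W$; pushing this a little further (improving the degree count, or taking $b$ of minimum $G[W]$-degree and handling the borderline case $d_{G[W]}(b)=n-5$ by hand via Lemma~\ref{lem:1-1relation} or Observation~\ref{obs2}) to obtain four $\overline G$-neighbours $b_1,\dots,b_4$ of $b$ in $W$, the alternating cycle $a_1b_1a_2b_2a_3b_3a_4b_4a_1$ lies in $\overline G$ and $b$ is a valid hub, so $W_8\subseteq\overline G$, a contradiction. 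The small values $n=8$ and $n=9$ I would treat separately and directly, paralleling the $n=8$ part of Theorem~\ref{thm:R(TH(n),W8)}: work inside an $8$-vertex set such as $\{v_4\}\cup U$, force a degree-$4$ vertex in $G$ there by Lemma~\ref{lem:pancyclic}, split according to whether that vertex lies in $U$, and use Lemma~\ref{lm4:R(Sn(1,2),W8)} to locate a $K_7$ or $K_7-e$, then Observation~\ref{obs2} or an explicit wheel.

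The main obstacle will be the structural bookkeeping: because $T_H(n)$'s longest leg has length $4$ rather than $3$ (as for $T_G(n)$), embeddings of $T_H(n)$ have extra slack, so $T_H(n)\not\subseteq G$ forces weaker non-adjacencies than in the $T_G(n)$ argument, and one must keep precise track of which vertices of the embedded $S_n(3,1)$ remain free to absorb an extra vertex (in particular, legs that re-enter $\{v_1,\dots,v_{n-4}\}$ drain the leaf pool and must be avoided). Completing the wheel in $\overline G$ after the non-adjacency step, and the explicit analysis for $n=8$ and $n=9$, should account for most of the detailed verification.
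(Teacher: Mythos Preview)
Your outline diverges from the paper's proof at the choice of starting subgraph, and that choice is where the trouble lies. The paper does not start from $S_n(3,1)$; it uses the immediately preceding result $R(T_G(n),W_8)=2n-1$ (Theorem~\ref{thm:R(TH(n),W8)}) to embed $T=T_G(n)$ in $G$, with legs $v_0v_1w_1$, $v_0v_2w_2$, $v_0v_3w_3w_4$. The point of this is that $T_G(n)$ already has a leg of length~$3$: any $G$-edge from the tip $w_4$ to $U$ (or to $w_1,w_2$) would extend that leg to length~$4$ and produce $T_H(n)$. Hence $w_4$ is $\overline G$-complete to $U\cup\{w_1,w_2\}$ and serves as a ready-made hub; the paper then only has to manufacture a $C_8$ inside $\{w_1\}\cup U$, which the pancyclic/high-degree argument delivers after a short case split on whether $v_0$ meets $N_{G[U]}(u_1)$.

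Starting from $S_n(3,1)$, you have no such vertex. Your step establishing $a_i\not\sim_G W$ is correct, but the endgame has a real gap. From $\delta(G[V(T)])\le n-5$ you get some $b\in W$ with $d_{G[W]}(b)\le n-5$, i.e.\ only \emph{three} $\overline G$-neighbours in $W$; you need four. The remedies you suggest do not close this: Observation~\ref{obs2} would require $E_G(\{a_1,\dots,a_4\},V(T))=\emptyset$, but nothing in your argument forbids $v_0\sim_G a_i$ (the leg $v_0a_iua_{i'}$ has length only $3$, not $4$), and once $v_0\sim_G a_1$ you cannot use $v_0$ as the missing fourth rim vertex either. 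Lemma~\ref{lem:1-1relation} likewise needs four vertices on each side with the right degree bounds, which you do not have inside $W$. The case $v_0\sim_G a_1$ does force strong constraints (e.g.\ $a_2,a_3,a_4$ become almost isolated), and the paper exploits the analogue of this to write down an explicit wheel, but doing so from $S_n(3,1)$ requires a separate case analysis you have not sketched. The cleanest fix is exactly the paper's: bootstrap from $T_G(n)$ rather than $S_n(3,1)$, so that a hub with full $\overline G$-adjacency to $U$ is handed to you for free.
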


\begin{proof}
Lemma~\ref{lem:lower-bound-on-S_n(1,4)-T_D(n)-...-T_L(n)}
provides the lower bound, so it remains to prove the upper bound.
Let $G$ be any graph of order $2n-1$
and assume that $G$ does not contain $T_H(n)$ and that $\overline{G}$ does not contain $W_8$. 
By Theorem~\ref{thm:R(TH(n),W8)}, $G$ has a subgraph $T=T_G(n)$. 
Let $V(T)=\{v_0,\ldots,v_{n-5},w_1,\ldots,w_4\}$ 
and $E(T)=\{v_0v_1,\ldots,v_0v_{n-5},v_1w_1,v_2w_2,v_3w_3,w_3w_4\}$. 
Set $U=\{u_1,\ldots,u_{n-1}\}=V(G)-V(T)$; 
then $|U|=n-1$. 
Since $T_G(n)\nsubseteq G$, $E_G(\{w_1,w_2\},\{w_3,w_4\})=\emptyset$ and $w_4$ is not adjacent to $U$. 
Now, let $W=\{w_1\}\cup U$; then $|W|=n$.  
If $\delta(\overline{G}[W])\geq \frac{n}{2}$, 
then $\overline{G}[W]$ contains $C_8$ by Lemma~\ref{lem:pancyclic} 
which, with $w_4$ as hub, forms $W_8$, a contradiction. 
Therefore, $\delta(\overline{G}[W])<\frac{n}{2}$, 
and $\Delta(G[W])\geq \lfloor \frac{n}{2} \rfloor \geq 4$. 

First, suppose that $w_1$ is a vertex with degree at least $\frac{n}{2}$ in $G[W]$. 
Assume without loss of generality that $u_1,\ldots,u_4\in N_{G[W]}(w_1)$. 
Since $T_H(n)\nsubseteq G$, $u_1,\ldots,u_4$ are independent and are not adjacent to $\{w_2,u_5,\ldots,u_{n-1}\}$ in~$G$. 
Then $w_2,u_1,\ldots,u_4,w_4$ and any $3$ vertices from $\{u_5,\ldots,u_{n-1}\}$ form $W_8$ in $\overline{G}$, a contradiction. 
Hence, $d_{G[W]}(u')\geq \frac{n}{2}$ for some vertex $u'\in U$, say $u'=u_1$.
Note that $w_1$ is not adjacent to $u_1$, or else $G$ contains $T_H(n)$. 
Without loss of generality, suppose that $u_2,\ldots,u_5\in N_{G[W]}(u_1)$. 
Since $T_H(n)\nsubseteq G$, 
$u_2,\ldots,u_5$ are not adjacent to $V(T)\setminus \{v_0\}$ in~$G$. 
Now, if $v_0$ is not adjacent to $\{u_2,\ldots,u_5\}$ in $G$, 
then by Observation~\ref{obs2}, $\delta(G[V(T)])\geq n-4$, or else $\overline{G}$ contains $W_8$. 
By Lemma~\ref{lm:R(TJLM(n),W8)}, $G[V(T)]$ contains $T_H(n)$, a contradiction. 
On the other hand, suppose that $v_0$ is adjacent to at least one of $u_2,\ldots,u_5$, say $u_2$.
Then $u_3,u_4,u_5$ are independent in $G$ and are not adjacent to $u_6$ and $u_7$ in~$G$. 
Furthermore, $w_4$ is not adjacent to $v_1$ or $v_2$. 
Then $v_1u_3v_2u_4u_6w_1u_7u_5v_1$ and $w_4$ form $W_8$ in $\overline{G}$, a contradiction. 
Thus, $R(T_H(n),W_8)\leq 2n-1$. 
\end{proof}

\begin{theorem}
\label{thm:R(TK(n),W8)}
If $n\geq 8$, then $R(T_J(n),W_8)=2n-1$. 
\end{theorem}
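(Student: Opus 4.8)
The lower bound $R(T_J(n),W_8)\ge 2n-1$ is immediate from Lemma~\ref{lem:lower-bound-on-S_n(1,4)-T_D(n)-...-T_L(n)}, via $G=2K_{n-1}$. For the upper bound, suppose $G$ has order $2n-1$, contains no $T_J(n)$, and has $W_8\nsubseteq\overline{G}$; the goal is a contradiction. As a first reduction, if some $X\subseteq V(G)$ with $|X|=n$ satisfies $\delta(G[X])\ge n-4$, then $G[X]$ contains $T_J(n)$: this is the $T_J$-analogue of Lemmas~\ref{lm:R(TF(n),W8)}, \ref{lm:R(TG(n),W8)} and~\ref{lm:R(TJLM(n),W8)}, and here there is no exceptional graph since $K_{4,4}$ does contain $T_J(8)$. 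So I may assume every $n$-subset of $V(G)$ induces a subgraph of minimum degree at most $n-5$. Since $R(T_G(n),W_8)=2n-1$ by Theorem~\ref{thm:R(TH(n),W8)}, $G$ contains a copy $T$ of $T_G(n)$; label it with centre $v_0$, attached paths $v_0v_1w_1$, $v_0v_2w_2$, $v_0v_3w_3w_4$, and remaining leaves $v_4,\dots,v_{n-5}$, and set $U=V(G)\setminus V(T)$, so $|U|=n-1$.

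The key structural step is to turn $T_J(n)\nsubseteq G$ into non-edges incident to the ``far'' vertices of $T$. Since $v_3w_3w_4$ is the only length-two path hanging from a neighbour of $v_0$ in $T$, and since the centre of $T_J(n)$ has the same degree $n-5$ as $v_0$ (so that the leaves of any $T_J(n)$-copy centred at $v_0$ must soak up almost all of $v_0$'s $T$-neighbours, forcing the pendant at the spider apex to come from outside), I expect $T_J(n)\nsubseteq G$ to force $v_3$ to have no $G$-neighbour in $\{w_1,w_2\}\cup U$; in particular $v_3$ is non-adjacent in $G$ to every vertex of $U$. Then $v_3$ may serve as the hub of a wheel in $\overline{G}$, so $\overline{G}[U]$ cannot contain $C_8$. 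By Lemma~\ref{lem:pancyclic} (the complete-bipartite alternative being harmless for $n\ge 9$, as $K_{(n-1)/2,(n-1)/2}$ then already contains $C_8$) there is a vertex $u\in U$ with $d_{G[U]}(u)\ge 4$; the small case $n=8$ will be handled directly, in the style of the earlier proofs.

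Given such a $u$ with four $G[U]$-neighbours, the non-edges that $T_J(n)\nsubseteq G$ supplies should split the analysis into two outcomes. In one, these four neighbours turn out to be independent and to avoid $N_{G[V(T)]}(v_0)$, whence a short explicit eight-cycle in $\overline{G}$ through them and a suitable quadruple from $V(T)\cup U$, hubbed at a restricted vertex, produces $W_8\subseteq\overline{G}$. In the other, the restrictions collapse $G$ restricted to $V(T)$ — or to $V(T)$ together with one or two vertices of $U$ — to a graph of order $n$ with minimum degree at least $n-4$, which then contains $T_J(n)$ by the degree lemma quoted above. Either way a contradiction ensues, so $R(T_J(n),W_8)\le 2n-1$, which combined with the lower bound gives the claimed equality.

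The hard part will be the middle portion: extracting the precise list of non-edges guaranteed by $T_J(n)\nsubseteq G$. The spider-leg shape of $T_J(n)$ is less tractable than the caterpillar-like trees $T_G$ and $T_H$, and because the centre of $T_J(n)$ has the same degree as $v_0$, the leaf count is tight and may produce an off-by-one that forces a secondary choice of the attached length-two path or of the spider apex (for instance using $v_1w_1$ rather than $v_2w_2$, or a leaf of $v_0$ in place of $v_3$). Handling the subcase where a $U$-vertex has large $G$-degree — deciding cleanly between an explicit $C_8$ in $\overline{G}$ and the reduction to minimum degree $n-4$ — together with verifying the $T_J(n)$ degree lemma, are the remaining pieces to be filled in.
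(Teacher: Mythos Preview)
Your approach diverges from the paper's in the choice of starting subgraph: you build on $T_G(n)$ (via Theorem~\ref{thm:R(TH(n),W8)}), while the paper builds on $T_C(n)$ (via Theorem~\ref{thm:R(TC,W8)}). This choice matters. In $T_C(n)$ the vertex $v_1$ already carries two pendants $w_1,w_2$, and since $T_J(n)$ differs from $T_C(n)$ only by subdividing one of those pendants, the hypothesis $T_J(n)\nsubseteq G$ immediately forces both $w_1$ and $w_2$ to be non-adjacent to all of $U\cup V$. With two hub candidates and a ready-made degree-$3$ vertex at $v_1$, the paper runs a direct case analysis on a high-degree vertex of $G[\{v_3\}\cup U]$, assembling explicit eight-cycles in $\overline{G}$ (Lemma~\ref{lem:1-1relation} for $n\ge 10$) and, for $n\in\{8,9\}$, disposing of the residual $\delta(G[V(T)])\ge n-4$ situations by hand using structure already in place rather than via a general lemma. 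Your $T_G(n)$ scaffold yields only the single hub $v_3$, and you compensate by positing a degree lemma for $T_J(n)$; that lemma is not in the paper and would require its own proof in the style of Lemmas~\ref{lm:R(TF(n),W8)}--\ref{lm:R(TJLM(n),W8)}.

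More seriously, your ``two outcomes'' sketch is too coarse to carry weight. When $u\in U$ has four $G[U]$-neighbours $u_2,\dots,u_5$, the constraint $T_J(n)\nsubseteq G$ does not cleanly force these to be independent or to avoid $N_{G[V(T)]}(v_0)$: an edge from some $u_j$ to a leaf $v_i$ of $v_0$ does not produce a $T_J(n)$ centred at $v_0$, since $v_i$ would still need a third neighbour to serve as the spider apex. The paper sidesteps this because in $T_C(n)$ the neighbour $v_1$ of the centre already has degree~$3$; in your $T_G(n)$ no neighbour of $v_0$ does, so the forbidden configurations are harder to trigger and the deferred case analysis is likely to be at least as intricate as the paper's (which for $n=9$ alone splits on whether $w_1\adj w_2$ and whether $v_1\adj v_2$, and for $n=8$ reaches $\delta(G[V(T)])\ge 4$ and finishes with an ad hoc argument). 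The route may be salvageable, but as written the structural heart of the argument is missing, the degree lemma is unproved, and the $n=8$ case is untouched.
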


\begin{proof}
Lemma~\ref{lem:lower-bound-on-S_n(1,4)-T_D(n)-...-T_L(n)}
provides the lower bound, so it remains to prove the upper bound.
Let $G$ be any graph of order $2n-1$
and assume that $G$ does not contain $T_J(n)$ 
and that $\overline{G}$ does not contain $W_8$. 
By Theorem~\ref{thm:R(TC,W8)}, $G$ has a subgraph $T=T_C(n)$. 
Let $V(T)=\{v_0,\ldots,v_{n-4},w_1,w_2,w_3\}$ 
and $E(T)=\{v_0v_1,\ldots,v_0v_{n-4},v_1w_1,v_1w_2,v_2w_3\}$. 
Set $V=\{v_3,\ldots,v_{n-4}\}$ and $U=V(G)-V(T)$;
then $|U|=n-1$. 
Let $U = \{u_1,\ldots,u_{n-1}\}$. 
Since $T_J(n)\nsubseteq G$, 
neither $w_1$ nor $w_2$ is adjacent in $G$ to any vertex from $U\cup V$. 

Let $W=\{v_3\}\cup U$; then $|W|=n$.  
If $\delta(\overline{G}[W])\geq \lceil \frac{n}{2} \rceil\geq \frac{n}{2}$, 
then $\overline{G}[W]$ contains $C_8$ by Lemma~\ref{lem:pancyclic} 
which with $w_1$ forms $W_8$, a contradiction. 
Thus, $\delta(\overline{G}[W])<\lceil \frac{n}{2} \rceil$,
and 
$\Delta(G[W])\geq 
\lfloor \frac{n}{2} \rfloor \geq 4$. 

Suppose that $d_{G[W]}(v_3)\geq \lfloor \frac{n}{2} \rfloor\geq 4$. 
Without loss of generality, assume that $u_1,\ldots,u_4\in N_G(v_3)$.
Since $T_J(n)\nsubseteq G$, 
$u_1,\ldots,u_4$ is independent in $G$ 
and is not adjacent to any remaining vertices from $U$ in~$G$. 
Then $u_2w_1u_3u_5u_4u_6w_2u_7u_2$ and $u_1$ form $W_8$ in $\overline{G}$, a contradiction. 
Hence, there is a vertex in $U$, say $u_1$, 
such that $d_{G[W]}(u_1)\geq \lfloor \frac{n}{2} \rfloor\geq 4$. 

Now, suppose that $v_3$ is adjacent to $u_1$ in $G[W]$.
Then $u_1$ is adjacent to at least $3$ other vertices of $U$ in~$G$,
say $u_2$, $u_3$ and $u_4$.
Since $T_J(n)\nsubseteq G$, 
$v_3$ is not adjacent to $v_1,v_2,v_4,\ldots,v_{n-4},w_1,w_2,w_3,u_2,u_3,u_4$ 
and neither $v_1$ nor $v_2$ is adjacent to $u_2$, $u_3$ or $u_4$ in~$G$. 
Then $v_2u_2v_1u_3w_1v_4w_2u_4v_2$ and $v_3$ form $W_8$ in $\overline{G}$, a contradiction. 

Thus, $v_3$ is not adjacent to $u_1$ in~$G$. 
Note that $u_1$ is not adjacent to any other vertices of $V$ in $G$ or else previous arguments apply. 
Similarly, $v_0$ is not adjacent to $N_{G[W]}(u_1)$ in~$G$. 
Since $T_J(n)\nsubseteq G$, neither $v_1$ nor $v_2$ is adjacent to $u_1$ or $N_{G[W]}(u_1)$ in $G$, 
and so $d_{N_{G[W]}(u_1)}(v)\leq 1$ for all $v\in V$. 

Suppose that $n\geq 10$; then $|V|\geq 4$ and $|N_{G[W]}(u_1)|\geq 5$. 
If $d_{G[V]}(u)\leq 2$ for each $u\in N_{G[W]}(u_1)$, 
then $\overline{G}[V\cup N_{G[W]}(u_1)]$ contains $C_8$ by Lemma~\ref{lem:1-1relation} 
which, with $w_1$ as hub, forms $W_8$ in $\overline{G}$, a contradiction.
Thus, $d_V(u')\geq 3$ for some vertex $u'\in N_{G[W]}(u_1)$.
Then any $4$ vertices from $V$, of which at least $3$ are in $N_{G[V]}(u')$, 
and any $4$ vertices from $N_{G[W]}(u_1)\setminus \{u'\}$
satisfy the condition in Lemma~\ref{lem:1-1relation}, 
so $\overline{G}[V\cup N_{G[W]}(u_1)]$ contains $C_8$ which with $w_1$ forms $W_8$, a contradiction. 

Suppose that $n=9$; then $V=\{v_3,v_4,v_5\}$.
Assume that $u_2,\ldots,u_5\in N_{G[W]}(u_1)$. 
Suppose that $w_1$ is not adjacent to $w_2$ in~$G$.
Let $X=\{v_3,v_4,v_5,w_2\}$ and $Y=\{u_2,\ldots,u_5\}$
and note that $d_{G[Y]}(x)\leq 1$ for each $x\in X$.
If $d_{G[X]}(y)\leq 2$ for each $y\in Y$, 
then $\overline{G}[X\cup Y]$ contains $C_8$ by Lemma~\ref{lem:1-1relation} 
which, with $w_1$ as hub, forms $W_8$, a contradiction. 
Thus, $d_{G[X]}(u')\geq 3$ for some $u'\in Y$, say $u'=u_2$, 
so $X$ is not adjacent to $Y\setminus \{u_2\}$. 
Hence, $v_3u_1v_4u_3v_5u_4w_2u_5v_3$ and $w_1$ form $W_8$ in $\overline{G}$, a contradiction. 

Thus, $w_1$ is adjacent to $w_2$ in~$G$.
Then $v_1$ is not adjacent to $\{v_3,v_4,v_5\}\cup U$. 
Suppose that $v_1$ is not adjacent to $v_2$.
Then set $X=\{v_2,\ldots,v_5\}$ and $Y=\{u_2,\ldots,u_5\}$.
If $d_{G[X]}(y)\leq 2$ for each $y\in Y$, 
then $\overline{G}[X\cup Y]$ contains $C_8$ by Lemma~\ref{lem:1-1relation} 
which, with $v_1$ as hub, forms $W_8$, a contradiction. 
Thus, $d_{G[X]}(u')\geq 3$ for some $u'\in Y$, say $u'=u_2$, 
so $X$ is not adjacent to $Y\setminus \{u_2\}$, 
and $v_2u_1v_3u_3v_4u_4v_5u_5v_2$ and $v_1$ form $W_8$ in $\overline{G}$, a contradiction. 
Thus, $v_1$ is adjacent to $v_2$ in~$G$.
Then $V$ is independent and is not adjacent to $U$ in~$G$. 
Since $W_8\nsubseteq \overline{G}$, 
$G[U]$ is $K_{n-1}$ or $K_{n-1}-e$ by Lemma~\ref{lm4:R(Sn(1,2),W8)}. 
Since $T_J(9)\nsubseteq G$, 
$T$ is not adjacent to $U$ and, by Observation~\ref{obs2}, $\delta(G[V(T)])\geq 5$. 
However, this is impossible since $V$ is independent and is not adjacent to $v_1$, $w_1$ or $w_2$. 

Finally, suppose that $n=8$; then $V=\{v_3,v_4\}$.
Assume that $u_2,\ldots,u_5\in N_{G[W]}(u_1)$. 
If $v_3$ is adjacent to any vertex of $\{u_2,\ldots,u_5\}$, say $u_2$, 
then $v_3$ is not adjacent to $\{v_1,v_2,v_4,w_3\}\cup U\setminus \{u_2\}$,
so $v_1u_1v_2u_3w_1u_4w_2u_5v_1$ and $v_3$ form $W_8$ in $\overline{G}$, a contradiction. 
Thus, $v_3$ is not adjacent to $\{u_2,\ldots,u_5\}$. 
Similarly, $v_4$ is not adjacent to $\{u_2,\ldots,u_5\}$.  
Now, if $w_3$ is adjacent to any of the vertices $u_2,\ldots,u_5$, say $u_2$, 
then $v_2$ is not adjacent to $\{w_1, w_2, v_3, v_4\}$,
so $v_3u_1v_4u_2w_1u_3w_2u_4v_3$ and $v_2$ form $W_8$ in $\overline{G}$, a contradiction.
Thus, $w_3$ is not adjacent to $\{u_2,\ldots,u_5\}$.
By Observation~\ref{obs2}, $\delta(G[V(T)])\geq 4$. 
Suppose that $v_2$ is adjacent to $w_1$.
Since $T_J(8)\nsubseteq G$, neither $v_3$ nor $v_4$ is adjacent to $w_3$.
Since $\delta(G[V(T)])\geq 4$, 
$v_3$ and $v_4$ are adjacent to $v_1$ and $v_2$, 
and $\{w_1,w_2,w_3\}$ is not independent. 
However, then $T_J(8)\subseteq G[V(T)]$, a contradiction. 
Thus, $v_2$ is not adjacent to $w_1$ and, similarly, $v_2$ is not adjacent to $w_2$. 
Since $\delta(G[V(T)])\geq 4$, 
$w_1$ and $w_2$ are adjacent to each other and to $w_3$. 
Since $T_J(8)\nsubseteq G$, neither $v_3$ nor $v_4$ is adjacent to $v_1$ or $v_2$;
however, this contradicts $\delta(G[V(T)])\geq 4$. 

In each case, $R(T_J(8),W_8)\leq 2n-1$ which completes the proof of the theorem. 
\end{proof}

\begin{theorem}
\label{thm:R(TL(n),W8)}
If $n\geq 8$, then $R(T_K(n),W_8)=2n-1$. 
\end{theorem}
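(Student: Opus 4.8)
The bound $R(T_K(n),W_8)\geq 2n-1$ is supplied by Lemma~\ref{lem:lower-bound-on-S_n(1,4)-T_D(n)-...-T_L(n)}, so the work lies entirely in proving $R(T_K(n),W_8)\leq 2n-1$. The plan is to fix a graph $G$ of order $2n-1$ that contains no copy of $T_K(n)$ and whose complement $\overline{G}$ contains no $W_8$, and to derive a contradiction, following the pattern used for the companion trees $T_G(n)$, $T_H(n)$ and $T_J(n)$ in Theorems~\ref{thm:R(TH(n),W8)}, \ref{thm:R(TJ(n),W8)} and~\ref{thm:R(TK(n),W8)}. First I would embed a large auxiliary tree $T$ in $G$ using an already-proved theorem of this section. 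A convenient choice is $T=T_H(n)$, available in $G$ by Theorem~\ref{thm:R(TJ(n),W8)} since $R(T_H(n),W_8)=2n-1$; its hub $v_0$ has degree $n-5$, and its branch of length four $v_0 b_1 b_2 b_3 b_4$ interacts nicely with $T_K(n)$, because a single edge from the distance-three vertex $b_3$ to a vertex of $U:=V(G)\setminus V(T)$ would already create a $T_K(n)$ (with $b_3$ playing the role of the degree-three vertex of the tail). One could instead start from $S_{n-1}(1,3)$ via Theorem~\ref{thm:R(Sn(1,3),W8)} or from $T_G(n)$ via Theorem~\ref{thm:R(TH(n),W8)}. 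Having placed $T$, I would record the adjacency restrictions forced by $T_K(n)\nsubseteq G$: the distance-three vertex of the long branch of $T$ is non-adjacent to $U$, and a short list of further ``two-edge'' obstructions (obtained by re-routing the tail of $T_K(n)$ through the pendants of the hub or through $U$) controls how the rest of $U$ can be joined to $T$.

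The second step is a pancyclicity argument. Applying Lemma~\ref{lem:pancyclic} to $\overline{G}$ restricted to an $n$-vertex set $W$ built from $U$ together with a suitable vertex of $T$, I would conclude that either $\overline{G}[W]$ contains $C_8$, which, with a suitable vertex of $T$ as hub, gives $W_8$ in $\overline{G}$, a contradiction; or $\delta(\overline{G}[W])<\lceil n/2\rceil$, so $\Delta(G[W])\geq\lfloor n/2\rfloor\geq 4$, whence some $u\in U$ has at least four $G$-neighbours $N\subseteq U$. A case analysis on $u$ and $N$ then finishes the proof. The decisive case is when no vertex of $N$ sends an edge into $V(T)$: then $G[V(T)]$ and four vertices of $N$ form a disjoint union with no edges between them, so by Observation~\ref{obs2} the complement $\overline{G}[V(T)]$ contains no $S_5$, which forces $\delta(G[V(T)])\geq n-4$; since $|V(T)|=n$, Lemma~\ref{lm:R(TJLM(n),W8)} then yields $T_K(n)\subseteq G[V(T)]$, a contradiction. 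In the remaining cases a vertex of $N$, or the hub $v_0$ itself, is adjacent to a vertex of $T$; the restrictions recorded in the first step then leave enough pairwise non-adjacent vertices among $N$ and enough free vertices in $U$ to exhibit an explicit $C_8$ in $\overline{G}$, which together with a pendant vertex of $T$ as hub produces $W_8$, again a contradiction.

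The final step is to dispose of the small orders $n=8$ and, if the counting above is too tight, $n=9$, by direct arguments in the style of the earlier theorems of this section: one repeatedly applies Lemma~\ref{lm4:R(Sn(1,2),W8)} to force $G$ restricted to a large subset of $U$ to be complete or complete minus an edge, then Observation~\ref{obs2} to push the minimum degree of $G$ on the remaining $n$ vertices up to $n-4$, and then Lemma~\ref{lm:R(TJLM(n),W8)} to locate $T_K(n)$. The step I expect to be the main obstacle is the organization of the forbidden configurations together with the case analysis of the high-degree vertex $u$: unlike the situation for $T_H(n)$ in Theorem~\ref{thm:R(TJ(n),W8)}, where one extra edge at the tail of the auxiliary tree suffices, here several ``two-edge'' obstructions must be tracked simultaneously, and in each sub-case one must verify that enough vertices of $U$ remain available to close the required $C_8$ --- which is precisely where the hypothesis $n\geq 8$ is used tightly and why the smallest orders need a separate treatment.
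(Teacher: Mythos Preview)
Your plan is essentially the paper's proof: embed an auxiliary tree via an earlier Ramsey result, record the adjacencies forbidden by $T_K(n)\nsubseteq G$, apply Lemma~\ref{lem:pancyclic} to force a high-degree vertex $u$ in $G[U]$, and finish either with an explicit $C_8$ in $\overline G$ or, in the structural endgame, with Observation~\ref{obs2} followed by Lemma~\ref{lm:R(TJLM(n),W8)}. The one organisational difference worth flagging is that the paper does \emph{not} run the $T_H(n)$ argument uniformly. For $n\not\equiv 0\pmod 4$ it starts instead from $T=S_n(1,3)$ (available by Theorem~\ref{thm:R(Sn(1,3),W8)}, and this is $S_n(1,3)$, not $S_{n-1}(1,3)$), where a single forbidden-edge observation plus one application of Lemma~\ref{lem:pancyclic} already gives the contradiction in a few lines. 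For $n\equiv 0\pmod 4$ with $n\geq 12$ it first disposes of the possibility $S_n(1,3)\subseteq G$ by that same short argument, so that the subsequent $T_H(n)$-based analysis may use the \emph{extra} hypothesis $S_n(1,3)\nsubseteq G$; this is precisely what delivers the non-adjacencies $v_0\not\sim U\cup\{w_4\}$ and $u_i\not\sim V(T)\setminus\{v_0\}$ for the high-degree neighbours $u_i$, trimming exactly the ``remaining cases'' you anticipated as the main obstacle. Your uniform-$T_H(n)$ route is viable, but without first peeling off the $S_n(1,3)$ case you would have to recover those non-adjacencies by hand in each sub-case, lengthening the argument considerably. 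Only $n=8$ is treated separately; $n=9$ needs no special handling.
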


\begin{proof}
Lemma~\ref{lem:lower-bound-on-S_n(1,4)-T_D(n)-...-T_L(n)}
provides the lower bound, so it remains to prove the upper bound.
Let $G$ be a graph of order $2n-1$ and assume that $G$ does not contain $T_K(n)$ 
and that $\overline{G}$ does not contain $W_8$. 

Suppose that $n\not\equiv 0 \pmod{4}$.
By Theorem~\ref{thm:R(Sn(1,3),W8)}, $G$ has a subgraph $T=S_n(1,3)$. 
Let $V(T)=\{v_0,\ldots,v_{n-4},w_1,w_2,w_3\}$ 
and $E(T)=\{v_0v_1,\ldots,v_0v_{n-4},v_1w_1,w_1w_2,w_2w_3\}$. 
Set $V=\{v_2,\ldots,v_{n-4}\}$ and $U=V(G)-V(T)$;
then $|V|=n-5$ and $|U|=n-1$. 
Since $T_K(n)\nsubseteq G$, $w_2$ is not adjacent in $G$ to any vertex of $U\cup V$. 
Now, if $\delta(G[U])\geq \frac{n-1}{2}$, 
then $\overline{G}[U]$ contains $C_8$ by Lemma~\ref{lem:pancyclic} 
which, with $v_1$ as hub, forms $W_8$, a contradiction. 
Therefore, $\delta(\overline{G}[U])<\frac{n-1}{2}$, 
and $\Delta(G[U])\geq \lfloor \frac{n-1}{2} \rfloor$. 
Let $U=\{u_1,\ldots,u_{n-1}\}$ 
and assume without loss of generality that $d_{G[U]}(u_1)\geq \lfloor \frac{n-1}{2} \rfloor \geq 4$. 
Since $T_K(n)\nsubseteq G$, $E_G(V,N_{G[U]}(u_1))=\emptyset$, 
so any $4$ vertices from $V$, any $4$ vertices from $N_{G[U]}(u_1)$ and $w_2$ form $W_8$ in $\overline{G}$, a contradiction. 
Therefore, $R(T_K(n),W_8)\leq 2n-1$ for $n\not\equiv 0 \pmod{4}$. 

Let $n=8$.
By Theorem~\ref{thm:R(TJ(n),W8)}, $G$ has a subgraph $T=T_H(8)$.
Let $V(T)=\{v_0,v_1,v_2,v_3,w_1,\ldots,w_4\}$ 
and $E(T)=\{v_0v_1,\ldots,v_0v_3,v_1w_1,w_1w_2,w_2w_3,v_2w_4\}$. 
Set $U=V(G)-V(T)=\{u_1,\ldots,u_7\}$; 
then $|U|=7$. 
Since $T_K(8)\nsubseteq G$, 
$w_2$ is not adjacent to $\{w_4\}\cup U$. 
Let $W=\{w_4\}\cup U$; then $|W|=8$.  
If $\delta(\overline{G}[W])\geq 4$, 
then $\overline{G}[W]$ contains $C_8$ by Lemma~\ref{lem:pancyclic} which, with $w_2$ as hub, forms $W_8$, a contradiction. 
Therefore, $\delta(\overline{G}[W])<3$, 
and $\Delta(G[W])\geq 4$. 

Now, suppose that $d_{G[W]}(w_4)\geq 4$ and assume without loss of generality 
that $w_4$ is adjacent to $u_1$, $u_2$, $u_3$ and $u_4$. 
Then $v_1$ is not adjacent to $\{v_3,w_2,w_3\}\cup U$ 
and neither $v_2$ nor $v_3$ is adjacent to $\{u_1,\ldots,u_4\}$, since $T_K(8)\nsubseteq G$. 
Now, suppose that $E_G(\{u_1,\ldots,u_4\},\{u_5,u_6,u_7\})\neq \emptyset$ 
and assume that $u_1$ is adjacent to $u_5$. 
Then $u_1$ is not adjacent to $\{w_1,w_2,w_3,u_2,\ldots,u_7\}$ in $G$, 
and $v_1u_2v_2u_3v_3u_4w_2u_6v_1$ and $u_1$ form $W_8$ in $\overline{G}$, a contradiction.
Thus, $E_G(\{u_1,\ldots,u_4\},\{u_5,u_6,u_7\})=\emptyset$, 
so $u_1u_5u_2u_6u_3u_7u_4v_3u_1$ and $v_1$ form $W_8$ in $\overline{G}$, a contradiction.

Now suppose that $d_{G[W]}(u')\geq 4$ for some vertex $u'\in U$, say $u'=u_1$.
Since, $T_K(8)\nsubseteq G$, $w_4$ is not adjacent to $u_1$.
Then without loss of generality, suppose that $u_2,\ldots,u_5\in N_G(u_1)$. 
Since $T_K(8)\nsubseteq G$, $E_G(\{v_1,v_2,v_3\},\{u_2,\ldots,u_5\})=\emptyset$. 
If $u_2$ is adjacent to $w_1$, then $u_2$ is not adjacent to $\{u_3,\ldots,u_7\}$ and $v_1$ is not adjacent to $u_6$.
Then $w_2u_3v_2u_4v_3u_5v_1u_6w_2$ and $u_2$ form $W_8$ in $\overline{G}$, a contradiction.
Thus, $u_2$ is not adjacent to $w_1$. 
Similarly, $u_3$, $u_4$ and $u_5$ are not adjacent to $w_1$. 
If $u_2$ is adjacent to $v_0$, 
then $v_2$ is not adjacent to $\{v_1,v_3,w_1,w_2,w_3,u_2,\ldots,u_7\}$, 
and $v_1u_2v_3u_3w_1u_4w_2u_5v_1$ and $v_2$ form $W_8$ in $\overline{G}$, a contradiction.
Thus, $u_2$ is not adjacent to $v_0$. 
Similarly, $u_3$, $u_4$ and $u_5$ are not adjacent to $v_0$. 
By similar arguments, $u_3$, $u_4$ and $u_5$ are not adjacent to $w_3$ or $w_4$.

Hence, $u_2,\ldots,u_5$ are not adjacent to $V(T)$ in $G$,
so $\delta(G[V(T)])\geq 4$ by Observation~\ref{obs2}. 
By Lemma~\ref{lm:R(TJLM(n),W8)}, 
$G[V(T)]$ contains $T_K(8)$, a contradiction. 
Thus, $R(T_K(8),W_8)\leq 15$. 

Now suppose that $n\equiv 0 \pmod{4}$ and that $n\geq 12$.
If $G$ has an $S_n(1,3)$ subgraph, 
then the arguments above lead to contradictions.
Thus, $G$ does not contain $S_n(1,3)$ as a subgraph. 
Now, by Theorem~\ref{thm:R(TJ(n),W8)}, $G$ has a subgraph $T=T_H(n)$. 
Let $V(T)=\{v_0,\ldots,v_{n-5},w_1,\ldots,w_4\}$ 
and $E(T)=\{v_0v_1,\ldots,v_0v_{n-5},v_1w_1,w_1w_2,w_2w_3,v_2w_4\}$. 
Set $V=\{v_3,\ldots,v_{n-5}\}$ and let $U=V(G)-V(T)=\{u_1,\ldots,u_{n-1}\}$. 
Then $|V|=n-7$ and $|U|=n-1$. 
Since $T_K(n)\nsubseteq G$,   $w_2$ is not adjacent in $G$ to $\{w_4\}\cup U$.
Since $S_n(1,3)\nsubseteq G$, $v_0$ is not adjacent to $\{w_4\}\cup U$. 

If $\delta(\overline{G}[U])\geq \frac{n-1}{2}$, 
then $\overline{G}[U]$ contains $C_8$ by Lemma~\ref{lem:pancyclic} which, with $w_2$, forms $W_8$, a contradiction. 
Thus, $\delta(\overline{G}[U])<\frac{n-1}{2}$, 
and $\Delta(G[U])\geq \lfloor \frac{n-1}{2} \rfloor \geq 5$. 
Without loss of generality, assume that $u_2,\ldots,u_6\in N_G(u_1)$. 
Since $T_K(n)\nsubseteq G$, 
$v_1$, $v_2$ and $V$ are not adjacent to $\{u_2,\ldots,u_6\}$,
and $w_1$ and $w_2$ are not adjacent to $u_1$. 

Now, if $u_2$ is adjacent to $w_1$, 
then $u_2$ is not adjacent to $\{w_3, w_4\}\cup U\setminus \{u_1\}$, since $T_K(n)\nsubseteq G$, 
so $v_0u_3v_1u_4v_2u_5v_3u_6v_0$ and $u_2$ form $W_8$ in $\overline{G}$, a contradiction.
Thus, $u_2$ is not adjacent to $w_1$. 
Similarly, $u_3,\ldots,u_6$ are not adjacent to $w_1$. 
If $u_2$ is adjacent to $w_3$ in $G$, 
then $v_0$ is not adjacent to $w_1,w_2,w_3$, 
and $d_{G[U\setminus \{u_1,u_2\}]}(u_i)\leq n-6$ for $i=3,\ldots,6$, since $S_n(1,3)\nsubseteq G$. 
Since $T_K(n)\nsubseteq G$, $w_3$ is not adjacent to $w_1$ or $w_4$. 
Since $d_{G[U\setminus \{u_1,u_2\}]}(u_3)\leq n-6$ and $d_{G[U\setminus \{u_1,u_2\}]}(u_4)\leq n-6$, 
$u_3$ and $u_4$ are adjacent in $\overline{G}$ to at least $2$ vertices in $\{u_7,\ldots,u_{n-1}\}$.
Without loss of generality, 
assume that $u_3$ is adjacent in $\overline{G}$ to $u_7$ and that $u_4$ is adjacent to $u_8$. 
Then $u_3u_7w_2u_8u_4w_1w_3w_4u_3$ and $v_0$ form $W_8$ in $\overline{G}$, a contradiction. 
Thus, $u_2$ is not adjacent to $w_3$. 
Similarly, $u_3,\ldots,u_6$ are not adjacent to $w_4$. 

Hence, $u_2,\ldots,u_6$ are not adjacent to $V(T)$. 
By Observation~\ref{obs2}, $\delta(G[V(T)])\geq 4$, 
so $G[V(T)]$ contains $T_K(n)$ by Lemma~\ref{lm:R(TJLM(n),W8)}, a contradiction. 
Thus, $R(T_K(n),W_8)\leq 2n-1$ for $n\equiv 0 \pmod{4}$. 
This completes the proof. 
\end{proof}

\begin{theorem}
If $n\geq 8$, then $R(T_L(n),W_8)=2n-1$. 
\end{theorem}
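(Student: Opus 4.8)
The plan is to follow the method developed for the other $\Delta(T_n)=n-5$ graphs (Theorems~\ref{thm:R(TH(n),W8)}, \ref{thm:R(TJ(n),W8)} and~\ref{thm:R(TL(n),W8)}): produce a large tree inside $G$, run a pancyclicity dichotomy, and dispose of the dense branch with Lemma~\ref{lm:R(TJLM(n),W8)}. The lower bound $R(T_L(n),W_8)\ge 2n-1$ is immediate from Lemma~\ref{lem:lower-bound-on-S_n(1,4)-T_D(n)-...-T_L(n)}, since $T_L(n)$ is not among the exceptions listed there. For the upper bound, let $G$ be a graph of order $2n-1$ with $W_8\nsubseteq\overline G$, and suppose for contradiction that $T_L(n)\nsubseteq G$.

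By the preceding Theorem~\ref{thm:R(TL(n),W8)}, $G$ contains a copy $T=T_K(n)$; write $V(G)=V(T)\sqcup U$ with $|V(T)|=n$ and $|U|=n-1$, and label $T$ so that $v_0$ is the centre of the embedded star $S_{n-4}$ with leaves $v_1,\dots,v_{n-5}$, the leaf $v_1$ carrying the pendant part with edges $v_1w_1$, $w_1w_2$, $w_2w_3$, $w_2w_4$. The crucial preliminary step is to note that if $v_1$ were adjacent in $G$ to any vertex $z\in U\cup\{w_4\}$, then $T_L(n)$ would embed into $G$ by sending its centre to $v_0$, its $n-6$ pendant leaves to $v_2,\dots,v_{n-5}$, its degree-$3$ vertex to $v_1$, the pendant leaf at that vertex to $z$, and the length-$3$ tail to $v_1,w_1,w_2,w_3$ --- a contradiction. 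Thus $v_1$ has no $G$-neighbour in $U$, so $v_1$ is joined in $\overline G$ to every vertex of $U$ and can serve as a hub.

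Next, apply Lemma~\ref{lem:pancyclic} to $\overline G[U]$ (and, when $n=8$, to $\overline G[\{w_4\}\cup U]$ instead, which is legitimate since $v_1$ is non-adjacent to $w_4$ as well). If its minimum degree is at least half its order, it contains $C_8$ --- a bipartite exceptional case still contains $C_8$ because the order is at least $8$ --- which with hub $v_1$ gives $W_8\subseteq\overline G$, a contradiction; otherwise $G[U]$ (or $G[\{w_4\}\cup U]$) has a vertex $z$ of degree at least $4$. In the main case $z\in U$, set $N=N_{G[U]}(z)$; the assumption $T_L(n)\nsubseteq G$ now constrains the edges between $N$ and $V(T)$ severely, because any such edge can be prolonged into a length-$3$ tail through $z$ and its other neighbours to complete a copy of $T_L(n)$. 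A case split on whether $v_0$ (or another low-degree vertex of $T$) reaches $N$ then either produces an explicit $8$-cycle in $\overline G$ on $8$ well-chosen vertices of $U\cup V(T)$ with a valid hub, or isolates a $4$-subset $A\subseteq N$ with $E_G(V(T),A)=\emptyset$; in the latter situation Observation~\ref{obs2} applied with $H_1=G[V(T)]$, $H_2=G[A]$ yields that either $\overline G[V(T)]\supseteq S_5$, so that $W_8\subseteq\overline G$, or $\delta(G[V(T)])\ge n-4$, so that $G[V(T)]\supseteq T_L(n)$ by Lemma~\ref{lm:R(TJLM(n),W8)}; both are contradictions. The case $z\notin U$ (only relevant for $n=8$) is handled similarly by a direct $8$-cycle construction. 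Hence $R(T_L(n),W_8)\le 2n-1$, and equality follows.

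The step I expect to be the real work is this case analysis for $z\in U$, and in particular the sub-branch where $v_0$ or another vertex of $T$ sends an edge into $N$: there one has to exhibit a concrete $C_8$ in $\overline G$ on exactly the right $8$ vertices and check that some chosen hub is $G$-non-adjacent to all of them, and the room to do so diminishes as $|U|=n-1$ and $|\{v_2,\dots,v_{n-5}\}|=n-6$ become small. I therefore anticipate that $n=8$ and $n=9$ will need separate hands-on treatment, possibly starting instead from a subgraph such as $T_H(n)$ or from an order-$(n-1)$ tree combined with Lemma~\ref{lm4:R(Sn(1,2),W8)} so that $U$ stays large enough for the bipartite-cycle lemmas (Lemma~\ref{lem:1-1relation} and Corollaries~\ref{cor:bipartite4-6} and~\ref{cor:bipartite4-8}) to apply.
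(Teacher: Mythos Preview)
Your plan is a different route from the paper and, as stated, has a real gap in the step you yourself flag as ``the real work''. The paper does not seed with $T_K(n)$. For $n\not\equiv 0\pmod4$ it starts from $S_n(1,3)$ (available because $R(S_n(1,3),W_8)=2n-1$ in that residue class), whose centre has degree $n-4$; that extra leaf is exactly what lets one derive $d_{G[U]}(v_i)\le n-7$ for each $v_i\in V=\{v_2,\dots,v_{n-4}\}$ and then build explicit $C_8$'s in $\overline G$ by walking through four vertices of $V$ and four suitable vertices of $U$, with $v_1$ as hub. For $n\equiv 0\pmod4$ the paper treats $n=8$ via $T_H(8)$ and $n\ge 12$ via $T_C(n)$, but only \emph{after} first disposing of the case $S_n(1,3)\subseteq G$; the residual hypothesis $S_n(1,3)\nsubseteq G$ is then used repeatedly to force the non-adjacencies (e.g.\ $u_2,\dots,u_6$ non-adjacent to $V(T)\setminus\{v_0\}$) that feed into Observation~\ref{obs2} and Lemma~\ref{lm:R(TJLM(n),W8)}.

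Your central claim---that an edge from $u\in N=N_{G[U]}(z)$ to some $x\in V(T)$ ``can be prolonged into a length-$3$ tail through $z$ and its other neighbours to complete a copy of $T_L(n)$''---does not do what you need. The path $x{-}u{-}z{-}u'$ with $u'\in N\setminus\{u\}$ is indeed of length $3$, but to make $x$ the degree-$3$ vertex of $T_L(n)$ you also require a pendant leaf at $x$, i.e.\ a \emph{third} neighbour of $x$ distinct from the centre and from $u$, and nothing in the $T_K(n)$ scaffold provides it. (For instance, $x=v_0$ with $u$ adjacent only to $v_0$ and $z$ yields no copy of $T_L(n)$.) What one can legitimately extract is weaker---e.g.\ each $v_i$ with $i\ge 2$ has at most one neighbour in $N$---but with only $n-6$ such $v_i$'s and without the extra non-adjacencies the paper gets from the $S_n(1,3)$ reduction, the explicit $C_8$/hub constructions you describe do not have enough vertices to work with, even before the small-$n$ cases. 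A workable fix is to mimic the paper: for $n\not\equiv 0\pmod 4$ seed with $S_n(1,3)$ rather than $T_K(n)$, and for $n\equiv 0\pmod 4$ first reduce to $S_n(1,3)\nsubseteq G$ and then run the dense-branch argument from a $\Delta=n-4$ tree such as $T_C(n)$.
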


\begin{proof}
Lemma~\ref{lem:lower-bound-on-S_n(1,4)-T_D(n)-...-T_L(n)}
provides the lower bound, so it remains to prove the upper bound.
Let $G$ be a graph with no $T_L(n)$ subgraph whose complement $\overline{G}$ does not contain $W_8$. 
Suppose that $n\not\equiv 0 \pmod{4}$ and that $G$ has order $2n-1$.
By Theorem~\ref{thm:R(Sn(1,3),W8)}, $G$ has a subgraph $T=S_n(1,3)$. 
Let $V(T)=\{v_0,\ldots,v_{n-4},w_1,w_2,w_3\}$ 
and $E(T)=\{v_0v_1,\ldots,v_0v_{n-4},v_1w_1,w_1w_2,w_2w_3\}$. 
Set $V=\{v_2,\ldots,v_{n-4}\}$ and $U=V(G)-V(T)$;
then $|V|=n-5$ and $|U|=n-1$. 
Since $T_L(n)\nsubseteq G$, 
$v_1$ is not adjacent to $U\cup V$,
and $d_{G[U]}(v_i)\leq n-7$ for each $v_i\in V$. 
Now, if $\delta(G[U])\geq \frac{n-1}{2}$, 
then $\overline{G}[U]$ contains $C_8$ by Lemma~\ref{lem:pancyclic} which, with $v_1$, forms $W_8$, a contradiction. 
Thus, $\delta(\overline{G}[U])<\frac{n-1}{2}$, 
and $\Delta(G[U])\geq \lfloor \frac{n-1}{2} \rfloor$. 

Let $U=\{u_1,\ldots,u_{n-1}\}$ and without loss of generality 
assume that $d_{G[U]}(u_1)\geq \lfloor \frac{n-1}{2} \rfloor \geq 4$ and that $u_2,\ldots,u_5\in N_{G[U]}(u_1)$. 
Now if $E_G(V,N_{G[U]}(u_1))=\emptyset$, 
then $4$ vertices from $V$, $4$ vertices from $N_{G[U]}(u_1)$ and $v_1$ form $W_8$ in $\overline{G}$, a contradiction. 
Thus, $E_G(V,N_{G[U]}(u_1))\neq \emptyset$.
Assume without loss of generality that $v_2$ is adjacent to $u_2$. 
Since $T_L(n)\nsubseteq G$, $v_2$ is not adjacent to $U\setminus \{u_1,u_2\}$. 
Since $d_{G[U]}(v_i)\leq n-7$ for each $v_i\in V$, 
$v_5$ is non-adjacent to at least one of $u_6,\ldots,u_{n-1}$, say $u_6$. 
Now if $E_G(\{v_3,v_4,v_5\},\{u_3,u_4,u_5\})=\emptyset$, 
then $v_2u_3v_3u_4v_4u_5v_5u_6v_2$ and $v_1$ form $W_8$ in $\overline{G}$, a contradiction.
Thus assume, say, that $v_3$ is adjacent to $u_3$ in $G$; 
then $v_3$ is not adjacent to $U\setminus \{u_1,u_3\}$. 
Again, if $E_G(\{v_4,v_5\},\{u_4,u_5\})=\emptyset$, 
then $v_2u_7v_3u_4v_4u_5v_5u_6v_2$ and $v_1$ form $W_8$ in $\overline{G}$, a contradiction.
Thus assume, say, that $v_4$ is adjacent to $u_4$, 
then $v_4$ is not adjacent to $U\setminus \{u_1,u_4\}$. 
If $v_5$ is not adjacent to $u_5$,
then $v_2u_7v_3u_2v_4u_5v_5u_6v_2$ and $v_1$ form $W_8$ in $\overline{G}$, a contradiction.
Thus, $v_5$ is adjacent to $u_5$, 
so $v_5$ is not adjacent to $U\setminus \{u_1,u_5\}$, 
and $v_2u_7v_3u_2v_4u_3v_5u_6v_2$ and $v_1$ form $W_8$ in $\overline{G}$, a contradiction.

Hence, $R(T_L(n),W_8)\leq 2n-1$ for $n\not\equiv 0 \pmod{4}$. 

Now, suppose that $n\equiv 0 \pmod{4}$ and that $G$ has order $2n-1$.
Suppose first that $n=8$.
By Theorem~\ref{thm:R(TJ(n),W8)}, $G$ has a subgraph $T=T_H(8)$. 
Let $V(T)=\{v_0,\ldots,v_3,w_1,\ldots,w_4\}$ 
and $E(T)=\{v_0v_1,\ldots,v_0v_3,v_1w_1,w_1w_2,w_2w_3,v_2w_4\}$. 
Set $U=V(G)-V(T)=\{u_1,\ldots,u_7\}$; then $|U|=7$. 
Since $T_L(8)\nsubseteq G$, 
neither $v_1$ nor $v_2$ are adjacent to $U$, 
and $d_{G[U]}(v_3)\leq 1$. 
Furthermore, $v_1$ is not adjacent to $w_4$, 
and $v_2$ is not adjacent to $w_1$ or $w_3$. 
Let $W=\{w_4\}\cup U$; then $|W|=8$.  
If $\delta(\overline{G}[W])\geq 4$, 
then $\overline{G}[W]$ contains $C_8$ by Lemma~\ref{lem:pancyclic} 
which with $v_1$ forms $W_8$, a contradiction. 
Thus, $\delta(\overline{G}[W])<3$ and $\Delta(G[W])\geq 4$. 

Now, suppose that $d_{G[W]}(w_4)\geq 4$ 
and assume without loss of generality that $u_1,\ldots,u_4\in N_G(w_4)$.
Then $v_2$ is not adjacent to $v_1, v_3, w_1, w_2$ 
and $d_{G[U]}(u_i)\leq 1$ for $1\leq i\leq 4$, since $T_L(8)\nsubseteq G$. 
Since $d_{G[U]}(v_3)\leq 1$, assume without loss of generality that $v_3$ is not adjacent to $u_3$ or $u_4$.
Now, suppose that $E_G(\{u_1,\ldots,u_4\},\{u_5,u_6,u_7\})\neq \emptyset$
and assume, say, that $u_1$ is adjacent to $u_5$. 
Then $u_1$ is not adjacent to $\{v_3,w_1,w_2,w_3,u_2,\ldots,u_7\}$. 
Since $T_L(8)\nsubseteq G$, 
at least one of $w_1$ and $w_2$ is adjacent in $\overline{G}$ to $u_2$, $u_3$ and $u_4$, 
say $w_1$, 
so $v_1u_2w_1u_3v_3u_4v_2u_6v_1$ and $u_1$ form $W_8$ in $\overline{G}$, a contradiction.
Thus, $E_G(\{u_1,\ldots,u_4\},\{u_5,u_6,u_7\})=\emptyset$. 
Then $u_1u_5u_2u_6u_3u_7u_4v_2u_1$ and $v_1$ form $W_8$ in $\overline{G}$, a contradiction.
Therefore, $d_{G[W]}(u')\geq 4$ for some vertex of $u'\in U$, say $u'=u_1$.

Suppose that $w_4$ is adjacent to $u_1$.
Then without loss of generality, assume that $u_1$ is adjacent to $u_2$, $u_3$ and $u_4$. 
Since $T_L(8)\nsubseteq G$, 
neither $v_0$ nor $w_4$ is adjacent to $w_1$ or $w_2$, 
and $w_4$ is not adjacent to $\{v_1, v_3\}\cup U\setminus \{u_1\}$. 
If $E_G(\{u_2,u_3,u_4\},\{u_5,u_6,u_7\})\neq \emptyset$, 
then, say, $u_2$ is adjacent to $u_5$ and is thus not adjacent to $\{v_0,v_3,w_1,w_2,w_3,u_3,u_4,u_6,u_7\}$, 
so $w_1v_0w_2w_4u_3v_1u_4v_2w_1$ and $u_2$ form $W_8$ in $\overline{G}$, a contradiction.
Thus $E_G(\{u_1,\ldots,u_4\},\{u_5,u_6,u_7\}=\emptyset$.
Let $X=\{v_1,u_2,u_3,u_4\}$ and $Y=\{v_3,u_5,u_6,u_7\}$.
Since $d_{G[U]}(v_3)\leq 1$, 
$\overline{G}[X\cup Y]$ contains $C_8$ by Lemma~\ref{lem:1-1relation} which, with $w_4$, forms $W_8$, a contradiction. 

Thus, $u_1$ is not adjacent to $w_4$
so assume without loss of generality that $u_2,\ldots,u_5\in N_G(u_1)$. 
Since $G$ does not contain $T_L(8)$, 
$d_{G[V(T)]}(u_i)\leq 1$ for $2\leq i\leq 5$. 
If $u_2$ is adjacent to $w_4$, 
then $u_2$ is not adjacent to $V(G)\setminus \{u_1,w_4\}$ in~$G$. 
Since $d_{G[U]}(v_3)\leq 1$, 
that $v_3$ is not adjacent to, say, $u_3$ or $u_4$.
Since $d_{G[V(T)]}(u_i)\leq 1$ for $2\leq i\leq 5$, 
$u_4$ and $u_5$ are each adjacent in $\overline{G}$ to at least $2$ of $w_1,w_2,w_3$, 
so some $w_i\in\{w_1,w_2,w_3\}$ is adjacent in $\overline{G}$ to both $u_4$ and $u_5$.
Therefore, $u_3v_3u_4w_iu_5v_2u_6v_1u_3$ and $u_2$ form $W_8$ in $\overline{G}$, a contradiction.
Thus, $u_2$ is not adjacent to $w_4$. 
Similarly, $u_3, u_4, u_5$ are not adjacent to $w_4$. 
Similar arguments show that $u_2,\ldots,u_5$ are not adjacent to $w_1$ or $w_2$.

Now, if $u_2$ is adjacent to any other vertex of $V(T)$, 
then $u_2$ is not adjacent to $\{u_3,u_4,u_5\}$, 
so $u_3w_1u_4w_4u_5v_2u_6v_1u_3$ and $u_2$ form $W_8$ in $\overline{G}$, a contradiction.
Hence, $u_2$ is not adjacent to $V(T)$ and, similarly, $u_3,u_4,u_5$ are not adjacent to $V(T)$. 
Therefore, by Observation~\ref{obs2}, $\delta(G[V(T)])\geq 4$.
By Lemma~\ref{lm:R(TJLM(n),W8)}, $G[V(T)]$ contains $T_L(8)$, a contradiction. 
Thus, $R(T_L(8),W_8)\leq 15$. 

Now suppose that $n\geq 12$.
If $G$ contains $S_n(1,3)$, 
then the previous arguments above lead to contradictions. 
Thus, $G$ does not contain $S_n(1,3)$.
By Theorem~\ref{thm:R(TC,W8)}, $G$ has a subgraph $T=T_C(n)$. 
Let $V(T)=\{v_0,\ldots,v_{n-4},w_1,w_2,w_3\}$ 
and $E(T)=\{v_0v_1,\ldots,v_0v_{n-4},v_1w_1,v_2w_2,v_2w_3\}$. 
Set $U=V(G)-V(T)=\{u_1,\ldots,u_{n-1}\}$; then $|U|=n-1$. 

Suppose that $w_2$ is not adjacent to $U$. 
If $\delta(\overline{G}[U])\geq \frac{n-1}{2}$, 
then $G$ contains $C_8$ by Lemma~\ref{lem:pancyclic} and, with $w_2$ as hub, forms $W_8$, a contradiction. 
Therefore, $\delta(\overline{G}[U])<\frac{n-1}{2}$ 
and so $\Delta(G[U])\geq \lfloor \frac{n-1}{2} \rfloor \geq 5$. 
Without loss of generality, assume that $u_2,\ldots,u_6\in N_G(u_1)$. 
Since $S_n(1,3)\nsubseteq G$, 
$u_2,\ldots,u_6$ are not adjacent to $V(T)\setminus \{v_0\}$. 
If $u_2$ is adjacent to $v_0$, 
then since $S_n(1,3)\nsubseteq G$, 
$u_3,\ldots,u_6$ are not adjacent to $\{u_7,\ldots,u_{n-1}\}$, 
so $u_3u_7u_4u_8u_5u_9u_6u_{10}u_3$ and $w_2$ form $W_8$ in $\overline{G}$, a contradiction.
Thus, $u_2$ is not adjacent to $v_0$ and, similarly, $u_3,\ldots,u_6$ are also not adjacent to $v_0$. 
Hence, $u_2,\ldots,u_6$ are not adjacent to $V(T)$. 
Therefore, by Observation~\ref{obs2}, $\delta(G[V(T)])\geq n-4$, 
so $G[V(T)]$ contains $T_L(n)$ by Lemma~\ref{lm:R(TJLM(n),W8)}, a contradiction. 

Thus some vertex of $U$, say $u_{n-1}$, is adjacent to $w_2$. 
Set $U'=U\setminus \{u_{n-1}\}$; then $|U'|=n-2$. 
Since $T_L(n)\nsubseteq G$, 
$u_{n-1}$ is not adjacent to $U'$ in~$G$. 
Now, if $\delta(\overline{G}[U'])\geq \frac{n-2}{2}$, 
then $\overline{G}[U']$ contains $C_8$ by Lemma~\ref{lem:pancyclic} which, with $u_{n-1}$, forms $W_8$, a contradiction. 
Thus, $\delta(\overline{G}[U'])\leq \frac{n-2}{2}-1$, 
and $\Delta(G[U'])\geq \frac{n-2}{2}\geq 5$. 
Without loss of generality, assume that $u_2,\ldots,u_6\in N_G(u_1)$
and repeat the above arguments to prove that $u_2,\ldots,u_6$ are not adjacent to $V(T)$ . 
Therefore, $\delta(G[V(T)])\geq n-4$ by Observation~\ref{obs2}, 
so $G[V(T)]$ contains $T_L(n)$ by Lemma~\ref{lm:R(TJLM(n),W8)}, 
a contradiction. 

Therefore, $R(T_L(n),W_8)\leq 2n-1$ for $n\equiv 0 \pmod{4}$
which completes the proof. 
\end{proof}

\begin{theorem}\label{thm:R(TN(n),W8)}
If $n\geq 9$, then $R(T_M(n),W_8)=2n-1$.
\end{theorem}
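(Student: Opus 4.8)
The lower bound $R(T_M(n),W_8)\ge 2n-1$ is supplied by Lemma~\ref{lem:lower-bound-on-S_n(1,4)-T_D(n)-...-T_L(n)} (take $G=2K_{n-1}$), so only the upper bound needs work. I would take a graph $G$ of order $2n-1$ and suppose, for contradiction, that $T_M(n)\nsubseteq G$ while $W_8\nsubseteq\overline G$.

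First I would locate a copy of the broom $S_n(4)$ inside $G$: this is available because $R(S_n(4),W_8)=2n-1$ for $n\ge 9$ by Theorem~\ref{thm:R(Sn(4),W8)}, and $S_n(4)$ is the natural base since it already carries the degree-$4$ branch vertex that distinguishes $T_M(n)$. Write $V(S_n(4))=\{v_0,\dots,v_{n-4},w_1,w_2,w_3\}$ with $v_0$ the centre of degree $n-4$, $v_1$ adjacent to $v_0,w_1,w_2,w_3$, let $L=\{v_2,\dots,v_{n-4}\}$ be the plain leaves of $v_0$, and set $U=V(G)\setminus V(S_n(4))$ with $|U|=n-1$. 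The decisive consequence of $T_M(n)\nsubseteq G$ is that each of $w_1,w_2,w_3$ is non-adjacent in $G$ to every vertex of $L\cup U$: an edge from $w_i$ to a vertex $z\in L\cup U$, together with the broom after discarding one plain leaf of $v_0$, produces $T_M(n)$ with $v_0$ the degree-$(n-5)$ vertex, $v_1$ the degree-$4$ vertex, $w_i$ the degree-$2$ vertex and $z$ its pendant leaf. Further local non-adjacencies among $L$, $v_1$ and $U$ follow in the same way.

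Next I would run the usual density dichotomy on $U$. If $\delta(\overline G[U])\ge\frac{n-1}{2}$ then, since $n-1\ge 8$, Lemma~\ref{lem:pancyclic} (whose exceptional graph $K_{(n-1)/2,(n-1)/2}$ still contains $C_8$ here) yields a $C_8$ in $\overline G[U]$, and $w_1$ — non-adjacent in $G$ to all of $U$ — is a hub, giving $W_8\subseteq\overline G$, a contradiction. Hence $\Delta(G[U])\ge\lceil\frac{n-1}{2}\rceil\ge 4$, so some $u\in U$ has at least four $U$-neighbours. Invoking $T_M(n)\nsubseteq G$ again, $N_{G[U]}(u)$ must be independent in $G$ and have no $G$-edges to $L$ (nor to the $w_i$), so that $\overline G$ restricted to $N_{G[U]}(u)$ together with $L$ (or with $\{w_1,w_2,w_3\}$ plus a plain leaf) meets the hypotheses of Lemma~\ref{lem:1-1relation} or Corollary~\ref{cor:bipartite4-6} and contains a $C_8$; a suitable $w_i$ or $v_0$ then serves as hub to force $W_8\subseteq\overline G$, a contradiction. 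The residual possibility is that these non-adjacencies have been pushed far enough that $\delta\big(G[V(S_n(4))]\big)\ge n-4$, in which case an auxiliary lemma — proved by a short case analysis in the style of Lemmas~\ref{lm:R(TF(n),W8)}, \ref{lm:R(TG(n),W8)} and~\ref{lm:R(TJLM(n),W8)}, asserting that every graph of order $n\ge 9$ with minimum degree at least $n-4$ contains $T_M(n)$ — completes the contradiction. No exceptional graph arises, since even $K_{4,4}$ contains $T_M(8)$.

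The main obstacle is the bookkeeping rather than any single idea: because the degree-$4$ vertex $v_1$ of $T_M(n)$ sits one level below the centre (as opposed to the pendant paths of $T_L(n)$ or the second centre-branch of $T_C(n)$), one must carefully track which of $w_1,w_2,w_3$ and which plain leaves remain usable as hubs or cycle vertices after each non-adjacency is invoked, and the choice of the $n$-set fed to Lemma~\ref{lem:pancyclic} and of the bipartite pair fed to Lemma~\ref{lem:1-1relation}/Corollary~\ref{cor:bipartite4-6} needs some care. Establishing the minimum-degree lemma for $T_M(n)$ costs a little separate effort, but only in the $n=9$ and $n\ge 10$ cases, both routine given $n\ge 9$.
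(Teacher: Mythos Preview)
Your setup matches the paper exactly: the lower bound, locating $S_n(4)$ via Theorem~\ref{thm:R(Sn(4),W8)}, and the key observation that each $w_i$ is non-adjacent in $G$ to all of $L\cup U$. The gap is in the second branch of your dichotomy on $U$. When $u\in U$ has $d_{G[U]}(u)\ge 4$, you assert that $N_{G[U]}(u)$ must be independent and non-adjacent to $L$ because $T_M(n)\nsubseteq G$. But to realise $T_M(n)$ you need the degree-$4$ vertex to be adjacent to a vertex of degree $n-5$ (the centre). Your vertex $u$ is not known to be adjacent to $v_0$ or to any other high-degree vertex, so an edge $u_1u_2$ inside $N_{G[U]}(u)$, or an edge from $N_{G[U]}(u)$ into $L$, does not by itself produce a copy of $T_M(n)$: there is no available centre to hang the broom on. The subsequent appeal to Lemma~\ref{lem:1-1relation} or Corollary~\ref{cor:bipartite4-6} therefore lacks its hypothesis, and the proposed auxiliary minimum-degree lemma never gets triggered because you have not actually forced $E_G(V(T),U)=\emptyset$.

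The paper's route after the common setup is shorter and avoids all of this. Instead of a dichotomy on $U$, it looks at degrees from $L$ into $U$. If some $v\in L$ has four $U$-neighbours $u_1,\dots,u_4$, then \emph{that} $v$ is adjacent to the centre $v_0$, so it can serve as the degree-$4$ vertex: any further $U$-edge at some $u_i$ yields $T_M(n)$ (centre $v_0$ restricted to $L$, degree-$4$ vertex $v$, degree-$2$ vertex $u_i$), whence $u_1,\dots,u_4$ are isolated in $G[U]$ and an explicit $C_8$ in $\overline G[U]$ with hub $w_1$ gives $W_8$. Otherwise every $v\in L$ has at most three $U$-neighbours, $|L|\ge 4$, $|U|\ge 8$, and Corollary~\ref{cor:bipartite4-8} gives $C_8\subseteq\overline G[L\cup U]$ with hub $w_1$. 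No density argument on $U$ alone, and no auxiliary $\delta\ge n-4$ lemma, is needed.
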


\begin{proof}
Lemma~\ref{lem:lower-bound-on-S_n(1,4)-T_D(n)-...-T_L(n)} provides the lower bound, 
so it remains to prove the upper bound. 
Let $G$ be any graph of order $2n-1$. 
Assume that $G$ does not contain $T_M(n)$ and that $\overline{G}$ does not contain~$W_8$. 
By Theorem~\ref{thm:R(Sn(4),W8)}, $G$ has a subgraph $T=S_n(4)$. 
Let $V(T)=\{v_0,\ldots,v_{n-4},w_1,w_2,w_3\}$ 
and $E(T)=\{v_0v_1,\ldots,v_0v_{n-4},v_1w_1,v_1w_2,v_1w_3\}$. 
Set $V=\{v_2,v_3,\ldots,v_{n-4}\}$ and $U=V(G)-V(T)=\{u_1,\ldots,u_{n-1}\}$; 
then $|V|=n-5$ and $|U|=n-1$. 
Since $T_M(n)\nsubseteq G$, 
$w_1$, $w_2$ and $w_3$ are not adjacent to any vertex of $U\cup V$ in~$G$. 

Now, suppose that some vertex in $V$ is adjacent to at least $4$ vertices of $U$ in $G$, 
say $v_2$ to $u_1,\ldots,u_4$. 
Then $u_1,\ldots,u_4$ are not adjacent to other vertices in $U$. 
Then $u_1u_5u_2u_6u_3u_7u_4u_8u_1$ and $w_1$ form $W_8$ in $\overline{G}$, a contradiction.
Therefore, each vertex in $V$ is adjacent to at most three vertices of $U$ in~$G$. 
Choose any $8$ vertices of $U$.
By Corollary~\ref{cor:bipartite4-8}, 
$\overline{G}[U\cup V]$ contains $C_8$ which together with $w_1$ gives $W_8$ 
in $\overline{G}$, a contradiction. 

Thus, $R(T_M(n),W_8)\leq 2n-1$ for $n\geq 9$. 
This completes the proof.
\end{proof}

\begin{theorem}
If $n\geq 9$, then 
\[
  R(T_N(n),W_8)=\begin{cases}
    2n-1 & \text{if $n\not\equiv 0 \pmod{4}$}\,;\\
    2n   & \text{otherwise}\,.
  \end{cases}
\]  
\end{theorem}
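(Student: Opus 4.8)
The plan is to mirror the structure of the $T_D(n)$ and $S_n(2,2)$ proofs, since $T_N(n)$ has the same extremal configuration ($G=K_{n-1}\cup K_{4,\ldots,4}$ when $n\equiv 0\pmod 4$), which Lemma~\ref{lem:lower-bound-on-S_n(1,4)-T_D(n)-...-T_L(n)} already supplies as the lower bound. So it remains to prove $R(T_N(n),W_8)\le 2n-1$ for $n\not\equiv 0\pmod 4$ and $R(T_N(n),W_8)\le 2n$ for $n\equiv 0\pmod 4$. First I would let $G$ be a graph on $2n$ vertices (resp. $2n-1$) with no $T_N(n)$ subgraph whose complement $\overline{G}$ contains no $W_8$, and invoke an earlier Ramsey result to locate a convenient spanning-type subgraph. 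Since $T_N(n)$ is obtained from $S_{n-4}$ by attaching a path-like caterpillar structure at a leaf (a path of length two out of the center's neighbour, with a pendant pair), the natural host is $T=S_n[4]$ via Theorem~\ref{thm:R(Sn[4],W8)}: write $V(T)=\{v_0,\ldots,v_{n-4},w_1,w_2,w_3\}$ with $v_0$ the center, $v_1w_1$ the subdivided edge and $w_1w_2,w_1w_3$ the pendant pair, set $V=\{v_2,\ldots,v_{n-4}\}$ and $U=V(G)-V(T)$, so $|V|=n-5$ and $|U|=n$ (resp. $n-1$).

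The key structural observations come from $T_N(n)\nsubseteq G$: certain leaf-vertices of $T$ (namely $w_2$ and $w_3$, and likely $v_1$ as well once the degenerate positions are accounted for) cannot be adjacent in $G$ to $U\cup V$, and each $v_i\in V$ has bounded degree into $U\cup V$. From there I would compute $|U\cup V|$ and apply Lemma~\ref{lem:pancyclic}: if $\delta(\overline{G}[U\cup V])$ is at least half the order, then $\overline{G}[U\cup V]$ contains $C_8$, which together with one of the non-adjacent vertices (say $w_2$) as hub yields $W_8$, a contradiction; otherwise $\Delta(G[U\cup V])\ge n-3$. Then split into two cases according to whether the high-degree vertex lies in $V$ or in $U$. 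In the $V$-case, a vertex of $V$ with many neighbours in $U$ forces those neighbours to be independent and isolated from the rest of $U$ (else $T_N(n)$ appears), and picking four such plus four more vertices of $U$ gives a $C_8$ in $\overline{G}$ with hub $w_3$. In the $U$-case, a vertex $u\in U$ of high degree has $N_{G[U]}(u)$ large, $V$ is non-adjacent to $\{u\}\cup N_{G[U]}(u)$, and a Lemma~\ref{lem:1-1relation} (or Corollary~\ref{cor:bipartite4-6}/\ref{cor:bipartite4-8}) argument on $V$ versus $N_{G[U]}(u)$ produces $C_8$ in $\overline{G}$, hence $W_8$.

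The main obstacle, as in the analogous $S_n(2,2)$ and $T_D(n)$ proofs, will be the small cases $n=9$ and $n=10$ (and in the $n\equiv 0\pmod4$ branch, $n=12$), where $|V|$ is too small for the clean bipartite cycle lemmas to apply directly and one must instead track adjacencies between $U$ and the whole of $V(T)$ by hand, repeatedly using $T_N(n)\nsubseteq G$ to forbid edges and then Observation~\ref{obs2} to conclude $\delta(G[V(T)])$ is large. For those I anticipate needing an auxiliary lemma of the form "every graph $H$ of order $n\ge 8$ with $\delta(H)\ge n-4$ contains $T_N(n)$, unless $n\equiv 0\pmod 4$ and $\overline{H}=\tfrac n4 K_4$" — analogous to Lemmas~\ref{lm:R(Sn[4],W8)}, \ref{lm:R(TF(n),W8)}, \ref{lm:R(TG(n),W8)} and~\ref{lm:R(TJLM(n),W8)} — proved by casework on whether $H$ has a vertex of degree $\ge n-3$ or is $(n-4)$-regular. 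Combining this with Lemma~\ref{lm4:R(Sn(1,2),W8)} (to force a large clique inside $U$ whenever few $U$–$V(T)$ edges exist) should close the remaining small cases, giving $R(T_N(n),W_8)\le 2n-1$ for $n\not\equiv 0\pmod4$ and $\le 2n$ otherwise, which matches the lower bound.
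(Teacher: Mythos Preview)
Your plan has a concrete error at the structural-observation step. With $S_n[4]$ as host, labelled so that $E(T)=\{v_0v_1,\ldots,v_0v_{n-4},v_1w_1,w_1w_2,w_1w_3\}$, it is \emph{not} true that $T_N(n)\nsubseteq G$ forces $w_2$ (or $w_3$) to be non-adjacent to $U\cup V$. Recall that $T_N(n)$ consists of a centre of degree $n-5$ one of whose neighbours carries two pendant paths of length~2. In $S_n[4]$ the only neighbour of $v_0$ with any attached structure is $v_1$, and $v_1$ has a single non-$v_0$ neighbour, $w_1$; an extra edge from $w_2$ to some $u$ gives $w_1$ two paths of length~2 ($w_1\text{--}v_1\text{--}v_0$ and $w_1\text{--}w_2\text{--}u$), but $w_1$ is not adjacent to any high-degree vertex, so no $T_N(n)$ appears. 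Consequently you have no ready hub for a $C_8$ in $\overline{G}[U\cup V]$, and the pancyclic/bipartite machinery you outline does not get off the ground. The auxiliary ``$\delta(H)\ge n-4\Rightarrow T_N(n)\subseteq H$'' lemma you anticipate is also false in the form stated: $T_N(n)$ contains $P_5$, so $H=K_{4,\ldots,4}$ is a counterexample for every $n\equiv 0\pmod 4$, not an isolated exception, and this would leave your small-$n$ endgame open.

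The paper avoids all of this by choosing a different host: it uses $T_A(n)$ (available by Theorem~\ref{thm:R(TA,W8)}), where $v_1$ already has one pendant path of length~2 ($v_1\text{--}w_1\text{--}w_3$) and one pendant leaf $w_2$. Then $w_2$ adjacent to anything in $U\cup V$ immediately completes a second length-2 path at $v_1$ and yields $T_N(n)$, so $w_2$ is a genuine hub. From there the proof is short: either every $v_i\in V$ has at most three neighbours in $U$ and Corollary~\ref{cor:bipartite4-8} gives $C_8\subseteq\overline{G}[U\cup V]$, or some $v_i$ has four $U$-neighbours and a brief ad-hoc argument (splitting $n\in\{9,10\}$ from $n\ge 11$) finds an explicit $C_8$. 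No auxiliary embedding lemma or Observation~\ref{obs2} detour is needed. The moral is that for $T_N(n)$ the right host is the tree one edge away from it, namely $T_A(n)$, not $S_n[4]$.
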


\begin{proof}
Lemma~\ref{lem:lower-bound-on-S_n(1,4)-T_D(n)-...-T_L(n)} provides the lower bound, 
so it remains to prove the upper bound. 
Let $G$ be any graph of order $2n$ if $n\equiv 0 \pmod{4}$ 
and of order $2n-1$ if $n\not\equiv 0 \pmod{4}$. 
Assume that $G$ does not contain $T_N(n)$ and that $\overline{G}$ does not contain $W_8$. 
By Theorem~\ref{thm:R(TA,W8)}, $G$ has a subgraph $T=T_A(n)$. 
Let $V(T)=\{v_0,\ldots,v_{n-4},w_1,w_2,w_3\}$ 
and $E(T)=\{v_0v_1,\ldots,v_0v_{n-4},v_1w_1,v_1w_2,w_1w_3\}$. 
Set $V=\{v_2,v_3,\ldots,v_{n-4}\}$ and $U=V(G)-V(T)=\{u_1,\ldots,u_j\}$, 
where $j=n-1$ if $n\not\equiv 0 \pmod{4}$ and $j=n$ otherwise.
Since $T_N(n)\nsubseteq G$, 
$w_2$ is not adjacent to $U\cup V$ in~$G$. 
If each $v_i\in V$ is adjacent to at most three vertices of $U$ in $G$, 
then by Corollary~\ref{cor:bipartite4-8}, 
$\overline{G}[U\cup V]$ contains $C_8$ which with $w_2$ gives $W_8$ in $\overline{G}$, a contradiction. 
Therefore, some vertex in $V$, say $v_2$, 
is adjacent to at least four vertices of $U$ in~$G$, 
say $u_1,\ldots,u_4$. 
If none of these is adjacent to other vertices of $U$ in $G$, 
then $u_1u_5u_2u_6u_3u_7u_4u_8u_1$ and $w_2$ form $W_8$ in $\overline{G}$, 
a contradiction. 

Therefore, assume that $u_1$ is adjacent to $u_5$ in~$G$. 
Since $T_N(n)\nsubseteq G$, 
$u_2,u_3,u_4$ are not adjacent to $\{u_6,\ldots,u_j\}$ in~$G$. 
For $n=9$ and $n=10$, $\{v_3,\ldots,v_{n-4}\}$ is not adjacent to $\{u_5,\ldots,u_{n-1}\}$
or else $G$ will contain $T_N(n)$ 
with $v_2$ and $v_0$ being the vertices of degree $n-5$ and $3$, respectively. 
However, $v_3u_5v_4u_6u_2u_7u_3u_8v_3$ and $w_2$ form $W_8$ in $\overline{G}$, 
a contradiction. 
For $n\geq 11$, if $v_2$ is not adjacent to $\{u_6,\ldots,u_j\}$ in $G$, 
then $v_2u_6u_2u_7u_3u_8u_4u_9v_2$ and $w_2$ form $W_8$ in $\overline{G}$, a contradiction. 
Therefore, assume that $v_2$ is adjacent to $u_6$ in $G$. 
Then $u_6$ is not adjacent to $\{u_7,\ldots,u_j\}$ in~$G$, 
and $u_2u_7u_3u_8u_4u_9u_6u_{10}u_2$ and $w_2$ form $W_8$ in $\overline{G}$, 
again a contradiction. 

Thus, $R(T_N(n),W_8)\leq 2n$   for $n\equiv 0 \pmod{4}$ 
and   $R(T_N(n),W_8)\leq 2n-1$ for $n\not\equiv 0 \pmod{4}$. 
\end{proof}

\begin{theorem}
If $n\geq 9$, then $R(T_P(n),W_8)=2n-1$.
\end{theorem}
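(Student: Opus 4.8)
Lemma~\ref{lem:lower-bound-on-S_n(1,4)-T_D(n)-...-T_L(n)} already gives $R(T_P(n),W_8)\ge 2n-1$, so the plan is to prove the matching upper bound, following the strategy used for $T_M(n)$ in Theorem~\ref{thm:R(TN(n),W8)}. Let $G$ be a graph of order $2n-1$ with $T_P(n)\nsubseteq G$ and $W_8\nsubseteq\overline G$; I want a contradiction. Since $n\ge 9$, Theorem~\ref{thm:R(Sn(4),W8)} furnishes a subgraph $T=S_n(4)$ of $G$; write $V(T)=\{v_0,\ldots,v_{n-4},w_1,w_2,w_3\}$ with $E(T)=\{v_0v_1,\ldots,v_0v_{n-4},v_1w_1,v_1w_2,v_1w_3\}$, and set $V=\{v_2,\ldots,v_{n-4}\}$ and $U=V(G)-V(T)=\{u_1,\ldots,u_{n-1}\}$, so that $|V|=n-5\ge 4$ and $|U|=n-1\ge 8$.

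First I would extract the structural consequences of $T_P(n)\nsubseteq G$. In a copy of $T_P(n)$ built with $v_0$ as the degree-$(n-5)$ centre, the vertex $v_1$ is the natural candidate for the degree-$3$ vertex adjacent to the centre, its three neighbours $w_1,w_2,w_3$ supplying one pendant leaf, one further degree-$3$ branch vertex (which must carry two pendant leaves of its own), and one unused vertex; and since $v_0$ has the $n-5$ spare neighbours $v_2,\ldots,v_{n-4}$ while only $n-6$ leaves are needed, any single one of them may be dropped. Consequently, if some $w_i$ had two neighbours available to serve as those two final pendants --- two distinct neighbours in $U\cup\{w_1,w_2,w_3\}$, or one such neighbour together with a neighbour in $V$ --- a copy of $T_P(n)$ would appear; the same must be checked with $v_1$ replaced by any other neighbour of $v_0$ that could play the degree-$3$ role. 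Working through these cases, the goal is to conclude that $w_1,w_2,w_3$ are not adjacent in $G$ to any vertex of $U\cup V$ (with any residual configuration, e.g.\ a $w_i$ having a single neighbour in $U$, dealt with separately below).

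With that in hand the argument splits. If every vertex of $V$ has at most three neighbours in $U$, then taking $V$ as the first set and any $8$ vertices of $U$ as the second set, Corollary~\ref{cor:bipartite4-8} yields a $C_8$ in $\overline G$ on those $n-5+8$ vertices, and, $w_1$ being adjacent in $G$ to none of $U\cup V$, this $C_8$ together with hub $w_1$ forms $W_8$ in $\overline G$, a contradiction. Otherwise some $v^\ast\in V$ has four neighbours $u_1,u_2,u_3,u_4\in U$ in $G$, and I would argue that $T_P(n)\nsubseteq G$ forces $\{u_1,u_2,u_3,u_4\}$ to be independent in $G$ and adjacent to no other vertex of $U$: if not, one of them --- say $u_1$ adjacent to some $u_5\in U$ or with an extra neighbour --- could realise the degree-$3$ branch vertex with its two pendants, with $v^\ast$ playing the centre's neighbour and $v_0$ supplying the remaining $n-6$ leaves, producing $T_P(n)$. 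Then for any four further vertices $u_5,\ldots,u_8$ of $U$, the cycle $u_1u_5u_2u_6u_3u_7u_4u_8u_1$ lies in $\overline G$, and with hub $w_1$ it forms $W_8$ in $\overline G$, again a contradiction. Hence $R(T_P(n),W_8)\le 2n-1$.

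The main obstacle is the structural extraction of the second paragraph. Unlike $T_M(n)$, whose missing branch is a single pendant edge, $T_P(n)$ requires an entire degree-$3$ vertex with two pendants living outside the precursor $S_n(4)$, so a single stray edge is never enough to complete a copy; one must therefore rule out the more delicate configurations, carefully distinguishing neighbours of a $w_i$ (or of a candidate node-$7$ vertex) that lie in $V$ --- which are ``cheap'' because $v_0$ can afford to drop one leaf --- from those lying in $U$, and checking that none of $v_0$'s neighbours admits the required two-pendant extension. I expect extra care at the boundary $n=9$, where $|V|=4$ is exactly the size demanded by Corollary~\ref{cor:bipartite4-8} so that all of $U$ must be used in the $C_8$, forcing one to handle any remaining $U$-neighbours of the $w_i$ directly rather than by avoidance.
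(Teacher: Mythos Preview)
Your approach has a real structural gap, and it stems from choosing $S_n(4)$ as the precursor rather than a tree in which the prospective branch vertex already has degree~$2$. From $S_n(4)$ each $w_i$ has only the single neighbour $v_1$, so for $w_i$ to serve as the degree-$3$ branch of a $T_P(n)$ you need \emph{two} extra pendants. Your own case list shows the problem: if the only extra neighbours of $w_1$ are two vertices $v_i,v_j\in V$, then using $w_1$ as branch (with $v_1$ as the degree-$3$ vertex and $v_0$ as centre) consumes both $v_i$ and $v_j$ as pendants, leaving $v_0$ with only $n-7$ leaves available --- one short. No alternative placement rescues this, so ``$w_1,w_2,w_3$ not adjacent to $U\cup V$'' simply does not follow from $T_P(n)\nsubseteq G$. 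A similar issue affects your ``otherwise'' branch: if $v^\ast\sim u_1,\ldots,u_4$ and $u_1$ has exactly one further neighbour $u_5\in U$, then $u_1$ has only one pendant available besides $v^\ast$, so you cannot realise it as the branch; hence $\{u_1,\ldots,u_4\}$ need not be independent nor isolated from the rest of $U$, and your explicit cycle $u_1u_5u_2u_6u_3u_7u_4u_8u_1$ may fail in~$\overline G$.

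The paper sidesteps all of this by starting from $T_A(n)$ (for $n\not\equiv 0\pmod 4$), in which $w_1$ already has the pendant $w_3$; then a \emph{single} extra neighbour of $w_1$ in $U\cup V$ completes a $T_P(n)$, so $w_1\nsim U\cup V$ follows in one line. When $n\equiv 0\pmod 4$ one has $R(T_A(n),W_8)=2n$, so the paper substitutes $T_M(n)$ (whose Ramsey number is $2n-1$) and argues separately. The ``otherwise'' case is then handled not by claiming independence but by the weaker (and correct) fact that each $u_i$ has at most two neighbours in the rest of $U$, which is exactly the hypothesis of Corollary~\ref{cor:bipartite4-6}; for $n=9,10$ a copy of $T_P(n)$ is exhibited directly using $v^\ast$ as centre. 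Your $S_n(4)$ route could perhaps be pushed through with substantial extra casework, but the clean move is to pick a precursor in which the branch vertex is one edge away from completion, not two.
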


\begin{proof}
Lemma~\ref{lem:lower-bound-on-S_n(1,4)-T_D(n)-...-T_L(n)} provides the lower bound, 
so it remains to prove the upper bound. 
Let $G$ be any graph of order $2n-1$. 
Assume that $G$ does not contain $T_P(n)$ and that $\overline{G}$ does not contain $W_8$. 
Suppose $n\not\equiv 0 \pmod{4}$.
By Theorem~\ref{thm:R(TA,W8)}, $G$ has a subgraph $T=T_A(n)$. 
Let $V(T)=\{v_0,\ldots,v_{n-4},w_1,w_2,w_3\}$ 
and $E(T)=\{v_0v_1,\ldots,v_0v_{n-4},v_1w_1,v_1w_2,w_1w_3\}$. 
Set $V=\{v_2,v_3,\ldots,v_{n-4}\}$ and $U=V(G)-V(T)$; 
then $|V|=n-5$ and $|U|=n-1$. 
Since $T_P(n)\nsubseteq G$, 
$w_1$ is not adjacent to any vertex of $U\cup V$ in~$G$. 
If each $v_i$ in $V$ is adjacent to at most three vertices of $U$ in $G$, 
then by Corollary~\ref{cor:bipartite4-8}, 
$\overline{G}[U\cup V]$ contains $C_8$ which with $w_1$ gives $W_8$ in $\overline{G}$, 
a contradiction. 
Therefore, some vertex in $V$, say $v_2$, 
is adjacent to at least four vertices of $U$ in~$G$, 
say $u_1,\ldots,u_4$. 
For $n=9$ and $n=10$, 
$G$ contains $T_P(9)$ and $T_P(10)$ with edge set 
$\{u_1v_2,u_2v_2,u_3v_2,v_2v_0,v_0v_1,v_0v_3,v_1w_1,v_1w_2\}$ and
$\{u_1v_2,u_2v_2,u_3v_2,u_4v_2,v_2v_0,v_0v_1,v_0v_3,v_1w_1,v_1w_2\}$, respectively. 
For $n\geq 11$, each of $u_1,\ldots,u_4$ is adjacent to 
at most two remaining vertices in~$U$. 
Then by Corollary~\ref{cor:bipartite4-6}, 
$\overline{G}[U]$ contains $C_8$ which with $w_1$ gives $W_8$ in $\overline{G}$, 
a contradiction. 

Suppose that $n\equiv 0 \pmod{4}$.
By Theorem~\ref{thm:R(TN(n),W8)}, 
$G$ contains a subgraph $T=T_M(n)$. 
Let $V(T)=\{v_0,\ldots,v_{n-5},w_1,\ldots,w_4\}$ 
and $E(T)=\{v_0v_1,\ldots,v_0v_{n-5},v_1w_1,v_1w_2,v_1w_3,w_1w_4\}$. 
Let $V=\{v_2,v_3,\ldots,v_{n-5}\}$ and $U=V(G)-V(T)$; 
then $|V|=n-6$ and $|U|=n-1$. 
Since $T_P(n)\nsubseteq G$, 
$w_1$ is not adjacent to $\{v_0,w_2,w_3\}\cup U$ in $G$, 
and so $d_{G[U]}(w_2)\leq 1$, 
       $d_{G[U]}(w_3)\leq 1$ 
   and $d_{G[U]}(v)  \leq n-7$ for any vertex $v\in V$. 
Now, if $G$ contains a subgraph $T_A(n)$, 
then arguments similar to those used for the case $n\not\equiv 0 \pmod{4}$ above can be used.
Therefore, $G$ contains no $T_A(n)$. 
Then $v_0$ is not adjacent to $\{w_2,w_3\}\cup U$ in~$G$.

Suppose that some vertex $v\in V$ is not adjacent to $w_1$ in $G$.
Let $X$ be any four vertices in $U$ that are not adjacent to $v$ in $G$ 
and set $Y=\{v,v_0,w_2,w_3\}$. 
By Lemma~\ref{lem:1-1relation}, 
$\overline{G}[X\cup Y]$ contains $C_8$ which with $w_1$ gives $W_8$ in $\overline{G}$, 
a contradiction. 
Therefore, each vertex of $V$ is adjacent to $w_1$ in~$G$. 
Since $T_P(n)\nsubseteq G$, 
$w_4$ is adjacent to at most $n-7$ vertices of $U$ in $G$.
Since $T_A(n)\nsubseteq G$, $w_2$ and $w_3$ are not adjacent in~$G$. 
Now, if $w_4$ is adjacent to both $w_2$ and $w_3$ in $G$, 
then $w_4$ is not adjacent to $v_0$ in $G$ since $T_P(n)\nsubseteq G$. 
Let $X$ be any four vertices of $U$ that are not adjacent to $w_4$ in $G$ 
and let $V=\{w_1,\ldots,w_4\}$. 
By Lemma~\ref{lem:1-1relation}, 
$\overline{G}[X\cup Y]$ contains $C_8$ which with $w_1$ gives $W_8$ in $\overline{G}$, 
a contradiction. 
Therefore, $w_4$ is non-adjacent to either $w_2$ or $w_3$ in $G$, say $w_2$. 
Since $d_{G[U]}(w_2)\leq 1$ and $d_{G[U]}(w_4)\leq n-7$, 
there is a set $X$ of four vertices in $U$ 
that are not adjacent to both $w_2$ and $w_4$ in $G$.
Let $Y=\{v_0,w_1,w_3,w_4\}$.
By Lemma~\ref{lem:1-1relation}, $\overline{G}[X\cup Y]$ contains $C_8$ 
which with $w_1$ gives $W_8$ in $\overline{G}$, again a contradiction. 

In either case, $R(T_P(n),W_8)\leq 2n-1$ for $n\geq 9$ and this completes the proof.
\end{proof}

\begin{theorem}
If $n\geq 9$, then $R(T_Q(n),W_8)=2n-1$.
\end{theorem}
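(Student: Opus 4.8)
The lower bound $R(T_Q(n),W_8)\ge 2n-1$ is already contained in Lemma~\ref{lem:lower-bound-on-S_n(1,4)-T_D(n)-...-T_L(n)}, so the plan is to establish $R(T_Q(n),W_8)\le 2n-1$. I would take a graph $G$ on $2n-1$ vertices with $T_Q(n)\nsubseteq G$ and $W_8\nsubseteq\overline G$, and first force a large tree into $G$ using an earlier result. The convenient choice is $T=S_n(4)$ via Theorem~\ref{thm:R(Sn(4),W8)} (valid since $n\ge 9$): write $V(T)=\{v_0,\dots,v_{n-4},w_1,w_2,w_3\}$ with $v_0$ of degree $n-4$ and $v_1$ of degree $4$ carrying the extra leaves $w_1,w_2,w_3$, and set $V=\{v_2,\dots,v_{n-4}\}$ and $U=V(G)\setminus V(T)$, so $|V|=n-5\ge 4$ and $|U|=n-1\ge 8$. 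The point is that $v_1$ together with $w_1,w_2,w_3$ is exactly the degree-$4$ branch vertex $b$ of $T_Q(n)$ with its three pendant leaves, and $v_0$ is a star centre, so a copy of $T_Q(n)$ appears in $G$ as soon as $v_0$ has a neighbour (besides $v_1$) carrying one further neighbour outside the few vertices already committed. Hence $T_Q(n)\nsubseteq G$ forces $V$ to be an independent set of $G$ with no $G$-edge to $U$; rerunning the same idea with a relabelled centre (taking a vertex of $U$ or a different $v_i$ as the degree-$4$ vertex) should likewise pin down the $G$-neighbourhoods of $w_1,w_2,w_3$. One may alternatively start from $T_C(n)$ via Theorem~\ref{thm:R(TC,W8)}, whose length-$2$ path out of $v_0$ and whose branch vertex with two pendants already match the tail $a$--$a'$ and the vertex $b$ of $T_Q(n)$ (needing only one extra leaf at $b$), or — mimicking the treatment of $T_N(n)$ and $T_P(n)$ — split $n\not\equiv 0\pmod 4$ (use $T_A(n)$ via Theorem~\ref{thm:R(TA,W8)}) from $n\equiv 0\pmod 4$ (use $T_M(n)$ via Theorem~\ref{thm:R(TN(n),W8)}).

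With these restrictions in hand, I would apply the pancyclicity lemma (Lemma~\ref{lem:pancyclic}) to $\overline G[U]$: if $\delta(\overline G[U])\ge\frac{n-1}{2}$ then $\overline G[U]$ contains $C_8$, and a vertex of $T$ that is non-adjacent in $G$ to all of $U$ (such as $w_2$) serves as a hub, producing $W_8\subseteq\overline G$, a contradiction. So $\Delta(G[U])\ge\lfloor\frac{n-1}{2}\rfloor\ge 4$; pick $u_1$ realising this and four $G$-neighbours $u_2,u_3,u_4,u_5\in U$. Appealing to $T_Q(n)\nsubseteq G$ once more — now with a copy centred at $u_1$ using $\{u_2,\dots,u_5\}$ for the branch — should force $u_2,\dots,u_5$ to be non-adjacent in $G$ to the core of $T$. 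The subcase in which $v_0$ is itself adjacent to some $u_i$ has to be handled separately: there $u_1$ carries a star in $\overline G$ which, together with the independent set $\{u_2,\dots,u_5\}$ and a few leaves of $v_0$, builds a $C_8$ in $\overline G$ directly, again with a hub from $T$.

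To finish, following the pattern of the $T_H(n)$, $T_K(n)$, $T_L(n)$ arguments of this section, I would arrange that $u_2,\dots,u_5$ together with $w_1,w_2,w_3$ are non-adjacent in $G$ to every vertex of $V(T)$, so that Observation~\ref{obs2} forces $\delta(G[V(T)])\ge n-4$; an auxiliary lemma in the style of Lemmas~\ref{lm:R(TF(n),W8)}--\ref{lm:R(TJLM(n),W8)} — namely that every graph on $n\ge 9$ vertices with minimum degree at least $n-4$ contains $T_Q(n)$ — then yields $T_Q(n)\subseteq G[V(T)]\subseteq G$, the final contradiction. I expect the main obstacle to be precisely this auxiliary lemma: one splits on whether $H$ has a vertex of degree at least $n-3$ or is $(n-4)$-regular, threads the three pendant leaves at $b$ against the single pendant at $a$, and settles the small cases $n=9,10,11$ by hand as in those lemmas; when $n\equiv 0\pmod 4$ one must also verify that $H=K_{4,\dots,4}$ still contains $T_Q(n)$, which it does because the two colour classes of $T_Q(n)$ (of sizes roughly $5$ and $n-5$) admit a refinement into parts of size at most $4$. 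A secondary source of friction — as throughout this section — is the bookkeeping of exactly which tree vertices end up with restricted $G$-neighbourhoods so that a legitimate hub for the $W_8$ is always available, and, if the $T_A(n)$ start is preferred, the extra $n\equiv 0\pmod 4$ stage.
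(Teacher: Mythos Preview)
Your plan is workable in outline but vastly overcomplicates the argument, and in doing so you overlook the one-line finish that the paper uses. You correctly deduce from $T_Q(n)\nsubseteq G$ that $V=\{v_2,\dots,v_{n-4}\}$ is independent in $G$ with $E_G(V,U)=\emptyset$. But then stop and look at what you have: for $n\ge 10$ the set $V$ has size $n-5\ge 5$ and is independent, so $\overline G[V]$ contains $S_5$; since $|U|\ge 4$ and there are no $G$-edges between $V$ and $U$, Observation~\ref{obs2} gives $W_8\subseteq\overline G$ immediately. No pancyclicity on $\overline G[U]$, no high-degree $u_1$, no case split on $n\bmod 4$, and above all no auxiliary ``every $n$-vertex graph with $\delta\ge n-4$ contains $T_Q(n)$'' lemma is needed.

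Only $n=9$ requires anything further, and there the paper's route is again much shorter than yours. With $|V|=4$ independent, $\overline G[V]\supseteq S_4$, so Lemma~\ref{lm4:R(Sn(1,2),W8)} forces $G[U]$ to be $K_8$ or $K_8-e$. One then checks directly that any $G$-edge from $V(T)$ to $U$ would produce $T_Q(9)$ (using that every vertex of $U$ now has at least six $U$-neighbours), so $E_G(V(T),U)=\emptyset$; Observation~\ref{obs2} then gives $\delta(G[V(T)])\ge 5$, which forces each $v_i\in V$ to be adjacent to all of $v_0,v_1,w_1,w_2,w_3$, and a copy of $T_Q(9)$ can be read off explicitly inside $G[V(T)]$. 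This replaces your entire auxiliary-lemma programme with a two-line check.

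Two smaller issues with your sketch: the hub you nominate for a $C_8\subseteq\overline G[U]$ should be any $v_i\in V$ (which you have shown is non-adjacent to $U$), not $w_2$ --- nothing in your argument forbids $w_2$ from having $G$-neighbours in $U$ when $n\ge 10$. And your line ``$u_2,\dots,u_5$ together with $w_1,w_2,w_3$ are non-adjacent in $G$ to every vertex of $V(T)$'' cannot be right as written, since $w_1,w_2,w_3\in V(T)$ and each is adjacent to $v_1$.
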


\begin{proof}
Lemma~\ref{lem:lower-bound-on-S_n(1,4)-T_D(n)-...-T_L(n)} provides the lower bound, 
so it remains to prove the upper bound. 
Let $G$ be any graph of order $2n-1$. 
Assume that $G$ does not contain $T_Q(n)$ and that $\overline{G}$ does not contain $W_8$. 
By Theorem~\ref{thm:R(Sn(4),W8)}, $G$ has a subgraph $T=S_n(4)$. 
Let $V(T)=\{v_0,\ldots,v_{n-4},w_1,w_2,w_3\}$ 
and $E(T)=\{v_0v_1,\ldots,v_0v_{n-4},v_1w_1,v_1w_2,v_1w_3\}$. 
Set $V=\{v_2,v_3,\ldots,v_{n-4}\}$ and $U=V(G)-V(T)$; 
then $|V|=n-5$ and $|U|=n-1$. 
Since $T_Q(n)\nsubseteq G$, 
$G[V]$ are independent vertices and not adjacent to~$U$. 

Suppose that $n\geq 10$.
Then $|V|\geq 5$ and $|U|\geq 9$, 
so by Observation~\ref{obs2}, 
$\overline{G}$ contains $W_8$, a contradiction. 
If $n=9$, then $|V|=4$ and $|U|=8$. 
By Lemma~\ref{lm4:R(Sn(1,2),W8)}, $G[U]$ is $K_8$ or $K_8-e$. 
Since $T_Q(9) \nsubseteq G$, 
$T$ is not adjacent to $U$, and $\delta(G[V(T)]\geq 5$. 
As $v_2,\ldots,v_5$ are independent in $G$, 
they are each adjacent to all other vertices in $G[V(T)]$, 
Therefore, $G[V(T)]$ contains $T_Q(9)$ with $v_2$ and $v_0$ as the vertices of degree $4$,
a contradiction. 

Thus, $R(T_Q(n),W_8)\leq 2n-1$ for $n\geq 9$ which completes the proof.
\end{proof}

\begin{theorem}
If $n\geq 9$, then $R(T_R(n),W_8)=2n-1$.
\end{theorem}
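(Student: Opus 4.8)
The lower bound $R(T_R(n),W_8)\ge 2n-1$ is already supplied by Lemma~\ref{lem:lower-bound-on-S_n(1,4)-T_D(n)-...-T_L(n)}, so only the matching upper bound requires proof. The plan is to take a graph $G$ of order $2n-1$ whose complement $\overline G$ contains no $W_8$ and show that $G$ must contain $T_R(n)$. The key observation is that $T_R(n)$ is obtained from $T_C(n)$ by lengthening the pendant path of length two into a pendant path of length three; since $R(T_C(n),W_8)=2n-1$ by Theorem~\ref{thm:R(TC,W8)}, the graph $G$ has a subgraph $T=T_C(n)$. I would fix such a copy, labelling it $V(T)=\{v_0,\ldots,v_{n-4},w_1,w_2,w_3\}$ with $E(T)=\{v_0v_1,\ldots,v_0v_{n-4},v_1w_1,v_1w_2,v_2w_3\}$, and set $V=\{v_3,\ldots,v_{n-4}\}$ and $U=V(G)-V(T)$, so $|V|=n-6$ and $|U|=n-1$.

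The crucial step is to read off the consequences of $T_R(n)\nsubseteq G$. Since $T$ already displays, around $v_0$, a star with $n-6$ free leaves, a pendant cherry $\{v_1;w_1,w_2\}$, and a pendant path $v_0v_2w_3$ of length two, the only thing missing to build $T_R(n)$ with centre $v_0$ is one further edge at the end of that path; a direct vertex count shows $T_R(n)\subseteq G$ as soon as $w_3$ has a neighbour in $U\cup V$, or more generally as soon as some leaf $v_j\in V$ reaches a vertex outside the core in two steps, or as soon as $v_0$ has a neighbour in $U$. Arguing similarly for embeddings whose cherry is placed at a vertex of $V$ or routed through $v_1$, I would conclude: either $T_R(n)\subseteq G$ (and we are done), or $v_0,v_1,v_2,w_1,w_2,w_3$ are all non-adjacent in $G$ to $U$, $G[V]$ is edgeless, and $G$ places essentially no edges between $V$ and $U$.

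In that sparse regime the plan is the standard one for this paper: produce a $C_8$ in $\overline G$ and attach a hub. For $n$ not too small one shows $\delta(\overline G[U])\ge\lceil (n-1)/2\rceil$, since otherwise some $u\in U$ has at least $\lceil (n-1)/2\rceil\ge 4$ neighbours inside $G[U]$, whose neighbourhood must, by $T_R(n)\nsubseteq G$, avoid $V$ and $v_0$, so that Corollary~\ref{cor:bipartite4-6} or Lemma~\ref{lem:1-1relation} yields a $C_8$ in $\overline G$ between that neighbourhood and $V\cup\{v_0,v_1,\ldots\}$; and when $\delta(\overline G[U])$ is large, Lemma~\ref{lem:pancyclic} gives $C_8\subseteq\overline G[U]$ unless $n$ is odd and $\overline G[U]=K_{(n-1)/2,(n-1)/2}$, a rigid case that I would dispose of by hand (it makes $U$ a union of cliques forming separate components, to which Observation~\ref{obs2} applies with the edgeless $G[V]$). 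In each case the $C_8$ lives on vertices that are non-adjacent in $G$ to one of $w_1,w_2,w_3$, so that vertex serves as a hub and $W_8\subseteq\overline G$, a contradiction. The remaining small values $n=9,10,11$, where the counting in Corollaries~\ref{cor:bipartite4-6} and~\ref{cor:bipartite4-8} is tight, I would treat separately: when a stray edge runs from $V$ into $U$, invoke Lemma~\ref{lm4:R(Sn(1,2),W8)} to force $G[U]$ (or $G$ on $U$ minus one vertex) to be $K_m$ or $K_m-e$, note that $T_R(n)\nsubseteq G$ keeps every vertex of $V(T)$ off this clique so that $\delta(G[V(T)])\ge n-4$ by Observation~\ref{obs2}, and finish with a short direct check — or with a one-line auxiliary lemma in the spirit of Lemmas~\ref{lm:R(TF(n),W8)}--\ref{lm:R(TJLM(n),W8)} — that a graph of order $n\ge 9$ with minimum degree at least $n-4$ contains $T_R(n)$.

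I expect the main obstacle to be the structural step. Because $T_R(n)$'s pendant path is one vertex longer than that of the planted $T_C(n)$, the bookkeeping for ``$T_R(n)\nsubseteq G$'' must simultaneously rule out extending the path at $w_3$, rerouting it through another leaf, and borrowing the extra vertex either from inside $U$ or from a spare leaf; and it is exactly the configuration in which the would-be path sits entirely inside $V$ with $v_0$ having no neighbour outside $V(T)$ that is ``one leaf short'' of the naive embedding and must be killed by a secondary embedding. This case, together with the tight small-$n$ endpoints, is where the real work concentrates; the pancyclicity-plus-hub conclusion itself is routine and mirrors the proofs of the preceding theorems in this section.
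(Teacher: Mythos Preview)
Your overall plan matches the paper's: plant $T_C(n)$, use the end of its pendant $P_3$ as a prospective hub, and run a pancyclicity/bipartite-$C_8$ argument. The gap is in your structural step. The assertion that $T_R(n)\subseteq G$ ``as soon as $v_0$ has a neighbour in $U$'' is false: attaching a fresh leaf $u$ to $v_0$ does not manufacture the missing length-$3$ pendant, because every path out of $v_0$ that you can exhibit ($v_0v_2w_3$ or $v_0v_1w_i$) still has length two, and routing through $v_1$ destroys your only cherry. For the same reason you cannot conclude that $v_1,w_1,w_2$ are non-adjacent to $U$, nor that $G[V]$ is edgeless. The only non-adjacency you get for free is $w_3\not\sim U\cup V$; and, once some $u\in U$ has several $G[U]$-neighbours, the secondary fact that those neighbours avoid $V\cup\{v_0,v_2\}$ (via the paths $v_0v_ju'u$ and $v_0v_2u'u$, with the cherry at $v_1$) --- but not $v_1$. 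So your ``sparse regime'' is never reached, and the step that leans on it (handling the $K_{(n-1)/2,(n-1)/2}$ exception via Observation~\ref{obs2} and the claimed edgelessness of $G[V]$) is unsupported. That exception is harmless anyway, since $K_{m,m}$ with $m\ge 4$ already contains $C_8$; but placing $v_1$ on the bipartite side of your $C_8$ is a genuine error.

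The paper sidesteps all of this by applying Lemma~\ref{lem:pancyclic} to $\overline G[U\cup V]$ rather than to $\overline G[U]$: the resulting high-degree vertex has degree at least $n-4$ in $G[U\cup V]$, and one argues separately according to whether it lies in $V$ (four of its neighbours are forced to be independent and isolated from the rest of $U$, yielding a bipartite $C_8$ inside $U$) or in $U$ (its $G$-neighbourhood in $U$ avoids $V$, yielding a bipartite $C_8$ between that neighbourhood and $V$ for $n\ge 10$, with $n=9$ finished by a short direct check). Only the single non-adjacency of the pendant leaf to $U\cup V$ is ever used for the hub. Your argument becomes correct once you drop the overstated structural claims and either follow the paper in working on $U\cup V$, or keep working in $U$ but use only $V$ --- not $v_0,v_1$ --- on the other side of the bipartite $C_8$; the latter route works verbatim for $n\ge 10$ and leaves $n=9$ to be handled separately, as you anticipated.
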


\begin{proof}
Lemma~\ref{lem:lower-bound-on-S_n(1,4)-T_D(n)-...-T_L(n)} provides the lower bound, 
so it remains to prove the upper bound. 
Let $G$ be any graph of order $2n-1$. 
Assume that $G$ does not contain $T_R(n)$ and that $\overline{G}$ does not contain $W_8$. 
By Theorem~\ref{thm:R(TC,W8)}, $G$ has a subgraph $T=T_C(n)$. 
Let $V(T)=\{v_0,\ldots,v_{n-4},w_1,w_2,w_3\}$ 
and $E(T)=\{v_0v_1,\ldots,v_0v_{n-4},v_1w_1,v_2w_2,v_2w_3\}$. 
Set $V=\{v_3,\ldots,v_{n-4}\}$ and $U=V(G)-V(T)=\{u_1,\ldots,u_{n-1}\}$; 
then $|V|=n-6$ and $|U|=n-1$. 
Since $T_R(n)\nsubseteq G$, $w_1$ is not adjacent in $G$ to any vertex of $U\cup V$. 
If $\delta(\overline{G}[U\cup V])\geq \lceil\frac{2n-7}{2}\rceil$, 
then $\overline{G}[U\cup V]$ contains $C_8$ by Lemma~\ref{lem:pancyclic} 
which with $w_3$ forms $W_8$, a contradiction. 
Therefore, $\delta(\overline{G}[U\cup V])\leq \lceil\frac{2n-7}{2}\rceil-1$, 
and $\Delta(G[U\cup V])\geq \lfloor\frac{2n-7}{2}\rfloor=n-4$. 
Now, there are two cases to be considered. 

\smallskip
\noindent {\bf Case 1}: 
One of the vertices of $V$, say $v_3$, is a vertex of degree at least $n-4$ in $G[U\cup V]$. 

Note that in this case, there are at least $3$ vertices from $U$, say $u_1,u_2,u_3$, 
that are adjacent to $v_3$ in~$G$. 
Suppose that $v_3$ is also adjacent to $a$ in $G$, where $a$ is a vertex in $U\cup V$.
Since $T_R(n)\nsubseteq G$, 
these $4$ vertices are independent and are not adjacent to any other vertices of $U$. 
Since $n\geq 9$, $U$ contains at least $4$ other vertices, say $u_5,\ldots,u_8$, 
so $u_1u_5u_2u_6u_3u_7au_8u_1$ and $w_3$ form $W_8$ in $\overline{G}$, a contradiction.

\smallskip
\noindent {\bf Case 2}: Some vertex $u\in U$ has degree at least $n-4$ in $G[U\cup V]$.

Since $T_R(n)\nsubseteq G$, $u$ is not adjacent to any vertex of $V$ in~$G$. 
Therefore, $u$ must be adjacent to at least $n-4$ vertices of $U$ in~$G$. 
Without loss of generality, suppose that $u_1,\ldots,u_{n-4}\in N_{G[U]}(u)$. 
Note that $V$ is not adjacent to $N_{G[U]}(u)$, 
or else it will form $T_R(n)$ in $G$, a contradiction. 
If $n\geq 10$, 
then any $4$ vertices from $N_{G[U]}(u)$ and any $4$ vertices from $V$ 
form $C_8$ in $\overline{G}$ which with $w_3$ forms $W_8$, a contradiction. 
Suppose that $n=9$ and let the remaining two vertices be $u_6$ and $u_7$. 
If either $u_6$ or $u_7$ is non-adjacent to any two vertices of $\{u_1,\ldots,u_5\}$ in $G$, 
say $u_6$ is not adjacent to $u_1$ and $u_2$ in $G$, 
then $u_1u_6u_2v_3u_3v_4u_4v_5u_1$ and $w_3$ form $W_8$ in $\overline{G}$, 
a contradiction. 
So, both $u_6$ and $u_7$ are adjacent to at least $4$ vertices of $\{u_1,\ldots,u_5\}$ in~$G$. 
Since $T_R(9)\nsubseteq G$, $T$ cannot be adjacent to $U$, and $\delta(G[V(T)]\geq 5$. 
As both $v_2$ and $w_3$ are not adjacent to $v_3$, $v_4$ or $v_5$ in $G$, 
they are adjacent to all other vertices in $G[V(T)]$. 
Similarly, since $v_3$ is not adjacent to $v_2$ or $w_3$ in $G$, 
$v_3$ is adjacent to $w_1$ or $w_2$ in $G$.
Without loss of generality, assume that $v_3$ is adjacent to $w_1$. 
Then $G[V(T)]$ contains $T_R(9)$ with edge set 
$\{v_2w_2,v_2v_1,v_2v_0,v_0v_4,v_0v_5,v_2w_3,v_2w_1,w_1v_3\}$, a contradiction.

In either case, $R(T_R(n),W_8)\leq 2n-1$. 
\end{proof}

\begin{theorem}
If $n\geq 9$, then $R(T_S(n),W_8)=2n-1$.
\end{theorem}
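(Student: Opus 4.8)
The plan is to follow the template of the earlier theorems in Section~\ref{sec:n-5proof}, in particular the one for $R(T_R(n),W_8)$, because $T_S(n)$ is a star whose centre $v_0$ carries, besides its $n-7$ pure leaves, one length-two path $v_0v_7v_8$ and one ``deep cherry'' $v_0v_9v_{10}\{v_{11},v_{12}\}$ in which $v_{10}$ has degree three. The lower bound $R(T_S(n),W_8)\ge 2n-1$ is supplied by Lemma~\ref{lem:lower-bound-on-S_n(1,4)-T_D(n)-...-T_L(n)}, so only the upper bound remains. Let $G$ have order $2n-1$ and suppose that $T_S(n)\nsubseteq G$ and $W_8\nsubseteq\overline{G}$. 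By Theorem~\ref{thm:R(TC,W8)}, $G$ contains a copy $T$ of $T_C(n)$; write $V(T)=\{v_0,\ldots,v_{n-4},w_1,w_2,w_3\}$ with edges $v_0v_1,\ldots,v_0v_{n-4},v_1w_1,v_2w_2,v_2w_3$, put $V=\{v_3,\ldots,v_{n-4}\}$ (the $n-6$ pure leaves of $v_0$) and $U=V(G)\setminus V(T)$, so that $|U|=n-1$.

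The first step is to harvest the restrictions that $T_S(n)\nsubseteq G$ forces. Since $T_C(n)$ already supplies the centre $v_0$, an ample stock of leaves, and the length-two path $v_0v_1w_1$, any deep cherry assembled from the remaining material would complete a copy of $T_S(n)$. Running through the handful of ways such a deep cherry can arise (lengthening a pendant $w_i$ by two vertices, lengthening the cherry $v_0v_2\{w_2,w_3\}$ past $v_2$, inserting a common neighbour of $v_0$ and $v_2$, or attaching a short branch of $G[U]$ to some neighbour of $v_0$) yields: each of $w_1,w_2,w_3$ is $G$-adjacent to at most one vertex of $U$; no vertex $e\in U$ with $d_{G[U]}(e)\ge 2$ is $G$-adjacent to any of $v_1,\ldots,v_{n-4}$; $v_0$ and $v_2$ have no common neighbour in $U$; and whenever a deep cherry would consume leaves of $v_0$, a one-vertex shortfall in the leaf budget is the only obstruction, which is what forces the slightly weaker adjacency statements that must be carried along. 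In particular $w_1$ is $\overline{G}$-adjacent to all but at most one vertex of $U$.

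Next I would split on $\delta(\overline{G}[U])$. If $\delta(\overline{G}[U])\ge\lceil\frac{n-1}{2}\rceil$ then, since $|U|=n-1\ge 8$, Lemma~\ref{lem:pancyclic} (whose complete-bipartite alternative still contains $C_8$) gives a $C_8$ in $\overline{G}[U]$, and for $n$ large enough one of $w_1,w_2,w_3$ can be taken as hub after discarding the single vertex of $U$ it may see in $G$, producing $W_8\subseteq\overline{G}$, a contradiction. Otherwise $\Delta(G[U])\ge\lfloor\frac{n-1}{2}\rfloor\ge 4$; choose $u_1\in U$ of maximum $G[U]$-degree and set $N_1=N_{G[U]}(u_1)$. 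By the restrictions above, $u_1$ and every vertex of $N_1$ of $G[U]$-degree at least two is non-adjacent in $G$ to $v_1,\ldots,v_{n-4}$, and each such vertex can meet only few pure leaves before a $T_S(n)$ appears. Feeding $N_1$ (or the part of $N_1$ of $G[U]$-degree at least two, padded with further vertices of $U$) against the pure leaves $V$ into Lemma~\ref{lem:1-1relation} or Corollary~\ref{cor:bipartite4-6} then produces a $C_8$ in $\overline{G}$ hubbed at a suitable $w_i$ — unless the adjacencies are so restricted that a $T_S(n)$ can be assembled directly around $v_0$. Either way a contradiction ensues, so $R(T_S(n),W_8)\le 2n-1$.

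The small values $n=9$ (and possibly $n=10$, where the parity of $|U|$ is what blocks the clean hub step) need a separate hand treatment: there $|U|$ is too small for the pancyclicity argument, so one instead uses Lemma~\ref{lm4:R(Sn(1,2),W8)} to pin $G[U]$, or a large subset of it, down to $K_m$ or $K_m-e$, then Observation~\ref{obs2} to force a minimum-degree condition on $G[V(T)]$, and finally exhibits $T_S(n)$ directly (a sufficiently complete graph on $n-1$ or $n$ vertices certainly contains it) or else finds $W_8$ in $\overline{G}$. I expect this last step, together with the book-keeping that in each sub-case certifies either $T_S(n)\subseteq G$ or $W_8\subseteq\overline{G}$ precisely when the leaf budget of $v_0$ is tight (the recurring off-by-one), to be the main obstacle; the remainder is a routine adaptation of the preceding theorems.
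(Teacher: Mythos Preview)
Your plan starts from $T_C(n)$, but the paper makes a much better choice of seed subgraph: $S_n[4]$ when $n\not\equiv 0\pmod 4$ (and $S_{n-1}[4]$ when $n\equiv 0\pmod 4$, since then $R(S_n[4],W_8)=2n$). The point is that $S_n[4]$ \emph{already carries the deep cherry} $v_0v_1w_1\{w_2,w_3\}$ that $T_S(n)$ needs, so the only missing piece is one further pendant path of length two from $v_0$ through a pure leaf. Hence $T_S(n)\nsubseteq G$ forces the single, very strong restriction that $V=\{v_2,\ldots,v_{n-4}\}$ is independent and $E_G(V,U)=\emptyset$. For $n\ge 10$ this gives $|V|\ge 5$ independent vertices with no edges to $|U|\ge 4$ others, and Observation~\ref{obs2} yields $W_8\subseteq\overline G$ in one line; only $n=9$ needs a short hand check via Lemma~\ref{lm4:R(Sn(1,2),W8)}. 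The $n\equiv 0\pmod 4$ case drops to $S_{n-1}[4]$, gets $E_G(V,U)=\emptyset$ with $|V|=n-6$, forces $\delta(G[U])\ge n-4$ by Observation~\ref{obs2}, and finishes with Lemma~\ref{lm:R(Sn[4],W8)} (the complete-multipartite exception $K_{4,\ldots,4}$ does contain $T_S(n)$).

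By contrast, $T_C(n)$ has its cherry one step too close to $v_0$, so the restrictions you extract are genuinely weaker (each $w_i$ meets at most one vertex of $U$, high-$G[U]$-degree vertices of $U$ avoid the $v_i$, and so on). That pushes you into the pancyclicity-plus-case-split machinery, and the hub step you flag is a real issue: the $C_8$ from Lemma~\ref{lem:pancyclic} on $\overline G[U]$ may pass through the unique $U$-neighbour of every $w_i$ simultaneously, and pre-trimming $U$ by those three vertices costs exactly the margin you need for small $n$. Your route can very likely be pushed through with enough bookkeeping, but what you are missing is not a clever lemma for the endgame --- it is the observation that choosing a seed tree which already contains the deep cherry collapses the whole argument to a few lines.
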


\begin{proof}
Lemma~\ref{lem:lower-bound-on-S_n(1,4)-T_D(n)-...-T_L(n)} provides the lower bound, 
so it remains to prove the upper bound. 
Let $G$ be any graph of order $2n-1$. 
Assume that $G$ does not contain $T_S(n)$ and that $\overline{G}$ does not contain $W_8$. 
Suppose $n\not\equiv 0 \pmod{4}$.
By Theorem~\ref{thm:R(Sn[4],W8)}, $G$ has a subgraph $T=S_n[4]$. 
Let $V(T)=\{v_0,\ldots,v_{n-4},w_1,w_2,w_3\}$ 
and $E(T)=\{v_0v_1,\ldots,v_0v_{n-4},v_1w_1,w_1w_2,w_1w_3\}$. 
Set $V=\{v_2,\ldots,v_{n-4}\}$ and $U=V(G)-V(T)$; 
then $|V|=n-5$ and $|U|=n-1$. 
Since $T_S(n)\nsubseteq G$, 
$G[V]$ are independent vertices and are not adjacent to~$U$. 
If $n\geq 10$, 
then $|V|\geq 5$ and $|U|\geq 9$, 
so by Observation~\ref{obs2}, $\overline{G}$ contains $W_8$, a contradiction. 
Suppose that $n=9$.
Then $|V|=4$ and $|U|=8$. 
By Lemma~\ref{lm4:R(Sn(1,2),W8)}, 
$G[U]$ is $K_8$ or $K_8-e$.
Since $T_S(9) \nsubseteq G$, $T$ is not adjacent to $U$, and $\delta(G[V(T)]\geq 5$. 
As $v_2,\ldots,v_5$ are independent in $G$, 
they are adjacent to all other vertices in $G[V(T)]$,
and so $G[V(T)]$ contains $T_S(9)$ with edge set 
$\{v_0v_1,v_0v_2,v_1v_4,v_1v_5,v_2w_1,v_2w_2,v_2w_3,v_3w_1\}$. 

Now suppose that $n\equiv 0 \pmod{4}$. 
By Theorem~\ref{thm:R(Sn[4],W8)}, $G$ has a subgraph $T=S_{n-1}[4]$. 
Let $V(T)=\{v_0,\ldots,v_{n-5},w_1,w_2,w_3\}$ 
and $E(T)=\{v_0v_1,\ldots,v_0v_{n-5},v_1w_1,w_1w_2,w_1w_3\}$. 
Set $V=\{v_2,\ldots,v_{n-5}\}$ and $U=V(G)-V(T)$; 
then $|V|=n-6$ and $|U|=n$. 
Since $T_S(n)\nsubseteq G$, 
$G[V]$ is not adjacent to~$U$. 
Since $|V|=n-6>4$, by Observation~\ref{obs2}, 
$\Delta(\overline{G}[U])\leq 3$ and $\delta(G[U])\geq n-4$ 
since $W_8\nsubseteq\overline{G}$. 
By Lemma~\ref{lm:R(Sn[4],W8)}, 
either $G[U]$ is $K_{4,\ldots,4}$ and contains $T_S(n)$ 
or $G[U]$ contains $S_n[4]$ 
and the arguments from the $n\not\equiv 0\pmod{4}$ case above lead to a contradiction. 

Thus, $R(T_S(n),W_8)\leq 2n-1$ for $n\geq 9$ which completes the proof.
\end{proof}

\section*{Declarations}

The authors have no relevant financial or non-financial interests to disclose.

\end{document}